\newcommand{\noun}[1]{\textsc{#1}}
\numberwithin{equation}{section}
\numberwithin{figure}{section}
\theoremstyle{plain}
\newtheorem{thm}{\protect\theoremname}
  \theoremstyle{definition}
  \newtheorem{defn}[thm]{\protect\definitionname}
  \theoremstyle{remark}
  \newtheorem{rem}[thm]{\protect\remarkname}
  \theoremstyle{definition}
  \newtheorem{example}[thm]{\protect\examplename}
  \theoremstyle{plain}
  \newtheorem{prop}[thm]{\protect\propositionname}
  \theoremstyle{plain}
  \newtheorem{lem}[thm]{\protect\lemmaname}
  \theoremstyle{plain}
  \newtheorem{cor}[thm]{\protect\corollaryname}
\renewcommand*{\epsilon}{\varepsilon}
\newlength{\lhs} 
\newlength{\rhs}
\newcommand{\myar}[2]{\ar^-{#1}[#2]}
\newcommand{\myard}[2]{\ar_-{#1}[#2]}
  \providecommand{\corollaryname}{Corollary}
  \providecommand{\definitionname}{Definition}
  \providecommand{\examplename}{Example}
  \providecommand{\lemmaname}{Lemma}
  \providecommand{\propositionname}{Proposition}
  \providecommand{\remarkname}{Remark}
\providecommand{\theoremname}{Theorem}
\begin{document}

\title{Distributive Laws via Admissibility}

\author{Charles Walker}

\keywords{KZ-doctrines, lax-idempotent pseudomonads, pseudo-distributive laws}

\subjclass[2000]{18A35, 18C15, 18D05}

\address{Department of Mathematics, Macquarie University, NSW 2109, Australia}

\email{charles.walker1@mq.edu.au}

\thanks{The author acknowledges the support of an Australian Government Research
Training Program Scholarship.}

\date{\today}
\begin{abstract}
This paper concerns the problem of lifting a KZ doctrine $P$ to the
2-category of pseudo $T$-algebras for some pseudomonad $T$. Here
we show that this problem is equivalent to giving a pseudo-distributive
law (meaning that the lifted pseudomonad is automatically KZ), and
that such distributive laws may be simply described algebraically
and are essentially unique (as known to be the case in the (co)KZ
over KZ setting). 

Moreover, we give a simple description of these distributive laws
using Bunge and Funk's notion of admissible morphisms for a KZ doctrine
(the principal goal of this paper). We then go on to show that the
2-category of KZ doctrines on a 2-category is biequivalent to a poset.

We will also discuss here the problem of lifting a locally fully faithful
KZ doctrine, which we noted earlier enjoys most of the axioms of a
Yoneda structure, and show that a bijection between oplax and lax
structures is exhibited on the lifted ``Yoneda structure'' similar
to Kelly's doctrinal adjunction. We also briefly discuss how this
bijection may be viewed as a coherence result for oplax functors out
of the bicategories of spans and polynomials, but leave the details
for a future paper.

\tableofcontents{}
\end{abstract}

\maketitle

\section{Introduction}

It is well known that to give a lifting of a monad to the algebras
of another monad is to give a distributive law \cite{beckdist}. More
generally, to give a lifting of a pseudomonad to the pseudoalgebras
of another pseudomonad is to give a pseudo-distributive law \cite{marm1999,cheng2003}.
However, in this paper we are interested in the problem of lifting
a Kock-Z\"{o}berlein pseudomonad $P$ (also known as a lax idempotent
pseudomonad), as introduced by Kock \cite{kock1972} and Z\"{o}berlein
\cite{zober1976}, to the pseudoalgebras of some pseudomonad $T$.
These KZ pseudomonads are a particular type of pseudomonad for which
algebra structures are adjoint to units; an important example being
the free cocompletion under a class of colimits $\Phi$.

But what does it mean to give a lifting of a KZ doctrine to the setting
of pseudoalgebras such that the lifted pseudomonad is also KZ? One
objective of this paper is to show that this problem is equivalent
to giving a pseudo-distributive law (meaning a lifting of this pseudomonad
automatically inherits the KZ structure), and consequently that such
pseudo-distributive laws have a couple of simple descriptions. One
simple description being purely algebraic (a generalization and simplification
of a description given in \cite[Section 11]{marm1999}), and another
being a novel description purely in terms of left Kan extensions and
Bunge and Funk's admissible maps of a KZ doctrine \cite{bungefunk}.
In fact, Bunge and Funk's admissible maps are a central tool in the
proof of these results. We also see that these distributive laws are
essentially unique, a generalization capturing \cite[Theorem 7.4]{marm2012}
and strengthening parts of \cite[Prop. 4.1]{marm2002}.

These two descriptions of a pseudo-distributive law correspond to
two different descriptions of a KZ pseudomonad. The first, which from
now on we call a KZ pseudomonad, is a well known algebraic description
similar to Kock's \cite{kock1972}; the second, which we call a KZ
doctrine, is to be the description in terms of left Kan extensions
due to Marmolejo and Wood \cite[Definition 3.1]{marm2012}.

Bunge and Funk showed that admissibility in the setting of a KZ pseudomonad
also has both an algebraic definition and a definition in terms of
left Kan extensions. Indeed, Bunge and Funk defined a morphism $f$
to be admissible in the context of a KZ doctrine $P$ when $Pf$ has
a right adjoint \cite[Definition 1.1]{bungefunk}, and showed that
this notion of admissibility also has a description in terms of left
Kan extensions \cite[Prop. 1.5]{bungefunk}. We refer to this as $P$-admissibility.

The central idea here is that instead of thinking about the problem
of lifting a KZ doctrine algebraically, we think about the problem
in terms of algebraic left Kan extensions. Moreover, this notion of
admissibility is crucial here as it allows us to show that certain
left extensions exist and are preserved.

A well known and motivating example the reader may keep in mind is
the KZ doctrine for the free small cocompletion on locally small categories,
with its lifting to the setting of monoidal categories described by
Im and Kelly \cite{uniconvolution} via the Day convolution \cite{dayconvolution}.

In Section \ref{background} we give the necessary background for
this paper, and recall the basic definitions of pseudomonads, pseudo
algebras and morphisms between pseudo algebras. In particular, we
recall the notion of a KZ pseudomonad and KZ doctrine and some results
concerning them. In addition, we recall some results concerning algebraic
left extensions. These notions will be used regularly throughout the
paper.

In Section \ref{liftingkzdoctrines}, which is the bulk of this paper,
we use Bunge and Funk's notion of admissibility to generalize some
results of Marmolejo and Wood concerning pseudo-distributive laws
of (co)KZ doctrines over KZ doctrines, such as the simple form of
such distributive laws \cite[Section 11]{marm1999} or essential uniqueness
of them \cite[Theorem 7.4]{marm2012}. Our first improvement here
is to show that an axiom concerning the (co)KZ doctrine may be dropped,
allowing us to generalize these results to pseudo-distributive laws
of \emph{any pseudomonad} over a KZ doctrine. For example, this level
of generality allows us to capture the case studied by Im and Kelly
\cite{uniconvolution}; showing that the lifting of the small cocompletion
from categories to monoidal categories is essentially unique.

In addition, we use this simplification to give a simple algebraic
description of a pseudo-distributive law of a pseudomonad over a KZ
pseudomonad, consisting only of a pseudonatural transformation and
three invertible modifications subject to three coherence axioms,
and prove this definition is equivalent to the usual notion of pseudo-distributive
law. However, the main new result of this section is a simple description
of pseudo-distributive laws over a KZ doctrine purely in terms of
left Kan extensions and admissibility.

Furthermore, through these calculations we find that in the presence
of a such a distributive law, the lifting of a KZ doctrine $P$ to
pseudo-$T$-algebras (for a pseudomonad $T$) is automatically a KZ
doctrine. The proof of these results is highly technical, relying
on $T$ preserving $P$-admissible maps; however, the main result
of this section is simply stated in Theorem \ref{liftkzequiv}. 

In Section \ref{consequencesandexamples} we study some properties
of the lifted KZ doctrine $\widetilde{P}$, such as classifying the
$\widetilde{P}$-cocomplete $T$-algebras as those for which the underlying
object is $P$-cocomplete and the algebra map separately cocontinuous,
thus justifying the usual definition of algebraic cocompleteness.
We also compare our results to that of Im-Kelly \cite{uniconvolution},
but seen from the KZ doctrine viewpoint.

After checking that the 2-category of KZ doctrines on a 2-category
is biequivalent to a poset, we go on to give some examples in which
we apply our results. Our first example concerns the case of the small
cocompletion and monoidal categories, and our second example concerns
multi-adjoints as studied by Diers \cite{diers}.

In Section \ref{liftlffkz} we consider the problem of lifting a locally
fully faithful KZ doctrine. These locally fully faithful KZ doctrines
are of interest as they almost give rise to Yoneda structures \cite{yonedakz}.
In particular, it is the goal of this section to describe a bijection
between oplax and lax structures on the lifted ``Yoneda structure''
when we have such a distributive law; that is a bijection between
cells $\alpha$ exhibiting $L$ as an oplax $T$-morphism
\[
\xymatrix@=1em{\mathcal{B}\myar{R_{L}}{rr} &  & P\mathcal{A}\ar@{}[ld]|-{\stackrel{\varphi_{L}}{\Longleftarrow}} &  &  &  & \left(\mathcal{B},T\mathcal{B}\overset{y}{\rightarrow}\mathcal{B}\right)\myar{\left(R_{L},\beta\right)}{rr} &  & \left(P\mathcal{A},TP\mathcal{A}\overset{z_{x}}{\rightarrow}P\mathcal{A}\right)\ar@{}[ld]|-{\;\;\;\;\;\stackrel{\varphi_{L}}{\Longleftarrow}}\\
 & \; &  &  &  &  &  & \;\\
 &  & \mathcal{A}\ar[uu]_{y_{\mathcal{A}}}\ar[uull]^{L} &  &  &  &  &  & \left(\mathcal{A},T\mathcal{A}\overset{x}{\rightarrow}\mathcal{A}\right)\ar[uu]_{\left(y_{\mathcal{A}},\xi_{x}\right)}\ar[uull]^{\left(L,\alpha\right)}
}
\]
and cells $\beta$ exhibiting $R_{L}$ as a lax $T$-morphism for
diagrams as on the right above, underlain by a ``Yoneda structure''
diagram such as that on the left above. As an instance of this result
we recover Kelly's bijection between oplax structures on left adjoints
and lax structures on right adjoints \cite{doctrinal}. An interesting
application of this bijection is as a coherence result for the bicategories
of spans and polynomials (and in particular the oplax functors out
of these bicategories). We briefly discuss the applications here,
but leave this to be explored in more detail in a forthcoming paper.

\section{Background\label{background}}

It is the purpose of this section to give the background knowledge
necessary for this paper. We start off by recalling the basic definitions
of pseudomonads, pseudo algebras, and morphisms between pseudo algebras,
as these notions will be used regularly throughout the paper. We then
recall the notion of a left extension in a 2-category, and consider
when these left extensions lift to the setting of pseudo-algebras
and morphisms between them (in a sense which will be applicable in
later sections). Finally, we go on to recall the notion of a KZ pseudomonad,
a special type of pseudomonad for which the algebra structure maps
are adjoint to units, and give their basic properties and some examples.

\subsection{Pseudomonads and their Algebras}

In order to define pseudomonads, we first need the notions of pseudonatural
transformations and modifications. The notion of pseudonatural transformation
is the (weak) 2-categorical version of natural transformation. There
are weaker notions also of lax and oplax natural transformations,
however those will not be used here. Modifications, defined below,
take the place of morphisms between pseudonatural transformations.
\begin{defn}
A \emph{pseudonatural transformation} between pseudofunctors $t\colon F\to G\colon\mathscr{A}\to\mathscr{B}$
where $\mathscr{A}$ and $\mathscr{B}$ are bicategories provides
for each 1-cell $f\colon\mathcal{A}\to\mathcal{B}$ in $\mathscr{A}$,
1-cells $t_{\mathcal{A}}$ and $t_{\mathcal{B}}$ and an invertible
2-cell $t_{f}$ in $\mathscr{B}$ as below
\[
\xymatrix@=1em{F\mathcal{A}\ar[rr]^{Ff}\ar[dd]_{t_{\mathcal{A}}} &  & F\mathcal{B}\ar[dd]^{t_{\mathcal{B}}}\\
 & \ar@{}[]|-{\overset{t_{f}}{\implies}}\\
G\mathcal{A}\ar[rr]_{Gf} &  & G\mathcal{B}
}
\]
satisfying coherence conditions outlined in \cite[Definition 2.2]{kelly1974}.
Given two pseudonatural transformations $t,s\colon F\to G\colon\mathscr{A}\to\mathscr{B}$
as above, a \emph{modification} $\alpha\colon s\to t$ consists of,
for every object $\mathcal{A}\in\mathscr{A}$, a 2-cell $\alpha_{\mathcal{A}}\colon t_{\mathcal{A}}\to s_{\mathcal{A}}$
such that for each 1-cell $f\colon\mathcal{A}\to\mathcal{B}$ in $\mathscr{A}$
we have the equality $\alpha_{\mathcal{B}}\cdot Ff\circ t_{f}=s_{f}\circ Gf\cdot\alpha_{\mathcal{A}}$.
\end{defn}

The following defines the (weak) 2-categorical version of monad to
be used throughout this paper. For brevity, we will suppress pseudofunctoriality
constraints in this definition and those following.
\begin{defn}
A \emph{pseudomonad} on a 2-category $\mathscr{C}$ consists of a
pseudofunctor equipped with pseudonatural transformations as below
\[
T\colon\mathscr{C}\to\mathscr{C},\qquad u\colon1_{\mathscr{C}}\to T,\qquad m\colon T^{2}\to T
\]
along with three invertible modifications
\[
\xymatrix@=1em{T\ar[rr]^{uT}\ar[rdrd]_{\textnormal{id}} &  & T^{2}\ar[dd]|-{m} &  & T\ar[ll]_{Tu}\ar[ldld]^{\textnormal{id}} &  & T^{3}\ar[rr]^{Tm}\ar[dd]_{mT} &  & T^{2}\ar[dd]^{m}\\
 & \;\ar@{}[ru]|-{\overset{\alpha}{\Longleftarrow}} &  & \ar@{}[lu]|-{\overset{\beta}{\Longleftarrow}}\\
 &  & T &  &  &  & T^{2}\ar[rr]_{m} &  & T\ar@{}[lulu]|-{\overset{\gamma}{\Longleftarrow}}
}
\]
subject to the two coherence axioms
\[
\xymatrix@=1em{T^{4}\ar[rr]^{T^{2}m}\ar[dd]_{mT^{2}}\ar[rdrd]|-{TmT} &  & T^{3}\ar[rrdd]^{Tm} &  &  &  & T^{4}\ar[rr]^{T^{2}m}\ar[dd]_{mT^{2}} &  & T^{3}\ar[rrdd]^{Tm}\ar[dd]^{mT}\\
 &  & \ar@{}[]|-{\overset{T\gamma}{\Longleftarrow}} &  &  &  &  & \ar@{}[]|-{\overset{m_{m}^{-1}}{\Longleftarrow}}\\
T^{3}\ar[rrdd]_{mT} & \ar@{}[]|-{\overset{\gamma T}{\Longleftarrow}} & T^{3}\ar[rr]^{Tm}\ar[dd]_{mT} &  & T^{2}\ar[dd]^{m} & = & T^{3}\ar[rrdd]_{mT}\ar[rr]_{Tm} &  & T^{2}\ar[rrdd]^{m} & \ar@{}[]|-{\overset{\gamma}{\Longleftarrow}} & T^{2}\ar[dd]^{m}\\
 &  &  & \ar@{}[]|-{\overset{\gamma}{\Longleftarrow}} &  &  &  &  & \ar@{}[]|-{\overset{\gamma}{\Longleftarrow}}\\
 &  & T^{2}\ar[rr]_{m} &  & T &  &  &  & T^{2}\ar[rr]_{m} &  & T
}
\]
and
\[
\xymatrix@=1em{ &  &  &  & T^{2}\ar[rrd]^{m} &  &  &  &  &  & T^{3}\ar[rrd]^{Tm}\\
T^{2}\ar[rr]^{TuT} &  & T^{3}\ar[rru]^{Tm}\ar[rrd]_{mT} &  & \ar@{}[]|-{\Downarrow\gamma} &  & T & = & T^{2}\ar[rur]^{TuT}\ar[rrd]_{TuT}\ar[rrrr]|-{\textnormal{id}} &  & \ar@{}[u]|-{\Downarrow T\alpha}\ar@{}[d]|-{\Downarrow\beta T} &  & T^{2}\ar[rr]^{m} &  & T\\
 &  &  &  & T^{2}\ar[rru]_{m} &  &  &  &  &  & T^{3}\ar[urr]_{mT}
}
\]
\end{defn}

\begin{rem}
One should note here that there are three useful consequences of these
pseudomonad axioms \cite[Proposition 8.1]{marm1997} originally due
to Kelly \cite{kellymaclanecoherence}. Of these, we will only need
the consequence that
\begin{equation}
\xymatrix@=1em{ &  &  &  & T^{2}\ar[rrd]^{m} &  &  &  &  &  & T\ar[rrd]^{uT}\\
1_{\mathscr{C}}\ar[rr]^{u} &  & T\ar[rru]^{uT}\ar[rrd]_{Tu}\ar[rrrr]|-{\textnormal{id}} &  & \ar@{}[u]|-{\Downarrow\alpha}\ar@{}[d]|-{\Downarrow\beta} &  & T & = & 1_{\mathscr{C}}\ar[rru]^{u}\ar[rrd]_{u} &  & \ar@{}[]|-{\Downarrow u_{u}^{-1}} &  & T^{2}\ar[rr]^{m} &  & T\\
 &  &  &  & T^{2}\ar[rru]_{m} &  &  &  &  &  & T\ar[rru]_{Tu}
}
\label{thirdaxiom}
\end{equation}
\end{rem}

Given a pseudomonad $\left(T,u,m\right)$ on a 2-category $\mathscr{C}$
one may consider its strict $T$-algebras and strict $T$-morphisms,
or the weaker counterparts where conditions only hold up coherent
2-cells. These weaker notions are what will be used throughout this
paper, though usually with the coherent 2-cells in question being
invertible. For convenience, we will leave the modifications $\alpha,\beta$
and $\gamma$ in the above definition as unnamed isomorphisms throughout
the rest of the paper.
\begin{defn}
Given a pseudomonad $\left(T,u,m\right)$ on a 2-category $\mathscr{C}$,
a \emph{lax $T$-algebra} consists of an object $\mathcal{A}\in\mathscr{C}$,
a 1-cell $x\colon T\mathcal{A}\to\mathcal{A}$ and 2-cells
\[
\xymatrix{T^{2}\mathcal{A}\ar[d]_{m_{\mathcal{A}}}\ar[r]^{Tx}\ar@{}[rd]|-{\Downarrow\mu} & T\mathcal{A}\ar[d]^{x} & \mathcal{A}\ar[rr]^{\textnormal{id}}\ar[rd]_{u_{\mathcal{A}}} & \;\ar@{}[d]|-{\Downarrow\nu} & \mathcal{A}\\
T\mathcal{A}\ar[r]_{x} & \mathcal{A} &  & T\mathcal{A}\ar[ur]_{x}
}
\]
such that both 
\[
\xymatrix@=1em{ &  & \ar@{}[d]|-{\Downarrow\nu} &  &  &  &  &  &  &  & T\mathcal{A}\ar[rrdd]^{x}\ar@{}[dddd]|-{\Downarrow\mu}\\
\mathcal{A}\ar[rr]^{u_{\mathcal{A}}}\ar@/^{2pc}/[rrrr]^{\textnormal{id}}\ar@{}[drdr]|-{\Downarrow u_{x}^{-1}} &  & T\mathcal{A}\ar[rr]^{x}\ar@{}[ddrr]|-{\Downarrow\mu} &  & \mathcal{A} &  &  &  & \ar@{}[]|-{\Downarrow T\nu}\\
 &  &  &  &  &  & T\mathcal{A}\ar@/^{1pc}/[rrrruu]^{\textnormal{id}}\ar[rr]_{Tu_{\mathcal{A}}}\ar@/_{1pc}/[rrrrdd]_{\textnormal{id}} &  & T^{2}\mathcal{A}\ar[uurr]^{Tx}\ar[ddrr]_{m_{\mathcal{A}}} &  &  &  & \mathcal{A}\\
T\mathcal{A}\ar[rr]_{u_{T\mathcal{A}}}\ar@/_{2pc}/[rrrr]_{\textnormal{id}}\ar[uu]^{x} &  & T^{2}\mathcal{A}\ar[uu]_{Tx}\ar[rr]_{m_{\mathcal{A}}} &  & T\mathcal{A}\ar[uu]_{x} &  &  &  & \ar@{}[]|-{\cong}\\
 &  & \ar@{}[u]|-{\cong} &  &  &  &  &  &  &  & T\mathcal{A}\ar[urur]_{x}
}
\]
paste to the identity 2-cell at $x$, known as the left and right
unit axioms respectively. Moreover, the associativity axiom asks that
we have the equality 
\[
\xymatrix@=1em{ &  & T^{2}\mathcal{A}\ar[rr]^{Tx}\ar[rdrd]^{m_{\mathcal{A}}}\ar@{}[dddd]|-{\Downarrow m_{x}^{-1}} &  & T\mathcal{A}\ar[rrdd]^{x}\ar@{}[dd]|-{\Downarrow\mu} &  &  &  &  &  & T^{2}\mathcal{A}\ar[rr]^{Tx}\ar@{}[dd]|-{\Downarrow T\mu} &  & T\mathcal{A}\ar[rrdd]^{x}\ar@{}[dddd]|-{\Downarrow\mu}\\
\\
T^{3}\mathcal{A}\ar[urur]^{T^{2}x}\ar[rrdd]_{m_{T\mathcal{A}}} &  &  &  & T\mathcal{A}\ar[rr]^{x}\ar@{}[dd]|-{\Downarrow\mu} &  & \mathcal{A} & = & T^{3}\mathcal{A}\ar[rr]^{Tm_{\mathcal{A}}}\ar[uurr]^{T^{2}x}\ar[rdrd]_{m_{T\mathcal{A}}} &  & T^{2}\mathcal{A}\ar[uurr]^{Tx}\ar[ddrr]_{m_{\mathcal{A}}}\ar@{}[dd]|-{\cong} &  &  &  & \mathcal{A}\\
\\
 &  & T^{2}\mathcal{A}\ar[rr]_{m_{\mathcal{A}}}\ar[uurr]_{Tx} &  & T\mathcal{A}\ar[rruu]_{x} &  &  &  &  &  & T^{2}\mathcal{A}\ar[rr]_{m_{\mathcal{A}}} &  & T\mathcal{A}\ar[rruu]_{x}
}
\]
If the above 2-cells $\nu$ and $\mu$ are isomorphisms, we call this
a \emph{pseudo $T$-algebra}. If $\nu$ and $\mu$ are identity 2-cells,
we call this a \emph{strict $T$-algebra}.
\end{defn}

These $T$-algebras may be regarded as the objects of a category,
with morphisms of (pseudo) $T$-algebras defined as follows.
\begin{defn}
Given a pseudomonad $\left(T,u,m\right)$ on a 2-category $\mathscr{C}$,
an \emph{oplax $T$-morphism} of pseudo $T$-algebras
\[
\left(L,\alpha\right)\colon\left(\mathcal{A},T\mathcal{A}\stackrel{x}{\rightarrow}\mathcal{A}\right)\to\left(\mathcal{B},T\mathcal{B}\stackrel{y}{\rightarrow}\mathcal{B}\right)
\]
consists of a 1-cell $L\colon\mathcal{A}\to\mathcal{B}$ and a 2-cell
\[
\xymatrix{T\mathcal{B}\ar[r]^{y}\ar@{}[rd]|-{\Uparrow\alpha} & \mathcal{B}\\
T\mathcal{A}\ar[r]_{x}\ar[u]^{TL} & \mathcal{A}\ar[u]_{L}
}
\]
such that (leaving the pseudo $T$-algebra coherence cells as unnamed
isomorphisms)
\[
\xymatrix{ & \;\\
\mathcal{B}\ar[r]^{u_{\mathcal{B}}}\ar@{}[rd]|-{\Uparrow u_{L}}\ar@/^{2.5pc}/[rr]^{\textnormal{id}} & T\mathcal{B}\ar[r]^{y}\;\ar@{}[rd]|-{\Uparrow\alpha}\ar@{}[u]|-{\cong} & \mathcal{B}\\
\mathcal{A}\ar[u]^{L}\ar[r]_{u_{\mathcal{A}}}\ar@/_{2.5pc}/[rr]_{\textnormal{id}} & T\mathcal{A}\ar[r]_{x}\ar[u]|-{TL}\ar@{}[d]|-{\cong} & \mathcal{A}\ar[u]_{L}\\
 & \;
}
\]
is the identity 2-cell on $L$, and for which
\[
\xymatrix{ & T\mathcal{B}\ar[rd]^{y}\\
T^{2}\mathcal{B}\ar[r]^{Ty}\ar@{}[rd]|-{\Uparrow T\alpha}\ar[ru]^{m_{\mathcal{B}}} & T\mathcal{B}\ar[r]^{y}\ar@{}[rd]|-{\Uparrow\alpha}\ar@{}[u]|-{\cong} & \mathcal{B} & \ar@{}[d]|-{=} & T^{2}\mathcal{B}\ar[r]^{m_{\mathcal{B}}}\ar@{}[dr]|-{\Uparrow m_{L}} & T\mathcal{B}\ar[r]^{y}\;\ar@{}[rd]|-{\Uparrow\alpha} & \mathcal{B}\\
T^{2}\mathcal{A}\ar[u]^{T^{2}L}\ar[r]_{Tx}\ar[rd]_{m_{\mathcal{A}}} & T\mathcal{A}\ar[r]_{x}\ar[u]|-{TL} & \mathcal{A}\ar[u]_{L} & \; & T^{2}\mathcal{A}\ar[u]^{T^{2}L}\ar[r]_{m_{\mathcal{A}}} & T\mathcal{A}\ar[r]_{x}\ar[u]|-{TL} & \mathcal{A}\ar[u]_{L}\\
 & T\mathcal{A}\ar[ur]_{x}\ar@{}[u]|-{\cong}
}
\]

If the 2-cell $\alpha$ goes in the opposite direction, this is the
definition of a \emph{lax $T$-morphism, }and if $\alpha$ is invertible
this is then the definition of a \emph{pseudo $T$-morphism.}
\end{defn}

The usual definition of $T$-transformation between oplax or lax $T$-morphisms
is not general enough for our purposes as we will be considering situations
in which we have both oplax and lax $T$-morphisms, and so we define
$T$-transformations as based on the double category viewpoint \cite{adjointdouble}.
Such transformations are sometimes referred to as generalized $T$-transformations.
\begin{defn}
Suppose we are given a square of morphisms of pseudo $T$-algebras
\[
\xymatrix@=1em{\left(\mathcal{B},y\right)\ar[rr]^{\left(R,\beta\right)} &  & \left(\mathcal{C},z\right)\ar@{}[ldld]|-{\stackrel{\zeta}{\Longleftarrow}}\\
\\
\left(\mathcal{D},w\right)\ar[rr]_{\left(M,\varepsilon\right)}\ar[uu]^{\left(N,\varphi\right)} &  & \left(\mathcal{A},x\right)\ar[uu]_{\left(I,\xi\right)}
}
\]
where the vertical maps are oplax $T$-morphisms and the horizontal
maps are lax $T$-morphisms. A \emph{$T$-transformation }$\zeta$
as in the above square is a 2-cell $\zeta:I\cdot M\to R\cdot N$ for
which we have the equality of the two sides of the cube
\[
\xymatrix{ & T\mathcal{B}\ar[r]^{y}\ar@{}[d]|-{\Uparrow\varphi} & \mathcal{B}\ar[rd]^{R}\ar@{}[dd]|-{\Uparrow\zeta} &  &  &  & T\mathcal{B}\ar[r]^{y}\ar[dr]^{TR}\ar@{}[dd]|-{\Uparrow T\zeta} & \mathcal{B}\ar[rd]^{R}\ar@{}[d]|-{\Uparrow\beta}\\
T\mathcal{D}\ar[r]^{w}\ar[rd]_{TM}\ar[ru]^{TN} & \mathcal{D}\ar[rd]_{M}\ar[ru]^{N}\ar@{}[d]|-{\Uparrow\varepsilon} &  & \mathcal{C} & = & T\mathcal{D}\ar[rd]_{TM}\ar[ru]^{TN} &  & T\mathcal{C}\ar[r]^{z}\ar@{}[d]|-{\Uparrow\xi} & \mathcal{C}\\
 & T\mathcal{A}\ar[r]_{x} & \mathcal{A}\ar[ur]_{I} &  &  &  & T\mathcal{A}\ar[r]_{x}\ar[ur]_{TI} & \mathcal{A}\ar[ur]_{I}
}
\]

We will call the 2-category of pseudo $T$-algebras, pseudo $T$-morphisms,
and $T$-transformations $\text{ps}$-$T$-alg (we may consider squares
where both horizontal maps are identities or both vertical maps are
identities to recover the usual notions of transformation between
lax/oplax/pseudo $T$-morphisms). 
\end{defn}

\begin{rem}
\label{Ttransremark} Note that in this language it makes sense to
talk about the unit and counit of an adjunction where the left adjoint
is oplax and the right adjoint lax. Indeed the oplax-lax bijective
correspondence in Kelly's doctrinal adjunction \cite{doctrinal} is
unique such the counit $\epsilon$ (and unit $\eta$) of the adjunction
is a $T$-transformation\footnote{This is shown in more generality in Proposition \ref{docparadj}.}.
Note also that in this setting of a doctrinal adjunction $L\dashv R$
(with an oplax structure $\alpha$ on $L$ corresponding a lax structure
$\beta$ on $R$ via the mates correspondence) it makes sense to view
the unit and counit as $T$-transformations as we have squares 
\[
\xymatrix@=1em{\left(\mathcal{B},y\right)\ar[rr]^{\left(\textnormal{id},\textnormal{id}\right)} &  & \left(\mathcal{B},y\right)\ar@{}[ldld]|-{\stackrel{\epsilon}{\Longleftarrow}} &  & \left(\mathcal{B},y\right)\ar[rr]^{\left(R,\beta\right)} &  & \left(\mathcal{A},x\right)\ar@{}[ldld]|-{\stackrel{\eta}{\Longleftarrow}}\\
\\
\left(\mathcal{B},y\right)\ar[rr]_{\left(R,\beta\right)}\ar[uu]^{\left(\textnormal{id},\textnormal{id}\right)} &  & \left(\mathcal{A},x\right)\ar[uu]_{\left(L,\alpha\right)} &  & \left(\mathcal{A},x\right)\ar[rr]_{\left(\textnormal{id},\textnormal{id}\right)}\ar[uu]^{\left(L,\alpha\right)} &  & \left(\mathcal{A},x\right)\ar[uu]_{\left(\textnormal{id},\textnormal{id}\right)}
}
\]

As a convention, will will usually omit these identity $T$-morphisms.
The reader may just remember that it makes sense to consider $T$-transformations
from a lax followed by an oplax $T$-morphism, into an oplax followed
by a lax $T$-morphism, and that any such transformation may be uniquely
expressed as a square in the form of the above definition by inserting
the appropriate identity $T$-morphisms; which is what we have done
in the case of the unit and counit above.
\end{rem}

\begin{example}
One may define the category of $\mathbf{Cat}$ (the category of locally
small categories) enriched graphs, denoted $\mathbf{CatGrph}$, with
objects given as families of hom-categories 
\[
\left(\mathscr{C}\left(X,Y\right)\colon X,Y\in\textnormal{ob}\mathscr{C}\right)
\]
 and morphisms consisting of locally defined functors
\[
\left(F_{X,Y}:\mathscr{C}\left(X,Y\right)\to\mathscr{D}\left(FX,FY\right)\colon X,Y\in\mathscr{C}\right)
\]
which have not been endowed with the structure of a bicategory or
a lax/oplax functor respectively \cite{companion}. This gives rise
to, via a suitable 2-monad $T$ on $\mathbf{CatGrph}$, the 2-category
of bicategories, oplax functors and icons \cite{icons}. We may of
course replace oplax here with ``lax'' or ``pseudo''. Note that
inside this 2-category lives the one object bicategories (isomorphic
to monoidal categories), giving the 2-category of monoidal categories,
lax/oplax/strong monoidal functors and monoidal transformations (which
may also be constructed directly via a suitable 2-monad \cite{icons}).
\end{example}

\subsection{Left Extensions and Algebraic Left Extensions\label{doctrinalleftextensions} }

In this section we will consider how pseudomonads interact with left
extensions. In particular, we start off by recalling the notion of
a left extension in a 2-category, and go on to give conditions under
which such a left extension lifts to a suitable notion of left extension
in the setting of pseudo $T$-algebras, $T$-morphisms and $T$-transformations.
The results of this section are mostly due to Koudenburg, shown in
a more general double category setting \cite{roald2015}. 
\begin{defn}
Suppose we are given a 2-cell $\eta\colon I\to R\cdot L$ as in the
left diagram
\[
\xymatrix@=1em{ &  &  &  &  &  &  & \ar@{}[d]|-{\Uparrow\sigma}\\
\mathcal{B}\ar[rr]^{R} &  & \mathcal{C}\ar@{}[ld]|-{\stackrel{\eta}{\Longleftarrow}} &  &  &  & \mathcal{B}\ar[rr]|-{R}\ar@/^{1.5pc}/[rr]^{M} &  & \mathcal{C}\ar@{}[ld]|-{\stackrel{\eta}{\Longleftarrow}}\\
 & \; &  &  &  &  &  & \;\\
 &  & \mathcal{A}\ar[uu]_{I}\ar[uull]^{L} &  &  &  &  &  & \mathcal{A}\ar[uu]_{I}\ar[uull]^{L}
}
\]
in a 2-category $\mathscr{C}$. We say that $R$ is exhibited as a
\emph{left extension} of $I$ along $L$ by the 2-cell $\eta$ when
pasting 2-cells $\sigma:R\to M$ with the 2-cell $\eta:I\to R\cdot L$
as in the right diagram defines a bijection between 2-cells $R\to M$
and 2-cells $I\to M\cdot L$. Moreover, we say such a left extension
$\left(R,\eta\right)$ is \emph{respected} (also called \emph{preserved})
by a 1-cell $E\colon\mathcal{C}\to\mathcal{D}$ when the whiskering
of $\eta$ by $E$, as given by the pasting diagram below
\[
\xymatrix@=1em{\mathcal{B}\ar[rr]^{R} &  & \mathcal{C}\ar@{}[ld]|-{\stackrel{\eta}{\Longleftarrow}}\ar[rr]^{E}\ar@{}[rd]|-{\stackrel{\textnormal{id}}{\Longleftarrow}} &  & \mathcal{D}\\
 & \; &  & \;\\
 &  & \mathcal{A}\ar[uu]_{I}\ar[uull]^{L}\ar[uurr]_{E\cdot I}
}
\]
exhibits $E\cdot R$ as a left extension of $E\cdot I$ along $L$. 
\end{defn}

We now give a suitable description of when a lax $T$-morphism may
be regarded as a left extension in the setting of pseudo $T$-algebras.
\begin{defn}
Suppose we are given an oplax $T$-morphism $\left(L,\alpha\right)$
and lax $T$-morphisms $\left(R,\beta\right)$ and $\left(I,\sigma\right)$
between pseudo $T$-algebras equipped with a $T$-transformation $\eta\colon I\to R\cdot L$
as in the diagram
\[
\xymatrix@=1em{\left(\mathcal{B},T\mathcal{B}\overset{y}{\rightarrow}\mathcal{B}\right)\ar[rr]^{\left(R,\beta\right)} &  & \left(\mathcal{C},T\mathcal{C}\overset{z}{\rightarrow}\mathcal{C}\right)\ar@{}[ld]|-{\stackrel{\eta}{\Longleftarrow}}\\
 & \;\\
 &  & \left(\mathcal{A},T\mathcal{A}\overset{x}{\rightarrow}\mathcal{A}\right)\ar[uu]_{\left(I,\sigma\right)}\ar[uull]^{\left(L,\alpha\right)}
}
\]
We call such a diagram a \emph{$T$-left extension }if for any given
pseudo $T$-algebra $\left(\mathcal{D},w\right)$, lax $T$-morphism
$\left(M,\epsilon\right)$ and oplax $T$-morphism $\left(N,\varphi\right)$
as below
\[
\xymatrix@=1em{ & \left(\mathcal{D},T\mathcal{D}\overset{w}{\rightarrow}\mathcal{D}\right)\ar[rd]^{\left(M,\epsilon\right)}\\
\left(\mathcal{B},T\mathcal{B}\overset{y}{\rightarrow}\mathcal{B}\right)\ar[rr]_{\left(R,\beta\right)}\ar[ur]^{\left(N,\varphi\right)} & \;\ar@{}[u]|-{\Uparrow\overline{\zeta}} & \left(\mathcal{C},T\mathcal{C}\overset{z}{\rightarrow}\mathcal{C}\right)\ar@{}[ld]|-{\stackrel{\eta}{\Longleftarrow}}\\
 & \;\\
 &  & \left(\mathcal{A},T\mathcal{A}\overset{x}{\rightarrow}\mathcal{A}\right)\ar[uu]_{\left(I,\sigma\right)}\ar[uull]^{\left(L,\alpha\right)}
}
\]
pasting $T$-transformations of the form $\overline{\zeta}$ above
with the $T$-transformation $\eta$ defines the bijection of $T$-transformations:
\[
\xymatrix@=1em{\left(\mathcal{D},w\right)\ar[rr]^{\left(M,\epsilon\right)} &  & \left(\mathcal{C},z\right)\ar@{}[ldld]|-{\stackrel{\overline{\zeta}}{\Longleftarrow}} &  &  & \left(\mathcal{D},w\right)\ar[rr]^{\left(M,\epsilon\right)} &  & \left(\mathcal{C},z\right)\ar@{}[ldld]|-{\stackrel{\zeta}{\Longleftarrow}}\\
 &  &  & \ar@{}[r]|-{\sim} & \; & \left(\mathcal{B},y\right)\ar[u]^{\left(N,\varphi\right)}\\
\left(\mathcal{B},y\right)\ar[rr]_{\left(R,\beta\right)}\ar[uu]^{\left(N,\varphi\right)} &  & \left(\mathcal{C},z\right)\ar[uu]_{\left(\textnormal{id},\textnormal{id}\right)} &  &  & \left(\mathcal{A},x\right)\ar[rr]_{\left(I,\sigma\right)}\ar[u]^{\left(L,\alpha\right)} &  & \left(\mathcal{C},z\right)\ar[uu]_{\left(\textnormal{id},\textnormal{id}\right)}
}
\]
\end{defn}

\begin{rem}
Note that if $\overline{\zeta}$ and $\eta$ are both $T$-transformations
then so is the composite $\overline{\zeta}L\cdot\eta$; this is a
simple calculation which we omit.
\end{rem}

In order to lift left extensions to $T$-left extensions as above
we will require the following algebraic cocompleteness property.

\begin{defn}
\label{Tpreserved}Given a pseudomonad $\left(T,u,m\right)$ on a
2-category $\mathscr{C}$, we say a left extension $\left(H,\varphi\right)$
in $\mathscr{C}$ as on the left below is \emph{$T$-preserved} by
a 1-cell $z\colon T\mathcal{C}\to\mathcal{D}$ when 
\[
\xymatrix@=1em{\mathcal{B}\ar[rr]^{H} &  & \mathcal{C}\ar@{}[ld]|-{\stackrel{\varphi}{\Longleftarrow}} &  &  & T\mathcal{B}\ar[rr]^{TH} &  & T\mathcal{C}\ar@{}[ld]|-{\stackrel{T\varphi}{\Longleftarrow}}\ar[r]^{z} & \mathcal{D}\\
 & \; &  &  &  &  &  &  & \;\ar@{}[ul]|-{\stackrel{\textnormal{id}}{\Longleftarrow}\quad\;\;}\\
 &  & \mathcal{X}\ar[uu]_{F}\ar[uull]^{G} &  &  &  &  & T\mathcal{X}\ar[uu]^{TF}\ar[uull]^{TG}\ar[uur]_{z\cdot TF}
}
\]
the pasting diagram on the right exhibits $\left(z\cdot TH,z\cdot T\varphi\right)$
as a left extension. 
\end{defn}

\begin{rem}
Given a pseudo $T$-algebra $\left(\mathcal{C},T\mathcal{C}\overset{z}{\rightarrow}\mathcal{C}\right)$
if we ask that the underlying object $\mathcal{C}$ is cocomplete
in the sense that all left extensions (along a chosen class of maps)
into $\mathcal{C}$ exist, and moreover that the algebra structure
map $z$ $T$-preserves these left extensions, then this is (essentially)
the notion of algebraic cocompleteness as given by Weber \cite[Definition 2.3.1]{markextension}
(except that we are not using pointwise left extensions here). In
the setting monoidal categories, this condition of $z$ (when $z$
is an algebra structure map) $T$-preserving the left extensions is
the analogue of asking the tensor product be separately cocontinuous;
see \cite[Prop. 2.3.2]{markextension}.
\end{rem}

We now recall a result for algebraic left extensions mostly due to
Koudenburg \cite{roald2015} (though we avoid working in a double
categorical setting). We will include some details of the proof as
we will need them later.
\begin{prop}
\label{docleftext} Suppose we are given a diagram
\[
\xymatrix@=1em{\mathcal{B}\ar[rr]^{R} &  & \mathcal{C}\ar@{}[ld]|-{\stackrel{\eta}{\Longleftarrow}}\\
 & \;\\
 &  & \mathcal{A}\ar[uu]_{I}\ar[uull]^{L}
}
\]
which exhibits $R$ as a left extension in a 2-category $\mathscr{C}$
equipped with a pseudomonad $\left(T,u,m\right)$. Suppose further
that 
\[
\left(\mathcal{A},T\mathcal{A}\stackrel{x}{\longrightarrow}\mathcal{A}\right),\quad\left(\mathcal{B},T\mathcal{B}\stackrel{y}{\longrightarrow}\mathcal{B}\right),\quad\left(\mathcal{C},T\mathcal{C}\stackrel{z}{\longrightarrow}\mathcal{C}\right)
\]
are pseudo $T\text{-algebras}$. Suppose even further that the left
extension $\left(R,\eta\right)$ is $T$-preserved by $z$, and the
resulting left extension $\left(z\cdot TR,z\cdot T\eta\right)$ is
itself $T$-preserved by $z$. Then given a lax $T\text{-morphism}$
structure $\sigma$ on $I$ and an oplax $T\text{-morphism}$ structure
$\alpha$ on $L$, there exists a unique lax $T\text{-morphism}$
structure $\beta$ on $R$ for which $\eta$ is a $T$-transformation.
Moreover, this left extension is then lifted to the $T$-left extension
\[
\xymatrix@=1em{\left(\mathcal{B},T\mathcal{B}\overset{y}{\rightarrow}\mathcal{B}\right)\ar[rr]^{\left(R,\beta\right)} &  & \left(\mathcal{C},T\mathcal{C}\overset{z}{\rightarrow}\mathcal{C}\right)\ar@{}[ld]|-{\stackrel{\eta}{\Longleftarrow}}\\
 & \;\\
 &  & \left(\mathcal{A},T\mathcal{A}\overset{x}{\rightarrow}\mathcal{A}\right)\ar[uu]_{\left(I,\sigma\right)}\ar[uull]^{\left(L,\alpha\right)}
}
\]
\end{prop}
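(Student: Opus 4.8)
The plan is to construct the lax $T$-morphism structure $\beta$ on $R$ by a universal-property argument, using the two nested $T$-preservation hypotheses to guarantee that the relevant left extensions are available. First I would unpack what a lax structure $\beta$ on $R$ must be: a $2$-cell $\beta\colon z\cdot TR\to R\cdot y$. Since $\left(R,\eta\right)$ is $T$-preserved by $z$, the pasting of $T\eta$ with $z$ exhibits $z\cdot TR$ as the left extension of $z\cdot TI$ along $TL$. Therefore to define a $2$-cell out of $z\cdot TR$ it suffices to define a $2$-cell $z\cdot TI\to R\cdot y\cdot TL$; and I would produce the latter by pasting the oplax structure $\alpha$ on $L$, the lax structure $\sigma$ on $I$, the given $\eta$, and the pseudo $T$-algebra structure isomorphisms for $x$, $z$. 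Concretely: $z\cdot TI\to z\cdot T(R\cdot L)=z\cdot TR\cdot TL$ via $T\eta$, but to land with $R\cdot y$ on the outside I instead run $z\cdot TI \xrightarrow{\sigma} I\cdot x \xrightarrow{\eta} R\cdot L\cdot x \xrightarrow{\alpha} R\cdot y\cdot TL$, giving the required $2$-cell $z\cdot TI\to R\cdot y\cdot TL$, hence by the universal property a unique $\beta\colon z\cdot TR\to R\cdot y$. By construction this $\beta$ is the unique $2$-cell making the $T$-transformation equation for $\eta$ hold (that equation is exactly the statement that $\beta$ corresponds to the pasted cell under the extension bijection), which also settles the uniqueness clause.

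Next I would verify that this $\beta$ satisfies the two lax $T$-morphism axioms, the unit axiom ($\beta$ composed with $u$-coherence is the identity) and the associativity axiom ($\beta$ interacts correctly with $m$). For each axiom one has an equality of $2$-cells with domain of the form $z\cdot z_*\cdot T^2R$ or $z\cdot TR\cdot u$, and the strategy is to show that both sides of each axiom, when precomposed appropriately, correspond to the same $2$-cell under a suitable left-extension bijection — this is where the \emph{second} preservation hypothesis enters: for the associativity axiom the domain involves $z\cdot T(z\cdot TR)$, and one needs $\left(z\cdot TR, z\cdot T\eta\right)$ to again be $T$-preserved by $z$ so that this iterated composite is itself a left extension along $T^2L$ and the two $2$-cells can be compared via their (necessarily equal) restrictions. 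The restrictions agree because they reduce, using the unit/associativity axioms for the pseudo $T$-algebras $x$ and $z$, the lax axioms for $\sigma$, the oplax axioms for $\alpha$, and the $T$-transformation property of $\eta$, to a common pasting. This is the genuinely technical part, and the place to be careful about coherence cells.

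Finally I would show the lifted diagram is a $T$-left extension in the sense defined above, i.e. that pasting $T$-transformations $\overline{\zeta}$ with $\eta$ gives a bijection between $T$-transformations $I\to M\cdot L$ and $T$-transformations $R\to M$ (for arbitrary lax $M$ and oplax $N$). The underlying bijection on plain $2$-cells is just the original left-extension property of $\left(R,\eta\right)$ in $\mathscr{C}$, so the content is purely that the bijection restricts to $T$-transformations: given a $T$-transformation $\zeta\colon I\cdot M'\to \cdots$ wait --- more precisely, given a plain $2$-cell $\zeta\colon R\to M$ obtained from a $T$-transformation $I\to M\cdot L$, one must check $\zeta$ is itself a $T$-transformation, i.e. satisfies the compatibility equation with $\beta$, $\epsilon$, $\varphi$. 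Both sides of that equation are $2$-cells out of $z\cdot TR$ (or a composite thereof), hence are determined by their restrictions along the extension-defining cell $T\eta$; one checks the restrictions coincide using the hypothesis that $\zeta$ came from a $T$-transformation together with the defining equation of $\beta$. The converse direction (a $T$-transformation $\zeta\colon R\to M$ yields a $T$-transformation $\overline\zeta L\cdot\eta\colon I\to M\cdot L$) is the Remark already noted.

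I expect the main obstacle to be the bookkeeping in the lax-morphism associativity axiom for $\beta$: one must express both the ``double $\beta$'' side and the ``$m$'' side as $2$-cells out of the iterated left extension $z\cdot T(z\cdot TR)$ along $T^2L$ — which is exactly why the hypothesis demands $T$-preservation at two levels — and then reduce each to a canonical pasting of $\alpha$, $\sigma$, $\eta$ and the pseudo-$T$-algebra coherence isomorphisms. Keeping track of the pseudofunctoriality constraints of $T$ and the invertible $2$-cells $m_x$, $u_x$ etc. (suppressed as "unnamed isomorphisms" in the paper's conventions) is where an error is most likely to creep in; everything else is a formal consequence of the universal properties.
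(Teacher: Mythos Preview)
Your proposal is correct and follows essentially the same approach as the paper: define $\beta$ via the universal property of the left extension $z\cdot TR$ (so that the $T$-transformation equation for $\eta$ becomes the defining equation), then verify the lax-morphism axioms and the $T$-left extension property. The paper in fact defers the coherence verification for $\beta$ to Weber's \cite[Theorem 2.4.4]{markextension} and omits the $T$-left extension check entirely, whereas you spell out where the second $T$-preservation hypothesis enters (the associativity axiom, via the iterated extension along $T^2L$) and sketch the $T$-left extension argument; your additional detail is accurate and matches what the deferred reference does.
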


\begin{proof}
Given our structure cells $\sigma$ and $\alpha$ as below
\[
\xymatrix{T\mathcal{A}\ar[d]_{TI}\ar[r]^{x}\ar@{}[rd]|-{\Uparrow\sigma} & \mathcal{A}\ar[d]^{I} &  & T\mathcal{A}\ar[d]_{TL}\ar[r]^{x}\ar@{}[rd]|-{\Downarrow\alpha} & \mathcal{A}\ar[d]^{L}\\
T\mathcal{C}\ar[r]_{z} & \mathcal{C} &  & T\mathcal{B}\ar[r]_{y} & \mathcal{B}
}
\]
 our lax constraint cell for $R$ is given as the unique $\beta$
such that $\eta$ is a $T$-transformation, that is the unique 2-cell
such that

\[
\xymatrix{ & T\mathcal{B}\ar[r]^{y} & \mathcal{B}\ar[dd]^{R} &  &  & T\mathcal{B}\ar[r]^{y}\ar[dd]|-{TR} & \mathcal{B}\ar[dd]^{R}\\
T\mathcal{A}\ar[ru]^{TL}\ar[r]^{x}\ar[rd]_{TI} & \mathcal{A}\ar[dr]_{I}\ar[ur]^{L}\ar@{}[r]|-{\Uparrow\eta}\ar@{}[u]|-{\Uparrow\alpha}\ar@{}[d]|-{\Uparrow\sigma} & \; & = & T\mathcal{A}\ar[ru]^{TL}\ar[rd]_{TI}\ar@{}[r]|-{\Uparrow T\eta\quad} & \;\ar@{}[r]|-{\Uparrow\beta} & \;\\
 & T\mathcal{C}\ar[r]_{z} & \mathcal{C} &  &  & T\mathcal{C}\ar[r]_{z} & \mathcal{C}
}
\]
as $z\cdot T\eta$ exhibits $z\cdot TR$ as a left extension. From
here, the proof of the coherence axioms for $\beta$ being a lax $T$-morphism
structure on $R$ is the same as in \cite[Theorem 2.4.4]{markextension}\footnote{The assumptions of \cite[Theorem 2.4.4]{markextension} concerning comma objects are not required for the proof of the coherence axioms.}.
Checking that the lax $T$-morphism $\left(R,\beta\right)$ is then
a $T$-left extension is a straightforward exercise, of which we omit
the details.
\end{proof}

\subsection{KZ Pseudomonads and KZ Doctrines}

A KZ pseudomonad is a special type of pseudomonad for which the algebra
structure maps are adjoint to units; with typical examples including
the cocompletion of a category under some class of colimits $\Phi$.
For this paper, we will use two different (but equivalent) characterizations
of KZ pseudomonads. The first characterization we will use is a well
known algebraic description of a KZ pseudomonad, described via conditions
on a ``KZ structure cell'' (similar to \cite{kock1972}), the second
characterization is in terms of left extensions, and will be referred
to as a KZ doctrine. 
\begin{rem}
Note that there are other (still equivalent) characterizations which
may be referred to as KZ pseudomonads or KZ doctrines. For example
the characterization through adjoint strings \cite{marm1997}, or
the characterization as lax idempotent pseudomonads \cite{lack1997}.
\end{rem}

\begin{defn}
\label{defkzpseudomonad} A \emph{KZ pseudomonad $\left(P,y,\mu\right)$
}on a 2-category $\mathscr{C}$ consists of a pseudomonad $\left(P,y,\mu\right)$
on $\mathscr{C}$ along with a modification $\theta\colon Py\to yP$
for which
\begin{equation}
\xymatrix@=1em{ &  &  &  &  &  &  &  & P\ar[rrd]^{yP}\\
1_{\mathscr{C}}\ar[rr]^{y} &  & P\ar@/^{1pc}/[rr]^{yP}\ar@/_{1pc}/[rr]_{Py}\ar@{}[rr]|-{\Uparrow\theta} &  & P^{2} & = & 1_{\mathscr{C}}\ar[rru]^{y}\ar[rrd]_{y} &  & \ar@{}[]|-{\Uparrow y_{y}} &  & P^{2}\\
 &  &  &  &  &  &  &  & P\ar[rru]_{Py}
}
\label{kzcoh1}
\end{equation}
and
\begin{equation}
\xymatrix@=1em{ &  & \ar@{}[d]|-{\Uparrow\alpha}\\
P\ar@/^{0.7pc}/[rr]^{yP}\ar@/_{0.7pc}/[rr]_{Py}\ar@{}[rr]|-{\Uparrow\theta}\ar@/^{2.5pc}/[rrrr]^{\textnormal{id}_{P}}\ar@/_{2.5pc}/[rrrr]_{\textnormal{id}_{P}} &  & P^{2}\ar[rr]^{\mu} &  & P & = & \textnormal{id}_{\textnormal{id}_{P}}\\
 &  & \ar@{}[u]|-{\Uparrow\beta}
}
\label{kzcoh2}
\end{equation}
\end{defn}

\begin{rem}
It is shown in \cite[Prop. 3.1, Lemma 3.2]{marm1997} that given the
adjoint string characterization we recover the definition given above,
and conversely given the above definition it is not hard to recover
the adjoint string definition, especially since it suffices to give
only one adjunction \cite[Theorem 11.1]{marm1997}.
\end{rem}

The above is an algebraic description of a KZ pseudomonad; however
there is another description in terms of left Kan extensions given
by Marmolejo and Wood \cite{marm2012} which we refer to as a KZ doctrine.
\begin{defn}
\label{defkzdoctrine}\cite[Definition 3.1]{marm2012} A \emph{KZ doctrine
$\left(P,y\right)$ }on a 2-category $\mathscr{C}$ consists of 

(i) An assignation on objects $P\colon\textnormal{ob}\mathscr{C}\to\textnormal{ob}\mathscr{C}$;

(ii) For every object $\mathcal{A}\in\mathscr{C}$, a 1-cell $y_{\mathcal{A}}\colon\mathcal{A}\to P\mathcal{A}$;

(iii) For every pair of objects $\mathcal{A}\text{ and }\mathcal{B}$
and 1-cell $F\colon\mathcal{A}\to P\mathcal{B}$, a left extension
\begin{equation}
\xymatrix@=1em{P\mathcal{A}\ar@{->}[rr]^{\overline{F}}\ar@{}[rd]|-{\stackrel{c_{F}}{\Longleftarrow}} &  & P\mathcal{B}\\
 & \;\\
\mathcal{A}\ar[rruu]_{F}\ar[uu]^{y_{\mathcal{A}}}
}
\label{kzdef2.1}
\end{equation}
of $F$ along $y_{\mathcal{A}}$ exhibited by an isomorphism $c_{F}$
as above. 

Moreover, we require that:

(a) For every object $\mathcal{A}\in\mathscr{C}$, the left extension
of $y_{\mathcal{A}}$ as in \ref{kzdef2.1} is given by
\[
\xymatrix@=1em{P\mathcal{A}\ar@{->}[rr]^{\textnormal{id}_{P\mathcal{A}}} &  & P\mathcal{A}\ar@{}[ld]|-{\stackrel{\textnormal{id}}{\Longleftarrow}}\\
 & \;\\
 &  & \mathcal{A}\ar[uu]_{y_{\mathcal{A}}}\ar[ulul]^{y_{\mathcal{A}}}
}
\]

Note that this means $c_{y_{\mathcal{A}}}$ is equal to the identity
2-cell on $y_{\mathcal{A}}$. 

(b) For any 1-cell $G\colon\mathcal{B}\to P\mathcal{C}$, the corresponding
left extension $\overline{G}\colon P\mathcal{B}\to P\mathcal{C}$
preserves the left extension $\overline{F}$ in \ref{kzdef2.1}.
\end{defn}

\begin{rem}
These two descriptions are equivalent in the sense that each gives
rise to the other \cite{marm2012,marm1997}. In Section \ref{consequencesandexamples}
we will express this relationship as a biequivalence between the 2-category
of KZ pseudomonads and the preorder of KZ doctrines.
\end{rem}

The following definitions in terms of left extensions are equivalent
to the preceding notions of pseudo $P$-algebra and $P$-homomorphism,
in the sense that we have an equivalence between the two resulting
2-categories of pseudo $P$-algebras arising from the two different
definitions \cite[Theorems 5.1,5.2]{marm2012}.
\begin{defn}
[\cite{marm2012}] Given a KZ doctrine $\left(P,y\right)$ on a 2-category
$\mathscr{C}$, we say an object $\mathcal{X}\in\mathscr{C}$ is \emph{$P$-cocomplete}
if for every $G\colon\mathcal{B}\to\mathcal{X}$ 
\[
\xymatrix@=1em{P\mathcal{B}\ar@{->}[rr]^{\overline{G}}\ar@{}[rd]|-{\stackrel{c_{G}}{\Longleftarrow}} &  & \mathcal{X} &  &  & P\mathcal{A}\ar@{->}[rr]^{\overline{F}}\ar@{}[rd]|-{\stackrel{c_{F}}{\Longleftarrow}} &  & P\mathcal{B}\ar[rr]^{\overline{G}} &  & \mathcal{X}\\
 & \; &  &  &  &  & \;\\
\mathcal{B}\ar[rruu]_{G}\ar[uu]^{y_{\mathcal{B}}} &  &  &  &  & \mathcal{A}\ar[rruu]_{F}\ar[uu]^{y_{\mathcal{A}}}
}
\]
there exists a left extension $\overline{G}$ as on the left exhibited
by an isomorphism $c_{G}$, and moreover this left extension respects
the left extensions $\overline{F}$ as in the diagram on the right.
We say a 1-cell $E\colon\mathcal{X}\to\mathcal{Y}$ between $P$-cocomplete
objects $\mathcal{X}$ and $\mathcal{Y}$ is a \emph{$P$-homomorphism
}(also called $P$\emph{-cocontinuous}) when it preserves all left
extensions along $y_{\mathcal{B}}$ into $\mathcal{X}$ for every
object $\mathcal{B}$. 
\end{defn}

\begin{rem}
It is clear that $P\mathcal{A}$ is $P$-cocomplete for every $\mathcal{A}\in\mathscr{C}$. 
\end{rem}

We now recall the notion of $P$-admissibility in the setting of a
KZ doctrine $P$. This notion of admissibility is useful for showing
that certain left extensions exist, and moreover are preserved. Note
that this notion will be used regularly throughout the paper.
\begin{defn}
\label{admequiv} Given a KZ doctrine $\left(P,y\right)$ on a 2-category
$\mathscr{C}$, we say a 1-cell $L\colon\mathcal{A}\to\mathcal{B}$
is \emph{$P$-admissible} if any of the following equivalent conditions
are met:
\end{defn}

\begin{enumerate}
\item In the left diagram below
\[
\xymatrix@=1em{\mathcal{B}\ar@{->}[rr]^{R_{L}} &  & P\mathcal{A}\ar@{}[ld]|-{\stackrel{\varphi_{L}}{\Longleftarrow}} &  &  & \mathcal{B}\ar@{->}[rr]^{R_{L}} &  & P\mathcal{A}\ar@{}[ld]|-{\stackrel{\varphi_{L}}{\Longleftarrow}}\ar[rr]^{\overline{H}}\ar@{}[rd]|-{\stackrel{c_{H}}{\Longleftarrow}} &  & \mathcal{X}\\
 & \; &  &  &  &  & \; &  & \;\\
 &  & \mathcal{A}\ar[uu]_{y_{\mathcal{A}}}\ar[ulul]^{L} &  &  &  &  & \mathcal{A}\ar[uu]_{y_{\mathcal{A}}}\ar[ulul]^{L}\ar[rruu]_{H}
}
\]
 there exists a left extension $\left(R_{L},\varphi_{L}\right)$ of
$y_{\mathcal{A}}$ along $L$, and moreover the left extension is
preserved by any $\overline{H}$ as in the right diagram where $\mathcal{X}$
is $P$-cocomplete;
\item Every $P$-cocomplete object $\mathcal{X}\in\mathscr{C}$ admits,
and $P$-homomorphism preserves, left extensions along $L$. This
says that for any given 1-cell $K:\mathcal{A}\to\mathcal{X}$, where
$\mathcal{X}$ is $P$-cocomplete, there exists a 1-cell $J$ and
2-cell $\delta$ as in the left diagram below 
\[
\xymatrix@=1em{\mathcal{B}\ar@{->}[rr]^{J} &  & \mathcal{X}\ar@{}[ld]|-{\stackrel{\delta}{\Longleftarrow}} &  &  & \mathcal{B}\ar@{->}[rr]^{J} &  & \mathcal{X}\ar@{}[ld]|-{\stackrel{\delta}{\Longleftarrow}}\ar[rr]^{E} &  & \mathcal{Y}\\
 & \; &  &  &  &  & \; &  & \;\\
 &  & \mathcal{A}\ar[uu]_{K}\ar[ulul]^{L} &  &  &  &  & \mathcal{A}\ar[uu]_{K}\ar[ulul]^{L}
}
\]
exhibiting $J$ as a left extension, and moreover this left extension
is preserved by any $P$-homomorphism $E\colon\mathcal{X}\to\mathcal{Y}$
for $P$-cocomplete $\mathcal{Y}$ as in the right diagram;
\item $PL:=\textnormal{lan}_{L}$ given as the left extension
\[
\xymatrix@=1em{P\mathcal{A}\ar@{->}[rr]^{PL}\ar@{}[rdrd]|-{\stackrel{c_{y_{\mathcal{B}\cdot L}}}{\Longleftarrow}} &  & P\mathcal{B}\\
\\
\mathcal{A}\ar[rr]_{L}\ar[uu]^{y_{\mathcal{A}}} &  & \mathcal{B}\ar[uu]_{y_{\mathcal{B}}}
}
\]
has a right adjoint. 
\end{enumerate}
\begin{rem}
For a proof that the descriptions (1), (2) and (3) above are equivalent,
we refer the reader to \cite{bungefunk} or \cite{yonedakz}.
\end{rem}

It is well known that pointwise left extensions along fully faithful
maps are exhibited by invertible 2-cells; in the following definition
we give an analogue of this fact for KZ doctrines.
\begin{defn}
Given a KZ doctrine $\left(P,y\right)$ on a 2-category $\mathscr{C}$,
we say a 1-cell $L\colon\mathcal{A}\to\mathcal{B}$ is \emph{$P$-fully
faithful} if $PL$ is fully faithful.
\end{defn}

\begin{rem}
The importance of the \emph{$P$-fully faithful }maps stems from the
fact that for a $P$-admissible map $L\colon\mathcal{A}\to\mathcal{B},$
this $L$ is $P$-fully faithful if and only if every left extension
along $L$ into a $P$-cocomplete object is exhibited by an isomorphism
\cite[Remark 24]{yonedakz}. Clearly each $y_{\mathcal{A}}$ is both
$P$-admissible and $P$-fully faithful.
\end{rem}

For any given KZ doctrine $P$ on a 2-category $\mathscr{C}$ a natural
question to ask is: what are the $P$-cocomplete objects; $P$-homomorphisms;
$P$-admissible maps and $P$-fully faithful maps? Let us consider
a couple of examples.
\begin{example}
A well known example of a KZ doctrine is the free small cocompletion
operation on locally small categories, which sends a locally small
category $\mathcal{A}$ to its category of small presheaves. In particular,
when $\mathcal{A}$ is small the free small cocompletion is $P\mathcal{A}=\left[\mathcal{A}^{\textnormal{op}},\mathbf{Set}\right]$.
In this example, the $P$-cocomplete objects are those locally small
categories which are small cocomplete and the $P$-homomorphisms are
those functors between such categories preserving small colimits.
The $P$-admissible maps are those functors $L\colon\mathcal{A}\to\mathcal{B}$
for which $\mathcal{B}\left(L-,-\right)\colon\mathcal{B}\to\left[\mathcal{A}^{\textnormal{op}},\mathbf{Set}\right]$
factors through $P\mathcal{A}$. Of these $P$-admissible maps, the
$P$-fully faithful maps are precisely the fully faithful functors.
\end{example}

Another example is the free \emph{large} cocompletion KZ doctrine
on locally small categories. The reader should keep in mind a theorem
of Freyd showing that any locally small category which admits all
large colimits is a preorder. Consequently, a locally small category
is large cocomplete precisely when it is a preorder with all large
joins. This KZ doctrine has some unusual properties. For example it
is a cocompletion KZ doctrine (in the simple sense that its algebras
are described as categories admitting a certain class of colimits)
with unit components not always fully faithful. Moreover, every functor
is admissible against the large cocompletion. We define this KZ doctrine
$P\colon\mathbf{Cat}\to\mathbf{Cat}$ by the assignment
\[
\begin{aligned}P\colon\textnormal{ob}\,\mathbf{Cat}\to\textnormal{ob}\,\mathbf{Cat}\colon & \mathcal{A}\mapsto\left[\mathcal{A}^{\textnormal{op}},\mathbbm2\right]\end{aligned}
\]
with unit maps for each $\mathcal{A}\in\mathbf{Cat}$ given by
\[
y_{\mathcal{A}}\colon\mathcal{A}\to\left[\mathcal{A}^{\textnormal{op}},\mathbbm2\right]\colon X\mapsto\mathcal{A}\left\langle -,X\right\rangle 
\]
with each $\mathcal{A}\left\langle -,X\right\rangle $ is defined
as
\[
\mathcal{A}\left\langle -,X\right\rangle \colon\mathcal{A}^{\textnormal{op}}\to\mathbbm2\colon S\mapsto\begin{cases}
1, & \exists\;S\overset{f}{\longrightarrow}X\textnormal{ in }\mathcal{A}\\
0, & \textnormal{otherwise}.
\end{cases}
\]
For any functor $F\colon\mathcal{A}\to\mathcal{D}$ where $\mathcal{D}$
is a preordered category with all large joins (such as $P\mathcal{B}$
for any $\mathcal{B}$) we may define a left extension $\overline{F}\colon\left[\mathcal{A}^{\textnormal{op}},\mathbbm2\right]\to\mathcal{D}$
as in the left diagram
\[
\xymatrix@=0.5em{\left[\mathcal{A}^{\textnormal{op}},\mathbbm2\right]\myar{\overline{F}}{rr} &  & \mathcal{D}\\
 & \;\ar@{}[ul]|-{\overset{\textnormal{id}}{\Longleftarrow}\quad} &  &  &  &  & {\displaystyle \overline{F}\left(H\right)=\sup_{X\in\mathcal{A}\colon HX=1}FX}\\
\mathcal{A}\ar[uu]^{y_{\mathcal{A}}}\ar[rruu]_{F}
}
\]
by the assignment on the right. Hence for this KZ doctrine, the $P$-cocomplete
objects are the large cocomplete categories, and the $P$-homomorphisms
are the order and join preserving maps between such categories. Every
map is $P$-admissible, and it is easily checked that a map $L\colon\mathcal{A}\to\mathcal{B}$
is $P$-fully faithful precisely when there exists a map $X\to Y$
in $\mathcal{A}$ if and only if there exists a map $LX\to LY$ in
$\mathcal{B}$.
\begin{rem}
For a set $X$ seen as a discrete category, the large cocompletion
of $X$ is $\left(\mathscr{P}X,\supseteq\right)$; and dually, the
large completion is $\left(\mathscr{P}X,\subseteq\right)$, where
$\mathscr{P}X$ is the powerset of $X$.
\end{rem}

\section{Pseudo-Distributive Laws over KZ Doctrines\label{liftingkzdoctrines}}

It was shown by Marmolejo that pseudo-distributive laws of a (co)KZ
doctrine over a KZ doctrine have a particularly simple form \cite[Definition 11.4]{marm1999}.
Here we show that one can give a description which is both simpler
(in that less coherence axioms are required) and more general (in
that the assumption of the former pseudomonad being (co)KZ may be
dropped). Hence the problem of lifting a cocompletion operation to
the 2-category of pseudo algebras may be more easily understood. 

Part of the motivation of our method comes from the observation that
if a KZ doctrine lifts to a pseudomonad on the 2-category of pseudo
algebras, then this pseudomonad is a KZ doctrine automatically\footnote{A fact perhaps most easily seen from the adjoint string definition \cite{marm1997}, in view of doctrinal adjunction \cite{doctrinal}.}.
Indeed, this fact means we may consider the problem of lifting a KZ
pseudomonad in terms of algebraic left extensions. 

In the proof we will make regular use of the admissibility perspective;
in fact, the preservation of admissible maps is crucial here, and
it is the main goal of this paper to describe such pseudo-distributive
laws in terms of this admissibility property. 

The proof of these results is quite technical, though the results
are summarized in Theorem \ref{liftkzequiv}.

\subsection{Notions of Pseudo-Distributive Laws}

Beck \cite{beckdist} defined a distributive law of a monad $\left(T,u,m\right)$
over another monad $\left(P,y,\mu\right)$ on a category $\mathcal{C}$
to be a natural transformation $\lambda\colon TP\to PT$ rendering
commutative the four diagrams
\[
\xymatrix@=1em{ & TP\ar[rr]^{\lambda} &  & PT &  &  &  & TP\ar[rr]^{\lambda} &  & PT\\
 &  & \;\ar@{}[ru]|-{=} &  &  &  &  &  & \;\ar@{}[ru]|-{=}\\
 &  &  & P\ar[uu]_{Pu}\ar[lluu]^{uP} &  &  &  &  &  & T\ar[uu]_{yT}\ar[lluu]^{Ty}\\
TTP\ar[dd]_{mP}\ar[rr]^{T\lambda}\;\ar@{}[rddrrr]|-{=} &  & TPT\ar[rr]^{\lambda T} &  & PTT\ar[dd]^{Pm} &  & TPP\ar@{}[rddrrr]|-{=}\ar[dd]_{T\mu}\ar[rr]^{\lambda P} &  & PTP\ar[rr]^{P\lambda} &  & PPT\ar[dd]^{\mu T}\\
\\
TP\ar[rrrr]_{\lambda} &  &  &  & PT &  & TP\ar[rrrr]_{\lambda} &  &  &  & PT
}
\]

A well known example on $\mathbf{Set}$ is the canonical distributive
law of the monad for monoids over the monad for abelian groups (whose
composite is the monad for rings). 

More generally, one may talk about a pseudo-distributive law of a
pseudomonad over another pseudomonad on a 2-category \cite{marm1999,kellycoherence,thesisTanaka,cheng2003}.
In this generalization the four conditions above are replaced by four
pieces of data (four invertible modifications) which are then required
to satisfy multiple coherence axioms, which we will omit here.
\begin{defn}
\label{dist} A \emph{pseudo-distributive law }of a pseudomonad $\left(T,u,m\right)$
over a pseudomonad $\left(P,y,\mu\right)$ on a 2-category $\mathscr{C}$
consists of a pseudonatural transformation $\lambda\colon TP\to PT$,
along with four invertible modifications $\omega_{1},\omega_{2},\omega_{3}$
and $\omega_{4}$ in place of the four equalities above. These four
modifications are subject to eight coherence axioms; see \cite{marm2008,marm1999}.

As a convention, we choose the direction of these four modifications
to be from right to left in the above four diagrams. 
\end{defn}

In this section, as in the background, we differentiate between ``KZ
doctrine'' defined in terms of left extensions, and ``KZ pseudomonad''
defined algebraically. 

We now define a pseudo-distributive law over such a KZ pseudomonad,
though showing this data and these coherence conditions suffice will
take some work. 
\begin{defn}
\label{distkzpseudomonad} Suppose we are given a 2-category $\mathscr{C}$
equipped with a pseudomonad $\left(T,u,m\right)$ and a KZ pseudomonad
$\left(P,y,\mu\right)$. Then a \emph{pseudo-distributive law over
a KZ pseudomonad} $\lambda\colon TP\to PT$ consists of a pseudonatural
transformation $\lambda\colon TP\to PT$ along with three invertible
modifications\footnote{Note the direction of the modifications are different in \cite{marm1999}. We use here the direction in which they will naturally arise from left extension and admissiblilty properties. Our direction agrees with that of \cite[Section 4]{tholen}.}
\[
\xymatrix@=1em{TP\ar[rr]^{\lambda} &  & PT &  & TP\ar[rr]^{\lambda} &  & PT &  & TTP\ar[dd]_{mP}\ar[rr]^{T\lambda}\;\ar@{}[rddrrr]|-{\overset{\omega_{3}}{\Longleftarrow}} &  & TPT\ar[rr]^{\lambda T} &  & PTT\ar[dd]^{Pm}\\
 & \;\ar@{}[ru]|-{\overset{\omega_{1}}{\Longleftarrow}} &  &  &  & \;\ar@{}[ru]|-{\overset{\omega_{2}}{\Longleftarrow}}\\
 &  & P\ar[uu]_{Pu}\ar[lluu]^{uP} &  &  &  & T\ar[uu]_{yT}\ar[lluu]^{Ty} &  & TP\ar[rrrr]_{\lambda} &  &  &  & PT
}
\]
subject to the three coherence axioms: 
\[
\xymatrix@=1em{ &  &  &  &  &  &  &  & TP\ar[ddll]_{TyP}\ar[rr]^{\lambda}\ar[dd]_{yTP}\ar@{}[ddrr]|-{\stackrel{y_{\lambda}^{-1}}{\Longleftarrow}\qquad\;\;} &  & PT\ar@/^{1pc}/[dd]^{PyT}\ar@/_{1pc}/[dd]_{yPT}\ar@{}[dd]|-{\stackrel{\theta T}{\Longleftarrow}}\\
TP\ar@/^{1pc}/[dd]^{TPy}\ar@/_{1pc}/[dd]_{TyP}\ar@{}[dd]|-{\stackrel{T\theta}{\Longleftarrow}}\ar[rr]^{\lambda}\ar@{}[drdr]|-{\qquad\quad\stackrel{\lambda_{y}}{\Longleftarrow}} &  & PT\ar[dd]^{PTy}\ar[rrdd]^{PyT} &  & \overset{\textnormal{coh }1}{=} &  &  & \;\ar@{}[dr]|-{\overset{\omega_{2}P}{\Longleftarrow}}\\
 &  &  & \;\ar@{}[dl]|-{\overset{P\omega_{2}}{\Longleftarrow}} &  & \; & TPP\ar[rr]_{\lambda P} &  & PTP\ar[rr]_{P\lambda} &  & PPT\ar[rr]_{\mu T} &  & PT\\
TPP\ar[rr]_{\lambda P} &  & PTP\ar[rr]_{P\lambda} &  & PPT\ar[rr]_{\mu T} &  & PT
}
\]
\[
\xymatrix@=1em{ &  &  &  &  &  &  & TP\ar[rd]^{\lambda}\\
P\ar[rr]^{uP}\ar@{}[drdr]|-{\stackrel{u_{y}}{\Longleftarrow}} &  & TP\ar[rr]^{\lambda}\ar@{}[dr]|-{\overset{\omega_{2}}{\Longleftarrow}} &  & PT &  & P\ar[rr]_{Pu}\ar[ru]^{uP}\ar@{}[drdr]|-{\stackrel{y_{u}^{-1}}{\Longleftarrow}} & \;\ar@{}[u]|-{\;\Uparrow\omega_{1}} & PT\\
 &  &  & \; &  & \overset{\textnormal{coh }2}{=}\\
1\ar[rr]_{u}\ar[uu]^{y} &  & T\ar[uu]_{Ty}\ar[uurr]_{yT} &  &  &  & 1\ar[rr]_{u}\ar[uu]^{y} &  & T\ar[uu]_{yT}
}
\]
\[
\xymatrix@=1em{ &  &  &  &  &  &  &  &  & TP\ar[rddrr]^{\lambda}\\
\\
TTP\ar[rr]^{mP} &  & TP\ar@{}[dldl]|-{\stackrel{m_{y}}{\Longleftarrow}}\ar[rr]^{\lambda}\ar@{}[dr]|-{\overset{\omega_{2}}{\Longleftarrow}} &  & PT &  & TTP\ar[rruru]^{mP}\ar[rr]^{T\lambda} &  & TPT\ar[rr]^{\lambda T} & \;\ar@{}[uu]|-{\Uparrow\omega_{3}} & PTT\ar[rr]^{Pm}\ar@{}[dd]|-{\stackrel{y_{m}^{-1}}{\Longleftarrow}} &  & PT\\
 &  &  & \; &  & \overset{\textnormal{coh }3}{=} &  & \;\ar@{}[ur]|-{\overset{T\omega_{2}}{\Longleftarrow}} &  & \;\ar@{}[ul]|-{\overset{\omega_{2}T}{\Longleftarrow}}\\
TT\ar[rr]_{m}\ar[uu]^{T^{2}y} &  & T\ar[uu]_{Ty}\ar[uurr]_{yT} &  &  &  &  &  & TT\ar[rr]_{m}\ar[uull]^{T^{2}y}\ar[rruu]_{yT^{2}}\ar[uu]^{TyT} &  & T\ar[uurr]_{yT}
}
\]
\end{defn}

\begin{rem}
(1) We will see later that $\omega_{1}$ and $\omega_{3}$ are uniquely
determined by $\omega_{2}$, due to the last two axioms and left extension
properties. (2) Actually, even the naturality cells of $\lambda$
may be determined given $\omega_{2}$ and the first coherence axiom.
(3) With the 2-cells $\omega_{1}$ and $\omega_{3}$ and the last
two coherence axioms omitted, we still have sufficient data to lift
$P$ to lax $T$-algebras. (4) These last two axioms may be seen as
invertibility conditions on $\omega_{1}$ and $\omega_{3}$, analogous
to those in \cite[Definition 11.4]{marm1999}. (5) During the proof,
we will see that each component $\omega_{2}^{\mathcal{A}}$ necessarily
exhibits each component $\lambda_{\mathcal{A}}$ as a left extension.
As $\omega_{2}$ uniquely determines the rest of the data, this will
show that such pseudo-distributive laws are essentially unique. (6)
In fact, the first coherence axiom above is equivalent to preservation
of admissible maps, in the presence of such a pseudonatural transformation
$\lambda$ and invertible modification $\omega_{2}$.
\end{rem}

We will need a notion of separately cocontinuous in the context of
KZ doctrines, and so we define the following. 
\begin{defn}
\label{defTadmccts} Suppose we are given a 2-category $\mathscr{C}$
equipped with a pseudomonad $\left(T,u,m\right)$ and a KZ doctrine
$\left(P,y\right)$. We define a 1-cell $z\colon T\mathcal{X}\to\mathcal{C}$
where $\mathcal{X}$ and $\mathcal{C}$ are $P$-cocomplete objects
to be:

\begin{enumerate}
\item \emph{$T_{P}$-cocontinuous} when every left extension along a unit
component $y_{\mathcal{A}}\colon\mathcal{A}\to P\mathcal{A}$ into
$\mathcal{X}$ is $T$-preserved by $z$;
\item \emph{$T_{P}$-adm-cocontinuous} when every left extension along a
$P$-admissible map $L\colon\mathcal{A}\to\mathcal{B}$ into $\mathcal{X}$
is $T$-preserved by $z$;
\end{enumerate}
\end{defn}

\begin{rem}
We will see later in Proposition \ref{admcocontinuousequiv} that
these two notions are equivalent in the presence of a pseudo-distributive
law of $T$ over $P$.
\end{rem}

We are now ready to give the definition of a pseudo-distributive law
over a KZ doctrine in terms of admissibility and left extensions.
\begin{defn}
\label{distkzdoctrine} Suppose we are given a 2-category $\mathscr{C}$
equipped with a pseudomonad $\left(T,u,m\right)$ and a KZ doctrine
$\left(P,y\right)$. Then a \emph{pseudo-distributive law over a KZ
doctrine} $\lambda\colon TP\to PT$ consists of the following assertions:
\begin{enumerate}
\item $T$ preserves $P$-admissible maps;
\end{enumerate}
\noindent and for every $\mathcal{A}\in\mathscr{C}$, \begin{enumerate}\setcounter{enumi}{1}

\item the exhibiting 2-cell $\omega_{2}^{\mathcal{A}}$ of the left
extension $\lambda_{\mathcal{A}}$\footnote{The left extension is unique up to coherent isomorphism, and exists since $Ty_{\mathcal{A}}$ is $P$-admissible.}
in
\[
\xymatrix@=1em{TP\mathcal{A}\ar[rr]^{\lambda_{\mathcal{A}}} &  & PT\mathcal{A}\ar@{}[dl]|-{\stackrel{\omega_{2}^{\mathcal{A}}}{\Longleftarrow}}\\
 & \;\\
 &  & T\mathcal{A}\ar[uu]_{y_{T\mathcal{A}}}\ar[uull]^{Ty_{\mathcal{A}}}
}
\]
is invertible\footnote{Equivalently one could ask that each $Ty_{\mathcal{A}}$  is $P$-fully faithful \cite[Prop. 23]{yonedakz}.};

\item the 1-cell $\lambda_{\mathcal{A}}$ above is $T_{P}$-cocontinuous\footnote{Equivalently one could ask that each $\lambda_{\mathcal{A}}$ is $T_{P}$-adm-cocontinuous.}; 

\item the respective diagrams
\[
\xymatrix@=1em{P\mathcal{A}\ar[rr]^{u_{P\mathcal{A}}} &  & TP\mathcal{A}\ar@{}[dldl]|-{\stackrel{u_{y_{\mathcal{A}}}}{\Longleftarrow}}\ar[rr]^{\lambda_{\mathcal{A}}}\ar@{}[dr]|-{\stackrel{\omega_{2}^{\mathcal{A}}}{\Longleftarrow}} &  & PT\mathcal{A} &  & T^{2}P\mathcal{A}\ar[rr]^{m_{P\mathcal{A}}} &  & TP\mathcal{A}\ar@{}[dldl]|-{\stackrel{m_{y_{\mathcal{A}}}}{\Longleftarrow}}\ar[rr]^{\lambda_{\mathcal{A}}}\ar@{}[dr]|-{\stackrel{\omega_{2}^{\mathcal{A}}}{\Longleftarrow}} &  & PT\mathcal{A}\\
 &  &  & \; &  &  &  &  &  & \;\\
\mathcal{A}\ar[rr]_{u_{\mathcal{A}}}\ar[uu]^{y_{\mathcal{A}}} &  & T\mathcal{A}\ar[uu]|-{Ty_{\mathcal{A}}}\ar[uurr]_{y_{T\mathcal{A}}} &  &  &  & T^{2}\mathcal{A}\ar[rr]_{m_{\mathcal{A}}}\ar[uu]^{T^{2}y_{\mathcal{A}}} &  & T\mathcal{A}\ar[uu]|-{Ty_{\mathcal{A}}}\ar[uurr]_{y_{T\mathcal{A}}}
}
\]
exhibit both $\lambda_{\mathcal{A}}\cdot u_{P\mathcal{A}}$ and $\lambda_{\mathcal{A}}\cdot m_{P\mathcal{A}}$
as left extensions. \end{enumerate}

\end{defn}

\begin{rem}
Note that a pseudo-distributive law as defined above is unique, as
it contains only assertions, and these assertions are invariant under
the choice of left left extension (unique up to coherent isomorphism).
\end{rem}

\subsection{The Main Theorem}

We are now ready to state the main result of this section (and this
paper), justifying our definitions above.
\begin{thm}
\label{liftkzequiv} Suppose we are given a 2-category $\mathscr{C}$
equipped with a pseudomonad $\left(T,u,m\right)$ and a KZ pseudomonad
$\left(P,y,\mu\right)$. Then the following are equivalent:

(a) $P$ lifts to a KZ doctrine $\widetilde{P}$ on $\text{ps-}T\text{-alg}$;

(b) $P$ lifts to a KZ pseudomonad $\widetilde{P}$ on $\text{ps-}T\text{-alg}$;

(c) $P$ lifts to a pseudomonad $\widetilde{P}$ on $\text{ps-}T\text{-alg}$;

(d) There exists a pseudo-distributive law over a KZ doctrine $\lambda\colon TP\to PT$;

(e) There exists a pseudo-distributive law over a KZ pseudomonad $\lambda\colon TP\to PT$;

(f) There exists a pseudo-distributive law $\lambda\colon TP\to PT$.
\end{thm}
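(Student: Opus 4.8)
\emph{Overview.} The plan is to prove all six statements equivalent via the cycle (a) $\Leftrightarrow$ (b) $\Leftrightarrow$ (c) $\Leftrightarrow$ (f) $\Leftrightarrow$ (e) $\Leftrightarrow$ (d), drawing on three ingredients: the pseudo-categorical version of Beck's theorem, the ``KZ doctrine versus KZ pseudomonad'' dictionary, and the admissibility machinery of this section. The step (c) $\Leftrightarrow$ (f) is exactly the statement that liftings of a pseudomonad to the pseudoalgebras of another are the same thing as pseudo-distributive laws \cite{marm1999,cheng2003}, applied to the strictly commuting forgetful $2$-functor $\text{ps-}T\text{-alg}\to\mathscr{C}$. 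The step (a) $\Leftrightarrow$ (b) is obtained by applying the equivalence between KZ doctrines and KZ pseudomonads \cite{marm1997,marm2012} inside $\text{ps-}T\text{-alg}$; since that equivalence is compatible with the forgetful $2$-functor, a lifting presented one way yields a lifting presented the other. As (b) $\Rightarrow$ (c) is immediate, the substance of (a) $\Leftrightarrow$ (b) $\Leftrightarrow$ (c) is the implication (c) $\Rightarrow$ (b).

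\emph{The lift of a KZ pseudomonad is automatically KZ.} For (c) $\Rightarrow$ (b) I would use the adjoint-string characterisation of KZ pseudomonads: it suffices to exhibit the adjunction $\mu_{\mathcal{A}}\dashv y_{P\mathcal{A}}$ in $\text{ps-}T\text{-alg}$, since one adjunction suffices \cite{marm1997}. Since $\widetilde{P}$ lifts $P$, the transformations $\widetilde{y}$ and $\widetilde{\mu}$ equip $y$ and $\mu$ with pseudo $T$-morphism structures whose mate-compatibility is forced by the pseudomonad coherence of $\widetilde{P}$; so Kelly's doctrinal adjunction \cite{doctrinal}, in the generalised $T$-transformation form of Remark \ref{Ttransremark}, lifts $\mu_{\mathcal{A}}\dashv y_{P\mathcal{A}}$ to $\text{ps-}T\text{-alg}$, making $\widetilde{P}$ a KZ pseudomonad.

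\emph{The reduced presentations.} For (e) $\Leftrightarrow$ (f) one checks that, starting from a pseudo-distributive law in the sense of Definition \ref{dist}, discarding the modification $\omega_{4}$ leaves exactly the data and three axioms of Definition \ref{distkzpseudomonad}; conversely, the decisive point is that each $\omega_{2}^{\mathcal{A}}$ exhibits $\lambda_{\mathcal{A}}$ as the left extension of $Ty_{\mathcal{A}}$ along $y_{T\mathcal{A}}$ (clause (iii) of Definition \ref{defkzdoctrine}), so the universal property forces $\omega_{1}$, $\omega_{3}$, the pseudonaturality cells $\lambda_{f}$ and the reconstructed $\omega_{4}$, and makes all eight coherence axioms provable by that property --- which also yields the essential uniqueness of such laws. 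For (d) $\Leftrightarrow$ (e): in one direction, pseudonaturality of $\lambda$ together with the first coherence axiom of Definition \ref{distkzpseudomonad} is precisely ``$T$ preserves $P$-admissible maps'' via Definition \ref{admequiv}(3); invertibility of $\omega_{2}^{\mathcal{A}}$ is the $P$-fully-faithfulness of $Ty_{\mathcal{A}}$; the second and third coherence axioms say $\lambda_{\mathcal{A}}\cdot u_{P\mathcal{A}}$ and $\lambda_{\mathcal{A}}\cdot m_{P\mathcal{A}}$ are left extensions; and $T_{P}$-cocontinuity of $\lambda_{\mathcal{A}}$ is read off from the $\mu$-compatibility. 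In the other direction, since $y_{\mathcal{A}}$ is $P$-admissible and $T$ preserves such maps, $Ty_{\mathcal{A}}$ is $P$-admissible and the left extension $\lambda_{\mathcal{A}}:=\textnormal{lan}_{y_{T\mathcal{A}}}Ty_{\mathcal{A}}$ exists; one then manufactures the pseudonaturality cells and $\omega_{1},\omega_{2},\omega_{3}$ from the universal properties of these extensions, with $T_{P}$-cocontinuity feeding Proposition \ref{docleftext} to guarantee that the relevant extensions are $T$-preserved, so that the modifications are well defined and the axioms hold; the same ingredients fed once more into Proposition \ref{docleftext} upgrade $\widetilde{P}$ to a KZ \emph{doctrine}, which is (d) $\Rightarrow$ (a).

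\emph{Main obstacle.} I expect the technical core to be the passage (d) $\Rightarrow$ (e) --- equivalently, producing a full pseudo-distributive law with all eight coherence axioms from the bare assertions of Definition \ref{distkzdoctrine}: every structure $2$-cell of $\lambda$ must be built from universal properties of left extensions along $P$-admissible maps, and coherence holds only because of the interplay between preservation of $P$-admissible maps and $T_{P}$-cocontinuity. The bookkeeping is delicate since Proposition \ref{docleftext} demands not merely that a left extension be $T$-preserved by an algebra map $z$ but that the \emph{resulting} extension be $T$-preserved by $z$ as well, so one must track this iterated condition at each construction step; and one must finally verify that the constructed $T$-morphism structure cells are invertible, so that $\widetilde{P}$ acts on $\text{ps-}T\text{-alg}$ rather than on the merely lax algebras --- which is exactly where invertibility of $\omega_{2}^{\mathcal{A}}$ is spent.
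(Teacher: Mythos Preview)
Your overall strategy is sound and in the same spirit as the paper, but the paper organises the work differently and you have misplaced where the difficulty lies.

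The paper runs a \emph{one-directional} cycle
\[
(a)\Rightarrow(b)\Rightarrow(c)\Rightarrow(f)\Rightarrow(e)\Rightarrow(d)\Rightarrow(a),
\]
so it never proves $(d)\Rightarrow(e)$ or $(e)\Rightarrow(f)$ directly at all. Your direct argument for $(c)\Rightarrow(b)$ via doctrinal adjunction on the adjoint string is valid (the paper acknowledges this in a footnote) but is not the route taken: the paper instead recovers $(b)$ only after the full loop through $(d)\Rightarrow(a)$. Consequently, the technical core is not $(d)\Rightarrow(e)$ as you predict, but rather $(e)\Rightarrow(d)$ and $(d)\Rightarrow(a)$. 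The hard content of $(e)\Rightarrow(d)$ is showing, from the reduced Definition~\ref{distkzpseudomonad}, that each $\omega_{2}^{\mathcal{A}}$ already exhibits $\lambda_{\mathcal{A}}$ as a left extension of $y_{T\mathcal{A}}$ along $Ty_{\mathcal{A}}$ (you have this backwards: it is the extension \emph{of} $y_{T\mathcal{A}}$ \emph{along} $Ty_{\mathcal{A}}$), and that $T$ preserves $P$-admissible maps; both require constructing explicit adjunctions $PTy_{\mathcal{A}}\dashv\mu_{T\mathcal{A}}\cdot P\lambda_{\mathcal{A}}$ and $PTL\dashv\mu_{T\mathcal{A}}\cdot P\lambda_{\mathcal{A}}\cdot PT\textnormal{res}_{L}\cdot PTy_{\mathcal{B}}$ and verifying the triangle identities by hand, which is where the first KZ coherence axiom of Definition~\ref{distkzpseudomonad} is actually spent.

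A few smaller points. Your claim that ``$T_{P}$-cocontinuity of $\lambda_{\mathcal{A}}$ is read off from the $\mu$-compatibility'' is not how the paper proceeds: Lemma~\ref{lambdaccts} shows $T_{P}$-cocontinuity is equivalent to the \emph{pseudonaturality} of $\lambda$ (via the $\omega_{2}$-modification condition), not to any $\mu$-axiom. Your final paragraph conflates $(e)$ and $(f)$: producing ``all eight coherence axioms'' is $(f)$, whereas $(e)$ has only three. Finally, Proposition~\ref{docleftext} is indeed the engine for $(d)\Rightarrow(a)$, but it is used to build the pseudo-$T$-algebra structures $(P\mathcal{A},z_{x})$ and to lift left extensions, not to manufacture the pseudonaturality cells of $\lambda$; those come from Lemma~\ref{lambdaccts}.
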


The proof of this theorem is lengthy, and so we will leave the more
difficult aspects of the proof for subsequent subsections. Before
moving on to these subsections, we give the remainder of the proof.
\begin{proof}
[Proof of Theorem \ref{liftkzequiv}] In order to prove this theorem,
we will complete the cycle of implications
\[
\xymatrix@R=0.1em{ & \left(a\right)\ar@{=>}[r] & \left(b\right)\ar@{=>}[rd]\\
\left(d\right)\ar@{::>}[ru] &  &  & \left(c\right)\ar@{=>}[ld]\\
 & \left(e\right)\ar@{::>}[lu] & \left(f\right)\ar@{=>}[l]
}
\]
where the more difficult implications left to later sections are dotted
above.

$\left(a\right)\Longrightarrow\left(b\right)\colon$ A KZ doctrine
gives rise to a pseudomonad whose structure forms a fully faithful
adjoint string by \cite[Theorem 4.1]{marm2012}, and this in turn
gives rise to a KZ pseudomonad by \cite[Prop. 3.1, Lemma 3.2]{marm1997}.

$\left(b\right)\Longrightarrow\left(c\right)\colon$ This implication
is trivial.

$\left(c\right)\Longrightarrow\left(f\right)\colon$ For the correspondence
between pseudo-distributive laws and liftings to pseudo $T$-algebras
see \cite[Theorem 5.4]{cheng2003}.

$\left(f\right)\Longrightarrow\left(e\right)\colon$ Given a pseudo-distributive
law $\lambda\colon TP\to PT$ where $P$ is a KZ pseudomonad, to check
that we then have a pseudo-distributive law over a KZ pseudomonad
in the sense of Definition \ref{distkzpseudomonad} we need only check
the first axiom. But this axiom follows from coherences $7$ and $8$
as given in \cite[Section 4]{marm1999} along with the KZ pseudomonad
coherence axiom \ref{kzcoh2}.

$\left(e\right)\Longrightarrow\left(d\right)\colon$ This is shown
later in Theorem \ref{eimpliesd}.

$\left(d\right)\Longrightarrow\left(a\right)\colon$ This is shown
later in Theorem \ref{dimpliesa}. 
\end{proof}

\subsection{Distributive Laws over KZ Monads to those over KZ Doctrines}

We will devote this entire subsection to showing that a pseudo-distributive
law over a KZ pseudomonad, as in Definition \ref{distkzpseudomonad},
gives rise to a pseudo-distributive law over a KZ doctrine, as in
Definition \ref{distkzdoctrine}. This is $\left(e\right)\Longrightarrow\left(d\right)$
of Theorem \ref{liftkzequiv}. As this is the most difficult implication
to show, we will break the proof up into a number of propositions
and lemmata, starting with the following proposition. 

\textbf{Note for reader.} During this subsection and the next, the
reader will keep the three equivalent characterizations of $P$-admissible
maps (given in Definition \ref{admequiv}) in mind. Indeed, all three
characterizations are to be used repeatedly throughout these two subsections. 
\begin{rem}
Most of our diagrams are constructed from the following 2-cells, where
$P$ is a KZ doctrine and $T$ a pseudomonad on a bicategory $\mathscr{C}$:
\begin{enumerate}
\item As noted in Definition \ref{admequiv}, for any $P$-admissible 1-cell
$L\colon\mathcal{A}\to\mathcal{B}$ we have a left extension $\left(R_{L},\varphi_{L}\right)$
of $y_{\mathcal{A}}$ along $L$. In particular if $L=Ty_{\mathcal{A}}$
is $P$-admissible, we will denote this left extension by $\left(\lambda_{\mathcal{A}},\omega_{2}^{\mathcal{A}}\right)$.
Moreover, by \cite[Remark 16]{yonedakz}, if we are given a chosen
right adjoint $\textnormal{res}_{L}$ to $PL$, then the canonical
way to define $\left(R_{L},\varphi_{L}\right)$ is by
\[
\xymatrix@=1em{\mathcal{B}\ar@{->}[rr]^{R_{L}} &  & P\mathcal{A}\ar@{}[ld]|-{\stackrel{\varphi_{L}}{\Longleftarrow}} &  &  &  & \mathcal{B}\ar[rr]^{y_{\mathcal{B}}} & \; & P\mathcal{B}\ar[r]^{\textnormal{res}_{L}} & P\mathcal{A}\\
 & \; &  &  & := &  &  &  & \ar@{}[ul]|-{\stackrel{y_{L}}{\Longleftarrow}} & P\mathcal{A}\ar@/^{0.5pc}/[ul]^{\textnormal{lan}_{L}}\ar@{}[ul]|-{\quad\stackrel{\eta}{\Longleftarrow}}\ar[u]_{\textnormal{id}_{P\mathcal{A}}}\\
 &  & \mathcal{A}\ar[uu]_{y_{\mathcal{A}}}\ar[ulul]^{L} &  &  &  &  &  &  & \mathcal{A}\ar@/^{1pc}/[ulull]^{L}\ar[u]_{y_{\mathcal{A}}}
}
\]
\item As noted in Definition \ref{defkzdoctrine}, for any 1-cell $F\colon\mathcal{A}\to P\mathcal{B}$
we have a left extension $\left(\overline{F},c_{F}\right)$ of $F$
along $y_{\mathcal{A}}$ with $c_{F}$ invertible. If $F=R_{L}$ for
a $P$-admissible $L$, we will denote this left extension by $\left(\textnormal{res}_{L},c_{R_{L}}\right)$,
and note that $\textnormal{res}_{L}$ defined this way is right adjoint
to $PL$ \cite[Lemma 13]{yonedakz}.
\end{enumerate}
\end{rem}

\begin{prop}
\label{lambdabeck} Suppose we are given a 2-category $\mathscr{C}$
equipped with a pseudomonad $\left(T,u,m\right)$ and a KZ doctrine
$\left(P,y\right)$. Further suppose that for each object $\mathcal{A}\in\mathscr{C}$,
$Ty_{\mathcal{A}}$ is \emph{$P$-}admissible, and the left extension\footnote{This left extension exists since $Ty_{\mathcal{A}}$ is $P$-admissible.}
which we denote $\lambda_{\mathcal{A}}$ in
\[
\xymatrix@=1em{TP\mathcal{A}\ar[rr]^{\lambda_{\mathcal{A}}} &  & PT\mathcal{A}\ar@{}[dl]|-{\stackrel{\omega_{2}^{\mathcal{A}}}{\Longleftarrow}}\\
 & \;\\
 &  & T\mathcal{A}\ar[uu]_{y_{T\mathcal{A}}}\ar[uull]^{Ty_{\mathcal{A}}}
}
\]
is exhibited by an isomorphism denoted $\omega_{2}^{\mathcal{A}}$.
Then for every \emph{$P$-}admissible 1-cell $L\colon\mathcal{A}\to\mathcal{B}$
such that $TL\colon T\mathcal{A}\to T\mathcal{B}$ is also \emph{$P$-}admissible,
the respective pastings
\begin{equation}
\xymatrix@=1em{PT\mathcal{A} &  & TP\mathcal{A}\ar[ll]_{\lambda_{\mathcal{A}}}\ar@{}[dl]|-{\stackrel{\omega_{2}^{\mathcal{A}}}{\implies}} &  & TP\mathcal{B}\ar[ll]_{T\textnormal{res}_{L}} &  & PT\mathcal{A} &  & PT\mathcal{B}\myard{\textnormal{res}_{TL}}{ll}\ar@{}[d]|-{\stackrel{c_{R_{TL}}}{\implies}} &  & TP\mathcal{B}\ar[ll]_{\lambda_{\mathcal{B}}}\\
 & \; &  & \;\ar@{}[ur]|-{\quad\stackrel{Tc_{R_{L}}}{\implies}}\ar@{}[dl]|-{\stackrel{T\varphi_{L}}{\implies}} &  &  &  &  & \;\ar@{}[d]|-{\stackrel{\varphi_{TL}}{\implies}} & \;\ar@{}[ur]|-{\;\stackrel{\omega_{2}^{\mathcal{B}}}{\implies}}\\
 &  & T\mathcal{A}\ar[uu]|-{Ty_{\mathcal{A}}}\ar[rr]_{TL}\ar[uull]^{y_{T\mathcal{A}}} &  & T\mathcal{B}\ar[uu]_{Ty_{\mathcal{B}}}\ar[uull]|-{TR_{L}} &  &  &  & T\mathcal{A}\ar[rr]_{TL}\ar[uull]^{y_{T\mathcal{A}}} &  & T\mathcal{B}\ar[uu]_{Ty_{\mathcal{B}}}\ar[uull]|-{y_{T\mathcal{B}}}\ar[uullll]|-{R_{TL}}
}
\label{defgamma}
\end{equation}
exhibit $\lambda_{\mathcal{A}}\cdot T\textnormal{res}_{L}$ and $\textnormal{res}_{TL}\cdot\lambda_{\mathcal{B}}$
as left extensions of $y_{T\mathcal{A}}$ along $Ty_{\mathcal{B}}\cdot TL$;
yielding an isomorphism of left extensions: 
\[
\xymatrix@=1em{TP\mathcal{B}\ar[rr]^{\lambda_{\mathcal{B}}}\ar[dd]_{T\textnormal{res}_{L}}\ar@{}[rdrd]|-{\Uparrow\gamma_{L}} &  & PT\mathcal{B}\ar[dd]^{\textnormal{res}_{TL}}\\
\\
TP\mathcal{A}\ar[rr]_{\lambda_{\mathcal{A}}} &  & PT\mathcal{A}
}
\]
 Moreover, if the left diagram below exhibits $R_{L}$ as a left extension
\[
\xymatrix@=1em{\mathcal{B}\ar[rr]^{R_{L}} &  & P\mathcal{A}\ar@{}[dl]|-{\stackrel{\varphi_{L}}{\Longleftarrow}} &  &  &  & T\mathcal{B}\ar[rr]^{TR_{L}} &  & TP\mathcal{A}\ar@{}[dl]|-{\stackrel{T\varphi_{L}}{\Longleftarrow}}\ar[rr]^{\lambda_{\mathcal{A}}}\ar@{}[rd]|-{\stackrel{\omega_{2}^{\mathcal{A}}}{\Longleftarrow}} &  & PT\mathcal{A}\\
 & \; &  &  &  &  &  & \; &  & \;\\
 &  & \mathcal{A}\ar[uull]^{L}\ar[uu]_{y_{\mathcal{A}}} &  &  &  &  &  & T\mathcal{A}\ar[uull]^{TL}\ar[uu]|-{Ty_{\mathcal{A}}}\ar[uurr]_{y_{T\mathcal{A}}}
}
\]
then the right diagram exhibits $\lambda_{\mathcal{A}}\cdot TR_{L}$
as a left extension. 
\end{prop}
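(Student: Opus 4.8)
The plan is to verify, separately, that each of the two pasting diagrams displayed in~(\ref{defgamma}) exhibits its apex $1$-cell as a left extension of $y_{T\mathcal{A}}$ along $Ty_{\mathcal{B}}\cdot TL$; once both are recognised as left extensions of the same $1$-cell along the same $1$-cell, the comparison $\gamma_{L}$ is forced as the unique (necessarily invertible) isomorphism between them. Throughout I would use three elementary properties of left extensions in a $2$-category: \emph{(i) composition} --- if $(R,\eta)$ is a left extension of $I$ along $f$ and $(S,\theta)$ is a left extension of $R$ along $g$, then the evident pasting exhibits $S$ as a left extension of $I$ along $g\cdot f$; \emph{(ii) adjointness} --- if $f\dashv g$ with unit $\iota$, then $(h\cdot g,\,h\cdot\iota)$ is an absolute left extension of $h$ along $f$, for any $h$; and \emph{(iii) invariance} --- a left extension along $a$ is also a left extension along any $1$-cell isomorphic to $a$. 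The $P$-admissibility of $L$ and of $TL$ supplies the left extensions $(R_{L},\varphi_{L})$, $(R_{TL},\varphi_{TL})$ of $y_{\mathcal{A}}$, $y_{T\mathcal{A}}$ along $L$, $TL$ and the adjunctions $PL\dashv\textnormal{res}_{L}$, $P(TL)\dashv\textnormal{res}_{TL}$ with $\textnormal{res}_{L}=\overline{R_{L}}$, $\textnormal{res}_{TL}=\overline{R_{TL}}$; and $\omega_{2}^{\mathcal{A}}$, $\omega_{2}^{\mathcal{B}}$ are invertible by hypothesis.

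For the right-hand pasting I would first observe that $\textnormal{res}_{TL}=\overline{R_{TL}}$ is the left extension of $R_{TL}\colon T\mathcal{B}\to PT\mathcal{A}$ along $y_{T\mathcal{B}}$ into the $P$-cocomplete object $PT\mathcal{A}$, hence is of the form $\overline{H}$ appearing in condition~(1) of Definition~\ref{admequiv}. Since $Ty_{\mathcal{B}}$ is $P$-admissible, that condition says precisely that $\textnormal{res}_{TL}$ preserves the left extension $(\lambda_{\mathcal{B}},\omega_{2}^{\mathcal{B}})=(R_{Ty_{\mathcal{B}}},\varphi_{Ty_{\mathcal{B}}})$ of $y_{T\mathcal{B}}$ along $Ty_{\mathcal{B}}$; so $\textnormal{res}_{TL}\cdot\lambda_{\mathcal{B}}$ is a left extension of $\textnormal{res}_{TL}\cdot y_{T\mathcal{B}}$ along $Ty_{\mathcal{B}}$, and composing with the invertible cell $c_{R_{TL}}\colon R_{TL}\cong\textnormal{res}_{TL}\cdot y_{T\mathcal{B}}$ makes it a left extension of $R_{TL}$ along $Ty_{\mathcal{B}}$. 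Feeding this into fact~(i) together with $(R_{TL},\varphi_{TL})$ --- the left extension of $y_{T\mathcal{A}}$ along $TL$ --- shows that the pasting of $\varphi_{TL}$, $c_{R_{TL}}$ and $\omega_{2}^{\mathcal{B}}$, which is exactly the right-hand diagram of~(\ref{defgamma}), exhibits $\textnormal{res}_{TL}\cdot\lambda_{\mathcal{B}}$ as a left extension of $y_{T\mathcal{A}}$ along $Ty_{\mathcal{B}}\cdot TL$.

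For the left-hand pasting I would instead exploit the adjunction $PL\dashv\textnormal{res}_{L}$ (available since $L$ is $P$-admissible): applying the pseudofunctor $T$ gives $TPL\dashv T\textnormal{res}_{L}$, so by fact~(ii) $\lambda_{\mathcal{A}}\cdot T\textnormal{res}_{L}$ is a left extension of $\lambda_{\mathcal{A}}$ along $TPL$, and combining this with $(\lambda_{\mathcal{A}},\omega_{2}^{\mathcal{A}})$ --- the left extension of $y_{T\mathcal{A}}$ along $Ty_{\mathcal{A}}$ --- via fact~(i) exhibits $\lambda_{\mathcal{A}}\cdot T\textnormal{res}_{L}$ as a left extension of $y_{T\mathcal{A}}$ along $TPL\cdot Ty_{\mathcal{A}}=T(PL\cdot y_{\mathcal{A}})$. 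The doctrine isomorphism $c_{y_{\mathcal{B}}\cdot L}$ (the cell exhibiting $PL=\textnormal{lan}_{L}=\overline{y_{\mathcal{B}}\cdot L}$ as in condition~(3) of Definition~\ref{admequiv}) gives $y_{\mathcal{B}}\cdot L\cong PL\cdot y_{\mathcal{A}}$, hence $Ty_{\mathcal{B}}\cdot TL\cong TPL\cdot Ty_{\mathcal{A}}$, so fact~(iii) promotes the previous statement to: $\lambda_{\mathcal{A}}\cdot T\textnormal{res}_{L}$ is a left extension of $y_{T\mathcal{A}}$ along $Ty_{\mathcal{B}}\cdot TL$, with exhibiting cell the left-hand diagram of~(\ref{defgamma}). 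This yields $\gamma_{L}$ as the unique comparison between the two left extensions. For the final assertion, taking the canonical $R_{L}=\textnormal{res}_{L}\cdot y_{\mathcal{B}}$ (every other left extension of $y_{\mathcal{A}}$ along $L$ is uniquely isomorphic to it, and the statement is invariant under this replacement), I would form the chain
\[
\lambda_{\mathcal{A}}\cdot TR_{L}\;\cong\;\lambda_{\mathcal{A}}\cdot T\textnormal{res}_{L}\cdot Ty_{\mathcal{B}}\;\cong\;\textnormal{res}_{TL}\cdot\lambda_{\mathcal{B}}\cdot Ty_{\mathcal{B}}\;\cong\;\textnormal{res}_{TL}\cdot y_{T\mathcal{B}}\;\cong\;R_{TL},
\]
the isomorphisms being $Tc_{R_{L}}$, then $\gamma_{L}$ whiskered by $Ty_{\mathcal{B}}$, then $\textnormal{res}_{TL}$ whiskered by $(\omega_{2}^{\mathcal{B}})^{-1}$, then $c_{R_{TL}}^{-1}$; since $(R_{TL},\varphi_{TL})$ is a left extension of $y_{T\mathcal{A}}$ along $TL$, transporting $\varphi_{TL}$ back along this chain produces exactly the pasting of $\omega_{2}^{\mathcal{A}}$ and $T\varphi_{L}$ in the final displayed diagram.

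The step I expect to be the main obstacle is not any of the existence arguments above --- each is a formal consequence of facts~(i)--(iii) and the quoted characterizations --- but the bookkeeping needed to identify the canonically produced exhibiting $2$-cells at each stage with the specific cells $\omega_{2}^{\mathcal{A}}$, $T\varphi_{L}$, $Tc_{R_{L}}$, $\varphi_{TL}$, $c_{R_{TL}}$ drawn in~(\ref{defgamma}). This requires unwinding the explicit construction of $\varphi_{L}$ (as the paste of the unit of $PL\dashv\textnormal{res}_{L}$ with $c_{y_{\mathcal{B}}\cdot L}$) and of $c_{R_{L}}$ recorded in the Remark immediately preceding this Proposition, together with the compatibility of $T$ with whiskering, and amounts to a routine but somewhat tedious diagram chase which I would carry out but not dwell on.
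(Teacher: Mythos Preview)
Your proposal is correct and follows essentially the same route as the paper's proof. For the left-hand pasting, your facts (ii)+(i)+(iii) are exactly the paper's chain of bijections (mates correspondence, then $\lambda_{\mathcal{A}}$ a left extension, then $PL\cdot y_{\mathcal{A}}\cong y_{\mathcal{B}}\cdot L$); for the right-hand pasting both of you use $P$-admissibility of $Ty_{\mathcal{B}}$ to get preservation by $\textnormal{res}_{TL}$ and then the pasting lemma; and for the final assertion your four-step isomorphism chain $\lambda_{\mathcal{A}}\cdot TR_{L}\cong R_{TL}$ via $Tc_{R_{L}}$, $\gamma_{L}$, $\omega_{2}^{\mathcal{B}}$, $c_{R_{TL}}$ coincides with the paper's commuting-regions diagram, with the identification of the transported $2$-cell with $\omega_{2}^{\mathcal{A}}$ pasted with $T\varphi_{L}$ left (as you correctly anticipate) to an unwinding of the definition of $\gamma_{L}$.
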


\begin{proof}
Firstly, we consider the diagram
\[
\xymatrix@=1em{PT\mathcal{A} &  & TP\mathcal{A}\ar[ll]_{\lambda_{\mathcal{A}}}\ar@{}[dl]|-{\stackrel{\omega_{2}^{\mathcal{A}}}{\implies}} &  & TP\mathcal{B}\ar[ll]_{T\textnormal{res}_{L}}\\
 & \; &  & \;\ar@{}[ur]|-{\quad\stackrel{Tc_{R_{L}}}{\implies}}\ar@{}[dl]|-{\stackrel{T\varphi_{L}}{\implies}}\\
 &  & T\mathcal{A}\ar[uu]|-{Ty_{\mathcal{A}}}\ar[rr]_{TL}\ar[uull]^{y_{T\mathcal{A}}} &  & T\mathcal{B}\ar[uu]_{Ty_{\mathcal{B}}}\ar[uull]|-{TR_{L}}
}
\]
and note that $\lambda_{\mathcal{A}}\cdot T\textnormal{res}_{L}$
is a left extension since for any 1-cell $H\colon TP\mathcal{B}\to PT\mathcal{A}$
we have the natural bijections \def\fCenter{\ \rightarrow\ } 
\def\ScoreOverhang{2pt}
\settowidth{\rhs}{$H\cdot T\textnormal{lan}_{L}\cdot Ty_{\mathcal{A}}$} 
\settowidth{\lhs}{$\lambda_{\mathcal{A}}\cdot T\textnormal{res}_{L}$}
\begin{prooftree}
\Axiom$\makebox[\lhs][r]{$\lambda_{\mathcal{A}}\cdot T\textnormal{res}_{L}$} \fCenter \makebox[\rhs][l]{$H$}$
\RightLabel{$\qquad \textnormal{mates correspondence}$} 
\UnaryInf$\makebox[\lhs][r]{$\lambda_{\mathcal{A}}$} \fCenter \makebox[\rhs][l]{$H\cdot T\textnormal{lan}_{L}$}$ 
\RightLabel{$\qquad \text{since \ensuremath{\lambda_{\mathcal{A}}} is a left extension}$}
\UnaryInf$\makebox[\lhs][r]{$y_{T\mathcal{A}}$} \fCenter \makebox[\rhs][l]{$H\cdot T\textnormal{lan}_{L}\cdot Ty_{\mathcal{A}}$}$ 
\RightLabel{$\qquad PL\cdot y_{\mathcal{A}}\cong y_{\mathcal{B}}\cdot L$}
\UnaryInf$\makebox[\lhs][r]{$y_{T\mathcal{A}}$} \fCenter \makebox[\rhs][l]{$H\cdot Ty_{\mathcal{B}}\cdot TL$}$ 
\end{prooftree}and one may check this is the correct exhibiting 2-cell using \cite[Remark 16]{yonedakz}.
We may also consider the diagram
\[
\xymatrix@=1em{PT\mathcal{A} &  & PT\mathcal{B}\myard{\textnormal{res}_{TL}}{ll}\ar@{}[d]|-{\stackrel{c_{R_{TL}}}{\implies}} &  & TP\mathcal{B}\ar[ll]_{\lambda_{\mathcal{B}}}\\
 &  & \;\ar@{}[d]|-{\stackrel{\varphi_{TL}}{\implies}} & \;\ar@{}[ur]|-{\;\stackrel{\omega_{2}^{\mathcal{B}}}{\implies}}\\
 &  & T\mathcal{A}\ar[rr]_{TL}\ar[uull]^{y_{T\mathcal{A}}} &  & T\mathcal{B}\ar[uu]_{Ty_{\mathcal{B}}}\ar[uull]|-{y_{T\mathcal{B}}}\ar[uullll]|-{R_{TL}}
}
\]
and note that since $Ty_{\mathcal{B}}$ is \emph{$P$-}admissible
the left extension $\lambda_{\mathcal{B}}$ is preserved by $\textnormal{res}_{TL}$.
Noting $c_{R_{TL}}$ is invertible, we then apply the pasting lemma
for left extensions (the dual of \cite[Prop. 1]{yonedastructures})
to see the outside diagram exhibits $\textnormal{res}_{TL}\cdot\lambda_{\mathcal{B}}$
as a left extension. By uniqueness of left extensions, we derive our
desired isomorphism $\gamma_{L}\colon\lambda_{\mathcal{A}}\cdot T\textnormal{res}_{L}\cong\textnormal{res}_{TL}\cdot\lambda_{\mathcal{B}}$.
Now, to show that
\begin{equation}
\xymatrix@=1em{T\mathcal{B}\ar[rr]^{TR_{L}} &  & TP\mathcal{A}\ar@{}[dl]|-{\stackrel{T\varphi_{L}}{\Longleftarrow}}\ar[rr]^{\lambda_{\mathcal{A}}}\ar@{}[rd]|-{\stackrel{\omega_{2}^{\mathcal{A}}}{\Longleftarrow}} &  & PT\mathcal{A}\\
 & \; &  & \;\\
 &  & T\mathcal{A}\ar[uull]^{TL}\ar[uu]|-{Ty_{\mathcal{A}}}\ar[uurr]_{y_{T\mathcal{A}}}
}
\label{TpresR}
\end{equation}
exhibits $\lambda_{\mathcal{A}}\cdot TR_{L}$ as a left extension,
it suffices to show that we have an isomorphism $\lambda_{\mathcal{A}}\cdot TR_{L}\cong R_{TL}$
and that pasting the left extension $\left(R_{TL},\varphi_{TL}\right)$
with this isomorphism yields the above. This is the case since all
regions in the following diagram commute up to isomorphism
\[
\xymatrix@=1em{ &  & \ar@{}[d]|-{\cong Tc_{R_{L}}} &  & TP\mathcal{A}\ar[drr]^{\lambda_{\mathcal{A}}}\ar@{}[d]|-{\cong\gamma_{L}}\\
T\mathcal{B}\ar@/_{1.5pc}/[rrrr]_{y_{T\mathcal{B}}}\ar[rr]^{Ty_{\mathcal{B}}}\ar@/_{3pc}/[rrrrrr]_{R_{TL}}\ar@/^{1pc}/[rrrru]^{TR_{L}} &  & TP\mathcal{B}\ar[rr]^{\lambda_{\mathcal{B}}}\ar[urr]^{T\textnormal{res}_{L}} &  & PT\mathcal{B}\ar[rr]^{\textnormal{res}_{TL}} &  & PT\mathcal{A}\\
 &  & \ar@{}[u]|-{\cong\omega_{2}^{\mathcal{B}}} &  & \ar@{}[]|-{\cong c_{R_{TL}}}
}
\]
and it is easy to check $\varphi_{TL}$ pasted with this isomorphism
yields the pasting  \ref{TpresR} if one uses the definition of $\gamma_{L}$.
\end{proof}
\begin{rem}
Note that the above proposition tells us something about the components
of $\lambda$ being separately cocontinuous, without any assumptions
on pseudonaturality of $\lambda$. This may seem unusual in view of
the following lemma, in which we show pseudonaturality of $\lambda$
is precisely equivalent to the $T_{P}$-cocontinuity of its components.
\end{rem}

\begin{lem}
\label{lambdaccts} Suppose we are given a 2-category $\mathscr{C}$
equipped with a pseudomonad $\left(T,u,m\right)$ and a KZ doctrine
$\left(P,y\right)$. Further suppose that for each object $\mathcal{A}\in\mathscr{C}$,
$Ty_{\mathcal{A}}$ is \emph{$P$-}admissible and the left extension
which we call $\lambda_{\mathcal{A}}$ as on the left below
\[
\xymatrix@=1em{TP\mathcal{A}\ar[rr]^{\lambda_{\mathcal{A}}} &  & PT\mathcal{A}\ar@{}[dl]|-{\stackrel{\omega_{2}^{\mathcal{\mathcal{A}}}}{\Longleftarrow}} &  &  & TP\mathcal{B}\ar[rr]^{\lambda_{\mathcal{B}}} &  & PT\mathcal{B}\\
 & \;\\
 &  & T\mathcal{A}\ar[uu]_{y_{T\mathcal{A}}}\ar[uull]^{Ty_{\mathcal{A}}} &  &  & TP\mathcal{A}\ar[rr]_{\lambda_{\mathcal{A}}}\ar[uu]^{TPL}\ar@{}[urur]|-{\Uparrow\lambda_{L}} &  & PT\mathcal{A}\ar[uu]_{PTL}
}
\]
is exhibited by an isomorphism $\omega_{2}^{\mathcal{\mathcal{A}}}$.
Then for all $L\colon\mathcal{A}\to\mathcal{B}$ the naturality squares
for $\lambda$ as on the right above commute up to a coherent isomorphism
$\lambda_{L}$, with coherent meaning
\[
\xymatrix@=1em{ &  & TP\mathcal{B}\ar[rrd]^{\lambda_{\mathcal{B}}}\\
TP\mathcal{A}\ar[rr]^{\lambda_{\mathcal{A}}}\ar[rru]^{TPL} &  & PT\mathcal{A}\ar@{}[dl]|-{\stackrel{\omega_{2}^{\mathcal{\mathcal{A}}}}{\Longleftarrow}}\ar[rr]^{PTL}\ar@{}[rdrd]|-{\stackrel{y_{TL}^{-1}}{\Longleftarrow}}\ar@{}[u]|-{\Uparrow\lambda_{L}} &  & PT\mathcal{B} &  & TP\mathcal{A}\ar[rr]^{TPL}\ar@{}[ddrr]|-{\stackrel{Ty_{L}^{-1}}{\Longleftarrow}} &  & TP\mathcal{B}\ar[rr]^{\lambda_{\mathcal{B}}} &  & PT\mathcal{B}\\
 & \; &  &  &  & = &  & \; &  & \;\ar@{}[ul]|-{\stackrel{\omega_{2}^{\mathcal{B}}}{\Longleftarrow}}\\
 &  & T\mathcal{A}\ar[uu]|-{y_{T\mathcal{A}}}\ar[uull]^{Ty_{\mathcal{A}}}\ar[rr]_{TL} &  & T\mathcal{B}\ar[uu]_{y_{T\mathcal{B}}} &  & T\mathcal{A}\ar[uu]^{Ty_{\mathcal{A}}}\ar[rr]_{TL} &  & T\mathcal{B}\ar[uurr]_{y_{T\mathcal{B}}}\ar[uu]|-{Ty_{\mathcal{B}}}
}
\]
(the condition for $\omega_{2}$ to be a modification), if and only
if each $\lambda_{\mathcal{A}}$ is $T_{P}$-cocontinuous.
\end{lem}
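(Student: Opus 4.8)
The plan is to establish the two implications of the biconditional separately; in both directions I would reduce everything to uniqueness of left extensions, to Proposition~\ref{lambdabeck}, and to the first two characterisations of $P$-admissibility recorded in Definition~\ref{admequiv}. Note that the standing hypothesis applies to \emph{every} object of $\mathscr{C}$, so in particular $Ty_{P\mathcal{A}}$ is $P$-admissible and $\lambda_{P\mathcal{A}}$ is exhibited by an iso $\omega_2^{P\mathcal{A}}$, which will be needed below.

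\emph{Assume each $\lambda_{\mathcal{A}}$ is $T_P$-cocontinuous; produce a coherent $\lambda_L$.} Fix $L\colon\mathcal{A}\to\mathcal{B}$. First I would note that $PTL=\overline{y_{T\mathcal{B}}\cdot TL}$ is a $P$-homomorphism (axiom (b) of Definition~\ref{defkzdoctrine}), so since $Ty_{\mathcal{A}}$ is $P$-admissible the second characterisation in Definition~\ref{admequiv} shows $PTL$ preserves the left extension $(\lambda_{\mathcal{A}},\omega_2^{\mathcal{A}})$; pasting the resulting cell with the constraint $y_{TL}\colon PTL\cdot y_{T\mathcal{A}}\cong y_{T\mathcal{B}}\cdot TL$ presents $PTL\cdot\lambda_{\mathcal{A}}$ as a left extension of $y_{T\mathcal{B}}\cdot TL$ along $Ty_{\mathcal{A}}$. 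On the other side, $PL=\overline{y_{\mathcal{B}}\cdot L}$ is a left extension along the unit component $y_{\mathcal{A}}$ into the $P$-cocomplete object $P\mathcal{B}$, so $T_P$-cocontinuity of $\lambda_{\mathcal{B}}$ shows $\lambda_{\mathcal{B}}\cdot Tc_{y_{\mathcal{B}}\cdot L}$ presents $\lambda_{\mathcal{B}}\cdot TPL$ as a left extension of $\lambda_{\mathcal{B}}\cdot Ty_{\mathcal{B}}\cdot TL$ along $Ty_{\mathcal{A}}$; pasting with $\omega_2^{\mathcal{B}}\cdot TL$ rewrites this as a left extension of $y_{T\mathcal{B}}\cdot TL$ along $Ty_{\mathcal{A}}$. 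Uniqueness of left extensions then yields a unique invertible $\lambda_L\colon PTL\cdot\lambda_{\mathcal{A}}\to\lambda_{\mathcal{B}}\cdot TPL$ identifying these two exhibiting cells, and that identification is exactly the displayed coherence equation. The remaining cocycle conditions making $(\lambda,\lambda_{(-)})$ pseudonatural, and that the action on $2$-cells is thereby forced, follow again by uniqueness of left extensions.

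\emph{Assume $\lambda_{(-)}$ coherent; show each $\lambda_{\mathcal{A}}$ is $T_P$-cocontinuous.} Fix $\mathcal{A}$ and a left extension $\overline{F}\colon P\mathcal{A}'\to P\mathcal{A}$ of $F\colon\mathcal{A}'\to P\mathcal{A}$ along $y_{\mathcal{A}'}$; the goal is that $\lambda_{\mathcal{A}}\cdot T\overline{F}$ be the left extension of $\lambda_{\mathcal{A}}\cdot TF$ along $Ty_{\mathcal{A}'}$ with exhibiting cell $\lambda_{\mathcal{A}}\cdot Tc_F$. I would run a chain of isomorphisms: writing $\mu$ for the multiplication of the KZ pseudomonad underlying $P$, one has $\overline{F}\cong\mu_{\mathcal{A}}\cdot PF$ with $\mu_{\mathcal{A}}=\overline{\textnormal{id}_{P\mathcal{A}}}=\textnormal{res}_{y_{\mathcal{A}}}$, whence $\lambda_{\mathcal{A}}\cdot T\overline{F}\cong\lambda_{\mathcal{A}}\cdot T\textnormal{res}_{y_{\mathcal{A}}}\cdot TPF$; Proposition~\ref{lambdabeck} with $L=y_{\mathcal{A}}$ (legitimate as $y_{\mathcal{A}}$ and $Ty_{\mathcal{A}}$ are $P$-admissible) gives $\lambda_{\mathcal{A}}\cdot T\textnormal{res}_{y_{\mathcal{A}}}\cong\textnormal{res}_{Ty_{\mathcal{A}}}\cdot\lambda_{P\mathcal{A}}$; pseudonaturality of $\lambda$ at $F$ gives $\lambda_{P\mathcal{A}}\cdot TPF\cong PTF\cdot\lambda_{\mathcal{A}'}$; and since $\textnormal{res}_{Ty_{\mathcal{A}}}$ and $PTF$ are $P$-homomorphisms their composite is the left extension along $y_{T\mathcal{A}'}$ of $(\textnormal{res}_{Ty_{\mathcal{A}}}\cdot PTF)\cdot y_{T\mathcal{A}'}\cong\textnormal{res}_{Ty_{\mathcal{A}}}\cdot y_{TP\mathcal{A}}\cdot TF\cong R_{Ty_{\mathcal{A}}}\cdot TF=\lambda_{\mathcal{A}}\cdot TF$, i.e.\ $\textnormal{res}_{Ty_{\mathcal{A}}}\cdot PTF\cong\overline{\lambda_{\mathcal{A}}\cdot TF}$. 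Assembling, $\lambda_{\mathcal{A}}\cdot T\overline{F}\cong\overline{\lambda_{\mathcal{A}}\cdot TF}\cdot\lambda_{\mathcal{A}'}$; and since $\lambda_{\mathcal{A}'}$ is the left extension of $y_{T\mathcal{A}'}$ along $Ty_{\mathcal{A}'}$ and is preserved by $\overline{\lambda_{\mathcal{A}}\cdot TF}$ (first characterisation in Definition~\ref{admequiv}, $PT\mathcal{A}$ being $P$-cocomplete), the right-hand side is precisely the left extension of $\lambda_{\mathcal{A}}\cdot TF$ along $Ty_{\mathcal{A}'}$, so $\lambda_{\mathcal{A}}$ $T$-preserves $\overline{F}$.

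\emph{The main obstacle.} In both directions the structural shape is immediate; the real work is bookkeeping of exhibiting $2$-cells, and in particular verifying in the second direction that the chain of isomorphisms above composes to exactly $\lambda_{\mathcal{A}}\cdot Tc_F$. This is where the coherence (modification) hypothesis on $\lambda_{(-)}$ is genuinely used: it is precisely what is needed to reconcile the constraint cells coming from the pseudonaturality cell $\lambda_F$ and from $\gamma_{y_{\mathcal{A}}}$ (Proposition~\ref{lambdabeck}) with $\omega_2^{\mathcal{A}}$ and $\omega_2^{\mathcal{A}'}$. I would organise this by first fixing, once and for all, the canonical choices of $R_{(-)}$, $\textnormal{res}_{(-)}$ and $\lambda_{(-)}$ as in the Remark preceding Proposition~\ref{lambdabeck}, so that every comparison $2$-cell is determined and the required pasting identities reduce to the KZ-doctrine axioms together with the triangle identities for $PL\dashv\textnormal{res}_L$.
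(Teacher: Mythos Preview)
Your proposal is correct and follows essentially the same route as the paper. Both directions match: for $T_P$-cocontinuity $\Rightarrow$ coherent $\lambda_L$, both you and the paper compare the two left extensions of $y_{T\mathcal{B}}\cdot TL$ along $Ty_{\mathcal{A}}$ (one via $P$-admissibility of $Ty_{\mathcal{A}}$, the other via $T_P$-cocontinuity applied to $PL$); for the converse, both rewrite $\overline{F}$ as $\textnormal{res}_{y}\cdot PF$, invoke the Beck isomorphism $\gamma_{y}$ of Proposition~\ref{lambdabeck}, use the pseudonaturality cell $\lambda_F$, and finish with $P$-admissibility of $Ty_{\mathcal{A}'}$. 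Your identification of the exhibiting-cell bookkeeping as the main obstacle is exactly right, and the paper resolves it just as you anticipate: by the coherence condition on $\lambda_F$ together with the definition of $\gamma_{y}$.
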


\begin{proof}
The following implications prove the logical equivalence.

$\left(\Longrightarrow\right)\colon$ Suppose that for each $L\colon\mathcal{A}\to\mathcal{B}$
the naturality square of $\lambda$ commutes up to a coherent isomorphism
$\lambda_{L}$. Then noting that $\textnormal{id}_{P\mathcal{B}}=R_{y_{\mathcal{B}}}$,
we see that for any left extension as on the left (which is isomorphic
to $\left(\overline{F},c_{F}\right)$ by uniqueness)
\[
\xymatrix@=1em{P\mathcal{A}\ar[rr]^{PF}\ar@{}[rrdd]|-{\stackrel{y_{F}^{-1}}{\Longleftarrow}} &  & P^{2}\mathcal{B}\ar[rr]^{\textnormal{res}_{y_{\mathcal{B}}}} &  & P\mathcal{B} &  & TP\mathcal{A}\ar[rr]^{TPF}\ar@{}[rrdd]|-{\stackrel{Ty_{F}^{-1}}{\Longleftarrow}} &  & TP^{2}\mathcal{B}\ar[rr]^{T\textnormal{res}_{y_{\mathcal{B}}}} &  & TP\mathcal{B}\ar[rr]^{\lambda_{\mathcal{B}}} &  & PT\mathcal{B}\\
 &  &  & \;\ar@{}[ul]|-{\stackrel{c_{\textnormal{id}_{P\mathcal{B}}}}{\Longleftarrow}} &  &  &  &  &  & \;\ar@{}[ul]|-{\stackrel{Tc_{\textnormal{id}_{P\mathcal{B}}}}{\Longleftarrow}}\\
\mathcal{A}\ar[uu]^{y_{\mathcal{A}}}\ar[rr]_{F} &  & P\mathcal{B}\ar[uu]|-{y_{P\mathcal{B}}}\ar[uurr]_{\textnormal{id}_{P\mathcal{B}}} &  &  &  & T\mathcal{A}\ar[uu]^{Ty_{\mathcal{A}}}\ar[rr]_{TF} &  & TP\mathcal{B}\ar[uu]|-{Ty_{P\mathcal{B}}}\ar[uurr]_{T\textnormal{id}_{P\mathcal{B}}}
}
\]
it suffices to check that the right diagram above exhibits $\lambda_{\mathcal{B}}\cdot T\textnormal{res}_{y_{\mathcal{B}}}\cdot TPF$
as a left extension. To see this we note that the pasting
\[
\xymatrix@=1em{ &  & PT\mathcal{A}\ar[rr]^{PTF}\ar@{}[dd]|-{\Uparrow\lambda_{F}^{-1}} &  & PTP\mathcal{B}\ar[ddrr]^{\textnormal{res}_{Ty_{\mathcal{B}}}}\ar@{}[dd]|-{\Uparrow\gamma_{y_{\mathcal{B}}}}\\
\\
TP\mathcal{A}\ar[rr]^{TPF}\ar@{}[rrdd]|-{\stackrel{Ty_{F}^{-1}}{\Longleftarrow}}\ar[uurr]^{\lambda_{\mathcal{A}}} &  & TP^{2}\mathcal{B}\ar[rr]^{T\textnormal{res}_{y_{\mathcal{B}}}}\ar[uurr]^{\lambda_{P\mathcal{B}}} &  & TP\mathcal{B}\ar[rr]^{\lambda_{\mathcal{B}}} &  & PT\mathcal{B}\\
 &  &  & \;\ar@{}[ul]|-{\stackrel{Tc_{\textnormal{id}_{P\mathcal{B}}}}{\Longleftarrow}}\\
T\mathcal{A}\ar[uu]^{Ty_{\mathcal{A}}}\ar[rr]_{TF} &  & TP\mathcal{B}\ar[uu]|-{Ty_{P\mathcal{B}}}\ar[uurr]_{\textnormal{id}_{TP\mathcal{B}}}
}
\]
is equal to the pasting (using $\lambda_{\mathcal{B}}=R_{Ty_{\mathcal{B}}}$)
\[
\xymatrix@=1em{TP\mathcal{A}\ar[rr]^{\lambda_{\mathcal{A}}} &  & PT\mathcal{A}\ar[rr]^{PTF}\ar@{}[rdrd]|-{\stackrel{y_{TF}^{-1}}{\Longleftarrow}} &  & PTP\mathcal{B}\ar[rr]^{\textnormal{res}_{Ty_{\mathcal{B}}}} &  & PT\mathcal{B}\\
 & \;\ar@{}[ur]|-{\stackrel{\omega_{2}^{\mathcal{A}}}{\Longleftarrow}} &  &  &  & \;\ar@{}[ul]|-{\stackrel{c_{\lambda_{\mathcal{B}}}}{\Longleftarrow}}\\
 &  & T\mathcal{A}\ar[rr]_{TF}\ar[uu]|-{y_{T\mathcal{A}}}\ar[uull]^{Ty_{\mathcal{A}}} &  & TP\mathcal{B}\ar[uu]|-{y_{TP\mathcal{B}}}\ar[rruu]_{\lambda_{\mathcal{B}}}
}
\]
This is shown by first using the coherence condition on $\lambda_{F}^{-1}$,
and then using the definition of $\gamma_{y_{\mathcal{B}}}$ from
Proposition \ref{lambdabeck}. Note also this last diagram exhibits
$\textnormal{res}_{Ty_{\mathcal{B}}}\cdot PTF\cdot\lambda_{\mathcal{A}}$
as a left extension since $Ty_{\mathcal{A}}$ is \emph{$P$-}admissible
(using preservation of the left extension $\lambda_{\mathcal{A}}$
by $P$-homomorphisms). 

$\left(\Longleftarrow\right)\colon$ For any $L\colon\mathcal{A}\to\mathcal{B}$,
we know $PTL\cdot\lambda_{\mathcal{A}}$ is a left extension of $y_{T\mathcal{B}}\cdot TL$
along $Ty_{\mathcal{A}}$ since $Ty_{\mathcal{A}}$ is \emph{$P$-}admissible.
Also $\lambda_{\mathcal{B}}\cdot TPL$ is such a left extension as
$\lambda_{\mathcal{B}}$ is $T_{P}$-cocontinuous, giving us an isomorphism
of left extensions $\lambda_{F}$ coherent as in the statement of
this lemma.
\end{proof}
\begin{rem}
A Beck condition is satisfied here. Indeed, the 2-cell $\gamma_{L}$
as in Proposition \ref{lambdabeck} is the mate of $\lambda_{L}$
as in the above lemma. This may be seen by pasting the left diagram
of \ref{defgamma} with the mate of $\lambda_{L}$ and then recovering
the right diagram (making use of \cite[Remark 16]{yonedakz} and the
coherence condition on $\lambda_{L}$).
\end{rem}

In the following lemma we see that for a pseudo-distributive law over
a KZ pseudomonad as in Definition \ref{distkzpseudomonad}, the modification
components $\omega_{2}^{\mathcal{A}}$ necessarily exhibit each $\lambda_{\mathcal{A}}$
as a left extension, and from this we deduce the existence of invertible
components $\omega_{4}^{\mathcal{A}}$.
\begin{lem}
\label{w2isext} Suppose we are given a 2-category $\mathscr{C}$
equipped with a pseudomonad $\left(T,u,m\right)$ and a KZ pseudomonad
$\left(P,y,\mu\right)$. Suppose further that we are given a pseudo-distributive
law over a KZ pseudomonad $\lambda\colon TP\to PT$. Then for each
$\mathcal{A}\in\mathscr{C}$, $Ty_{\mathcal{A}}$ is \emph{$P$-}admissible,
exhibited by an adjunction
\[
PTy_{\mathcal{A}}\dashv\mu_{T\mathcal{A}}\cdot P\lambda_{\mathcal{A}}
\]
Moreover, the diagrams as on the left exhibit each $\lambda_{\mathcal{A}}$
as a left extension, 
\[
\xymatrix@=1em{TP\mathcal{A}\ar[rr]^{\lambda_{\mathcal{A}}} &  & PT\mathcal{A}\ar@{}[dl]|-{\stackrel{\omega_{2}^{\mathcal{\mathcal{A}}}}{\Longleftarrow}} &  &  &  & TPP\mathcal{A}\ar[rr]^{\lambda_{P\mathcal{A}}}\ar@{}[rddrrr]|-{\stackrel{\omega_{4}^{\mathcal{\mathcal{A}}}}{\Longleftarrow}}\ar[dd]_{T\mu_{\mathcal{A}}} &  & PTP\mathcal{A}\ar[rr]^{P\lambda_{\mathcal{A}}} &  & PPT\mathcal{A}\ar[dd]^{\mu_{T\mathcal{A}}}\\
 & \;\\
 &  & T\mathcal{A}\ar[uu]_{y_{T\mathcal{A}}}\ar[uull]^{Ty_{\mathcal{A}}} &  &  &  & TP\mathcal{A}\ar[rrrr]_{\lambda_{\mathcal{A}}} &  &  &  & PT\mathcal{A}
}
\]
and there exists canonical isomorphisms as on the right for each $\mathcal{A}$.
\end{lem}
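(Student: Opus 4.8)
The plan is to establish the three assertions in turn: that $Ty_{\mathcal{A}}$ is $P$-admissible via the displayed adjunction, that $\omega_{2}^{\mathcal{A}}$ exhibits $\lambda_{\mathcal{A}}$ as the corresponding left extension, and that the isomorphisms $\omega_{4}^{\mathcal{A}}$ exist. Constructing the adjunction is the substantial step; the other two are then largely formal, the last being an application of Proposition \ref{lambdabeck}.

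To construct the adjunction $PTy_{\mathcal{A}}\dashv\mu_{T\mathcal{A}}\cdot P\lambda_{\mathcal{A}}$, I would exhibit its unit and counit by hand. The unit $\textnormal{id}_{PT\mathcal{A}}\Rightarrow\mu_{T\mathcal{A}}\cdot P\lambda_{\mathcal{A}}\cdot PTy_{\mathcal{A}}$ is the invertible $2$-cell obtained from the unit law $\mu_{T\mathcal{A}}\cdot Py_{T\mathcal{A}}\cong\textnormal{id}_{PT\mathcal{A}}$ of $\mu$, the image under $P$ of $\omega_{2}^{\mathcal{A}}$, and pseudofunctoriality of $P$; since it is invertible, $Ty_{\mathcal{A}}$ will in fact be $P$-fully faithful. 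The counit $PTy_{\mathcal{A}}\cdot\mu_{T\mathcal{A}}\cdot P\lambda_{\mathcal{A}}\Rightarrow\textnormal{id}_{PTP\mathcal{A}}$ is obtained by first rewriting $PTy_{\mathcal{A}}\cdot\mu_{T\mathcal{A}}\cong\mu_{TP\mathcal{A}}\cdot PPTy_{\mathcal{A}}$ by pseudonaturality of $\mu$, then applying $\mu_{TP\mathcal{A}}\cdot P(-)$ to the $2$-cell $PTy_{\mathcal{A}}\cdot\lambda_{\mathcal{A}}\Rightarrow y_{TP\mathcal{A}}$ given by pasting the naturality isomorphism of $\lambda$ at $y_{\mathcal{A}}$ (which identifies $PTy_{\mathcal{A}}\cdot\lambda_{\mathcal{A}}$ with $\lambda_{P\mathcal{A}}\cdot TPy_{\mathcal{A}}$), the whiskering $\lambda_{P\mathcal{A}}\cdot T\theta_{\mathcal{A}}$ of the KZ structure cell, and $\omega_{2}^{P\mathcal{A}}$, and finally using the unit law of $\mu$ once more. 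The two triangle identities then unwind into equalities of pastings of these cells, which I expect to follow from the first coherence axiom of Definition \ref{distkzpseudomonad} (a compatibility precisely between the naturality of $\lambda$, the cell $\omega_{2}$, and the KZ cell $\theta$) together with the KZ coherences \ref{kzcoh1} and \ref{kzcoh2} and the usual pseudofunctor and pseudomonad coherence data. This triangle-identity verification is the one genuinely heavy calculation, and is the main obstacle in the proof. Once it is done, characterization (3) of Definition \ref{admequiv} shows $Ty_{\mathcal{A}}$ is $P$-admissible.

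Given admissibility (for every object), characterization (1) of Definition \ref{admequiv} supplies a left extension $(R_{Ty_{\mathcal{A}}},\varphi_{Ty_{\mathcal{A}}})$ of $y_{T\mathcal{A}}$ along $Ty_{\mathcal{A}}$. Choosing $\textnormal{res}_{Ty_{\mathcal{A}}}:=\mu_{T\mathcal{A}}\cdot P\lambda_{\mathcal{A}}$ as the right adjoint of $PTy_{\mathcal{A}}$, the canonical construction of $R_{L}$ from $\textnormal{res}_{L}$ (as in \cite[Remark 16]{yonedakz}) gives $R_{Ty_{\mathcal{A}}}=\textnormal{res}_{Ty_{\mathcal{A}}}\cdot y_{TP\mathcal{A}}$, which is isomorphic to $\lambda_{\mathcal{A}}$ by pseudonaturality of $y$ followed by the unit law of $\mu$. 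Transporting $\varphi_{Ty_{\mathcal{A}}}$ across this isomorphism exhibits $\lambda_{\mathcal{A}}$ as a left extension of $y_{T\mathcal{A}}$ along $Ty_{\mathcal{A}}$, and a diagram chase, unravelling $\varphi_{Ty_{\mathcal{A}}}$ in terms of $\textnormal{res}_{Ty_{\mathcal{A}}}$ and the adjunction unit and using that the latter was built from $\omega_{2}^{\mathcal{A}}$, shows the exhibiting cell is $\omega_{2}^{\mathcal{A}}$. This is the second assertion.

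Finally, the hypotheses of Proposition \ref{lambdabeck} are now met, so I would apply it to the $P$-admissible $1$-cell $L:=y_{\mathcal{A}}\colon\mathcal{A}\to P\mathcal{A}$, for which $TL=Ty_{\mathcal{A}}$ is $P$-admissible by the first assertion. This produces an isomorphism $\gamma_{y_{\mathcal{A}}}\colon\lambda_{\mathcal{A}}\cdot T\textnormal{res}_{y_{\mathcal{A}}}\cong\textnormal{res}_{Ty_{\mathcal{A}}}\cdot\lambda_{P\mathcal{A}}$. Since $R_{y_{\mathcal{A}}}=\textnormal{id}_{P\mathcal{A}}$ by axiom (a) of Definition \ref{defkzdoctrine}, we have $\textnormal{res}_{y_{\mathcal{A}}}=\overline{\textnormal{id}_{P\mathcal{A}}}\cong\mu_{\mathcal{A}}$ (this being how the multiplication of $P$ is recovered from the KZ doctrine), while $\textnormal{res}_{Ty_{\mathcal{A}}}=\mu_{T\mathcal{A}}\cdot P\lambda_{\mathcal{A}}$ by the adjunction above; under these identifications $\gamma_{y_{\mathcal{A}}}$ becomes the desired canonical isomorphism $\omega_{4}^{\mathcal{A}}\colon\lambda_{\mathcal{A}}\cdot T\mu_{\mathcal{A}}\cong\mu_{T\mathcal{A}}\cdot P\lambda_{\mathcal{A}}\cdot\lambda_{P\mathcal{A}}$, with coherence inherited from that of $\gamma$ in Proposition \ref{lambdabeck}.
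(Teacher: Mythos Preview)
Your proposal is correct and follows essentially the same strategy as the paper for the first two assertions: the unit of the adjunction is built from $P\omega_{2}^{\mathcal{A}}$ and the unit law of $\mu$, the counit from $\lambda_{y_{\mathcal{A}}}$, $T\theta_{\mathcal{A}}$, and $(\omega_{2}^{P\mathcal{A}})^{-1}$, and the triangle identities are the heavy calculation driven by coherence axiom 1 of Definition \ref{distkzpseudomonad} together with \ref{kzcoh1} and \ref{kzcoh2}; the identification of the exhibiting cell with $\omega_{2}^{\mathcal{A}}$ is then the same argument via \cite[Remark 16]{yonedakz} (the paper notes it also uses that $\omega_{2}$ is a modification and the consequence \ref{thirdaxiom}).

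For $\omega_{4}^{\mathcal{A}}$ you take a slightly different route. The paper does not invoke Proposition \ref{lambdabeck}; instead it exhibits both $\mu_{T\mathcal{A}}\cdot P\lambda_{\mathcal{A}}\cdot\lambda_{P\mathcal{A}}$ and $\lambda_{\mathcal{A}}\cdot T\mu_{\mathcal{A}}$ directly as left extensions of $\lambda_{\mathcal{A}}$ along $Ty_{P\mathcal{A}}$, the first by $P$-admissibility of $Ty_{P\mathcal{A}}$ and the second by appealing to Lemma \ref{lambdaccts} (pseudonaturality of $\lambda$ forces each $\lambda_{\mathcal{A}}$ to be $T_{P}$-cocontinuous, so it $T$-preserves the left extension $(\mu_{\mathcal{A}},c_{\textnormal{id}})$). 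Your approach via $\gamma_{y_{\mathcal{A}}}$ is a legitimate alternative: Proposition \ref{lambdabeck} applies once parts one and two are in hand, and with $\textnormal{res}_{y_{\mathcal{A}}}\cong\mu_{\mathcal{A}}$ and $\textnormal{res}_{Ty_{\mathcal{A}}}=\mu_{T\mathcal{A}}\cdot P\lambda_{\mathcal{A}}$ the isomorphism $\gamma_{y_{\mathcal{A}}}$ is exactly $\omega_{4}^{\mathcal{A}}$. One small point: Proposition \ref{lambdabeck} itself asserts no coherence for $\gamma$, so your phrase ``coherence inherited from that of $\gamma$'' is better read as ``coherence inherited from $\gamma_{y_{\mathcal{A}}}$ being an isomorphism of left extensions'', which is precisely how the paper justifies that $\omega_{4}$ satisfies the relevant axiom.
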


\begin{proof}
We now prove the three assertions of the above lemma.

\noun{Each $Ty_{\mathcal{A}}$ is $P$-admissible.} Firstly, we note
that the below diagram exhibits $\mu_{T\mathcal{A}}\cdot P\lambda_{\mathcal{A}}$
as a left extension
\begin{equation}
\xymatrix@=1em{PTP\mathcal{A}\ar[rr]^{P\lambda_{\mathcal{A}}}\ar@{}[rdrd]|-{\stackrel{y_{\lambda_{\mathcal{A}}}^{-1}}{\Longleftarrow}} &  & P^{2}T\mathcal{A}\ar[rr]^{\mu_{T\mathcal{A}}}\ar@{}[rd]|-{\cong} &  & PT\mathcal{A}\\
 &  &  & \;\\
TP\mathcal{A}\ar[rr]_{\lambda_{\mathcal{A}}}\ar[uu]^{y_{TP\mathcal{A}}} &  & PT\mathcal{A}\ar[uu]|-{y_{PT\mathcal{A}}}\ar[rruu]_{\textnormal{id}_{PT\mathcal{A}}}
}
\label{defrightadj}
\end{equation}
Indeed, the construction of a KZ doctrine from a pseudomonad whose
structure forms a fully faithful adjoint string is outlined in \cite{marm2012},
and the above is an instance of this construction. Noting $\overline{\lambda_{\mathcal{A}}}=\mu_{T\mathcal{A}}\cdot P\lambda_{\mathcal{A}}$,
we define our unit $\eta$ as the unique solution to the left extension
problem
\[
\xymatrix@=1em{ & PTP\mathcal{A}\ar[rd]^{\overline{\lambda_{\mathcal{A}}}} &  &  & PT\mathcal{A}\ar[rr]^{PTy_{\mathcal{A}}}\ar@{}[rdrd]|-{\Uparrow y_{Ty_{\mathcal{A}}}^{-1}} &  & PTP\mathcal{A}\ar[rr]^{P\lambda_{\mathcal{A}}}\ar@{}[rdrd]|-{\Uparrow y_{\lambda_{\mathcal{A}}}^{-1}} &  & P^{2}T\mathcal{A}\ar[rr]^{\mu_{T\mathcal{A}}}\ar@{}[rd]|-{\cong} &  & PT\mathcal{A}\\
PT\mathcal{A}\ar[rr]_{\textnormal{id}}\ar[ru]^{PTy_{\mathcal{A}}} & \;\ar@{}[u]|-{\Uparrow\eta}\ar@{}[dld]|-{\quad\Uparrow\textnormal{id}} & PT\mathcal{A} & = &  &  &  &  &  & \;\\
 &  &  &  & T\mathcal{A}\ar[rr]_{Ty_{\mathcal{A}}}\ar[uu]^{y_{T\mathcal{A}}}\ar@/_{2pc}/[rrrr]_{y_{T\mathcal{A}}} &  & TP\mathcal{A}\ar[rr]_{\lambda_{\mathcal{A}}}\ar[uu]|-{y_{TP\mathcal{A}}}\ar@{}[d]|-{\Uparrow\omega_{2}^{\mathcal{A}}} &  & PT\mathcal{A}\ar[uu]|-{y_{PT\mathcal{A}}}\ar[rruu]_{\textnormal{id}}\\
T\mathcal{A}\ar[uu]^{y_{T\mathcal{A}}}\ar@/_{1pc}/[uurr]_{y_{T\mathcal{A}}} &  &  &  &  &  &  & \;
}
\]
Note that the unit $\eta$ must then be given by 
\[
\xymatrix@=1em{ &  & \;\ar@{}[d]|-{\Downarrow P\omega_{2}^{\mathcal{A}}} &  & \ar@{}[]|-{\quad\cong}\\
PT\mathcal{A}\ar[rr]^{PTy_{\mathcal{A}}}\ar@/^{1.7pc}/[rrrr]^{Py_{T\mathcal{A}}}\ar@/^{3.3pc}/[rrrrrr]^{\textnormal{id}_{PT\mathcal{A}}} &  & PTP\mathcal{A}\ar[rr]^{P\lambda_{\mathcal{A}}} &  & P^{2}T\mathcal{A}\ar[rr]^{\mu_{T\mathcal{A}}} &  & PT\mathcal{A}
}
\]
as $y\colon1\to P$ is a pseudonatural transformation. We define our
counit $\epsilon$ as the unique solution to the left extension problem
\[
\xymatrix@=1em{ &  & \;\ar@{}[d]|-{\Uparrow\epsilon} &  &  &  &  &  & \ar@{}[]|-{\quad\Uparrow\left(\omega_{2}^{P\mathcal{A}}\right)^{-1}}\\
PTP\mathcal{A}\ar[rr]^{\overline{\lambda_{\mathcal{A}}}}\ar@/^{2pc}/[rrrr]^{\textnormal{id}_{PTP\mathcal{A}}} &  & PT\mathcal{A}\ar[rr]^{PTy_{\mathcal{A}}} &  & PTP\mathcal{A} & = & TP\mathcal{A}\ar@/^{1pc}/[rr]^{Ty_{P\mathcal{A}}}\ar@/_{1pc}/[rr]_{TPy_{\mathcal{A}}}\ar@{}[rr]|-{\Uparrow T\theta}\ar@/^{3.8pc}/[rrrr]^{y_{TP\mathcal{A}}}\ar@/_{1pc}/[rrdd]_{\lambda_{\mathcal{A}}} &  & TP^{2}\mathcal{A}\ar[rr]^{\lambda_{P\mathcal{A}}} &  & PTP\mathcal{A}\\
 & \ar@{}[]|-{\cong\qquad}\\
TPA\ar[uu]^{y_{TP\mathcal{A}}}\ar@/_{1pc}/[rruu]_{\lambda_{\mathcal{A}}} &  &  &  &  &  &  &  & PTA\ar@/_{1pc}/[rruu]_{PTy_{\mathcal{A}}}\ar@{}[uu]|-{\Uparrow\lambda_{y_{\mathcal{A}}}}
}
\]
where the unnamed isomorphism above is \ref{defrightadj}. One could
also define $\epsilon$ directly in terms of $\theta$, but that would
result in a more complicated proof. Of the triangle identities:
\[
\xymatrix@=1em{PTy_{\mathcal{A}}\ar[ddrrr]_{\textnormal{id}_{PTy_{\mathcal{A}}}}\myar{PTy_{\mathcal{A}}\cdot\eta}{rrr} &  &  & PTy_{\mathcal{A}}\cdot\overline{\lambda_{\mathcal{A}}}\cdot PTy_{\mathcal{A}}\ar[dd]^{\epsilon\cdot PTy_{\mathcal{A}}} &  & \overline{\lambda_{\mathcal{A}}}\ar[ddrrr]_{\textnormal{id}_{\overline{\lambda_{\mathcal{A}}}}}\myar{\eta\cdot\overline{\lambda_{\mathcal{A}}}}{rrr} &  &  & \overline{\lambda_{\mathcal{A}}}\cdot PTy_{\mathcal{A}}\cdot\overline{\lambda_{\mathcal{A}}}\ar[dd]^{\overline{\lambda_{\mathcal{A}}}\cdot\epsilon}\\
\\
 &  &  & PTy_{\mathcal{A}} &  &  &  &  & \overline{\lambda_{\mathcal{A}}}
}
\]
the left identity, which is equivalent to asking for equality when
whiskered by $y_{T\mathcal{A}}$, can be proven using that $\omega_{2}$
is a modification. The right triangle identity, which is equivalent
to asking for equality when whiskered by $y_{TP\mathcal{A}}$, amounts
to asking that the pasting

\[
\xymatrix@=1em{ &  &  &  & PT\mathcal{A}\ar@/^{1pc}/[ddddrr]^{y_{PT\mathcal{A}}}\ar@/^{1.5pc}/[ddddrrrr]^{\textnormal{id}_{PT\mathcal{A}}}\\
\\
 &  & \;\\
 &  & \ar@{}[u]|-{\Uparrow\left(\omega_{2}^{P\mathcal{A}}\right)^{-1}} &  &  &  & \ar@{}[r]|-{\cong} & \;\\
TP\mathcal{A}\ar@/^{1pc}/[rr]^{Ty_{P\mathcal{A}}}\ar@/_{1pc}/[rr]_{TPy_{\mathcal{A}}}\ar@{}[rr]|-{\Uparrow T\theta^{\mathcal{A}}}\ar@/^{3.8pc}/[rrrr]^{y_{TP\mathcal{A}}}\ar@/_{1pc}/[rrdd]_{\lambda_{\mathcal{A}}}\ar@/^{2.3pc}/[rrrruuuu]^{\lambda_{\mathcal{A}}} &  & TP^{2}\mathcal{A}\ar[rr]^{\lambda_{P\mathcal{A}}} &  & PTP\mathcal{A}\ar[rr]^{P\lambda_{\mathcal{A}}}\ar@{}[uuuu]|-{\Uparrow y_{\lambda_{\mathcal{A}}}} &  & P^{2}T\mathcal{A}\ar[rr]^{\mu_{T\mathcal{A}}} &  & PT\mathcal{A}\\
 &  & \; &  & \;\ar@{}[rru]|-{\Uparrow P\omega_{2}^{\mathcal{A}}\quad}\\
 &  & PT\mathcal{A}\ar@/_{1pc}/[rruu]^{PTy_{\mathcal{A}}}\ar@{}[uu]|-{\Uparrow\lambda_{y_{\mathcal{A}}}}\ar@/_{1pc}/[rruurr]_{Py_{T\mathcal{A}}}\ar@/_{1.6pc}/[rruurrrr]_{\textnormal{id}_{PT\mathcal{A}}} &  &  & \ar@{}[rruur]|-{\cong\quad\;\;}
}
\]
is the identity. This is where the first axiom for a pseudo-distributive
law over a KZ pseudomonad is used, in addition to the second coherence
axiom \ref{kzcoh2} of a KZ pseudomonad.

\noun{Each $\omega_{2}^{\mathcal{A}}$ exhibits $\lambda_{\mathcal{A}}$
as a left extension.} As $Ty_{\mathcal{A}}$ is \emph{$P$-}admissible,
we know by \cite[Remark 16]{yonedakz} that the pasting
\[
\xymatrix@=1em{TP\mathcal{A}\ar[rr]^{y_{TP\mathcal{A}}} &  & PTP\mathcal{A}\ar[rr]^{\textnormal{res}_{Ty_{\mathcal{A}}}} &  & PT\mathcal{A}\\
\;\ar@{}[rr]|-{\quad\quad\;\;\stackrel{y_{Ty_{\mathcal{A}}}}{\Longleftarrow}} &  & PT\mathcal{A}\ar[u]^{PTy_{\mathcal{A}}}\ar@{}[rru]|-{\stackrel{\eta\cdot y_{T\mathcal{A}}}{\Longleftarrow}}\\
 &  &  &  & T\mathcal{A}\ar[uu]_{y_{T\mathcal{A}}}\ar@/^{2pc}/[ululll]^{Ty_{\mathcal{A}}}\ar[ull]^{y_{T\mathcal{A}}}
}
\]
exhibits $\textnormal{res}_{Ty_{\mathcal{A}}}\cdot y_{TP\mathcal{A}}$
as a left extension, where $\textnormal{res}_{Ty_{\mathcal{A}}}=\overline{\lambda_{\mathcal{A}}}=\mu_{T\mathcal{A}}\cdot P\lambda_{\mathcal{A}}$,
and $\eta$ is the unit of $PTy_{\mathcal{A}}\dashv\textnormal{res}_{Ty_{\mathcal{A}}}$
as just defined. From a substitution of the definition of $\eta$
(and pasting with a couple of isomorphisms) we see that the pasting
\[
\xymatrix@=1em{ &  & \;\ar@{}[rd]|-{\cong}\\
 & PT\mathcal{A}\ar[rr]^{y_{PT\mathcal{A}}}\ar@/^{3pc}/[rrrd]^{\textnormal{id}_{PT\mathcal{A}}}\ar@{}[rd]|-{\Uparrow y_{\lambda_{\mathcal{A}}}} &  & P^{2}T\mathcal{A}\ar[rd]|-{\mu_{T\mathcal{A}}}\\
TP\mathcal{A}\ar[rr]^{y_{TP\mathcal{A}}}\ar[ru]^{\lambda_{\mathcal{A}}} &  & PTP\mathcal{A}\ar[ur]^{P\lambda_{\mathcal{A}}} & \;\ar@{}[l]|-{\stackrel{P\omega_{2}^{\mathcal{A}}}{\Longleftarrow}\;\;}\ar@{}[r]|-{\cong} & PT\mathcal{A}\\
 &  & \; & \;\\
 &  &  & PT\mathcal{A}\ar[uul]^{PTy_{\mathcal{A}}}\ar[rruul]_{\textnormal{id}}\ar[uuu]|-{Py_{T\mathcal{A}}} & \;\ar@{}[l]|-{=}\\
 &  & \;\ar@{}[uu]|-{\stackrel{y_{Ty_{\mathcal{A}}}}{\Longleftarrow}}\\
 &  &  &  & T\mathcal{A}\ar[uuuu]_{y_{T\mathcal{A}}}\ar@/^{2pc}/[uluuulll]^{Ty_{\mathcal{A}}}\ar[uul]^{y_{T\mathcal{A}}}
}
\]
exhibits $\lambda_{\mathcal{A}}$ as a left extension. Note that this
pasting is equal to $\omega_{2}$ as a consequence of $\omega_{2}$
being a modification as well as the coherence condition \ref{thirdaxiom}
satisfied by $P$.

\noun{There exists canonical isomorphisms $\omega_{4}^{\mathcal{A}}$.}
We have the left extension
\[
\xymatrix@=1em{TP^{2}\mathcal{A}\ar[rr]^{\lambda_{P\mathcal{A}}} &  & PTP\mathcal{A}\ar[rr]^{P\lambda_{\mathcal{A}}}\ar@{}[rddr]|-{\stackrel{y_{\lambda_{\mathcal{A}}}^{-1}}{\Longleftarrow}} &  & PPT\mathcal{A}\ar[rr]^{\mu_{T\mathcal{A}}} &  & PT\mathcal{A}\\
 & \;\ar@{}[ur]|-{\stackrel{\;\;\omega_{2}^{P\mathcal{A}}}{\Longleftarrow}} &  &  &  & \ar@{}[ul]|-{\cong}\\
 &  & TP\mathcal{A}\ar[uu]|-{y_{TP\mathcal{A}}}\ar[uull]^{Ty_{P\mathcal{A}}}\ar[rr]_{\lambda_{\mathcal{A}}} &  & PT\mathcal{A}\ar[uu]|-{y_{PT\mathcal{A}}}\ar[rruu]_{\textnormal{id}_{PT\mathcal{A}}}
}
\]
since $Ty_{P\mathcal{A}}$ is \emph{$P$-}admissible, and also the
left extension
\[
\xymatrix@=1em{TP^{2}\mathcal{A}\ar[rr]^{T\mu_{\mathcal{A}}} &  & TP\mathcal{A}\ar[rr]^{\lambda_{\mathcal{A}}} &  & PT\mathcal{A}\\
 & \;\ar@{}[ul]|-{\Uparrow Tc_{\textnormal{id}}}\\
TP\mathcal{A}\ar[rruu]_{T\textnormal{id}}\ar[uu]^{Ty_{P\mathcal{A}}}
}
\]
since $\lambda_{\mathcal{A}}$ is $T_{P}$-cocontinuous by Lemma \ref{lambdaccts},
giving us our isomorphism of left extensions $\omega_{4}^{\mathcal{A}}$.
Note that this means $\omega_{4}$ satisfies coherence axiom 7 of
\cite{marm1999}.
\end{proof}
In the following proposition we show that the admissible maps are
preserved. Note that the proof relies on the existence of isomorphisms
$\omega_{4}^{\mathcal{A}}$ as above, which in turn relies on the
the admissibility of $y_{\mathcal{A}}$ being preserved (also shown
above).
\begin{prop}
\label{preserveadm} Suppose we are given a 2-category $\mathscr{C}$
equipped with a pseudomonad $\left(T,u,m\right)$ and a KZ pseudomonad
$\left(P,y,\mu\right)$. Suppose further that we are given a pseudo-distributive
law over a KZ pseudomonad $\lambda\colon TP\to PT$. Then $T$ preserves
$P$-admissible maps.
\end{prop}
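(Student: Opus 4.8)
The plan is to reduce the claim, via the right-adjoint characterisation of $P$-admissibility (condition (3) of Definition \ref{admequiv}), to a purely formal fact about fully faithful maps, and then to feed in what Lemma \ref{w2isext} has already established. So fix a $P$-admissible $1$-cell $L\colon\mathcal{A}\to\mathcal{B}$; the goal is to produce a right adjoint to $P(TL)$. The structural input is the factorisation $Ty_{\mathcal{B}}\cdot TL\cong T(PL)\cdot Ty_{\mathcal{A}}$, obtained by applying the pseudofunctor $T$ to the KZ structure isomorphism $PL\cdot y_{\mathcal{A}}\cong y_{\mathcal{B}}\cdot L$, together with the fact that $Ty_{\mathcal{B}}$ is $P$-fully faithful.

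First I would isolate the formal lemma: if $h$ is $P$-fully faithful and $h\cdot k$ is $P$-admissible, then $k$ is $P$-admissible. For this, use pseudofunctoriality of $P$ to write $P(h\cdot k)\cong Ph\cdot Pk$, let $\rho$ be a right adjoint of $Ph\cdot Pk$ with unit $\eta$ and counit $\epsilon$, and check that $\rho\cdot Ph$ is right adjoint to $Pk$: its unit is $\eta$, and its counit $Pk\cdot\rho\cdot Ph\to\textnormal{id}$ is the unique $2$-cell whose whiskering by the fully faithful $Ph$ equals $\epsilon\cdot Ph$; the two triangle identities then follow from those for $Ph\cdot Pk\dashv\rho$ by whiskering with $Ph$ and using faithfulness of $Ph$.

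It then remains to verify the hypotheses with $h=Ty_{\mathcal{B}}$ and $k=TL$. That $Ty_{\mathcal{B}}$ is $P$-fully faithful follows from Lemma \ref{w2isext}: it is $P$-admissible there, and the invertible modification $\omega_{2}^{\mathcal{B}}$ exhibits $\lambda_{\mathcal{B}}$ as the corresponding left extension, whence $P$-fully faithfulness by \cite[Prop. 23]{yonedakz}. That $Ty_{\mathcal{B}}\cdot TL$ is $P$-admissible follows from the displayed factorisation: $P$-admissibility is invariant under isomorphism of $1$-cells and closed under composition (immediate from condition (3) of Definition \ref{admequiv} and pseudofunctoriality of $P$, since a composite of left adjoints is a left adjoint); here $Ty_{\mathcal{A}}$ is $P$-admissible by Lemma \ref{w2isext}, while $T(PL)$ is $P$-admissible because $PL\dashv\textnormal{res}_{L}$ yields $T(PL)\dashv T(\textnormal{res}_{L})$ and hence $P(T(PL))\dashv P(T(\textnormal{res}_{L}))$ (pseudofunctors preserve adjunctions), so $T(PL)$ is a left adjoint and therefore $P$-admissible. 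Applying the formal lemma to $h=Ty_{\mathcal{B}}$, $k=TL$ then gives that $TL$ is $P$-admissible, as required.

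The only delicate point is the bookkeeping with the pseudofunctoriality constraints and the coherence isomorphism $P(hk)\cong Ph\cdot Pk$, so that the transported unit and counit genuinely satisfy the triangle identities; no new idea beyond the formal lemma and Lemma \ref{w2isext} is needed. Alternatively --- and this is the route the preceding remark points toward --- one may argue through the left-extension characterisation (condition (1) of Definition \ref{admequiv}), exhibiting $\lambda_{\mathcal{A}}\cdot TR_{L}$ as the left extension $R_{TL}$ of $y_{T\mathcal{A}}$ along $TL$ and then using the isomorphisms $\omega_{4}^{\mathcal{A}}$ of Lemma \ref{w2isext} together with the $T_{P}$-cocontinuity of $\lambda_{\mathcal{A}}$ (Lemma \ref{lambdaccts}) to check that this left extension is preserved by the relevant $\overline{H}$ into $P$-cocomplete objects.
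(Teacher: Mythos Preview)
Your main argument is correct and takes a genuinely different, much shorter route than the paper. The paper proceeds by explicitly writing down the right adjoint $\mu_{T\mathcal{A}}\cdot P\lambda_{\mathcal{A}}\cdot PT\textnormal{res}_{L}\cdot PTy_{\mathcal{B}}$ to $PTL$, defines the unit $n$ and counit $e$ as solutions to left extension problems, and then verifies the triangle identities by a rather involved diagram chase that invokes the first coherence axiom of Definition \ref{distkzpseudomonad}, the isomorphism $\omega_{4}^{\mathcal{A}}$ from Lemma \ref{w2isext}, and the KZ coherence axioms \ref{kzcoh1} and \ref{kzcoh2}. Your approach instead isolates the purely formal cancellation lemma (fully faithful $Ph$ and $P(hk)$ a left adjoint force $Pk$ to be a left adjoint) and feeds in only the $P$-admissibility and $P$-full faithfulness of $Ty_{\mathcal{B}}$ already supplied by Lemma \ref{w2isext}; this is both cleaner and makes transparent that the essential content here is the $P$-full faithfulness of $Ty_{\mathcal{B}}$. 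The trade-off is that the paper's explicit formula for the unit $n$ (equation \ref{nformula}) is reused later, in the proof that $T$ preserves maps which are simultaneously $P$-admissible and $P$-fully faithful, so the concrete description is not wasted work.

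One caution about your closing ``alternative'' paragraph: Proposition \ref{lambdabeck} as stated already assumes $TL$ is $P$-admissible, so invoking it directly would be circular; and Lemma \ref{lambdaccts} gives $T_{P}$-cocontinuity of $\lambda_{\mathcal{A}}$ only under the hypothesis that the naturality squares commute coherently, which in the paper's development is deduced \emph{after} Proposition \ref{preserveadm}. So that alternative, while plausible in spirit, would need its dependencies reorganised before it could stand on its own. Your primary argument has no such issue.
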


\begin{proof}
Suppose we are given a 1-cell $L\colon\mathcal{A}\to\mathcal{B}$
which is $P$-admissible, meaning that we have an adjunction $PL\dashv\textnormal{res}_{L}$
with unit and counit denoted $\eta$ and $\epsilon$ respectively.
We show that $TL\colon T\mathcal{A}\to T\mathcal{B}$ must then be
$P$-admissible, with the admissibility exhibited by an adjunction
\[
PTL\dashv\mu_{T\mathcal{A}}\cdot P\lambda_{\mathcal{A}}\cdot PT\textnormal{res}_{L}\cdot PTy_{\mathcal{B}}
\]
Firstly, we note that this right adjoint is exhibited as the left
extension
\begin{equation}
\xymatrix@=1em{PT\mathcal{B}\ar[rr]^{PTy_{\mathcal{B}}}\ar@{}[rdrd]|-{\stackrel{y_{Ty_{\mathcal{B}}}^{-1}}{\Longleftarrow}} &  & PTP\mathcal{B}\ar[rr]^{PT\textnormal{res}_{L}}\ar@{}[rdrd]|-{\stackrel{y_{T\textnormal{res}_{L}}^{-1}}{\Longleftarrow}} &  & PTP\mathcal{A}\ar[rr]^{P\lambda_{\mathcal{A}}}\ar@{}[rdrd]|-{\stackrel{y_{\lambda_{\mathcal{A}}}^{-1}}{\Longleftarrow}} &  & P^{2}T\mathcal{A}\ar[rr]^{\mu_{T\mathcal{A}}}\ar@{}[rd]|-{\cong} &  & PT\mathcal{A}\\
 &  &  &  &  &  &  & \;\\
T\mathcal{B}\ar[uu]^{y_{T\mathcal{B}}}\ar[rr]_{Ty_{\mathcal{B}}} &  & TP\mathcal{B}\ar[uu]|-{y_{TP\mathcal{B}}}\ar[rr]_{T\textnormal{res}_{L}} &  & TP\mathcal{A}\ar[rr]_{\lambda_{\mathcal{A}}}\ar[uu]|-{y_{TP\mathcal{A}}} &  & PT\mathcal{A}\ar[uu]|-{y_{PT\mathcal{A}}}\ar[rruu]_{\textnormal{id}_{PT\mathcal{A}}}
}
\label{defrightadj2}
\end{equation}
and denote it $\mathbf{R}_{L}$ for convenience. We then define our
unit $n$ as the unique 2-cell rendering
\[
\xymatrix@=1em{ &  &  & PT\mathcal{B}\ar[rd]^{\mathbf{R}_{L}}\\
T\mathcal{A}\ar[rr]_{y_{T\mathcal{A}}} &  & PT\mathcal{A}\ar[rr]_{\textnormal{id}_{PT\mathcal{A}}}\ar[ru]^{PTL} & \;\ar@{}[u]|-{\Uparrow n} & PT\mathcal{A}
}
\]
equal to
\[
\xymatrix@=1em{PT\mathcal{A}\ar[rr]^{PTL}\ar@{}[rdrd]|-{\stackrel{y_{TL}^{-1}}{\Longleftarrow}} &  & PT\mathcal{B}\ar[rr]^{PTy_{\mathcal{B}}}\ar@{}[rdrd]|-{\stackrel{y_{Ty_{\mathcal{B}}}^{-1}}{\Longleftarrow}} &  & PTP\mathcal{B}\ar[rr]^{PT\textnormal{res}_{L}}\ar@{}[rdrd]|-{\stackrel{y_{T\textnormal{res}_{L}}^{-1}}{\Longleftarrow}} &  & PTP\mathcal{A}\ar[rr]^{P\lambda_{\mathcal{A}}}\ar@{}[rdrd]|-{\stackrel{y_{\lambda_{\mathcal{A}}}^{-1}}{\Longleftarrow}} &  & P^{2}T\mathcal{A}\ar[rr]^{\mu_{T\mathcal{A}}}\ar@{}[rd]|-{\cong} &  & PT\mathcal{A}\\
 &  &  &  &  &  &  &  &  & \;\\
T\mathcal{A}\ar[rr]_{TL}\ar[uu]^{y_{T\mathcal{A}}}\ar@/_{0.7pc}/[rrrrd]|-{Ty_{\mathcal{A}}}\ar@/_{3.5pc}/[rrrrrrrr]_{y_{T\mathcal{A}}} &  & T\mathcal{B}\ar[uu]|-{y_{T\mathcal{B}}}\ar[rr]_{Ty_{\mathcal{B}}} & \; & TP\mathcal{B}\ar[uu]|-{y_{TP\mathcal{B}}}\ar[rr]_{T\textnormal{res}_{L}} & \ar@{}[d]|-{\Uparrow T\eta\quad\quad} & TP\mathcal{A}\ar[rr]_{\lambda_{\mathcal{A}}}\ar[uu]|-{y_{TP\mathcal{A}}} &  & PT\mathcal{A}\ar[uu]|-{y_{PT\mathcal{A}}}\ar[rruu]_{\textnormal{id}_{PT\mathcal{A}}}\\
 &  & \ar@{}[ru]|-{\;\;\Uparrow Ty_{L}} &  & TP\mathcal{A}\ar[u]^{TPL}\ar@/_{0.5pc}/[rru]_{T\textnormal{id}} & \;\\
 &  &  &  & \ar@{}[u]|-{\Uparrow\omega_{2}^{\mathcal{A}}}
}
\]
and note that the unit $n$ is then given by
\begin{equation}
\xymatrix@=1em{PT\mathcal{A}\ar[rr]^{PTL}\ar@/_{1pc}/[rrrrd]|-{PTy_{\mathcal{A}}}\ar@/_{4pc}/[rrrrrrrr]|-{Py_{T\mathcal{A}}}\ar@/_{5pc}/[rrrrrrrrrr]_{\textnormal{id}_{PT\mathcal{A}}} &  & PT\mathcal{B}\ar[rr]^{PTy_{\mathcal{B}}} &  & PTP\mathcal{B}\ar[rr]^{PT\textnormal{res}_{L}}\ar@{}[rrd]|-{\Uparrow PT\eta\quad\quad} &  & PTP\mathcal{A}\ar[rr]^{P\lambda_{\mathcal{A}}} &  & P^{2}T\mathcal{A}\ar[rr]^{\mu_{T\mathcal{A}}} &  & PT\mathcal{A}\\
 &  &  & \ar@{}[ul]|-{\Uparrow PTy_{L}} & PTP\mathcal{A}\ar[u]^{PTPL}\ar@/_{0.7pc}/[rru]_{PT\textnormal{id}} &  & \; & \;\\
 &  &  &  & \ar@{}[u]|-{\Uparrow P\omega_{2}^{\mathcal{A}}} &  & \;\\
 &  &  &  &  & \; & \ar@{}[uur]|-{\cong}
}
\label{nformula}
\end{equation}
as $y\colon1\to P$ is a pseudonatural transformation. We define our
counit $e$ as the unique solution to the left extension problem
\[
\xymatrix@=1em{ &  &  &  & \;\ar@{}[d]|-{\Uparrow e} &  &  &  &  &  & \ar@{}[d]|-{\Uparrow\left(\omega_{2}^{\mathcal{B}}\right)^{-1}}\\
T\mathcal{B}\ar[rr]^{y_{T\mathcal{B}}}\ar@/_{2pc}/[rrrr]_{\lambda_{\mathcal{A}}\cdot T\textnormal{res}_{L}\cdot Ty_{\mathcal{B}}} &  & PT\mathcal{B}\ar[rr]^{\mathbf{R}_{L}}\ar@/^{1.8pc}/[rrrr]^{\textnormal{id}_{PT\mathcal{B}}} &  & PT\mathcal{A}\ar[rr]^{PTL} &  & PT\mathcal{B} & = & T\mathcal{B}\ar@/_{0pc}/[dd]_{Ty_{\mathcal{B}}}\ar@/^{0pc}/[rr]^{Ty_{\mathcal{B}}}\ar@/^{1.8pc}/[rrrr]^{y_{T\mathcal{B}}} &  & TP\mathcal{B}\ar[rr]^{\lambda_{\mathcal{B}}} &  & PT\mathcal{B}\\
 &  & \;\ar@{}[u]|-{\cong} &  &  &  &  &  & \ar@{}[r]|-{=} & \ar@{}[rd]|-{\Uparrow T\epsilon} &  & \ar@{}[]|-{\Uparrow\lambda_{L}}\\
 &  &  &  &  &  &  &  & TP\mathcal{B}\ar[rruu]|-{T\textnormal{id}}\ar@/_{0pc}/[rr]_{T\textnormal{res}_{L}} &  & TP\mathcal{A}\ar[uu]|-{TPL}\ar[rr]_{\lambda_{\mathcal{A}}} &  & PT\mathcal{A}\ar[uu]_{PTL}
}
\]
where the unlabeled isomorphism is \ref{defrightadj2}. Of the triangle
identities:
\[
\xymatrix@=1em{PTL\ar[ddrrr]_{\textnormal{id}_{PTL}}\myar{PTL\cdot n}{rrr} &  &  & PTL\cdot\mathbf{R}_{L}\cdot PTL\ar[dd]^{e\cdot PTL} &  & \mathbf{R}_{L}\ar[ddrrr]_{\textnormal{id}_{\mathbf{R}_{L}}}\myar{n\cdot\mathbf{R}_{L}}{rrr} &  &  & \mathbf{R}_{L}\cdot PTL\cdot\mathbf{R}_{L}\ar[dd]^{\mathbf{R}_{L}\cdot e}\\
\\
 &  &  & PTL &  &  &  &  & \mathbf{R}_{L}
}
\]
the left identity (or equivalently the left triangle identity whiskered
by $y_{T\mathcal{A}}$) easily follows from the whiskered definitions
of $n$ and $e$ as well as the corresponding triangle identity for
$PL\dashv\textnormal{res}_{L}$, and $\omega_{2}$ being a modification.
The right triangle identity (or that whiskered by $y_{T\mathcal{B}}$)
is more complicated. This identity amounts to checking that
\begin{equation}
\xymatrix@=1em{ &  & \ar@{}[d]|-{\Uparrow\left(\omega_{2}^{\mathcal{B}}\right)^{-1}}\\
T\mathcal{B}\ar@/_{0pc}/[dd]_{Ty_{\mathcal{B}}}\ar@/^{0pc}/[rr]^{Ty_{\mathcal{B}}}\ar@/^{1.8pc}/[rrrr]^{y_{T\mathcal{B}}} &  & TP\mathcal{B}\ar[rr]^{\lambda_{\mathcal{B}}} &  & PT\mathcal{B}\ar[rr]^{PTy_{\mathcal{B}}}\ar@{}[rrdd]|-{\Uparrow PTy_{L}} &  & PTP\mathcal{B}\ar[rr]^{PT\textnormal{res}_{L}} &  & PTP\mathcal{A}\ar[rr]^{P\lambda_{\mathcal{A}}} &  & P^{2}T\mathcal{A}\ar[rr]^{\mu_{T\mathcal{A}}} &  & PT\mathcal{A}\\
 & \ar@{}[rd]|-{\Uparrow T\epsilon}\ar@{}[lu]|-{=} &  & \ar@{}[]|-{\Uparrow\lambda_{L}} &  &  &  & \;\ar@{}[ul]|-{\Uparrow PT\eta} &  & \ar@{}[rd]|-{\cong\quad}\\
TP\mathcal{B}\ar[rruu]|-{T\textnormal{id}}\ar@/_{0pc}/[rr]_{T\textnormal{res}_{L}} &  & TP\mathcal{A}\ar[uu]|-{TPL}\ar[rr]_{\lambda_{\mathcal{A}}} &  & PT\mathcal{A}\ar[uu]|-{PTL}\ar[rr]^{PTy_{\mathcal{A}}}\ar@/_{2pc}/[rrrrrruu]|-{Py_{T\mathcal{A}}}\ar@/_{2.5pc}/[rrrrrrrruu]_{\textnormal{id}_{PT\mathcal{A}}} &  & PTP\mathcal{A}\ar[uu]|-{PTPL}\ar[uurr]|-{PT\textnormal{id}} & \; & \ar@{}[lu]|-{\Uparrow P\omega_{2}^{\mathcal{A}}} &  & \;
}
\label{diag1}
\end{equation}
is just the isomorphism \ref{defrightadj2}. The first step here is
to make our diagrams more like the first axiom of a pseudo-distributive
law over a KZ doctrine. Upon using that $\omega_{2}$ is a modification
and the coherence axiom \ref{kzcoh2}, the problem reduces to showing
that \ref{diag1} with the unnamed isomorphism and cell $\omega_{2}^{\mathcal{B}}$
removed is equal to
\begin{equation}
\xymatrix@=1em{ & PT\mathcal{B}\ar[rr]^{PTy_{\mathcal{B}}}\ar@{}[rd]|-{\Uparrow\lambda_{y_{\mathcal{B}}}^{-1}} &  & PTP\mathcal{B}\ar[rr]^{PT\textnormal{res}_{L}}\ar@{}[rd]|-{\Uparrow\lambda_{\textnormal{res}_{L}}^{-1}} &  & PTP\mathcal{A}\ar[rr]^{P\lambda_{\mathcal{A}}}\ar@{}[dr]|-{\overset{y_{\lambda_{\mathcal{A}}}^{-1}}{\Longleftarrow}} &  & P^{2}T\mathcal{A}\ar[rr]^{\mu_{T\mathcal{A}}} &  & PT\mathcal{A}\\
TP\mathcal{B}\ar[rr]^{TPy_{\mathcal{B}}}\ar[ur]^{\lambda_{\mathcal{B}}} &  & TP^{2}\mathcal{B}\ar[rr]^{TP\textnormal{res}_{L}}\ar[ur]^{\lambda_{P\mathcal{B}}} &  & TP^{2}\mathcal{A}\ar@{}[r]|-{\overset{\;\omega_{2}^{P\mathcal{A}}}{\Longleftarrow}\;\;\;}\ar[ur]^{\lambda_{P\mathcal{A}}} & \; & \; & \;\ar@{}[]|-{\overset{\theta^{T\mathcal{A}}}{\Longleftarrow}}\\
 & T\mathcal{B}\ar[rr]_{Ty_{\mathcal{B}}}\ar[ul]^{Ty_{\mathcal{B}}}\ar@{}[ru]|-{\Uparrow Ty_{y_{\mathcal{B}}}^{-1}} &  & TP\mathcal{B}\ar[rr]_{T\textnormal{res}_{L}}\ar[ul]^{Ty_{P\mathcal{B}}}\ar@{}[ru]|-{\Uparrow Ty_{\textnormal{res}_{L}}^{-1}} &  & TP\mathcal{A}\ar[rr]_{\lambda_{\mathcal{A}}}\ar[uu]|-{y_{TP\mathcal{A}}}\ar[ul]^{Ty_{P\mathcal{A}}} &  & PT\mathcal{A}\ar@/^{1pc}/[uu]^{y_{PT\mathcal{A}}}\ar@/_{1pc}/[uu]_{Py_{T\mathcal{A}}}
}
\label{diag2}
\end{equation}
We then simplify \ref{diag2} using the first axiom of a pseudo-distributive
law over a KZ doctrine, canceling $P\omega_{2}^{\mathcal{A}}$, and
pasting $\lambda_{y_{\mathcal{B}}}$, $\lambda_{y_{\mathcal{A}}}$
and $\lambda_{\textnormal{res}_{L}}$ to the other side of the desired
equation. By pseudonaturality of $\lambda$, the problem may then
be reduced to showing that 
\[
\xymatrix@=1em{T\mathcal{B}\ar@/_{0pc}/[dd]_{Ty_{\mathcal{B}}}\ar@/^{0pc}/[rr]^{Ty_{\mathcal{B}}} &  & TP\mathcal{B}\ar[rr]^{TPy_{\mathcal{B}}} &  & TP^{2}\mathcal{B}\ar[rr]^{TP\textnormal{res}_{L}} &  & TP^{2}\mathcal{A}\ar[rr]^{\lambda_{P\mathcal{A}}} &  & PTP\mathcal{A}\ar[rr]^{P\lambda_{\mathcal{A}}} &  & P^{2}T\mathcal{A}\ar[rr]^{\mu_{T\mathcal{A}}} &  & PT\mathcal{A}\\
 & \ar@{}[rd]|-{\Uparrow T\epsilon}\ar@{}[lu]|-{=} &  & \ar@{}[]|-{\Uparrow TPy_{L}} &  & \ar@{}[ul]|-{\;\;\Uparrow TP\eta}\\
TP\mathcal{B}\ar[rruu]|-{T\textnormal{id}}\ar@/_{0pc}/[rr]_{T\textnormal{res}_{L}} &  & TP\mathcal{A}\ar[uu]|-{TPL}\ar[rr]_{TPy_{\mathcal{A}}} &  & TP^{2}\mathcal{A}\ar[rruu]_{TP\textnormal{id}}\ar[uu]|-{TP^{2}L}
}
\]
is equal to
\[
\xymatrix@=1em{TP\mathcal{B}\ar[rr]^{TPy_{\mathcal{B}}} &  & TP^{2}\mathcal{B}\ar[rr]^{TP\textnormal{res}_{L}} &  & TP^{2}\mathcal{A}\ar[rr]^{\lambda_{P\mathcal{A}}} &  & PTP\mathcal{A}\ar[rr]^{P\lambda_{\mathcal{A}}} &  & P^{2}T\mathcal{A}\ar[rr]^{\mu_{T\mathcal{A}}} &  & PT\mathcal{A}\\
\; & \ar@{}[]|-{\Uparrow Ty_{y_{\mathcal{B}}}^{-1}} & \; & \ar@{}[ld]|-{\Uparrow Ty_{\textnormal{res}_{L}}^{-1}} & \;\ar@{}[]|-{\overset{T\theta^{\mathcal{A}}}{\Longleftarrow}}\\
T\mathcal{B}\ar[rr]_{Ty_{\mathcal{B}}}\ar[uu]^{Ty_{\mathcal{B}}} &  & TP\mathcal{B}\ar[rr]_{T\textnormal{res}_{L}}\ar[uu]|-{Ty_{P\mathcal{B}}} &  & TP\mathcal{A}\ar@/^{0.8pc}/[uu]^{Ty_{P\mathcal{A}}}\ar@/_{0.8pc}/[uu]_{TPy_{\mathcal{A}}}
}
\]
Since we have the isomorphism $\omega_{4}^{\mathcal{A}}$ as in Lemma
\ref{w2isext}, and as $T\theta^{\mathcal{B}}\cdot Ty_{\mathcal{B}}$
is invertible (meaning we can paste with $T\theta^{\mathcal{B}}$
and maintain the logical equivalence), we may reduce the problem to
showing that
\[
\xymatrix@=1em{T\mathcal{B}\ar@/_{0pc}/[dd]_{Ty_{\mathcal{B}}}\ar@/^{0pc}/[rr]^{Ty_{\mathcal{B}}} &  & TP\mathcal{B}\ar@/_{0.5pc}/[rr]_{TPy_{\mathcal{B}}}\ar@/^{0.5pc}/[rr]^{Ty_{P\mathcal{B}}} & \ar@{}[]|-{\Uparrow T\theta^{\mathcal{B}}} & TP^{2}\mathcal{B}\ar[rr]^{TP\textnormal{res}_{L}} &  & TP^{2}\mathcal{A}\ar[rr]^{T\mu_{\mathcal{A}}} &  & TP\mathcal{A}\ar[rr]^{\lambda_{\mathcal{A}}} &  & PT\mathcal{A}\\
 & \ar@{}[rd]|-{\Uparrow T\epsilon}\ar@{}[lu]|-{=} &  & \ar@{}[]|-{\stackrel{\;}{\Uparrow TPy_{L}}} &  & \ar@{}[ul]|-{\;\;\Uparrow TP\eta}\\
TP\mathcal{B}\ar[rruu]|-{T\textnormal{id}}\ar@/_{0pc}/[rr]_{T\textnormal{res}_{L}} &  & TP\mathcal{A}\ar[uu]|-{TPL}\ar[rr]_{TPy_{\mathcal{A}}} & \; & TP^{2}\mathcal{A}\ar[rruu]_{TP\textnormal{id}_{P\mathcal{A}}}\ar[uu]|-{TP^{2}L}
}
\]
is equal to 
\[
\xymatrix@=1em{TP\mathcal{B}\ar@/_{0.5pc}/[rr]_{TPy_{\mathcal{B}}}\ar@/^{0.5pc}/[rr]^{Ty_{P\mathcal{B}}} & \ar@{}[]|-{\Uparrow T\theta^{\mathcal{B}}} & TP^{2}\mathcal{B}\ar[rr]^{TP\textnormal{res}_{L}} &  & TP^{2}\mathcal{A}\ar[rr]^{T\mu_{\mathcal{A}}} &  & TP\mathcal{A}\ar[rr]^{\lambda_{\mathcal{A}}} &  & PT\mathcal{A}\\
\; & \ar@{}[d]|-{\underset{\;}{\Uparrow Ty_{y_{\mathcal{B}}}}} & \; & \ar@{}[d]|-{\Uparrow Ty_{\textnormal{res}_{L}}^{-1}} & \;\ar@{}[]|-{\overset{T\theta^{\mathcal{A}}}{\Longleftarrow}}\\
T\mathcal{B}\ar[rr]_{Ty_{\mathcal{B}}}\ar[uu]^{Ty_{\mathcal{B}}} & \; & TP\mathcal{B}\ar[rr]_{T\textnormal{res}_{L}}\ar[uu]|-{Ty_{P\mathcal{B}}} & \; & TP\mathcal{A}\ar@/^{0.8pc}/[uu]^{Ty_{P\mathcal{A}}}\ar@/_{0.8pc}/[uu]_{TPy_{\mathcal{A}}}
}
\]
From here, use that $\theta$ is a modification, the axioms \ref{kzcoh1}
and \ref{kzcoh2}, and pseudonaturality of $y$ to deduce the triangle
identity from that of the adjunction $PL\dashv\textnormal{res}_{L}$.
\end{proof}
\begin{rem}
Note that here, as well as in the preceding lemma, we only used that
$\omega_{2}$ is an invertible modification and the first axiom for
a pseudo-distributive law over a KZ doctrine, along with pseudo naturality
of $\lambda$.
\end{rem}

We are now ready to prove the main result of this subsection.
\begin{thm}
\label{eimpliesd} In the statement of Theorem \ref{liftkzequiv}
(e) implies (d).
\end{thm}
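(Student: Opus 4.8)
The plan is to verify directly that the pseudonatural transformation $\lambda\colon TP\to PT$ supplied by (e) satisfies the four assertions comprising Definition \ref{distkzdoctrine}, so that the same $\lambda$ serves as a pseudo-distributive law over the KZ doctrine underlying $(P,y,\mu)$. Assertions (1)--(3) are by now essentially in hand. Assertion (1), that $T$ preserves $P$-admissible maps, is precisely Proposition \ref{preserveadm}; in particular each $Ty_{\mathcal{A}}$ is $P$-admissible, so the left extension $\lambda_{\mathcal{A}}$ of $y_{T\mathcal{A}}$ along $Ty_{\mathcal{A}}$ named in Definition \ref{distkzdoctrine} exists, and by Lemma \ref{w2isext} it may be taken to be the component $\lambda_{\mathcal{A}}$ of the given $\lambda$, exhibited as a left extension by the modification component $\omega_{2}^{\mathcal{A}}$, which is invertible since $\omega_{2}$ is an invertible modification --- this is assertion (2). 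Assertion (3), that each $\lambda_{\mathcal{A}}$ is $T_{P}$-cocontinuous, then follows from the ($\Longrightarrow$) direction of Lemma \ref{lambdaccts}, whose hypotheses hold because $\lambda$ is pseudonatural and because the naturality cells $\lambda_{L}$ are coherent in the sense of that lemma --- that coherence being exactly the statement that $\omega_{2}$ is a modification.

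The substantive remaining step is assertion (4): that the displayed pastings exhibit $\lambda_{\mathcal{A}}\cdot u_{P\mathcal{A}}$ as a left extension of $y_{T\mathcal{A}}\cdot u_{\mathcal{A}}$ along $y_{\mathcal{A}}$, and $\lambda_{\mathcal{A}}\cdot m_{P\mathcal{A}}$ as a left extension of $y_{T\mathcal{A}}\cdot m_{\mathcal{A}}$ along $T^{2}y_{\mathcal{A}}$. For the unit case I would invoke the invertible modification $\omega_{1}$: its component gives an isomorphism $\lambda_{\mathcal{A}}\cdot u_{P\mathcal{A}}\cong Pu_{\mathcal{A}}$, while $Pu_{\mathcal{A}}=\textnormal{lan}_{u_{\mathcal{A}}}$ is by construction (Definition \ref{admequiv}(3), which is meaningful for any $1$-cell) a left extension of $y_{T\mathcal{A}}\cdot u_{\mathcal{A}}$ along $y_{\mathcal{A}}$ exhibited by an isomorphism; transporting this left extension along $\omega_{1}^{\mathcal{A}}$ yields the claim for $\lambda_{\mathcal{A}}\cdot u_{P\mathcal{A}}$. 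For the multiplication case I would invoke $\omega_{3}$: the ``Moreover'' clause of Proposition \ref{lambdabeck}, applied to the $P$-admissible map $L=Ty_{\mathcal{A}}$ (whose $T$-image $T^{2}y_{\mathcal{A}}$ is $P$-admissible by Proposition \ref{preserveadm}, and for which $R_{L}=\lambda_{\mathcal{A}}$ is a left extension by Lemma \ref{w2isext}), exhibits $\lambda_{T\mathcal{A}}\cdot T\lambda_{\mathcal{A}}$ as a left extension of $y_{T^{2}\mathcal{A}}$ along $T^{2}y_{\mathcal{A}}$; since $Pm_{\mathcal{A}}=\overline{y_{T\mathcal{A}}\cdot m_{\mathcal{A}}}$ is a $P$-homomorphism it preserves left extensions along $P$-admissible maps, so $Pm_{\mathcal{A}}\cdot\lambda_{T\mathcal{A}}\cdot T\lambda_{\mathcal{A}}$ is a left extension of $Pm_{\mathcal{A}}\cdot y_{T^{2}\mathcal{A}}\cong y_{T\mathcal{A}}\cdot m_{\mathcal{A}}$ (using pseudonaturality of $y$) along $T^{2}y_{\mathcal{A}}$; transporting along $\omega_{3}^{\mathcal{A}}$ yields the claim for $\lambda_{\mathcal{A}}\cdot m_{P\mathcal{A}}$.

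The hard part --- the step I expect to be the main obstacle --- is not the existence of these left extensions, which falls out of $\omega_{1}$, $\omega_{3}$, Proposition \ref{lambdabeck} and Proposition \ref{preserveadm} as above, but the verification that after transporting along $\omega_{1}^{\mathcal{A}}$ and $\omega_{3}^{\mathcal{A}}$ the resulting exhibiting $2$-cells coincide with the specific pastings of $\omega_{2}^{\mathcal{A}}$ with $u_{y_{\mathcal{A}}}$, respectively of $\omega_{2}^{\mathcal{A}}$ with $m_{y_{\mathcal{A}}}$, demanded in Definition \ref{distkzdoctrine}(4). This is a pasting-diagram computation whose content is exactly the second and third coherence axioms of Definition \ref{distkzpseudomonad} (together with the pseudonaturality cells of $y$ and the KZ doctrine axioms of Definition \ref{defkzdoctrine} relating the canonical cells $c_{(-)}$), and the care required lies in matching orientations and the unnamed coherence isomorphisms. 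Once these two identifications are made, all four assertions of Definition \ref{distkzdoctrine} hold for $\lambda$, so $\lambda$ is a pseudo-distributive law over a KZ doctrine and (d) follows.
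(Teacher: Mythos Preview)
Your proposal is correct and follows essentially the same route as the paper's proof: Proposition \ref{preserveadm} for (1), Lemma \ref{w2isext} for (2), Lemma \ref{lambdaccts} for (3), and for (4) establish the left-extension property of $Pu_{\mathcal{A}}$ and of $Pm_{\mathcal{A}}\cdot\lambda_{T\mathcal{A}}\cdot T\lambda_{\mathcal{A}}$ (the latter via Proposition \ref{lambdabeck} and $P$-admissibility of $T^{2}y_{\mathcal{A}}$), then transport along $\omega_{1}$ and $\omega_{3}$. Your concern about the ``hard part'' is slightly overstated: the second and third coherence axioms of Definition \ref{distkzpseudomonad} are precisely the equalities of pastings identifying the transported exhibiting $2$-cells with those demanded in Definition \ref{distkzdoctrine}(4), so no further diagram chase is required.
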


\begin{proof}
We first note by Proposition \ref{preserveadm} that $T$ preserves
\emph{$P$-}admissible maps. Also, we know by Lemma \ref{w2isext}
that each $\lambda_{\mathcal{A}}$ is a left extension exhibited by
the distributive law data as in
\[
\xymatrix@=1em{TP\mathcal{A}\ar[rr]^{\lambda_{\mathcal{A}}} &  & PT\mathcal{A}\ar@{}[dl]|-{\stackrel{\omega_{2}^{\mathcal{A}}}{\Longleftarrow}}\\
 & \;\\
 &  & T\mathcal{A}\ar[uu]_{y_{T\mathcal{A}}}\ar[uull]^{Ty_{\mathcal{A}}}
}
\]
with $\omega_{2}^{\mathcal{A}}$ invertible by assumption. That each
$\lambda_{\mathcal{A}}$ is $T_{P}$-cocontinuous is a consequence
of Lemma \ref{lambdaccts} and $\omega_{2}$ being a modification.
Finally, that the diagrams
\[
\xymatrix@=1em{P\mathcal{A}\ar[rr]^{u_{P\mathcal{A}}} &  & TP\mathcal{A}\ar@{}[dldl]|-{\stackrel{u_{y_{\mathcal{A}}}}{\Longleftarrow}}\ar[rr]^{\lambda_{\mathcal{A}}}\ar@{}[dr]|-{\stackrel{\omega_{2}^{\mathcal{A}}}{\Longleftarrow}} &  & PT\mathcal{A} &  & T^{2}P\mathcal{A}\ar[rr]^{m_{P\mathcal{A}}} &  & TP\mathcal{A}\ar@{}[dldl]|-{\stackrel{m_{y_{\mathcal{A}}}}{\Longleftarrow}}\ar[rr]^{\lambda_{\mathcal{A}}}\ar@{}[dr]|-{\stackrel{\omega_{2}^{\mathcal{A}}}{\Longleftarrow}} &  & PT\mathcal{A}\\
 &  &  & \; &  &  &  &  &  & \;\\
\mathcal{A}\ar[rr]_{u_{\mathcal{A}}}\ar[uu]^{y_{\mathcal{A}}} &  & T\mathcal{A}\ar[uu]|-{Ty_{\mathcal{A}}}\ar[uurr]_{y_{T\mathcal{A}}} &  &  &  & T^{2}\mathcal{A}\ar[rr]_{m_{\mathcal{A}}}\ar[uu]^{T^{2}y_{\mathcal{A}}} &  & T\mathcal{A}\ar[uu]|-{Ty_{\mathcal{A}}}\ar[uurr]_{y_{T\mathcal{A}}}
}
\]
exhibit both $\lambda_{\mathcal{A}}\cdot u_{P\mathcal{A}}$ and $\lambda_{\mathcal{A}}\cdot m_{P\mathcal{A}}$
as left extensions is due to the last two axioms for a pseudo-distributive
law over a KZ pseudomonad (as pasting a left extension with an isomorphism
$\omega_{1}$ or $\omega_{3}$ will preserve the left extension property).
Indeed, it is clear the left diagram below exhibits $Pu_{\mathcal{A}}$
as a left extension.
\[
\xymatrix@=1em{P\mathcal{A}\ar[rr]^{Pu_{\mathcal{A}}}\ar@{}[rrdd]|-{\stackrel{\;y_{u_{\mathcal{A}}}^{-1}}{\Longleftarrow}} &  & PT\mathcal{A} &  &  & T^{2}P\mathcal{A}\ar[rr]^{T\lambda_{\mathcal{A}}} &  & TPT\mathcal{A}\ar@{}[dl]|-{\stackrel{T\omega_{2}^{\mathcal{A}}}{\Longleftarrow}}\ar[rr]^{\lambda_{T\mathcal{A}}}\ar@{}[dr]|-{\stackrel{\;\;\omega_{2}T^{\mathcal{A}}}{\Longleftarrow}} &  & PT^{2}\mathcal{A}\ar[rr]^{Pm_{\mathcal{A}}}\ar@{}[dd]|-{\stackrel{\;y_{m_{\mathcal{A}}}^{-1}}{\Longleftarrow}} &  & PT\mathcal{A}\\
 &  &  &  &  &  & \; &  & \;\\
\mathcal{A}\ar[rr]_{u_{\mathcal{A}}}\ar[uu]^{y_{\mathcal{A}}} &  & T\mathcal{A}\ar[uu]_{y_{T\mathcal{A}}} &  &  &  &  & T^{2}\mathcal{A}\ar[uu]|-{Ty_{T\mathcal{A}}}\ar[uull]^{T^{2}y_{\mathcal{A}}}\ar[uurr]_{y_{T^{2}\mathcal{A}}}\ar[rr]_{m_{\mathcal{A}}} &  & T\mathcal{A}\ar[uurr]_{y_{T\mathcal{A}}}
}
A
\]
To see that the composite $Pm_{\mathcal{A}}\cdot\lambda_{T\mathcal{A}}\cdot T\lambda_{\mathcal{A}}$
on the right is a left extension, note that Proposition \ref{lambdabeck}
shows $\lambda_{T\mathcal{A}}\cdot T\lambda_{\mathcal{A}}$ is a left
extension above, and since $T^{2}y_{\mathcal{A}}$ is \emph{$P$-}admissible
by Proposition \ref{preserveadm}, the left extension property is
respected upon whiskering by $Pm_{\mathcal{A}}$. 
\end{proof}

\subsection{Lifting a KZ Doctrine to Algebras via a Distributive Law}

In this subsection we show that given a pseudo-distributive law of
a pseudomonad $T$ over a KZ doctrine $P$, we may lift $P$ to a
KZ doctrine $\widetilde{P}$ on the 2-category of pseudo $T$-algebras.
This is $\left(d\right)\Longrightarrow\left(a\right)$ of Theorem
\ref{liftkzequiv}. However, before we show this implication we will
first need to verify the following proposition.
\begin{prop}
\label{claim} Suppose we are given statement $\left(d\right)$ of
Theorem \ref{liftkzequiv}. It then follows that:
\begin{enumerate}
\item $T$ preserves $P$-admissible maps;
\end{enumerate}
\noindent and for every pseudo $T$-algebra $\left(\mathcal{A},T\mathcal{A}\overset{x}{\rightarrow}\mathcal{A}\right)$,
\begin{enumerate}\setcounter{enumi}{1}

\item there exists a 1-cell $z_{x}$ given as the left extension
via an isomorphism $\xi_{x}$ 
\[
\xymatrix{TP\mathcal{A}\ar[r]^{z_{x}} & P\mathcal{A}\\
T\mathcal{A}\ar[r]_{x}\ar[u]^{Ty_{\mathcal{A}}}\ar@{}[ur]|-{\Uparrow\xi_{x}} & \mathcal{A}\ar[u]_{y_{\mathcal{A}}}
}
\]
which we call the Day convolution at $x$;

\item each $z_{x}$ is $T_{P}$-cocontinuous;

\item the respective diagrams
\[
\xymatrix@=1em{P\mathcal{A}\ar[rr]^{u_{P\mathcal{A}}} &  & TP\mathcal{A}\ar@{}[dldl]|-{\Uparrow u_{y_{\mathcal{A}}}}\ar[rr]^{z_{x}}\ar@{}[ddrr]|-{\Uparrow\xi_{x}} &  & P\mathcal{A} &  & T^{2}P\mathcal{A}\ar[rr]^{m_{P\mathcal{A}}} &  & TP\mathcal{A}\ar@{}[dldl]|-{\Uparrow m_{y_{\mathcal{A}}}}\ar[rr]^{z_{x}}\ar@{}[ddrr]|-{\Uparrow\xi_{x}} &  & P\mathcal{A}\\
 &  &  & \; &  &  &  &  &  & \;\\
\mathcal{A}\ar[rr]_{u_{\mathcal{A}}}\ar[uu]^{y_{\mathcal{A}}} &  & T\mathcal{A}\ar[uu]|-{Ty_{\mathcal{A}}}\ar[rr]_{x} &  & \mathcal{A}\ar[uu]_{y_{\mathcal{A}}} &  & T^{2}\mathcal{A}\ar[rr]_{m_{\mathcal{A}}}\ar[uu]^{T^{2}y_{\mathcal{A}}} &  & T\mathcal{A}\ar[uu]|-{Ty_{\mathcal{A}}}\ar[rr]_{x} &  & \mathcal{A}\ar[uu]_{y_{\mathcal{A}}}
}
\]
exhibit $z_{x}\cdot u_{P\mathcal{A}}$ and $z_{x}\cdot m_{P\mathcal{A}}$
as left extensions.

\end{enumerate}

\end{prop}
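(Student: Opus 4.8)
The plan is to transport each of the assertions (2)--(4) from the corresponding property of the pseudo\nobreakdash-distributive law datum $\lambda$ along the $1$\nobreakdash-cell $Px\colon PT\mathcal A\to P\mathcal A$. Assertion (1) is literally part (1) of $(d)$, so there is nothing to do there. Recall from $(d)$ (and its footnotes) that each $Ty_{\mathcal A}$ is $P$-admissible --- hence so is $T^{2}y_{\mathcal A}$ by (1) --- that $\omega_{2}^{\mathcal A}$ exhibits $\lambda_{\mathcal A}$ as a left extension of $y_{T\mathcal A}$ along $Ty_{\mathcal A}$ and is invertible, that each $\lambda_{\mathcal A}$ is $T_{P}$-cocontinuous, and that the two diagrams in the fourth clause of Definition \ref{distkzdoctrine} exhibit $\lambda_{\mathcal A}\cdot u_{P\mathcal A}$ and $\lambda_{\mathcal A}\cdot m_{P\mathcal A}$ as left extensions. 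The only general fact used throughout is that for any $1$\nobreakdash-cell $f$ the induced $Pf$ is a $P$-homomorphism --- indeed $Pf=\overline{y\cdot f}$, so it preserves all left extensions along unit components by Definition \ref{defkzdoctrine}(b) --- together with the fact that $P\mathcal A$ and $PT\mathcal A$ are $P$-cocomplete. Combining this with characterization (2) of Definition \ref{admequiv}: since $Px\colon PT\mathcal A\to P\mathcal A$ is a $P$-homomorphism between $P$-cocomplete objects, it preserves \emph{every} left extension along a $P$-admissible map whose apex is $PT\mathcal A$.

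For (2), set $z_{x}:=Px\cdot\lambda_{\mathcal A}$. Because $Px$ preserves the left extension $(\lambda_{\mathcal A},\omega_{2}^{\mathcal A})$ of $y_{T\mathcal A}$ along the $P$-admissible map $Ty_{\mathcal A}$, the whiskering $Px\cdot\omega_{2}^{\mathcal A}$ exhibits $z_{x}$ as a left extension of $Px\cdot y_{T\mathcal A}$ along $Ty_{\mathcal A}$; pasting with the pseudonaturality isomorphism $y_{x}\colon y_{\mathcal A}\cdot x\cong Px\cdot y_{T\mathcal A}$ then produces an invertible $2$\nobreakdash-cell $\xi_{x}$ exhibiting $z_{x}$ as a left extension of $y_{\mathcal A}\cdot x$ along $Ty_{\mathcal A}$, which is the asserted Day convolution at $x$. (Alternatively: such a left extension exists outright because $Ty_{\mathcal A}$ is $P$-admissible and $P\mathcal A$ is $P$-cocomplete, and it is exhibited by an isomorphism because $Ty_{\mathcal A}$ is moreover $P$-fully faithful; by uniqueness of left extensions it agrees with $Px\cdot\lambda_{\mathcal A}$.) For (3), let $(\overline G,c_{G})$ be the left extension of an arbitrary $G\colon\mathcal B\to P\mathcal A$ along $y_{\mathcal B}$; since $\lambda_{\mathcal A}$ is $T_{P}$-cocontinuous it $T$-preserves $(\overline G,c_{G})$, so $(\lambda_{\mathcal A}\cdot T\overline G,\lambda_{\mathcal A}\cdot Tc_{G})$ is a left extension along the $P$-admissible map $Ty_{\mathcal B}$ with apex $PT\mathcal A$, hence is preserved by $Px$. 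Therefore $(z_{x}\cdot T\overline G,z_{x}\cdot Tc_{G})$ is a left extension of $z_{x}\cdot TG$ along $Ty_{\mathcal B}$, i.e.\ $z_{x}$ $T$-preserves $(\overline G,c_{G})$; as $G$ was arbitrary, $z_{x}$ is $T_{P}$-cocontinuous.

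For (4), by $(d)$ the left (resp.\ right) of the two diagrams in the fourth clause of Definition \ref{distkzdoctrine} exhibits $\lambda_{\mathcal A}\cdot u_{P\mathcal A}$ as a left extension along $y_{\mathcal A}$ (resp.\ $\lambda_{\mathcal A}\cdot m_{P\mathcal A}$ as a left extension along $T^{2}y_{\mathcal A}$), each with apex $PT\mathcal A$; applying the $P$-homomorphism $Px$ exhibits $z_{x}\cdot u_{P\mathcal A}$ and $z_{x}\cdot m_{P\mathcal A}$ as left extensions. It then only remains to verify that whiskering those $(d)$-diagrams by $Px$ and pasting with the relevant pseudonaturality isomorphism of $y$ reproduces precisely the two diagrams displayed in the statement; this is immediate once $\xi_{x}$ is unfolded into $Px\cdot\omega_{2}^{\mathcal A}$ and $y_{x}$, using only that $\omega_{2}$ is a modification and the pseudonaturality of $u$, $m$ and $y$. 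The one piece of genuine labour in the proposition is this final $2$\nobreakdash-cell bookkeeping; conceptually everything reduces to the slogan that $P$-homomorphisms preserve left extensions along $P$-admissible maps into $P$-cocomplete objects, applied to $Px$, together with the cocontinuity and unit/multiplication left-extension properties of $\lambda$ handed to us by $(d)$.
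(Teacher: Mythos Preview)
Your proof is correct and follows essentially the same approach as the paper: set $z_x = Px\cdot\lambda_{\mathcal A}$, and transport each of the properties (2)--(4) of $\lambda_{\mathcal A}$ along $Px$ using that $Px$ is a $P$-homomorphism and hence preserves left extensions along $P$-admissible maps into $PT\mathcal A$. Your framing via characterisation (2) of Definition~\ref{admequiv} is equivalent to the paper's use of characterisation (1), and your explicit remark that $T^2y_{\mathcal A}$ is $P$-admissible (needed for the multiplication half of (4)) is actually more careful than the paper's terse ``noting each $Ty_{\mathcal A}$ is $P$-admissible''. The only superfluous ingredient is the appeal to $\omega_2$ being a modification in the final bookkeeping: once $\xi_x$ is unfolded as the pasting of $Px\cdot\omega_2^{\mathcal A}$ with $y_x^{-1}$, the identification of the displayed diagrams with $Px$ applied to the $(d)$-diagrams followed by $y_x^{-1}$ is immediate and uses no further coherence.
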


\begin{proof}
(1) This property is straight from the definition. We include this
property here so that this proposition may be taken as one the equivalent
conditions of Theorem \ref{liftkzequiv}. We will remark about this
later in this subsection. Now, let a pseudo $T$-algebra $\left(\mathcal{A},T\mathcal{A}\overset{x}{\rightarrow}\mathcal{A}\right)$
be given. (2) The left extension $\left(z_{x},\xi_{x}\right)$ is
given by the diagram
\[
\xymatrix@=1em{TP\mathcal{A}\ar[rr]^{\lambda_{\mathcal{A}}} &  & PT\mathcal{A}\ar@{}[dl]|-{\stackrel{\omega_{2}^{\mathcal{A}}}{\Longleftarrow}}\ar[rr]^{Px}\ar@{}[drdr]|-{\stackrel{y_{x}^{-1}}{\Longleftarrow}} &  & P\mathcal{A}\\
 & \;\\
 &  & T\mathcal{A}\ar[uu]|-{y_{T\mathcal{A}}}\ar[uull]^{Ty_{\mathcal{A}}}\ar[rr]_{x} &  & \mathcal{A}\ar[uu]_{y_{\mathcal{A}}}
}
\]
where the left extension $\lambda_{\mathcal{A}}$ is preserved by
$Px$ as $Ty_{\mathcal{A}}$ is \emph{$P$-}admissible. (3) Suppose
we are given a left extension as on the left below.
\[
\xymatrix@=1em{P\mathcal{D}\ar[rr]^{\overline{F}} &  & P\mathcal{A} &  &  &  & TP\mathcal{D}\ar[rr]^{T\overline{F}} &  & TP\mathcal{A}\ar[rr]^{z_{x}} &  & P\mathcal{A}\\
 & \ar@{}[ul]|-{\overset{c_{F}}{\Longleftarrow}} &  &  &  &  &  & \ar@{}[ul]|-{\overset{Tc_{F}}{\Longleftarrow}}\\
\mathcal{D}\ar[uu]^{y_{\mathcal{D}}}\ar[uurr]_{F} &  &  &  &  &  & T\mathcal{D}\ar[uu]^{Ty_{\mathcal{D}}}\ar[uurr]_{TF}
}
\]
As this left extension is $T$-preserved by $\lambda_{\mathcal{A}}$,
which in turn is preserved by $Px$ as $Ty_{\mathcal{D}}$ is $P$-admissible,
the diagram on the right exhibits $z_{x}\cdot T\overline{F}=Px\cdot\lambda_{\mathcal{A}}\cdot T\overline{F}$
as a left extension. (4) Again noting each $Ty_{\mathcal{A}}$ is
\emph{$P$-}admissible, we see the left extensions
\[
\xymatrix@=1em{P\mathcal{A}\ar[rr]^{u_{P\mathcal{A}}} &  & TP\mathcal{A}\ar@{}[dldl]|-{\overset{u_{y}^{\mathcal{A}}}{\Longleftarrow}}\ar[rr]^{\lambda_{\mathcal{A}}}\ar@{}[dr]|-{\overset{\omega_{2}^{\mathcal{A}}}{\Longleftarrow}} &  & PT\mathcal{A} &  & T^{2}P\mathcal{A}\ar[rr]^{m_{P\mathcal{A}}} &  & TP\mathcal{A}\ar@{}[dldl]|-{\overset{m_{y}^{\mathcal{A}}}{\Longleftarrow}}\ar[rr]^{\lambda_{\mathcal{A}}}\ar@{}[dr]|-{\overset{\omega_{2}^{\mathcal{A}}}{\Longleftarrow}} &  & PT\mathcal{A}\\
 &  &  & \; &  &  &  &  &  & \;\\
\mathcal{A}\ar[rr]_{u_{\mathcal{A}}}\ar[uu]^{y_{\mathcal{A}}} &  & T\mathcal{A}\ar[uu]|-{Ty_{\mathcal{A}}}\ar[uurr]_{y_{T\mathcal{A}}} &  &  &  & T^{2}\mathcal{A}\ar[rr]_{m_{\mathcal{A}}}\ar[uu]^{T^{2}y_{\mathcal{A}}} &  & T\mathcal{A}\ar[uu]|-{Ty_{\mathcal{A}}}\ar[uurr]_{y_{T\mathcal{A}}}
}
\]
are preserved upon composing with $Px$. Trivially, these left extensions
are then preserved upon pasting with the isomorphism $y_{x}$.
\end{proof}
The following remark is not needed for the proof of Theorem \ref{liftkzequiv},
it merely explains why the consequences in the above proposition are
equivalent to the conditions (\emph{a}) through to (\emph{f}) of this
theorem.
\begin{rem}
Note that from this proposition one may recover statement \emph{(d)}
of Theorem \ref{liftkzequiv}. This is since given the data of this
proposition, one may recover a choice of each $\lambda_{\mathcal{A}}$
and its exhibiting invertible 2-cell $\omega_{2}^{\mathcal{A}}$ as
a left extension, by taking the pasting
\[
\xymatrix@=1em{TP\mathcal{A}\ar@{}[ddrr]|-{\Uparrow Ty_{u_{\mathcal{A}}}^{-1}}\myar{TPu_{\mathcal{A}}}{rr} &  & TPT\mathcal{A}\ar[rr]^{z_{m_{\mathcal{A}}}}\ar@{}[ddrr]|-{\Uparrow\xi_{m_{\mathcal{A}}}} &  & PT\mathcal{A}\\
\\
T\mathcal{A}\ar[uu]^{Ty_{\mathcal{A}}}\ar[rr]_{Tu_{\mathcal{A}}} &  & T^{2}\mathcal{A}\ar[uu]|-{Ty_{T\mathcal{A}}}\ar[rr]_{m_{\mathcal{A}}} &  & T\mathcal{A}\ar[uu]_{y_{T\mathcal{A}}}
}
\]

The condition of each $\lambda_{\mathcal{A}}$ being $T_{P}$-cocontinuous
is inherited from the corresponding condition on each $z_{m_{\mathcal{A}}}$.
Condition (4) of this proposition yields the corresponding conditions
on the maps $\lambda_{\mathcal{A}}$. We omit this last calculation,
as it is not required for the proof of the main theorem. We just note
that this last calculation relies on the pseudo-algebra structure
of the maps $z_{x}\colon TP\mathcal{A}\to P\mathcal{A}$ constructed
later on in this subsection. The construction of the algebra structure
may be done with all of the axioms for a pseudo-distributive law over
a KZ doctrine without the last (which we have recovered from the proposition),
in addition to the last condition of the proposition.
\end{rem}

The following proposition will be useful in the proof that $\left(d\right)$
implies $\left(a\right)$.
\begin{prop}
\label{admcocontinuousequiv}Suppose we are given a 2-category $\mathscr{C}$
equipped with a pseudomonad $\left(T,u,m\right)$ and a KZ doctrine
$\left(P,y\right)$. Further suppose that we are given a pseudo-distributive
law over a KZ doctrine $\lambda\colon TP\to PT$. Then for any two
$P$-cocomplete objects $\mathcal{C}$ and $\mathcal{D}$, a 1-cell
$u\colon T\mathcal{C}\to\mathcal{D}$ is $T_{P}$-cocontinuous if
and only if it is $T_{P}$-adm-cocontinuous. 
\end{prop}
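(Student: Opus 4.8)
The plan is to establish the two implications separately. One direction is immediate: each unit component $y_{\mathcal{A}}\colon\mathcal{A}\to P\mathcal{A}$ is $P$-admissible, so a $T_{P}$-adm-cocontinuous $u$ in particular $T$-preserves every left extension along a unit component, hence is $T_{P}$-cocontinuous.

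For the converse, suppose $u\colon T\mathcal{C}\to\mathcal{D}$ is $T_{P}$-cocontinuous, let $L\colon\mathcal{A}\to\mathcal{B}$ be $P$-admissible, and let $(J,\delta)$ exhibit $J$ as a left extension of some $K\colon\mathcal{A}\to\mathcal{C}$ along $L$; the goal is to show that $(u\cdot TJ,u\cdot T\delta)$ is a left extension of $u\cdot TK$ along $TL$. Writing $(\overline{K},c_{K})$ for the left extension of $K$ along $y_{\mathcal{A}}$ (exhibited by an isomorphism, since $\mathcal{C}$ is $P$-cocomplete) and $(R_{L},\varphi_{L})$ for the left extension of $y_{\mathcal{A}}$ along $L$ from Definition \ref{admequiv}, the composite $\overline{K}\cdot R_{L}$ is a left extension of $\overline{K}\cdot y_{\mathcal{A}}\cong K$ along $L$ because $\overline{K}$ preserves $R_{L}$; so by uniqueness of left extensions we may assume $J=\overline{K}\cdot R_{L}$ with $\delta$ the evident composite 2-cell. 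Since $T$ preserves $P$-admissible maps, $TL$ is also $P$-admissible, so the left extension $(R_{TL},\varphi_{TL})$ of $y_{T\mathcal{A}}$ along $TL$ exists, together with the isomorphism $\lambda_{\mathcal{A}}\cdot TR_{L}\cong R_{TL}$ provided by the last assertion of Proposition \ref{lambdabeck}.

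The heart of the argument is then a short chain of identifications. First, because $u$ is $T_{P}$-cocontinuous and $\overline{K}$ is a left extension along the unit component $y_{\mathcal{A}}$, the cell $u\cdot Tc_{K}$ exhibits $u\cdot T\overline{K}$ as a left extension of $u\cdot TK$ along $Ty_{\mathcal{A}}$. Second, the left extension $\overline{u\cdot TK}\colon PT\mathcal{A}\to\mathcal{D}$ of $u\cdot TK$ along $y_{T\mathcal{A}}$ (which exists as $\mathcal{D}$ is $P$-cocomplete) is a $P$-homomorphism: by $P$-cocompleteness of $\mathcal{D}$ it respects every left extension of the form $\overline{F}$ into $PT\mathcal{A}$, and since $PT\mathcal{A}$ is $P$-cocomplete every left extension along a unit component into $PT\mathcal{A}$ is of this form. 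Hence, by the characterization of $P$-admissibility of $Ty_{\mathcal{A}}$ through preservation by $P$-homomorphisms, $\overline{u\cdot TK}$ preserves the left extension $(\lambda_{\mathcal{A}},\omega_{2}^{\mathcal{A}})$ of $y_{T\mathcal{A}}$ along $Ty_{\mathcal{A}}$; pasting with the isomorphism $c_{u\cdot TK}$ shows $\overline{u\cdot TK}\cdot\lambda_{\mathcal{A}}$ is again a left extension of $u\cdot TK$ along $Ty_{\mathcal{A}}$, so by uniqueness $u\cdot T\overline{K}\cong\overline{u\cdot TK}\cdot\lambda_{\mathcal{A}}$ compatibly with the exhibiting cells. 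Whiskering on the right by $TR_{L}$ and using $\lambda_{\mathcal{A}}\cdot TR_{L}\cong R_{TL}$ yields $u\cdot TJ=u\cdot T\overline{K}\cdot TR_{L}\cong\overline{u\cdot TK}\cdot R_{TL}$, and the latter is a left extension of $u\cdot TK$ along $TL$ since $R_{TL}$ is preserved by $\overline{u\cdot TK}$ (the $P$-admissibility of $TL$) and $\overline{u\cdot TK}\cdot y_{T\mathcal{A}}\cong u\cdot TK$. Thus $u\cdot TJ$ is a left extension of $u\cdot TK$ along $TL$.

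The only thing left is to verify that the exhibiting 2-cell of this left extension is exactly $u\cdot T\delta$, which I would do by tracing $u\cdot T\delta=(u\cdot T\overline{K}\cdot T\varphi_{L})\circ(u\cdot Tc_{K})$ through the isomorphisms above, using the explicit description of $\gamma_{L}$ (and hence of $\lambda_{\mathcal{A}}\cdot TR_{L}\cong R_{TL}$) from the proof of Proposition \ref{lambdabeck}. I expect this bookkeeping with whiskered copies of $\omega_{2}^{\mathcal{A}}$, $\varphi_{L}$ and $\varphi_{TL}$ to be the main obstacle; the sole genuinely new conceptual input is the identification $u\cdot T\overline{K}\cong\overline{u\cdot TK}\cdot\lambda_{\mathcal{A}}$, resting on the observation that $\overline{u\cdot TK}$ is a $P$-homomorphism.
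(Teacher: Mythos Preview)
Your argument is correct and rests on the same three ingredients as the paper's proof: the factorisation of the given left extension through $R_{L}$, Proposition~\ref{lambdabeck} (giving $\lambda_{\mathcal{A}}\cdot TR_{L}\cong R_{TL}$ as a left extension), and the $P$-admissibility of $TL$ (so that the relevant $P$-homomorphism into $\mathcal{D}$ preserves this left extension). The organisation differs slightly, however. The paper first establishes the isomorphism $u\cdot T(y_{\mathcal{C}})_{\ast}\cong (y_{\mathcal{D}})_{\ast}\cdot Pu\cdot\lambda_{\mathcal{C}}$ at the level of $\mathcal{C}$ (comparing two left extensions of $u$ along $Ty_{\mathcal{C}}$), and then invokes the pseudonaturality of $\lambda$ from Lemma~\ref{lambdaccts} to pass the $\lambda$ across $TPH$ down to the $\mathcal{A}$ level. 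You instead work directly at $\mathcal{A}$: you compare $u\cdot T\overline{K}$ with $\overline{u\cdot TK}\cdot\lambda_{\mathcal{A}}$ as two left extensions of $u\cdot TK$ along $Ty_{\mathcal{A}}$, so that Lemma~\ref{lambdaccts} is never explicitly needed. Since your $\overline{u\cdot TK}$ is (up to isomorphism) the paper's composite $(y_{\mathcal{D}})_{\ast}\cdot Pu\cdot PTH$, the two arguments are really the same identification viewed from different ends; your route is a touch more economical, while the paper's route makes the role of the distributive-law datum (the pseudonaturality of $\lambda$) more visible. The bookkeeping you flag at the end, matching the exhibiting 2-cell with $u\cdot T\delta$, is handled in the paper simply by building the correct pasting from the outset and observing that composing with invertible 2-cells preserves left extensions; your chain of isomorphisms will unwind to the same thing.
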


\begin{proof}
Supposing that $u$ is $T_{P}$-cocontinuous we check that $u$ is
necessarily $T_{P}$-adm-cocontinuous. To see this, we first note
that we have an induced isomorphism of left extensions as a consequence
of having the two left extensions
\[
\xymatrix@=1em{TP\mathcal{C}\ar[rr]^{\lambda_{\mathcal{C}}} &  & PT\mathcal{C}\ar[rr]^{Pu} &  & P\mathcal{D}\ar[rr]^{\left(y_{\mathcal{D}}\right)_{\ast}} &  & \mathcal{D} &  & TP\mathcal{C}\ar[rr]^{T\left(y_{\mathcal{C}}\right)_{\ast}} &  & T\mathcal{C}\ar[rr]^{u} &  & \mathcal{D}\\
 & \;\ar@{}[ul]|-{\stackrel{\omega_{2}^{\mathcal{C}}}{\Longleftarrow}} &  &  &  &  &  &  &  & \;\ar@{}[ul]|-{\stackrel{Tc_{\textnormal{id}_{\mathcal{C}}}}{\Longleftarrow}} & \ar@{}[]|-{\cong}\\
T\mathcal{C}\ar[uu]^{Ty_{\mathcal{C}}}\ar[rruu]_{y_{T\mathcal{C}}}\ar[rr]_{u} &  & \mathcal{D}\ar[rruu]_{y_{\mathcal{D}}}\ar@{}[uu]|-{\stackrel{y_{u}^{-1}}{\Longleftarrow}}\ar@/_{1pc}/[rurrur]_{\textnormal{id}_{\mathcal{D}}} &  & \ar@{}[uu]|-{\stackrel{c_{\textnormal{id}_{\mathcal{D}}}}{\Longleftarrow}} &  &  &  & T\mathcal{C}\ar[uu]^{Ty_{\mathcal{C}}}\ar[rruu]_{T\textnormal{id}_{\mathcal{C}}}\ar@/_{1pc}/[rurrur]_{u}
}
\]
We must check that the left extension (where $L$ is $P$-admissible)
\[
\xymatrix@=1em{\mathcal{B}\ar[rr]^{R_{L}} &  & P\mathcal{A}\ar@{}[dl]|-{\stackrel{\varphi_{L}}{\Longleftarrow}}\ar[rr]^{PH} &  & P\mathcal{C}\ar[rr]^{\left(y_{\mathcal{C}}\right)_{\ast}} &  & \mathcal{C}\\
 & \; &  & \; &  & \;\ar@{}[ul]|-{\stackrel{c_{\textnormal{id}_{\mathcal{C}}}}{\Longleftarrow}}\\
 &  & \mathcal{A}\ar[uull]^{L}\ar[uu]|-{y_{\mathcal{A}}}\ar[rr]_{H}\ar@{}[uurr]|-{\stackrel{y_{H}^{-1}}{\Longleftarrow}} &  & \mathcal{C}\ar[uu]|-{y_{\mathcal{C}}}\ar[rruu]_{\textnormal{id}_{\mathcal{C}}}
}
\]
is $T$-preserved by $u$. Indeed, on applying $T$ and whiskering
by $u$, and then pasting with this isomorphism of left extensions
and a naturality isomorphism of $\lambda$ (which we have by Lemma
\ref{lambdaccts}), we obtain 
\[
\xymatrix@=1em{ &  & PT\mathcal{A}\ar[rr]^{PTH} &  & PT\mathcal{C}\ar[rr]^{Pu} &  & P\mathcal{D}\ar[rrdd]^{\left(y_{\mathcal{D}}\right)_{\ast}} & \;\\
\\
T\mathcal{B}\ar[rr]^{TR_{L}} &  & TP\mathcal{A}\ar@{}[dl]|-{\stackrel{T\varphi_{L}}{\Longleftarrow}}\ar[rr]^{TPH}\ar[uu]^{\lambda_{\mathcal{A}}}\ar@{}[uurr]|-{\stackrel{\lambda_{H}^{-1}}{\Longleftarrow}} &  & TP\mathcal{C}\ar[rr]^{T\left(y_{\mathcal{C}}\right)_{\ast}}\ar[uu]_{\lambda_{\mathcal{C}}}\ar@{}[uurrr]|-{\cong} &  & T\mathcal{C}\ar[rr]^{u} &  & \mathcal{D}\\
 & \; &  & \; &  & \;\ar@{}[ul]|-{\stackrel{Tc_{\textnormal{id}_{\mathcal{C}}}}{\Longleftarrow}}\\
 &  & T\mathcal{A}\ar[uull]^{TL}\ar[uu]|-{Ty_{\mathcal{A}}}\ar[rr]_{TH}\ar@{}[uurr]|-{\stackrel{Ty_{H}^{-1}}{\Longleftarrow}} &  & T\mathcal{C}\ar[uu]|-{Ty_{\mathcal{C}}}\ar[rruu]_{T\textnormal{id}_{\mathcal{C}}}
}
\]
Then noting that pasting with invertible 2-cells preserves left extensions
and that
\[
\xymatrix@=1em{T\mathcal{B}\ar[rr]^{TR_{L}} &  & TP\mathcal{A}\ar@{}[dl]|-{\stackrel{T\varphi_{L}}{\Longleftarrow}}\ar[rr]^{\lambda_{\mathcal{A}}}\ar@{}[dr]|-{\stackrel{\omega_{2}}{\Longleftarrow}} &  & PT\mathcal{A}\ar[rr]^{PTH} &  & PT\mathcal{C}\ar[rr]^{Pu} &  & P\mathcal{D}\ar[rr]^{\left(y_{\mathcal{D}}\right)_{\ast}} &  & \mathcal{D}\\
 & \; &  & \;\\
 &  & T\mathcal{A}\ar[uull]^{TL}\ar[uu]|-{Ty_{\mathcal{A}}}\ar[rruu]_{y_{T\mathcal{A}}}
}
\]
is a left extension as a consequence of $TL$ being $P$-admissible
(thus the left extension $\lambda_{\mathcal{A}}\cdot TR_{L}$ in Proposition
\ref{lambdabeck} being preserved), we have the result.
\end{proof}
We now have everything required to complete the proof of the main
theorem.
\begin{thm}
\label{dimpliesa} In the statement of Theorem \ref{liftkzequiv}
(d) implies (a).
\end{thm}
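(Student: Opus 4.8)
The plan is to build the lifted KZ doctrine $\widetilde{P}$ directly according to Definition \ref{defkzdoctrine}, using only the consequences packaged in Proposition \ref{claim} together with Propositions \ref{docleftext} and \ref{admcocontinuousequiv}. On objects, set $\widetilde{P}(\mathcal{A},x):=(P\mathcal{A},z_{x})$ with $z_{x}=Px\cdot\lambda_{\mathcal{A}}$ the Day convolution of Proposition \ref{claim}(2); the first task is to equip this with a pseudo $T$-algebra structure. Since $\xi_{x}$ exhibits $z_{x}$ as the left extension of $y_{\mathcal{A}}\cdot x$ along $Ty_{\mathcal{A}}$, the structure cells are forced. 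By Proposition \ref{claim}(4) the composite $z_{x}\cdot u_{P\mathcal{A}}$ is the left extension of $y_{\mathcal{A}}\cdot x\cdot u_{\mathcal{A}}\cong y_{\mathcal{A}}$ along $y_{\mathcal{A}}$, hence canonically isomorphic to $\textnormal{id}_{P\mathcal{A}}$, giving an invertible unit cell $\nu_{x}$; and, combining pseudonaturality of $\lambda$ (available by Lemma \ref{lambdaccts} since each $\lambda_{\mathcal{A}}$ is $T_{P}$-cocontinuous), pseudo-associativity of $x$, and the fact that $Px$ preserves left extensions along the $P$-admissible map $T^{2}y_{\mathcal{A}}$, one identifies both $z_{x}\cdot Tz_{x}$ and $z_{x}\cdot m_{P\mathcal{A}}$ with the left extension of $y_{\mathcal{A}}\cdot x\cdot m_{\mathcal{A}}$ along $T^{2}y_{\mathcal{A}}$, giving an invertible associativity cell $\mu_{x}$. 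The pseudo $T$-algebra axioms, and the assertion that $(y_{\mathcal{A}},\xi_{x})$ is a pseudo $T$-morphism $(\mathcal{A},x)\to(P\mathcal{A},z_{x})$, then follow from uniqueness of left extensions; this supplies the object assignment (i) and the unit (ii) of a KZ doctrine.

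Next I would produce the left extensions (iii). Let $(F,\bar{f})\colon(\mathcal{A},x)\to(P\mathcal{B},z_{y})$ be a 1-cell of $\text{ps-}T\text{-alg}$, so $F\colon\mathcal{A}\to P\mathcal{B}$ and $\bar{f}$ is invertible; let $(\overline{F},c_{F})$ be the left extension of $F$ along $y_{\mathcal{A}}$ in $\mathscr{C}$, which exists with $c_{F}$ invertible because $(P,y)$ is a KZ doctrine. I claim Proposition \ref{docleftext} lifts this: $(\overline{F},c_{F})$ is a left extension along the unit $y_{\mathcal{A}}$ into the $P$-cocomplete object $P\mathcal{B}$, hence $T$-preserved by $z_{y}$ by Proposition \ref{claim}(3); and the resulting left extension $z_{y}\cdot T\overline{F}$ is one along $Ty_{\mathcal{A}}$, which is $P$-admissible since $y_{\mathcal{A}}$ is and $T$ preserves $P$-admissible maps (Proposition \ref{claim}(1)), so it too is $T$-preserved by $z_{y}$ because $z_{y}$ is $T_{P}$-adm-cocontinuous --- equivalently $T_{P}$-cocontinuous, by Proposition \ref{admcocontinuousequiv}. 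Feeding the oplax structure $\xi_{x}$ on $y_{\mathcal{A}}$ and the lax structure $\bar{f}$ on $F$ into Proposition \ref{docleftext} yields a unique lax $T$-morphism structure $\beta$ on $\overline{F}$ making $c_{F}$ a $T$-transformation, together with a $T$-left extension; since $\xi_{x}$, $\bar{f}$ and $c_{F}$ are invertible, the equation defining $\beta$ exhibits it, up to invertible structural cells, as an invertible 2-cell, so $(\overline{F},\beta)$ is a pseudo $T$-morphism. As pseudo $T$-morphisms are simultaneously lax and oplax, this $T$-left extension restricts to an ordinary left extension of $(F,\bar{f})$ along $(y_{\mathcal{A}},\xi_{x})$ in $\text{ps-}T\text{-alg}$, exhibited by the isomorphism $c_{F}$.

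Finally I would verify axioms (a) and (b). For (a), the left extension of the unit $(y_{\mathcal{A}},\xi_{x})$ along itself is $(\textnormal{id}_{P\mathcal{A}},\textnormal{id})$: on underlying data this is axiom (a) for $(P,y)$, and by uniqueness in Proposition \ref{docleftext} the identity lax structure on $\textnormal{id}_{P\mathcal{A}}$ is the one making the identity 2-cell a $T$-transformation. For (b), given $(G,\bar{g})\colon(\mathcal{B},y)\to(P\mathcal{C},z_{w})$, the extension $\overline{(G,\bar{g})}$ must respect $\overline{(F,\bar{f})}$; on underlying 1-cells $\overline{G}$ respects $\overline{F}$ by axiom (b) for $(P,y)$, and since the lifted $T$-morphism and $T$-transformation structures are uniquely determined by this underlying data via Proposition \ref{docleftext}, the whiskered $T$-transformation again exhibits a $T$-left extension, whence the same restriction argument gives respect in $\text{ps-}T\text{-alg}$. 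That $\widetilde{P}$ genuinely lifts $P$ along the forgetful 2-functor $U\colon\text{ps-}T\text{-alg}\to\mathscr{C}$ (i.e. $U\widetilde{P}=PU$ on objects, with units and left extensions compatible) is immediate from the construction. The main obstacle is the first paragraph: turning $(P\mathcal{A},z_{x})$ into a pseudo $T$-algebra and $(y_{\mathcal{A}},\xi_{x})$ into a pseudo $T$-morphism is a genuine diagram chase --- the structure cells are pinned down by left-extension uniqueness, but checking the unit, counit and associativity coherences requires careful use of the $T$-coherence axioms and of pseudonaturality of $\lambda$ and of $y$; once this is in place, the remaining steps are essentially bookkeeping around Propositions \ref{docleftext}, \ref{claim} and \ref{admcocontinuousequiv}.
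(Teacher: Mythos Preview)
Your proposal follows essentially the same route as the paper: construct $\widetilde{P}$ directly as a KZ doctrine in the sense of Definition~\ref{defkzdoctrine}, build the pseudo $T$-algebra structure on $(P\mathcal{A},z_{x})$ by comparing left extensions along $T^{2}y_{\mathcal{A}}$ and $y_{\mathcal{A}}$ (using Proposition~\ref{claim}(4) and $T_{P}$-adm-cocontinuity of $z_{x}$ via Proposition~\ref{admcocontinuousequiv}), take $(y_{\mathcal{A}},\xi_{x})$ as unit, and lift the extensions $(\overline{F},c_{F})$ via Proposition~\ref{docleftext}. The paper does exactly this, with the diagram chases you anticipate in your last paragraph spelled out.

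There is, however, one genuine gap. Your argument that the induced lax structure $\beta$ on $\overline{F}$ is invertible is not valid as stated. The defining equation from Proposition~\ref{docleftext} only pins down the whiskering $\beta\cdot Ty_{\mathcal{A}}$ (pasted with the invertible $z_{y}\cdot Tc_{F}$); knowing that this whiskering is an isomorphism does \emph{not} by itself force $\beta$ to be one. What is needed---and what the paper supplies---is that \emph{both} $z_{y}\cdot T\overline{F}$ and $\overline{F}\cdot z_{x}$ are left extensions of $z_{y}\cdot TF$ along $Ty_{\mathcal{A}}$: the first by $T_{P}$-cocontinuity of $z_{y}$ (which you note), the second because $Ty_{\mathcal{A}}$ is $P$-admissible and $\overline{F}$ is a $P$-homomorphism, so $\overline{F}$ preserves the left extension $(z_{x},\xi_{x})$, after which one pastes with the isomorphism $\bar{f}$. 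Only then does $\beta$ become an isomorphism of left extensions. You have all the ingredients for this on the table (admissibility of $Ty_{\mathcal{A}}$ in particular), but the sentence ``the equation defining $\beta$ exhibits it, up to invertible structural cells, as an invertible 2-cell'' skips the essential step.
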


\begin{proof}
Firstly, we observe that each $z_{x}$ is $T_{P}$-adm-cocontinuous
as a consequence of Proposition \ref{admcocontinuousequiv}. It follows
that we have the left extensions
\[
\xymatrix@=1em{T^{2}P\mathcal{A}\ar[rr]^{Tz_{x}} &  & TP\mathcal{A}\ar@{}[dldl]|-{\Uparrow T\xi_{x}}\ar[rr]^{z_{x}}\ar@{}[ddrr]|-{\Uparrow\xi_{x}} &  & P\mathcal{A} &  & T^{3}P\mathcal{A}\ar[rr]^{T^{2}z_{x}} &  & T^{2}P\mathcal{A}\ar[rr]^{Tz_{x}}\ar@{}[dldl]|-{\Uparrow T^{2}\xi_{x}} &  & TP\mathcal{A}\ar@{}[dldl]|-{\Uparrow T\xi_{x}}\ar[rr]^{z_{x}}\ar@{}[ddrr]|-{\Uparrow\xi_{x}} &  & P\mathcal{A}\\
 &  &  & \; &  &  &  &  &  &  &  & \;\\
T^{2}\mathcal{A}\ar[rr]_{Tx}\ar[uu]^{T^{2}y_{\mathcal{A}}} &  & T\mathcal{A}\ar[uu]|-{Ty_{\mathcal{A}}}\ar[rr]_{x} &  & \mathcal{A}\ar[uu]_{y_{\mathcal{A}}} &  & T^{3}\mathcal{A}\ar[rr]_{T^{2}x}\ar[uu]^{T^{3}y_{\mathcal{A}}} &  & T^{2}\mathcal{A}\ar[rr]_{Tx}\ar[uu]|-{T^{2}y_{\mathcal{A}}} &  & T\mathcal{A}\ar[uu]|-{Ty_{\mathcal{A}}}\ar[rr]_{x} &  & \mathcal{A}\ar[uu]_{y_{\mathcal{A}}}
}
\]
upon noting that each $T^{2}y_{\mathcal{A}}$ and $T^{3}y_{\mathcal{A}}$
is $P$-admissible. 

Secondly, we check that each $\left(P\mathcal{A},z_{x}\right)$ is
a pseudo $T$-algebra. We define our algebra structure maps as the
unique solutions to the following left extension problems (and note
they are invertible as they are isomorphisms of left extensions by
Proposition \ref{claim})
\[
\xymatrix{ & TP\mathcal{A}\ar[dr]^{z_{x}} &  &  &  & TP\mathcal{A}\ar[r]^{z_{x}}\ar@{}[rd]|-{\Uparrow\xi_{x}} & P\mathcal{A}\\
P\mathcal{A}\ar[rr]^{\textnormal{id}_{P\mathcal{A}}}\ar[ur]^{u_{P\mathcal{A}}} & \;\ar@{}[u]|-{\Uparrow\sigma_{x}} & P\mathcal{A} & = & P\mathcal{A}\ar[ur]^{u_{P\mathcal{A}}}\ar@{}[r]|-{\Uparrow u_{y_{\mathcal{A}}}} & T\mathcal{A}\ar[r]_{x}\ar[u]^{Ty_{\mathcal{A}}}\ar@{}[d]|-{\cong} & \mathcal{A}\ar[u]_{y_{\mathcal{A}}}\\
 &  & \mathcal{A}\ar[u]_{y_{\mathcal{A}}}\ar[ull]_{\quad\;\;\stackrel{\textnormal{id}}{\Longleftarrow}}^{y_{\mathcal{A}}} &  & \mathcal{A}\ar[ur]_{u_{\mathcal{A}}}\ar@/_{1pc}/[rru]_{\textnormal{id}_{\mathcal{A}}}\ar[u]^{y_{\mathcal{A}}} & \;\\
 & TP\mathcal{A}\ar[dr]^{z_{x}} &  &  &  & TP\mathcal{A}\ar[r]^{z_{x}}\ar@{}[rd]|-{\Uparrow\xi_{x}} & P\mathcal{A}\\
T^{2}P\mathcal{A}\ar[r]^{Tz_{x}}\ar[ur]^{m_{P\mathcal{A}}}\ar@{}[rd]|-{\Uparrow T\xi_{x}} & TP\mathcal{A}\ar[r]^{z_{x}}\ar@{}[rd]|-{\Uparrow\xi_{x}}\ar@{}[u]|-{\Uparrow\delta_{x}} & P\mathcal{A} & = & T^{2}P\mathcal{A}\ar[ur]^{m_{P\mathcal{A}}}\ar@{}[r]|-{\Uparrow m_{y_{\mathcal{A}}}} & T\mathcal{A}\ar[r]_{x}\ar[u]^{Ty_{\mathcal{A}}}\ar@{}[d]|-{\cong} & \mathcal{A}\ar[u]_{y_{\mathcal{A}}}\\
T^{2}\mathcal{A}\ar[r]_{Tx}\ar[u]^{T^{2}y_{\mathcal{A}}} & T\mathcal{A}\ar[r]_{x}\ar[u]|-{Ty_{\mathcal{A}}} & \mathcal{A}\ar[u]_{y_{\mathcal{A}}} &  & T^{2}\mathcal{A}\ar[ur]_{m_{\mathcal{A}}}\ar[u]^{T^{2}y_{\mathcal{A}}}\ar[r]_{Tx} & T\mathcal{A}\ar[ur]_{x}
}
\]
Note that these are the axioms for $\xi_{x}$ to exhibit $y_{\mathcal{A}}$
as a pseudo $T$-morphism. To check that the algebra structure coherence
axioms are satisfied, we note that the equalities
\[
\xymatrix@=1em{ & \;\\
 & T^{2}P\mathcal{A}\ar[rr]^{m_{P\mathcal{A}}}\ar[rd]^{Tz_{x}}\ar@{}[u]|-{\cong} &  & TP\mathcal{A}\ar[rd]^{z_{x}}\\
TP\mathcal{A}\ar[rr]|-{\textnormal{id}}\ar[ur]|-{Tu_{P\mathcal{A}}}\ar@/^{3pc}/[rrur]^{\textnormal{id}_{TP\mathcal{A}}} & \;\ar@{}[u]|-{\Uparrow T\sigma_{x}} & TP\mathcal{A}\ar[rr]^{z_{x}}\ar@{}[rdrd]|-{\Uparrow\xi_{x}}\ar@{}[ur]|-{\Uparrow\delta_{x}} &  & P\mathcal{A} &  & TP\mathcal{A}\ar[rr]^{z_{x}}\ar@{}[rdrd]|-{\Uparrow\xi_{x}} &  & P\mathcal{A}\\
 & \;\ar@{}[ul]|-{=} &  &  &  & =\\
T\mathcal{A}\ar[rrrr]_{x}\ar[uu]^{Ty_{\mathcal{A}}}\ar[uurr]_{Ty_{\mathcal{A}}} &  &  &  & \mathcal{A}\ar[uu]_{y_{\mathcal{A}}} &  & T\mathcal{A}\ar[uu]^{Ty_{\mathcal{A}}}\ar[rr]_{x} &  & \mathcal{A}\ar[uu]_{y_{\mathcal{A}}}\\
 & \; & \; & TP\mathcal{A}\ar@/^{1pc}/[rdd]^{z_{x}}\\
 & T^{2}P\mathcal{A}\ar[rr]^{Tz_{x}}\ar[rru]^{m_{P\mathcal{A}}}\ar@{}[u]|-{\quad\;\cong} &  & TP\mathcal{A}\ar[rd]^{z_{x}}\ar@{}[u]|-{\Uparrow\delta_{x}}\\
TP\mathcal{A}\ar[rr]_{z_{x}}\ar[ur]|-{u_{TP\mathcal{A}}}\ar@/^{2pc}/[rrruu]^{\textnormal{id}_{TP\mathcal{A}}} &  & P\mathcal{A}\ar[rr]_{\textnormal{id}_{P\mathcal{A}}}\ar[ur]^{u_{P\mathcal{A}}}\ar@{}[ul]|-{\Uparrow u_{z_{x}}^{-1}} & \;\ar@{}[u]|-{\Uparrow\sigma_{x}} & P\mathcal{A} &  & TP\mathcal{A}\ar[rr]^{z_{x}}\ar@{}[rdrd]|-{\Uparrow\xi_{x}} &  & P\mathcal{A}\\
 &  &  & \;\ar@{}[ru]|-{=} &  & =\\
T\mathcal{A}\ar[rrrr]_{x}\ar[uu]^{Ty_{\mathcal{A}}}\ar@{}[rrruu]|-{\Uparrow\xi_{x}} &  &  &  & \mathcal{A}\ar[uu]_{y_{\mathcal{A}}}\ar[uull]^{y_{\mathcal{A}}} &  & T\mathcal{A}\ar[uu]^{Ty_{\mathcal{A}}}\ar[rr]_{x} &  & \mathcal{A}\ar[uu]_{y_{\mathcal{A}}}
}
\]
and the equality between

\[
\xymatrix@=1em{T^{2}P\mathcal{A}\ar@/^{1.5pc}/[rrrr]^{m_{P\mathcal{A}}} & \ar@{}[]|-{\cong} & T^{2}P\mathcal{A}\ar[rr]^{m_{P\mathcal{A}}}\ar[ddrr]^{Tz_{x}} &  & TP\mathcal{A}\ar[rddr]^{z_{x}}\\
\; & \;\\
T^{3}P\mathcal{A}\ar[rr]^{T^{2}z_{x}}\ar[ruru]^{Tm_{P\mathcal{A}}}\ar[uu]^{m_{TP\mathcal{A}}} &  & T^{2}P\mathcal{A}\ar[rr]^{Tz_{x}}\ar@{}[uu]|-{\Uparrow T\delta_{x}} &  & TP\mathcal{A}\ar[rr]^{z_{x}}\ar@{}[uu]|-{\Uparrow\delta_{x}} &  & P\mathcal{A}\\
\\
T^{3}\mathcal{A}\ar[rr]_{T^{2}x}\ar[uu]^{T^{3}y_{\mathcal{A}}}\ar@{}[rruu]|-{\Uparrow T^{2}\xi_{x}} &  & T^{2}\mathcal{A}\ar[rr]_{Tx}\ar[uu]|-{T^{2}y_{\mathcal{A}}}\ar@{}[rruu]|-{\Uparrow T\xi_{x}} &  & T\mathcal{A}\ar[rr]_{x}\ar[uu]|-{Ty_{\mathcal{A}}}\ar@{}[rruu]|-{\Uparrow\xi_{x}} &  & \mathcal{A}\ar[uu]_{y_{\mathcal{A}}}
}
\]
and

\[
\xymatrix@=1em{ &  &  &  & TP\mathcal{A}\ar@/^{1pc}/[rdrddd]^{z_{x}}\\
\\
 &  & T^{2}P\mathcal{A}\ar[rr]^{Tz_{x}}\ar[rruu]^{m_{P\mathcal{A}}} &  & TP\mathcal{A}\ar[rddr]^{z_{x}}\ar@{}[uu]|-{\Uparrow\delta_{x}}\\
\\
T^{3}P\mathcal{A}\ar[rr]^{T^{2}z_{x}}\ar[rruu]^{m_{TP\mathcal{A}}} &  & T^{2}P\mathcal{A}\ar[rr]^{Tz_{x}}\ar[uurr]^{m_{P\mathcal{A}}}\ar@{}[uu]|-{\Uparrow m_{z_{x}}^{-1}\ } &  & TP\mathcal{A}\ar[rr]^{z_{x}}\ar@{}[uu]|-{\Uparrow\delta_{x}} &  & P\mathcal{A}\\
\\
T^{3}\mathcal{A}\ar[rr]_{T^{2}x}\ar[uu]^{T^{3}y_{\mathcal{A}}}\ar@{}[rruu]|-{\Uparrow T^{2}\xi_{x}} &  & T^{2}\mathcal{A}\ar[rr]_{Tx}\ar[uu]|-{T^{2}y_{\mathcal{A}}}\ar@{}[rruu]|-{\Uparrow T\xi_{x}} &  & T\mathcal{A}\ar[rr]_{x}\ar[uu]|-{Ty_{\mathcal{A}}}\ar@{}[rruu]|-{\Uparrow\xi_{x}} &  & \mathcal{A}\ar[uu]_{y_{\mathcal{A}}}
}
\]
easily follow from the respective conditions on $\left(\mathcal{A},x\right)$
being a pseudo $T$-algebra and the definitions of $\delta_{x}$ and
$\sigma_{x}$.

We now use the above to define our KZ doctrine 
\[
\widetilde{P}\colon\text{ps-\ensuremath{T}-alg}\to\text{ps-\ensuremath{T}-alg}
\]
We use the assignment on objects $\left(\mathcal{A},x\right)\mapsto\left(P\mathcal{A},z_{x}\right)$.
We take our units as the pseudo $T$-morphisms $\left(y_{\mathcal{A}},\xi_{x}\right)\colon\left(\mathcal{A},x\right)\to\left(P\mathcal{A},z_{x}\right)$.
Now suppose that we are given a pseudo $T$-morphism $\left(F,\phi\right)\colon\left(\mathcal{A},x\right)\to\left(P\mathcal{B},z_{r}\right)$,
where $\left(P\mathcal{B},z_{r}\right)=\widetilde{P}\left(\mathcal{B},r\right)$,
as in the diagram
\[
\xymatrix@=1em{\left(P\mathcal{A},z_{x}\right)\ar[rr]^{\left(\overline{F},\overline{\phi}\right)}\ar@{}[dr]|-{\stackrel{c_{F}}{\Longleftarrow}} &  & \left(P\mathcal{B},z_{r}\right)\\
 & \;\\
\left(\mathcal{A},x\right)\ar[uurr]_{\left(F,\phi\right)}\ar[uu]^{\left(y_{\mathcal{A}},\xi_{x}\right)}
}
\]
Since $z_{r}$ is $T_{P}$-cocontinuous, we may apply Proposition
\ref{docleftext} to find a lax $T$-morphism $\left(\overline{F},\overline{\phi}\right)$
as above. Indeed the lax structure map $\overline{\phi}$ is given
as the unique solution to 
\[
\xymatrix{ & TP\mathcal{A}\ar[r]^{z_{x}} & P\mathcal{A}\ar[dd]^{\overline{F}} &  &  & TP\mathcal{A}\ar[r]^{z_{x}}\ar[dd]_{T\overline{F}} & P\mathcal{A}\ar[dd]^{\overline{F}}\\
T\mathcal{A}\ar[r]^{x}\ar[ur]^{Ty_{\mathcal{A}}}\ar[rd]_{TF} & \mathcal{A}\ar[dr]^{F}\ar[ur]^{y_{\mathcal{A}}}\ar@{}[r]|-{\Uparrow c_{F}}\ar@{}[u]|-{\Uparrow\xi_{x}} & \; & = & T\mathcal{A}\ar[ru]^{Ty_{\mathcal{A}}}\ar[rd]_{TF}\ar@{}[r]|-{\Uparrow Tc_{F}\quad} & \;\ar@{}[r]|-{\Uparrow\overline{\phi}} & \;\\
 & TP\mathcal{B}\ar[r]_{z_{r}}\ar@{}[u]|-{\Uparrow\phi} & P\mathcal{B} &  &  & TP\mathcal{B}\ar[r]_{z_{r}} & P\mathcal{B}
}
\]
But we notice that
\[
\xymatrix@=1em{TP\mathcal{A}\ar[rr]^{T\overline{F}}\ar@{}[dr]|-{\stackrel{Tc_{F}}{\Longleftarrow}} &  & TP\mathcal{B}\ar[rr]^{z_{r}} &  & P\mathcal{B} &  & TP\mathcal{A}\ar[rr]^{z_{x}} &  & P\mathcal{A}\ar[rr]^{\overline{F}} &  & P\mathcal{B}\\
 & \; &  &  &  &  &  &  &  & \;\ar@{}[ul]|-{\stackrel{c_{F}}{\Longleftarrow}}\\
T\mathcal{A}\ar[uu]^{Ty_{\mathcal{A}}}\ar[rruu]_{TF} &  &  &  &  &  & T\mathcal{A}\ar[rr]_{x}\ar[uu]^{Ty_{\mathcal{A}}}\ar@{}[urur]|-{\Uparrow\xi_{x}}\ar@/_{0.5pc}/[rrrd]_{TF} &  & \mathcal{A}\ar[uu]_{y_{\mathcal{A}}}\ar[uurr]_{F}\ar@{}[dr]|-{\cong\phi}\\
 &  &  &  &  &  &  &  &  & TP\mathcal{B}\ar[uuru]_{z_{r}}
}
\]
are both left extensions since $z_{r}$ is $T_{P}$-cocontinuous and
$Ty_{\mathcal{A}}$ is \emph{$P$-}admissible respectively. It follows
that the lax $T$-morphism structure map $\overline{\phi}$ is an
isomorphism of left extensions, making $\left(\overline{F},\overline{\phi}\right)$
a pseudo $T$-morphism. Of course, if we only assume $\left(F,\phi\right)$
to be a lax $T$-morphism then we can only expect $\overline{F}$
to admit a lax $T$-morphism structure.

We now check that such left extensions are preserved by other left
extensions of this form. Suppose we are given two left extensions
of pseudo $T$-algebras and pseudo $T$-morphisms
\[
\xymatrix@=1em{\left(P\mathcal{A},z_{x}\right)\ar[rr]^{\left(\overline{F},\overline{\phi}\right)}\ar@{}[dr]|-{\stackrel{c_{F}}{\Longleftarrow}} &  & \left(P\mathcal{B},z_{r}\right) &  &  & \left(P\mathcal{B},z_{r}\right)\ar[rr]^{\left(\overline{G},\overline{\sigma}\right)}\ar@{}[dr]|-{\stackrel{c_{G}}{\Longleftarrow}} &  & \left(P\mathcal{C},z_{h}\right)\\
 & \; &  &  &  &  & \;\\
\left(\mathcal{A},x\right)\ar[uurr]_{\left(F,\phi\right)}\ar[uu]^{\left(y_{\mathcal{A}},\xi_{x}\right)} &  &  &  &  & \left(\mathcal{B},r\right)\ar[uurr]_{\left(G,\sigma\right)}\ar[uu]^{\left(y_{\mathcal{B}},\xi_{r}\right)}
}
\]
To see that
\[
\xymatrix@=1em{\left(P\mathcal{A},z_{x}\right)\ar[rr]^{\left(\overline{F},\overline{\phi}\right)} &  & \left(P\mathcal{B},z_{r}\right)\ar[rr]^{\left(\overline{G},\overline{\sigma}\right)} &  & \left(P\mathcal{C},z_{h}\right)\\
\ar@{}[rr]|-{\stackrel{\left(\overline{G},\overline{\sigma}\right)c_{F}}{\Longleftarrow}} & \; & \left(P\mathcal{B},z_{r}\right)\ar[rur]_{\left(\overline{G},\overline{\sigma}\right)}\\
\left(\mathcal{A},x\right)\ar[urr]_{\left(F,\phi\right)}\ar[uu]^{\left(y_{\mathcal{A}},\xi_{x}\right)}
}
\]
is a left extension we need only observe that the $T$-morphism structure
on $\overline{G}\overline{F}$ resulting from an application of Proposition
\ref{docleftext} (on the outside diagram) is given by composing $\overline{\phi}$
and $\overline{\sigma}$ as above. This is shown by pasting the defining
diagram for $\overline{\phi}$ with $\overline{\sigma}$ which gives

\begin{equation}
\xymatrix{ & TP\mathcal{A}\ar[r]^{z_{x}} & P\mathcal{A}\ar[dd]^{\overline{F}} &  &  & TP\mathcal{A}\ar[r]^{z_{x}}\ar[dd]_{T\overline{F}} & P\mathcal{A}\ar[dd]^{\overline{F}}\\
T\mathcal{A}\ar[r]^{x}\ar[ur]^{Ty_{\mathcal{A}}}\ar[rd]_{TF} & \mathcal{A}\ar[dr]^{F}\ar[ur]^{y_{\mathcal{A}}}\ar@{}[r]|-{\Uparrow c_{F}}\ar@{}[u]|-{\Uparrow\xi_{x}} & \; &  & T\mathcal{A}\ar[ru]^{Ty_{\mathcal{A}}}\ar[rd]_{TF}\ar@{}[r]|-{\Uparrow Tc_{F}\quad} & \;\ar@{}[r]|-{\Uparrow\overline{\phi}} & \;\\
 & TP\mathcal{B}\ar[r]_{z_{r}}\ar@{}[u]|-{\Uparrow\phi}\ar[dd]_{T\overline{G}} & P\mathcal{B}\ar[dd]^{\overline{G}} & = &  & TP\mathcal{B}\ar[r]_{z_{r}}\ar[dd]_{T\overline{G}} & P\mathcal{B}\ar[dd]^{\overline{G}}\\
 & \;\ar@{}[r]|-{\Uparrow\overline{\sigma}} & \; &  &  & \;\ar@{}[r]|-{\Uparrow\overline{\sigma}} & \;\\
 & TP\mathcal{C}\ar[r]_{z_{h}} & P\mathcal{C} &  &  & TP\mathcal{C}\ar[r]_{z_{h}} & P\mathcal{C}
}
\label{prescoh}
\end{equation}
which is  the defining diagram for the induced lax structure on $\overline{G}\cdot\overline{F}$
from an application of Proposition \ref{docleftext}.

It is an easy consequence of Proposition \ref{docleftext} that each
$\left(y_{\mathcal{A}},\xi_{x}\right)$ is dense. Indeed since $z_{x}$
$T$-preserves the left extension
\[
\xymatrix@=1em{P\mathcal{A}\ar[rr]^{\textnormal{id}_{P\mathcal{A}}} &  & P\mathcal{A}\\
 & \ar@{}[ul]|-{=}\\
\mathcal{A}\ar[uu]^{y_{\mathcal{A}}}\ar[uurr]_{y_{\mathcal{A}}}
}
\]
(as well the resulting left extension) the density property may be
lifted to pseudo-$T$-algebras applying Proposition \ref{docleftext}.
\end{proof}

\section{Consequences and Examples\label{consequencesandexamples}}

In this section we point out some consequences of Theorem \ref{liftkzequiv}
proven in the previous section, and in particular some properties
of the lifted KZ doctrine $\widetilde{P}$ on $\textnormal{ps-}T\textnormal{-alg}$.
Before considering the properties of $\widetilde{P}$, we mention
two easy corollaries.
\begin{cor}
\label{distunique} Pseudo-distributive laws over KZ pseudomonads
are essentially unique.
\end{cor}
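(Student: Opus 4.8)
The plan is to prove the stronger statement that, for a fixed pseudomonad $T$ and KZ pseudomonad $P$ on $\mathscr{C}$, any two pseudo-distributive laws over $P$ are connected by a unique invertible modification which is an isomorphism of pseudo-distributive laws; this is exactly the sense of ``essentially unique'' already signalled in Remark (5) following Definition \ref{distkzpseudomonad}. The crucial input is Lemma \ref{w2isext}: for \emph{any} pseudo-distributive law $\left(\lambda,\omega_{1},\omega_{2},\omega_{3}\right)$ over the KZ pseudomonad $P$, each invertible component $\omega_{2}^{\mathcal{A}}$ exhibits $\lambda_{\mathcal{A}}$ as the left extension of $y_{T\mathcal{A}}$ along $Ty_{\mathcal{A}}$. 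Since left extensions are unique up to a unique isomorphism, this pins down the ``main'' piece of data, and the coherence axioms then force the rest.

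So let $\left(\lambda,\omega_{1},\omega_{2},\omega_{3}\right)$ and $\left(\lambda',\omega_{1}',\omega_{2}',\omega_{3}'\right)$ be two pseudo-distributive laws over $P$. First I would apply Lemma \ref{w2isext} to both and use the universal property of left extensions objectwise: for each $\mathcal{A}\in\mathscr{C}$ there is a unique invertible $2$-cell $\kappa_{\mathcal{A}}\colon\lambda_{\mathcal{A}}\overset{\cong}{\longrightarrow}\lambda'_{\mathcal{A}}$ with $(\omega_{2}')^{\mathcal{A}}$ obtained from $\omega_{2}^{\mathcal{A}}$ by pasting with $\kappa_{\mathcal{A}}\cdot Ty_{\mathcal{A}}$. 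Next I would check that $\kappa=\left(\kappa_{\mathcal{A}}\right)_{\mathcal{A}}$ is a modification $\lambda\to\lambda'$: by Remark (2) after Definition \ref{distkzpseudomonad} and by Lemma \ref{lambdaccts}, the pseudonaturality constraints $\lambda_{L}$ and $\lambda'_{L}$ are themselves determined by $\omega_{2}$, $\omega_{2}'$ and the first coherence axiom (as the unique coherent isomorphisms, using that $PTL\cdot\lambda_{\mathcal{A}}$ is a left extension since $Ty_{\mathcal{A}}$ is $P$-admissible); compatibility of $\kappa$ with $\lambda_{L}$ and $\lambda'_{L}$ then reduces to a left-extension uniqueness statement (whisker by $Ty_{\mathcal{A}}$ and invoke the established relation between $\omega_{2}$ and $(\omega_{2}')$). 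Finally I would check that $\kappa$ intertwines $\omega_{1}$ with $\omega_{1}'$ and $\omega_{3}$ with $\omega_{3}'$: coherence axioms coh $2$ and coh $3$ exhibit $\omega_{1}$ and $\omega_{3}$ as the unique solutions to left-extension problems built from $\omega_{2}$ (this is Remark (1) after Definition \ref{distkzpseudomonad}, made precise in the proof of Theorem \ref{eimpliesd}, where the pastings of $\omega_{2}^{\mathcal{A}}$ with $u_{y_{\mathcal{A}}}$ and $m_{y_{\mathcal{A}}}$ are seen to be left extensions), so compatibility of $\kappa$ with $\omega_{2}$ forces these two intertwining identities.

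Assembling these steps exhibits $\kappa$ as the unique isomorphism $\left(\lambda,\omega_{i}\right)\cong\left(\lambda',\omega_{i}'\right)$ of pseudo-distributive laws, which is the asserted essential uniqueness. (One may equivalently route the argument through Theorem \ref{liftkzequiv}: by Theorem \ref{eimpliesd} each pseudo-distributive law over $P$ induces a pseudo-distributive law over the underlying KZ doctrine, and by the Remark following Definition \ref{distkzdoctrine} the latter consists only of assertions, invariant under the choice of left extension; $\kappa$ then merely records the canonical comparison between the two choices of $\lambda_{\mathcal{A}}$.) I expect the main obstacle to be the second step, namely confirming that the objectwise comparisons $\kappa_{\mathcal{A}}$ cohere into a genuine modification; this is where the interaction of $\kappa$ with the pseudonaturality cells must be tracked carefully, whereas the constructions of $\kappa_{\mathcal{A}}$ and the intertwining with $\omega_{1},\omega_{3}$ are immediate consequences of uniqueness of left extensions.
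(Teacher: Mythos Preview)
Your proposal is correct and follows essentially the same approach as the paper: both invoke Lemma~\ref{w2isext} to see that $\omega_{2}^{\mathcal{A}}$ exhibits $\lambda_{\mathcal{A}}$ as a left extension, observe that the remaining modifications $\omega_{1},\omega_{3}$ (and $\omega_{4}$) are then forced by the coherence axioms as unique solutions to left-extension problems, and conclude by the essential uniqueness of left extensions. Your write-up is in fact more thorough than the paper's---you explicitly construct the comparison $\kappa$ and verify it is a modification compatible with all the data, whereas the paper's proof is a three-sentence sketch that leaves these verifications implicit.
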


\begin{proof}
As shown in Lemma \ref{w2isext}, the modification components $\omega_{2}^{\mathcal{A}}$
exhibit $\lambda_{\mathcal{A}}$ as a left extension. The last two
coherence axioms of a pseudo-distributive law over a KZ pseudomonad
then define the components $\omega_{1}^{\mathcal{A}}$ and $\omega_{3}^{\mathcal{A}}$
as unique solutions to a left extension problem. Note that $\omega_{4}^{\mathcal{A}}$
is also defined as the unique solution to a left extension problem
(see the proof of \ref{w2isext}). The essential uniqueness of left
extensions then tells us these pseudo-distributive laws are essentially
unique.
\end{proof}
\begin{cor}
When the conditions of Theorem \ref{liftkzequiv} are met, the lifted
pseudomonad arising from the pseudo-distributive law is automatically
KZ.
\end{cor}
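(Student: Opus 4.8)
The plan is to observe that this corollary needs almost nothing beyond Theorem \ref{liftkzequiv} itself: the point is that the chain of constructions establishing $(f)\Rightarrow(a)$ produces, from a given pseudo-distributive law $\lambda\colon TP\to PT$, precisely the lifting of $P$ that $\lambda$ determines. So I would begin by unwinding that chain. Tracing $(f)\Rightarrow(e)\Rightarrow(d)\Rightarrow(a)$, the step $(f)\Rightarrow(e)$ leaves $\lambda$ unchanged, $(e)\Rightarrow(d)$ (Theorem \ref{eimpliesd}) reads off from $\lambda$ that $T$ preserves $P$-admissible maps and that each $\lambda_{\mathcal{A}}$ is the left extension of $Ty_{\mathcal{A}}$ along $y_{T\mathcal{A}}$ exhibited by $\omega_{2}^{\mathcal{A}}$, and $(d)\Rightarrow(a)$ (Theorem \ref{dimpliesa}) then builds a KZ doctrine $\widetilde{P}$ on $\mathrm{ps}\text{-}T\text{-alg}$ sending $(\mathcal{A},x)$ to $(P\mathcal{A},z_{x})$ with $z_{x}=Px\cdot\lambda_{\mathcal{A}}$ and units $(y_{\mathcal{A}},\xi_{x})$. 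By $(a)\Rightarrow(b)$ the pseudomonad underlying this KZ doctrine is KZ.

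The remaining task is to identify that pseudomonad with the lifted pseudomonad that $\lambda$ produces under the correspondence of \cite{cheng2003} used in $(c)\Leftrightarrow(f)$. On objects this is immediate, since that correspondence sends $\lambda$ to the lifting acting by $(\mathcal{A},x)\mapsto(P\mathcal{A},Px\cdot\lambda_{\mathcal{A}})=(P\mathcal{A},z_{x})$; and the lifted unit and multiplication are forced, on underlying 1-cells, to be $y$ and $\mu$, with their $T$-morphism structures determined by $\lambda$ through the same formulas used to construct $\widetilde{P}$ above. So a short comparison of the two bookkeeping schemes finishes the argument.

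If one would rather skip that comparison, the cleaner route is to appeal to essential uniqueness: by Corollary \ref{distunique} a pseudo-distributive law over a KZ pseudomonad is essentially unique, and the correspondence of \cite{cheng2003} is an equivalence, so $\lambda$ determines its lifting up to equivalence of pseudomonads on $\mathrm{ps}\text{-}T\text{-alg}$; since we have exhibited \emph{some} lifting associated to $\lambda$ that is KZ, and being KZ is stable under equivalence of pseudomonads, the lifting arising from $\lambda$ is KZ. I expect no genuine obstacle here, since the substance is already contained in Theorem \ref{liftkzequiv}; the only mildly delicate point is confirming that the KZ-doctrine lifting of Theorem \ref{dimpliesa} and the pseudomonad lifting of the correspondence of \cite{cheng2003} coincide, which is routine unwinding. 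A more conceptual check is also available through the adjoint-string characterization of KZ pseudomonads \cite{marm1997}: the adjunctions $Py_{\mathcal{A}}\dashv\mu_{\mathcal{A}}\dashv y_{P\mathcal{A}}$ witnessing that $P$ is KZ lift, via doctrinal adjunction \cite{doctrinal}, to adjunctions among the pseudo $T$-morphism components of $\widetilde{P}\widetilde{y}$, $\widetilde{\mu}$ and $\widetilde{y}\widetilde{P}$, so that $\widetilde{P}$ acquires an adjoint string and is therefore KZ.
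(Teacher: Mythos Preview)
Your proposal is correct, and your ``cleaner route'' via Corollary~\ref{distunique} is exactly the paper's argument: essential uniqueness of the pseudo-distributive law forces any lifted pseudomonad to be equivalent to the KZ one produced by Theorem~\ref{liftkzequiv}. Your additional sketches (unwinding the chain explicitly, and the adjoint-string/doctrinal-adjunction check) are valid alternatives the paper does not spell out, though the latter is alluded to in a footnote at the start of Section~\ref{liftingkzdoctrines}.
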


\begin{proof}
As a consequence of the essential uniqueness of pseudo-distributive
laws over KZ pseudomonads, any lifted pseudomonad must be equivalent
to the KZ pseudomonad whose existence is guaranteed by Theorem \ref{liftkzequiv}.
\end{proof}

\subsection{The Lifted KZ Doctrines}

We first check that in addition to having a lifting to $\textnormal{ps-}T\textnormal{-alg}$,
we have a lifting to the 2-category of pseudo-$T$-algebras, lax (or
oplax) $T$-morphisms, and $T$-transformations.
\begin{prop}
\noindent \label{oplaxlaxlift} Suppose any of the equivalent conditions
of Theorem \ref{liftkzequiv} are satisfied. Then

(a) $P$ lifts to a KZ doctrine $\widetilde{P}_{\textnormal{oplax}}$
on $\text{ps-}T\text{-alg}_{\textnormal{oplax}}$;

(b) $P$ lifts to a KZ doctrine $\widetilde{P}_{\textnormal{lax}}$
on $\text{ps-}T\text{-alg}_{\textnormal{lax}}$;
\end{prop}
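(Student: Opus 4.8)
The plan is to re-run the construction of Theorem \ref{dimpliesa}, now tracking lax (respectively oplax) $T$-morphism structures in place of pseudo ones. The object assignment $(\mathcal{A},x)\mapsto(P\mathcal{A},z_{x})$, the pseudo $T$-algebras $(P\mathcal{A},z_{x})$, and the units $(y_{\mathcal{A}},\xi_{x})$ — which are \emph{pseudo} $T$-morphisms, hence both lax and oplax — supplied by Proposition \ref{claim} are insensitive to this change, so $\widetilde{P}_{\textnormal{lax}}$ and $\widetilde{P}_{\textnormal{oplax}}$ will agree with $\widetilde{P}$ on objects and units. What has to be redone is, first, the existence for an arbitrary lax (resp.\ oplax) $T$-morphism $(F,\phi)\colon(\mathcal{A},x)\to(P\mathcal{B},z_{r})$ of a left extension of $(F,\phi)$ along $(y_{\mathcal{A}},\xi_{x})$ exhibited by an invertible $2$-cell inside $\text{ps-}T\text{-alg}_{\textnormal{lax}}$ (resp.\ $\text{ps-}T\text{-alg}_{\textnormal{oplax}}$), and second, the two KZ-doctrine axioms of Definition \ref{defkzdoctrine}.

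For the lax case I would apply Proposition \ref{docleftext} directly, with $(I,\sigma)=(F,\phi)$ and $(L,\alpha)=(y_{\mathcal{A}},\xi_{x})$: its hypotheses hold because the underlying left extension $(\overline{F},c_{F})$ along the unit component $y_{\mathcal{A}}$ is $T$-preserved by $z_{r}$ ($z_{r}$ being $T_{P}$-cocontinuous by Proposition \ref{claim}), the resulting extension $(z_{r}\cdot T\overline{F},z_{r}\cdot Tc_{F})$ along the $P$-admissible map $Ty_{\mathcal{A}}$ (Proposition \ref{claim}(1)) is $T$-preserved by $z_{r}$ ($z_{r}$ being $T_{P}$-adm-cocontinuous by Proposition \ref{admcocontinuousequiv}), and the invertible $\xi_{x}$ is in particular an oplax structure. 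This yields a unique lax structure $\overline{\phi}$ on $\overline{F}$ making $c_{F}$ a $T$-transformation, together with a $T$-left extension; restricting the universal property of the latter to the case in which the oplax leg $(N,\varphi)$ is an identity and $(M,\epsilon)$ ranges over lax $T$-morphisms gives exactly a left extension in $\text{ps-}T\text{-alg}_{\textnormal{lax}}$ (whose $2$-cells are precisely the square $T$-transformations with identity vertical legs), and $c_{F}$, being invertible in $\mathscr{C}$, is invertible there. Axiom (a) of Definition \ref{defkzdoctrine} is immediate, and axiom (b) follows, via the identification \eqref{prescoh} of the induced structure cell on a composite, from axiom (b) of the KZ doctrine $P$ on $\mathscr{C}$ together with $T_{P}$-cocontinuity of the maps $z_{x}$, exactly as in the proof of Theorem \ref{dimpliesa}.

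The oplax case is the step I expect to be the main obstacle, since Proposition \ref{docleftext} as stated manufactures only lax morphism structures. The point to exploit is that here the exhibiting cell $c_{F}$ is invertible and $(y_{\mathcal{A}},\xi_{x})$ is pseudo, so the argument of Proposition \ref{docleftext} dualizes. Concretely, I would define $\overline{\phi}\colon z_{r}\cdot T\overline{F}\to\overline{F}\cdot z_{x}$ as the unique $2$-cell for which $c_{F}$ becomes a $T$-transformation of oplax $T$-morphisms; this is well-posed because $(z_{r}\cdot T\overline{F},z_{r}\cdot Tc_{F})$ is a left extension along $Ty_{\mathcal{A}}$, so $\overline{\phi}$ is pinned down by the composite $(\overline{F}\cdot\xi_{x})\circ(c_{F}\cdot x)\circ\phi$ under the left-extension bijection. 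The oplax coherence axioms for $\overline{\phi}$ are then checked by the same manipulations as in \cite[Theorem 2.4.4]{markextension} with the relevant $2$-cells reversed — legitimate precisely because $\xi_{x}$ and $c_{F}$ are invertible — and the left-extension property in $\text{ps-}T\text{-alg}_{\textnormal{oplax}}$ is obtained by transporting $2$-cells, and their $T$-transformation conditions, across the isomorphism $c_{F}$, as in the lax case; the KZ-doctrine axioms go through verbatim. I would close by remarking that $\widetilde{P}$ is the restriction of both $\widetilde{P}_{\textnormal{lax}}$ and $\widetilde{P}_{\textnormal{oplax}}$ to pseudo $T$-morphisms.
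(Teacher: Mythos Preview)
Your treatment of part (b) is correct and matches the paper, which simply refers back to Theorem \ref{dimpliesa}.

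For part (a) your high-level idea of dualising Proposition \ref{docleftext} is right and is what the paper does, but your explicit construction has the direction reversed and therefore invokes the wrong left extension. An oplax structure cell on $\overline{F}$ points $\overline{\varphi}\colon \overline{F}\cdot z_{x}\to z_{r}\cdot T\overline{F}$, so to pin it down by a left-extension bijection you need a left extension with \emph{source} $\overline{F}\cdot z_{x}$; the extension $(z_{r}\cdot T\overline{F},\,z_{r}\cdot Tc_{F})$ you invoke has the universal property for $2$-cells \emph{out of} $z_{r}\cdot T\overline{F}$, which is the target of $\overline{\varphi}$ and hence the wrong side. The paper instead uses that $\xi_{x}$ exhibits $z_{x}$ itself as a left extension of $y_{\mathcal{A}}\cdot x$ along the $P$-admissible $Ty_{\mathcal{A}}$, and that the $P$-homomorphism $\overline{F}$ preserves it; thus $(\overline{F}\cdot z_{x},\,\overline{F}\cdot\xi_{x})$ is a left extension along $Ty_{\mathcal{A}}$, and $\overline{\varphi}$ is the unique $2$-cell whose pasting with $\overline{F}\cdot\xi_{x}$ equals the pasting of $Tc_{F}^{-1}$, $\varphi$ and $c_{F}$. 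The coherence axioms are then verified using the left extensions $z_{x}\cdot u_{P\mathcal{A}}$ and $z_{x}\cdot m_{P\mathcal{A}}$ of Proposition \ref{claim}(4) --- again on the domain side --- rather than via $T_{P}$-cocontinuity of $z_{r}$. In short, for the oplax lifting the relevant input is $P$-admissibility of $Ty_{\mathcal{A}}$ together with the left-extension properties of $z_{x}$, not $T_{P}$-cocontinuity of $z_{r}$; your formula $(\overline{F}\cdot\xi_{x})\circ(c_{F}\cdot x)\circ\phi$ does not even typecheck once the directions are straightened out.
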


\begin{proof}
$\left(a\right)\colon$$P$ lifts to a KZ doctrine $\widetilde{P}_{\textnormal{oplax}}$
on $\text{ps-}T\text{-alg}_{\textnormal{oplax}}$ since given any
oplax structure cell $\varphi$ on a map $F\colon\mathcal{A}\to P\mathcal{B}$
as below
\[
\xymatrix@=1em{\left(P\mathcal{A},z_{x}\right)\ar[rr]^{\left(\overline{F},\overline{\varphi}\right)}\ar@{}[dr]|-{\stackrel{c_{F}}{\Longleftarrow}} &  & \left(P\mathcal{B},z_{r}\right)\\
 & \;\\
\left(\mathcal{A},x\right)\ar[uurr]_{\left(F,\varphi\right)}\ar[uu]^{\left(y_{\mathcal{A}},\xi_{x}\right)}
}
\]
we get an oplax structure cell $\overline{\varphi}$ given as unique
the solution to 
\[
\xymatrix@=1em{TP\mathcal{B}\ar[rr]^{z_{r}}\ar@{}[ddrr]|-{\Uparrow\overline{\varphi}} &  & P\mathcal{B} &  &  &  & TP\mathcal{B}\ar[rr]^{z_{r}}\ar@{}[rrdddd]|-{\Uparrow\varphi} &  & P\mathcal{B}\\
\\
TP\mathcal{A}\ar[rr]^{z_{x}}\ar@{}[ddrr]|-{\Uparrow\xi_{x}}\ar[uu]^{T\overline{F}} &  & P\mathcal{A}\ar[uu]_{\overline{F}} & = & TP\mathcal{A}\ar[uurr]^{T\overline{F}} & \ar@{}[]|-{\;\;\stackrel{Tc_{F}^{-1}}{\Longrightarrow}} &  &  &  & \ar@{}[]|-{\stackrel{c_{F}}{\Longrightarrow}} & P\mathcal{A}\ar[lluu]_{\overline{F}}\\
 & \;\\
T\mathcal{A}\ar[uu]^{Ty_{\mathcal{A}}}\ar[rr]_{x} &  & \mathcal{A}\ar[uu]_{y_{\mathcal{A}}} &  &  &  & T\mathcal{A}\ar[uull]^{Ty_{\mathcal{A}}}\ar[rr]_{x}\ar[uuuu]|-{TF} &  & \mathcal{A}\ar[urru]_{y_{\mathcal{A}}}\ar[uuuu]|-{F}
}
\]
with the coherence conditions for $\overline{\varphi}$ being an oplax
$T$-morphism structure following from Proposition \ref{claim} (Part
4). Note that the induced oplax structure when composed by an oplax
$T$-morphism $\left(\overline{G},\overline{\tau}\right)$ as below
\[
\xymatrix@=1em{\left(P\mathcal{A},z_{x}\right)\ar[rr]^{\left(\overline{F},\overline{\varphi}\right)}\ar@{}[dr]|-{\stackrel{c_{F}}{\Longleftarrow}} &  & \left(P\mathcal{B},z_{r}\right)\ar[rr]^{\left(\overline{G},\overline{\tau}\right)} &  & \left(P\mathcal{C},z_{k}\right)\\
 & \;\\
\left(\mathcal{A},x\right)\ar[uurr]_{\left(F,\varphi\right)}\ar[uu]^{\left(y_{\mathcal{A}},\xi_{x}\right)}
}
\]
is still $\left(\overline{G},\overline{\tau}\right)\cdot\left(\overline{F},\overline{\varphi}\right)$.
To see that $\left(\overline{F},\overline{\varphi}\right)$ is a left
extension in the sense of transformations, suppose we are given a
transformation $\sigma\colon\left(F,\varphi\right)\to\left(H,\psi\right)\cdot\left(y_{\mathcal{A}},\xi_{x}\right)$,
then the induced cell $\overline{\sigma}\colon\overline{F}\to H$
is a transformation since

\[
\xymatrix@=1em{ & TP\mathcal{B}\ar[rr]^{z_{r}}\ar@{}[ddrr]|-{\Uparrow\overline{\varphi}} &  & P\mathcal{B} &  &  & TP\mathcal{B}\ar[rr]^{z_{r}}\ar@{}[dd]|-{\Uparrow\psi} &  & P\mathcal{B}\\
\;\ar@{}[r]|-{\overset{T\overline{\sigma}}{\Longleftarrow}} & \; &  &  &  &  &  & \;\ar@{}[r]|-{\overset{\overline{\sigma}}{\Longleftarrow}} & \;\\
 & TP\mathcal{A}\ar[rr]^{z_{x}}\ar@{}[ddrr]|-{\Uparrow\xi_{x}}\ar[uu]_{T\overline{F}}\ar@/^{2pc}/[uu]^{TH} &  & P\mathcal{A}\ar[uu]_{\overline{F}} & = &  & TP\mathcal{A}\ar[rr]^{z_{x}}\ar@{}[ddrr]|-{\Uparrow\xi_{x}}\ar@/^{2pc}/[uu]^{TH} &  & P\mathcal{A}\ar[uu]_{\overline{F}}\ar@/^{2pc}/[uu]^{H}\\
 &  & \; &  &  &  &  & \;\\
 & T\mathcal{A}\ar[uu]^{Ty_{\mathcal{A}}}\ar[rr]_{x} &  & \mathcal{A}\ar[uu]_{y_{\mathcal{A}}} &  &  & T\mathcal{A}\ar[uu]^{Ty_{\mathcal{A}}}\ar[rr]_{x} &  & \mathcal{A}\ar[uu]_{y_{\mathcal{A}}}
}
\]
as a consequence of $\sigma$ being a transformation. By Proposition
\ref{docleftext} the density property is still valid in the setting
of oplax $T$-morphisms; this being why we proved the general case
of Proposition \ref{docleftext} in terms of composites of lax and
oplax morphisms.

$\left(b\right)\colon$ The proof that $P$ lifts to a KZ doctrine
$\widetilde{P}_{\textnormal{lax}}$ on $\text{ps-}T\text{-alg}_{\textnormal{lax}}$
is essentially given in Theorem \ref{dimpliesa}.
\end{proof}
We now check that the KZ structure cell $\theta\colon Py\to yP$ remains
the same upon lifting to algebras. 
\begin{prop}
\noindent Suppose any of the equivalent conditions of Theorem \ref{liftkzequiv}
are satisfied. Then the KZ structure cell $\theta\colon Py\to yP$
for $P$ is also the KZ structure cell for $\widetilde{P}$.
\end{prop}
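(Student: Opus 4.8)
The plan is to reduce the statement to the observation that the forgetful $2$-functor $U\colon\text{ps-}T\text{-alg}\to\mathscr{C}$ is locally faithful — a $T$-transformation between pseudo $T$-morphisms is by definition a single $2$-cell in $\mathscr{C}$ satisfying a compatibility condition — and that, by the explicit construction of $\widetilde{P}$ in the proof of Theorem \ref{dimpliesa}, $U$ carries the left extensions defining $\widetilde{P}$ to the left extensions defining $P$, \emph{together with their exhibiting isomorphisms}. Concretely: for a pseudo $T$-morphism $(F,\phi)\colon(\mathcal{A},x)\to\widetilde{P}(\mathcal{B},r)$ the lifted left extension $\overline{(F,\phi)}=(\overline{F},\overline{\phi})$ has underlying $1$-cell $\overline{F}$ and exhibiting isomorphism $c_{F}$ exactly as in $\mathscr{C}$, and $\widetilde{y}_{(\mathcal{A},x)}=(y_{\mathcal{A}},\xi_{x})$ has underlying $1$-cell $y_{\mathcal{A}}$. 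Hence the $1$-cells $\widetilde{P}\widetilde{y}_{(\mathcal{A},x)}$ and $\widetilde{y}_{\widetilde{P}(\mathcal{A},x)}=(y_{P\mathcal{A}},\xi_{z_{x}})$ have underlying $1$-cells $Py_{\mathcal{A}}=\overline{y_{P\mathcal{A}}\cdot y_{\mathcal{A}}}$ and $y_{P\mathcal{A}}$, precisely as for $P$.

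Next I would recall how the KZ structure cell is recovered from the KZ doctrine presentation: for any KZ doctrine $(Q,q)$ and object $Z$, the component $\theta^{Q}_{Z}\colon Qq_{Z}\to q_{QZ}$ is the unique $2$-cell obtained by applying the universal property of the left extension $(Qq_{Z},c_{q_{QZ}\cdot q_{Z}})=(\overline{q_{QZ}\cdot q_{Z}},c_{q_{QZ}\cdot q_{Z}})$ of $q_{QZ}\cdot q_{Z}$ along $q_{Z}$ to the pseudonaturality constraint of $q$ at $q_{Z}$; equivalently, $\theta^{Q}_{Z}$ is the unique $2$-cell $Qq_{Z}\to q_{QZ}$ whose whiskering by $q_{Z}$, pasted with $c_{q_{QZ}\cdot q_{Z}}$, equals that constraint (this is coherence axiom \ref{kzcoh1}). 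Applying this with $Q=\widetilde{P}$, $q=\widetilde{y}$, $Z=(\mathcal{A},x)$ describes $\widetilde{\theta}_{(\mathcal{A},x)}$ as a $T$-transformation characterized by a property involving only the exhibiting isomorphism of $\widetilde{P}\widetilde{y}_{(\mathcal{A},x)}$ and the pseudonaturality constraint of $\widetilde{y}$; applying it with $Q=P$, $q=y$, $Z=\mathcal{A}$ describes $\theta_{\mathcal{A}}$ by the analogous property in $\mathscr{C}$.

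I would then apply $U$. Since $U$ takes the left extension defining $\widetilde{P}\widetilde{y}_{(\mathcal{A},x)}$ (with its exhibiting isomorphism) to the left extension defining $Py_{\mathcal{A}}$ (with its exhibiting isomorphism), and the underlying pseudonaturality constraint of $\widetilde{y}$ at $\widetilde{y}_{(\mathcal{A},x)}$ is that of $y$ at $y_{\mathcal{A}}$, the underlying $2$-cell $U\widetilde{\theta}_{(\mathcal{A},x)}\colon Py_{\mathcal{A}}\to y_{P\mathcal{A}}$ satisfies in $\mathscr{C}$ exactly the property that characterizes $\theta_{\mathcal{A}}$. By the universal property of the left extension $(Py_{\mathcal{A}},c_{y_{P\mathcal{A}}\cdot y_{\mathcal{A}}})$ there is only one such $2$-cell, so $U\widetilde{\theta}_{(\mathcal{A},x)}=\theta_{\mathcal{A}}$; since $U$ is a $2$-functor and both $\theta$ and $\widetilde{\theta}$ are already known to be modifications, this componentwise identity upgrades to the assertion that $\theta$ underlies the KZ structure cell of $\widetilde{P}$.

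The step I expect to be the main obstacle is the bookkeeping needed to verify \emph{on the nose} (not merely up to isomorphism) that forming the lifted left extension $\overline{(-)}$ and then applying $U$ agrees with applying $U$ and then forming $\overline{(-)}$, so that $U\widetilde{\theta}_{(\mathcal{A},x)}$ literally has source $Py_{\mathcal{A}}$ and satisfies the defining equation of $\theta_{\mathcal{A}}$ rather than an isomorphic transport of it; this is exactly what the explicit formulae for $\overline{F}$, $c_{F}$ and $\xi_{x}$ in the proof of Theorem \ref{dimpliesa} supply. An alternative route that sidesteps unwinding the definition of $\theta$ from a left extension is to use the adjoint-string presentation: $\widetilde{P}$ underlies a fully faithful adjoint string $\widetilde{P}\widetilde{y}\dashv\widetilde{\mu}\dashv\widetilde{y}\widetilde{P}$ whose image under the (adjunction-preserving) $2$-functor $U$ is the adjoint string $Py\dashv\mu\dashv yP$ of $P$ — here one uses that $\widetilde{\mu}_{(\mathcal{A},x)}=\overline{\textnormal{id}}$ has underlying $1$-cell $\mu_{\mathcal{A}}$ — and since $\theta$ is built from the unit of $\mu\dashv yP$ together with the pseudomonad coherence isomorphisms of $P$, the same construction performed upstairs maps down to $\theta$ under $U$.
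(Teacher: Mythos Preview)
Your proposal is correct and follows essentially the same route as the paper: both characterize $\theta_{\mathcal{A}}$ as the unique $2$-cell induced out of the left extension $Py_{\mathcal{A}}$ (exhibited by the naturality square $y_{y_{\mathcal{A}}}^{-1}$) whose pasting with that square is the identity, and then observe that the lifted left extension defining $\widetilde{P}\widetilde{y}_{(\mathcal{A},x)}$ has the same underlying $1$-cell and exhibiting $2$-cell, so the same uniqueness argument upstairs forces $\widetilde{\theta}_{(\mathcal{A},x)}$ to have underlying $2$-cell $\theta_{\mathcal{A}}$. The paper compresses this to a single invocation of Proposition~\ref{docleftext} applied to the naturality square, which is exactly the mechanism you unpack via the locally faithful forgetful $2$-functor $U$; your adjoint-string alternative is a genuine second route the paper does not mention.
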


\begin{proof}
Recall that the components of $\theta$ are recovered as the induced
cells out of the left extensions $Py_{\mathcal{A}}$ as in the diagram
below
\[
\xymatrix@=1em{P\mathcal{A}\ar[rr]^{Py_{\mathcal{A}}}\ar@{}[rrdd]|-{\Uparrow y_{y_{\mathcal{A}}}^{-1}} &  & P^{2}\mathcal{A}\\
\\
\mathcal{A}\ar[rr]_{y_{\mathcal{A}}}\ar[uu]^{y_{\mathcal{A}}} &  & P\mathcal{A}\ar[uu]_{y_{P\mathcal{A}}}
}
\]
such that the composite with this diagram is an identity. Now apply
Proposition \ref{docleftext} to this naturality square noting that
each $y_{\mathcal{A}}$ extends to a pseudo $T$-morphism $\left(y_{\mathcal{A}},\xi_{x}\right)$
in order to recover the components of the KZ structure cell for $\widetilde{P}$.
\end{proof}
If we are to study the lifted KZ doctrine $\widetilde{P}$, we should
consider the $\widetilde{P}$-cocomplete objects and the $\widetilde{P}$-admissible
maps. We start with the former.

Algebraic cocompleteness is usually defined by asking that the underlying
object be cocomplete, and that the algebra structure map be separately
cocontinuous. The following proposition justifies this definition.
\begin{prop}
Suppose any of the equivalent conditions of Theorem \ref{liftkzequiv}
are satisfied. Then a pseudo $T$-algebra $\left(\mathcal{A},x\right)$
is

(a) $\widetilde{P}$-cocomplete iff $\mathcal{A}$ is $P$-cocomplete
and $x\colon T\mathcal{A}\to\mathcal{A}$ is $T_{P}$-cocontinuous;

(b) $\widetilde{P}_{\textnormal{lax}}$-cocomplete iff $\mathcal{A}$
is $P$-cocomplete and $x\colon T\mathcal{A}\to\mathcal{A}$ is $T_{P}$-cocontinuous;

(c) $\widetilde{P}_{\textnormal{oplax}}$-cocomplete iff $\mathcal{A}$
is $P$-cocomplete.

\noindent Moreover, the pseudo/lax/oplax $T$-morphisms $\left(F,\phi\right)$
which are $\widetilde{P}/\widetilde{P}_{\textnormal{lax}}/\widetilde{P}_{\textnormal{oplax}}$-cocontinuous
are all classified by those maps for which the underlying $F$ is
$P$-cocontinuous.
\end{prop}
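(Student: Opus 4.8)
My plan is to deduce the proposition from three facts already in place: (i) by Theorem \ref{liftkzequiv} and Proposition \ref{oplaxlaxlift}, $\widetilde P$, $\widetilde P_{\mathrm{lax}}$ and $\widetilde P_{\mathrm{oplax}}$ are KZ doctrines, so by \cite{marm2012} an object is cocomplete for one of them exactly when it carries a pseudo-algebra structure for it; (ii) by Proposition \ref{claim} each Day convolution $z_x\colon TP\mathcal A\to P\mathcal A$ is $T_P$-cocontinuous, with $(y_{\mathcal A},\xi_x)$ a pseudo $T$-morphism; and (iii) $\widetilde P$ \emph{lifts} $P$ in the strong sense that the forgetful $2$-functor $U\colon\text{ps-}T\text{-alg}\to\mathscr C$ carries it to $P$ on objects and units, and likewise for the $\mathrm{lax}$/$\mathrm{oplax}$ variants. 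Since a pseudo $T$-morphism is in particular a lax one and an oplax one, $\widetilde P_{\mathrm{lax}}$-cocompleteness and $\widetilde P_{\mathrm{oplax}}$-cocompleteness of $(\mathcal A,x)$ each already imply $\widetilde P$-cocompleteness; so it suffices to prove the ``only if'' of (a) together with the converses of all three parts.

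For the converses of (a) and (b): given $\mathcal A$ $P$-cocomplete and $x$ $T_P$-cocontinuous, and a pseudo (resp.\ lax) $T$-morphism $(G,\gamma)\colon(\mathcal B,r)\to(\mathcal A,x)$, I would take the left extension $\overline G$ of $G$ along $y_{\mathcal B}$ in $\mathscr C$ (available since $\mathcal A$ is $P$-cocomplete) and lift it by Proposition \ref{docleftext}: its hypotheses hold because, by Proposition \ref{admcocontinuousequiv}, $T_P$-cocontinuity of $x$ is $T_P$-adm-cocontinuity, so $x$ $T$-preserves $\overline G$ along the $P$-admissible $y_{\mathcal B}$ and also $T$-preserves the resulting left extension along the $P$-admissible $Ty_{\mathcal B}$ (admissible by Proposition \ref{preserveadm}). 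The lift is exhibited by the isomorphism $c_G$; in the pseudo case its induced $T$-morphism structure is an isomorphism of left extensions exactly as in Theorem \ref{dimpliesa} (with $x$ in place of the free convolution map), and preservation by the lifted $(\overline F,\overline\phi)$ is the same composite-of-left-extensions bookkeeping used there. For the converse of (c): if $\mathcal A$ is $P$-cocomplete it carries a pseudo $P$-algebra structure $a\colon P\mathcal A\to\mathcal A$; since $(y_{\mathcal A},\xi_x)$ is a pseudo $T$-morphism, generalized doctrinal adjunction (Proposition \ref{docparadj}) endows its left adjoint $a$ with an oplax $T$-morphism structure, making $(\mathcal A,x)$ a pseudo $\widetilde P_{\mathrm{oplax}}$-algebra and hence $\widetilde P_{\mathrm{oplax}}$-cocomplete --- and no cocontinuity of $x$ is needed, as this oplax structure need not be invertible.

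For the ``only if'' of (a): if $(\mathcal A,x)$ is $\widetilde P$-cocomplete then it carries a pseudo $\widetilde P$-algebra structure, whose structure $1$-cell is a pseudo $T$-morphism $(a,\alpha)\colon(P\mathcal A,z_x)\to(\mathcal A,x)$ with $\alpha$ invertible; applying $U$ and using (iii), $\mathcal A$ is a pseudo $P$-algebra, so $P$-cocomplete, with $a\dashv y_{\mathcal A}$ (invertible counit) and $a$ a $P$-homomorphism. To see $x$ is $T_P$-cocontinuous I would write an arbitrary left extension $\overline K$ of $K\colon\mathcal B\to\mathcal A$ along $y_{\mathcal B}$ as $a\cdot\overline{y_{\mathcal A}K}$ (as $a$ is a $P$-homomorphism), so that $x\cdot T\overline K\cong a\cdot z_x\cdot T\overline{y_{\mathcal A}K}$ through the invertible $\alpha$; since $z_x$ is $T_P$-cocontinuous (Proposition \ref{claim}) and $a$, being a $P$-homomorphism, preserves left extensions along the $P$-admissible $Ty_{\mathcal B}$, this exhibits $x\cdot T\overline K$ as a left extension of $x\cdot TK$ along $Ty_{\mathcal B}$ after translating through $\xi_x$ and $a\cdot y_{\mathcal A}\cong 1$; that is, $x$ $T$-preserves $\overline K$. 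The converse of (b) then follows from the converse of (a) via the $\mathrm{lax}$ form of Proposition \ref{docleftext}, and the ``only if'' of (b) and (c) from that of (a) together with the reductions noted in the first paragraph.

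Finally, for the ``moreover'' clause, a pseudo/lax/oplax $T$-morphism $(F,\phi)$ between cocomplete algebras is cocontinuous iff it preserves the lifted left extensions along units; as these are lifts of the underlying left extensions and the codomain's convolution map is $T_P$-cocontinuous, Proposition \ref{docleftext} identifies the lift of $F\cdot\overline G$ as having structure cell the composite of $\phi$ with $\overline\phi$, so $(F,\phi)$ preserves it precisely when $F$ preserves the underlying extension, i.e.\ precisely when $F$ is $P$-cocontinuous. I expect the main obstacle to be the ``only if'' of (a): extracting the base-level data from the two-dimensional universal property in $\text{ps-}T\text{-alg}$. Routing it through ``$\widetilde P$-cocomplete $\Leftrightarrow$ pseudo $\widetilde P$-algebra'' together with the fact that $\widetilde P$ literally lifts $P$ --- so that the already-understood maps $z_x$ carry the computation --- is what makes it tractable, and the presence or absence of invertibility of the colimit map's $T$-morphism structure is exactly what separates (c) from (a) and (b).
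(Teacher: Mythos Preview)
Your overall strategy matches the paper's, but the reduction in your first paragraph contains a genuine error. You claim that since a pseudo $T$-morphism is in particular lax and oplax, $\widetilde P_{\mathrm{lax}}$- and $\widetilde P_{\mathrm{oplax}}$-cocompleteness each imply $\widetilde P$-cocompleteness. This inference points the wrong way: having a reflection left adjoint to $(y_{\mathcal A},\xi_x)$ in $\text{ps-}T\text{-alg}_{\mathrm{oplax}}$ only gives an \emph{oplax} left adjoint, which need not lie in $\text{ps-}T\text{-alg}$. Indeed the proposition itself shows the implication fails: by (a) and (c), any $P$-cocomplete $\mathcal A$ with $x$ not $T_P$-cocontinuous gives a $\widetilde P_{\mathrm{oplax}}$-cocomplete algebra that is not $\widetilde P$-cocomplete.

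Both gaps are easily patched. For (c), the ``only if'' is immediate without passing through (a): the forgetful $U$ carries the reflection adjunction in $\text{ps-}T\text{-alg}_{\mathrm{oplax}}$ to one in $\mathscr C$, so $\mathcal A$ is $P$-cocomplete. For (b), your claimed implication $\widetilde P_{\mathrm{lax}}$-cocomplete $\Rightarrow$ $\widetilde P$-cocomplete is actually true, but for a different reason: a left adjoint in $\text{ps-}T\text{-alg}_{\mathrm{lax}}$ is automatically pseudo by doctrinal adjunction \cite{doctrinal} (a lax left adjoint to a lax morphism is strong), which is precisely the argument the paper invokes for (b). With these corrections your proof is essentially the paper's: the ``only if'' of (a) via the invertible mate structure on $(y_{\mathcal A})_*$ and the factorisation of $\overline K$ through $P\mathcal A$, the converses via Proposition~\ref{docleftext} and Proposition~\ref{admcocontinuousequiv}, and the ``moreover'' via the composite-structure-cell bookkeeping of~\eqref{prescoh}.
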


\begin{proof}
We start off by proving part ($a$).

$\left(\Longrightarrow\right)\colon$ Suppose that $\left(\mathcal{A},x\right)$
is a $\widetilde{P}$-cocomplete pseudo $T$-algebra. Then, by doctrinal
adjunction \cite{doctrinal}, the pseudo $T$-morphism $\left(y_{\mathcal{A}},\xi_{x}\right)$
has a reflection left adjoint $\left(\left(y_{\mathcal{A}}\right)_{\ast},\left(\xi_{x}^{-1}\right)_{\ast}\right)$
for which $\left(\xi_{x}^{-1}\right)_{\ast}$ is defined by the mates
correspondence and is invertible. That is, we have isomorphisms 
\[
\xymatrix@=1em{TP\mathcal{A}\ar[rr]^{z_{x}}\ar@{}[ddrr]|-{\Downarrow\xi_{x}^{-1}} &  & P\mathcal{A} &  &  &  & TP\mathcal{A}\ar[rr]^{z_{x}}\ar@{}[ddrr]|-{\Downarrow\left(\xi_{x}^{-1}\right)_{\ast}}\ar[dd]_{T\left(y_{\mathcal{A}}\right)_{\ast}} &  & P\mathcal{A}\ar[dd]^{\left(y_{\mathcal{A}}\right)_{\ast}}\\
 & \; &  &  &  &  &  & \;\\
T\mathcal{A}\ar[uu]^{Ty_{\mathcal{A}}}\ar[rr]_{x} &  & \mathcal{A}\ar[uu]_{y_{\mathcal{A}}} &  &  &  & T\mathcal{A}\ar[rr]_{x} &  & \mathcal{A}
}
\]
Now $\left(y_{\mathcal{A}}\right)_{\ast}\dashv y_{\mathcal{A}}$ via
a reflection adjoint so $\mathcal{A}$ is $P$-cocomplete. We thus
check that $x\colon T\mathcal{A}\to\mathcal{A}$ is $T_{P}$-cocontinuous.
Suppose we are given a left extension as on the left
\[
\xymatrix@=1em{P\mathcal{D}\ar[rr]^{\overline{F}}\ar@{}[dr]|-{\Uparrow c_{F}} &  & \mathcal{A} &  &  & TP\mathcal{D}\ar[rr]^{\overline{F}}\ar@{}[dr]|-{\Uparrow Tc_{F}} &  & T\mathcal{A}\ar[rr]^{x} &  & \mathcal{A}\\
 & \; &  &  &  &  & \;\\
\mathcal{D}\ar[rruu]_{F}\ar[uu]^{y_{\mathcal{D}}} &  &  &  &  & T\mathcal{D}\ar[rruu]_{TF}\ar[uu]^{Ty_{\mathcal{D}}}
}
\]
We check that the right diagram is a left extension. We first note
this is equivalent to showing that $x$ $T$-preserves left extensions
as on the left below
\[
\xymatrix@=1em{P\mathcal{D}\ar[rr]^{PF}\ar@{}[drdr]|-{\Uparrow c_{y_{\mathcal{A}}\cdot F}} &  & P\mathcal{A}\ar[rr]^{\left(y_{\mathcal{A}}\right)_{\ast}}\ar@{}[dr]|-{\Uparrow c_{\textnormal{id}_{\mathcal{A}}}} &  & \mathcal{A} &  & TP\mathcal{D}\ar[rr]^{TPF}\ar@{}[drdr]|-{\Uparrow Tc_{y_{\mathcal{A}}\cdot F}} &  & TP\mathcal{A}\ar[rr]^{T\left(y_{\mathcal{A}}\right)_{\ast}}\ar@{}[dr]|-{\Uparrow Tc_{\textnormal{id}_{\mathcal{A}}}} &  & T\mathcal{A}\ar[rr]^{x} &  & \mathcal{A}\\
 &  &  & \; &  &  &  &  &  & \;\\
\mathcal{D}\ar[rr]_{F}\ar[uu]^{y_{\mathcal{D}}} &  & \mathcal{A}\ar[uu]|-{y_{\mathcal{A}}}\ar[uurr]_{\textnormal{id}_{\mathcal{A}}} &  &  &  & T\mathcal{D}\ar[rr]_{TF}\ar[uu]^{Ty_{\mathcal{D}}} &  & T\mathcal{A}\ar[uu]|-{Ty_{\mathcal{A}}}\ar[uurr]_{T\textnormal{id}_{\mathcal{A}}}
}
\]
and so it suffices to check the right diagram is a left extension.
This is seen upon pasting with the isomorphism $\left(\xi_{x}^{-1}\right)_{\ast}$
as $z_{x}$ is $T_{P}$-cocontinuous and $\left(y_{\mathcal{A}}\right)_{\ast}$
is a left adjoint (and hence preserves all left extensions).

$\left(\Longleftarrow\right)\colon$ Suppose that $\mathcal{A}$ is
$P$-cocomplete and $x$ is $T_{P}$-cocontinuous. Then $\left(\mathcal{A},x\right)$
is $\widetilde{P}$-cocomplete as $\left(\mathcal{A},x\right)$ admits
left extensions along $\left(y_{\mathcal{A}},\xi_{x}\right)$ by Proposition
\ref{docleftext}, and showing that such left extensions admit a pseudo
$T$-morphism structure and are preserved is a similar calculation
to that in the proof of Theorem \ref{dimpliesa}.

$\left(b\right)\colon$ The proof of the classification of $\widetilde{P}_{\textnormal{lax}}$-cocomplete
pseudo $P$-algebras is almost the same (as the reflection left adjoint
must again be pseudo by doctrinal adjunction \cite{doctrinal}), and
so we omit the details.

$\left(c\right)\colon$ The $\widetilde{P}_{\textnormal{oplax}}$-cocomplete
pseudo $P$-algebras are those with an underlying $P$-cocomplete
object, as a consequence of doctrinal adjunction \cite{doctrinal}.

That the $T$-morphisms $\left(F,\phi\right)$ which are $\widetilde{P}/\widetilde{P}_{\textnormal{lax}}/\widetilde{P}_{\textnormal{oplax}}$-cocontinuous
are all classified by those morphisms for which the underlying $F$
is $P$-cocontinuous is a straightforward calculation. Indeed, given
a pseudo $T$-morphism $\left(F,\phi\right)$ for which $F$ is $P$-cocontinuous,
checking that $\left(F,\phi\right)$ is then $\widetilde{P}$-cocontinuous
requires only checking a coherence condition (similar to \ref{prescoh}).
Conversely, given that $\left(F,\phi\right)$ is $\widetilde{P}$-cocontinuous,
that is, a pseudo $\widetilde{P}$-morphism on $\textnormal{ps-}T\textnormal{-alg}$,
we know the underlying $F$ must be a pseudo $P$-morphism on $\mathscr{C}$
(by forgetting that certain morphisms and 2-cells are $T$-algebraic),
so that $F$ is $P$-cocontinuous. The $\widetilde{P}_{\textnormal{lax}}\textnormal{ and }\widetilde{P}_{\textnormal{oplax}}$
case may be similarly seen.
\end{proof}
\begin{prop}
Suppose any of the equivalent conditions of Theorem \ref{liftkzequiv}
are satisfied. Assume $\left(L,\alpha\right)\colon\left(\mathcal{A},x\right)\to\left(\mathcal{B},y\right)$
is a pseudo $T$-morphism and $L\colon\mathcal{A}\to\mathcal{B}$
is $P$-admissible. Then $\left(L,\alpha\right)$ is $\widetilde{P}$-admissible
if and only if for every $\widetilde{P}$-cocomplete pseudo $T$-algebra
$\left(\mathcal{C},z\right)$ and pseudo $T$-morphism $\left(I,\xi\right)$
as in the diagram
\[
\xymatrix@=1em{\left(\mathcal{B},y\right)\ar[rr]^{\left(R,\beta\right)} &  & \left(\mathcal{C},z\right)\ar@{}[dl]|-{\stackrel{\delta}{\Longleftarrow}}\\
 & \;\\
 &  & \left(\mathcal{A},x\right)\ar[uu]_{\left(I,\xi\right)}\ar[uull]^{\left(L,\alpha\right)}
}
\]
the induced lax structure cell $\beta$ on the underlying left extension
$R$ as in Proposition \ref{docleftext} is invertible. Moreover,
for pseudo, lax and oplax $\left(L,\alpha\right)$ respectively,

1. $\left(L,\alpha\right)$ is $\widetilde{P}$-admissible iff $\widetilde{P}\left(L,\alpha\right)$
has a pseudo right adjoint;

2. $\left(L,\alpha\right)$ is $\widetilde{P}_{\textnormal{lax}}$-admissible
iff $\widetilde{P}\left(L,\alpha\right)$ is pseudo;

3. $\left(L,\alpha\right)$ is $\widetilde{P}_{\textnormal{oplax}}$-admissible
iff $\widetilde{P}\left(L,\alpha\right)$ has a pseudo right adjoint.
\end{prop}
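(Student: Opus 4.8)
The plan is to deduce the whole statement from the third characterization of admissibility in Definition \ref{admequiv}, applied to the three lifted KZ doctrines $\widetilde{P}$, $\widetilde{P}_{\textnormal{lax}}$, $\widetilde{P}_{\textnormal{oplax}}$ (which are KZ doctrines on their respective $2$-categories by Proposition \ref{oplaxlaxlift}), together with Kelly's doctrinal adjunction \cite{doctrinal} in the generalized $T$-transformation form of Remark \ref{Ttransremark}. The first observation is that for $(L,\alpha)$ pseudo (resp. lax, resp. oplax), $\widetilde{P}(L,\alpha)$ is by construction the left extension $\textnormal{lan}_{(L,\alpha)}$ in the relevant $2$-category — obtained from Proposition \ref{docleftext} applied to the pseudo/lax/oplax $T$-morphism $(y_{\mathcal{B}},\xi_y)\cdot(L,\alpha)$ — and its underlying $1$-cell is exactly $PL=\textnormal{lan}_L$. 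Since $L$ is assumed $P$-admissible we already have a right adjoint $\textnormal{res}_L$ to $PL$ in $\mathscr{C}$, and since the forgetful $2$-functor to $\mathscr{C}$ is a $2$-functor, any right adjoint of $\widetilde{P}(L,\alpha)$ must have underlying $1$-cell $\textnormal{res}_L$.

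For the numbered refinements, fix the relevant $2$-category and apply Definition \ref{admequiv}(3): $(L,\alpha)$ is $\widetilde{P}/\widetilde{P}_{\textnormal{lax}}/\widetilde{P}_{\textnormal{oplax}}$-admissible iff $\widetilde{P}(L,\alpha)$ admits a right adjoint there. By doctrinal adjunction, along the fixed adjunction $PL\dashv\textnormal{res}_L$ the mates correspondence is a bijection between oplax structures on $PL$ and lax structures on $\textnormal{res}_L$ for which the unit and counit become $T$-transformations, and this determines the $T$-morphism structure of any lift of $\textnormal{res}_L$ compatible with a given structure on $\widetilde{P}(L,\alpha)$. In the pseudo case $\widetilde{P}(L,\alpha)$ carries an invertible structure, the mate is the unique candidate structure on $\textnormal{res}_L$, and the lifted adjunction lies in $\textnormal{ps-}T\textnormal{-alg}$ exactly when that mate is invertible, i.e. when $\widetilde{P}(L,\alpha)$ has a pseudo right adjoint; this is part 1. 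The oplax case is identical, giving part 3, since a right adjoint of the oplax $\widetilde{P}_{\textnormal{oplax}}(L,\alpha)$ is automatically lax and lies in $\textnormal{ps-}T\textnormal{-alg}_{\textnormal{oplax}}$ precisely when it is moreover oplax, hence pseudo. In the lax case the right adjoint is automatically oplax, so lies in $\textnormal{ps-}T\textnormal{-alg}_{\textnormal{lax}}$ precisely when it is pseudo; running the mates bijection in both directions then forces $\widetilde{P}_{\textnormal{lax}}(L,\alpha)$ itself to be pseudo, which is part 2.

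It remains to prove the opening equivalence about invertibility of the induced $\beta$. Given a $\widetilde{P}$-cocomplete pseudo $T$-algebra $(\mathcal{C},z)$ and a pseudo $T$-morphism $(I,\xi)$, the classification of $\widetilde{P}$-cocomplete algebras proved above gives that $\mathcal{C}$ is $P$-cocomplete and $z$ is $T_P$-cocontinuous, hence $T_P$-adm-cocontinuous by Proposition \ref{admcocontinuousequiv}; since $L$ and (by statement $(d)$) $TL$ are $P$-admissible, $z$ $T$-preserves the underlying left extension $(R,\delta)$ of $I$ along $L$ and the resulting left extension along $TL$. Proposition \ref{docleftext} then produces the unique lax structure $\beta$ on $R$ making $\delta$ a $T$-transformation and lifts the diagram to a $T$-left extension, which is in particular a left extension in $\textnormal{ps-}T\textnormal{-alg}$ as soon as $\beta$ is invertible; preservation by $\widetilde{P}$-homomorphisms is inherited from preservation of $(R,\delta)$ by $P$-homomorphisms together with the uniqueness clause of Proposition \ref{docleftext}. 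If $(L,\alpha)$ is $\widetilde{P}$-admissible then by part 1 it has a pseudo right adjoint $\widetilde{\textnormal{res}}_{L}$, and the lifted form of \cite[Remark 16]{yonedakz} computes the left extension of $(I,\xi)$ along $(L,\alpha)$ as $\overline{(I,\xi)}\cdot\widetilde{\textnormal{res}}_{L}\cdot(y_{\mathcal{B}},\xi_y)$, a composite of pseudo $T$-morphisms (the first being pseudo by the argument of Theorem \ref{dimpliesa}), so $\beta$ is invertible. Conversely, applying the hypothesis with $(\mathcal{C},z)=\widetilde{P}(\mathcal{A},x)$ and $(I,\xi)=(y_{\mathcal{A}},\xi_x)$ shows the canonical left extension of the unit along $(L,\alpha)$ is pseudo, whence its further extension along $(y_{\mathcal{A}},\xi_x)$ — which is the lifted $\textnormal{res}_L$, pseudo by Theorem \ref{dimpliesa} — is right adjoint to $\widetilde{P}(L,\alpha)$, so by part 1 $(L,\alpha)$ is $\widetilde{P}$-admissible.

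The principal difficulty is the doctrinal-adjunction bookkeeping in the second paragraph: one must track the directions of the structure $2$-cells through the mates correspondence in each of the three $2$-categories and verify that "having a right adjoint there" really translates into the asserted pseudo-ness condition, the delicate point being that the mate of a structure cell is not invertible in general, so the equivalences genuinely use the specific shape of the adjunction $PL\dashv\textnormal{res}_L$ coming from the KZ doctrine $P$. The rest is routine pasting-diagram verification in the style of Theorem \ref{dimpliesa} and Proposition \ref{docleftext}.
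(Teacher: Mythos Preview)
Your proposal is correct and follows essentially the same approach as the paper: the numbered parts are derived from the third characterization of admissibility in Definition \ref{admequiv} together with Kelly's doctrinal adjunction, and the opening equivalence is obtained from the Bunge--Funk characterization combined with Proposition \ref{docleftext}. The paper's own proof is a two-sentence pointer to exactly these ingredients, so your version is simply a fleshed-out form of the same argument; the only minor structural difference is that you route the opening equivalence through part 1, whereas the paper seems to invoke the ``left extensions into every cocomplete object'' form of admissibility directly, but this is a matter of presentation rather than substance.
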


\begin{proof}
The first part of this proposition follows an equivalent characterization
of $P$-admissibility as given by Bunge and Funk (discussed in \cite{bungefunk,yonedakz}),
along with Proposition \ref{docleftext}. The last three properties
are a direct consequence of doctrinal adjunction \cite{doctrinal}.
\end{proof}
\begin{rem}
Note that the conditions of $\widetilde{P}/\widetilde{P}_{\textnormal{oplax}}$-admissibility
are analogous to asking a Guitart exactness condition is satisfied
\cite{exactness} (in the presence of some additional structure, and
in the context of pointwise left extensions). However, we omit discussion
of this as it would take us beyond the scope of this paper.
\end{rem}

\begin{rem}
Note that if $P$ (and thus $\widetilde{P}$) is locally fully faithful,
and $\left(L,\alpha\right)$ is a lax $T$-morphism, then $\widetilde{P}\left(L,\alpha\right)$
being pseudo implies $\left(L,\alpha\right)$ is. Indeed, the lax
structure cell $\alpha$ when whiskered by $y_{\mathcal{A}}$ is invertible
(a direct consequence of how the structure cell of $\widetilde{P}\left(L,\alpha\right)$
is defined in Proposition \ref{docleftext}). As $y_{\mathcal{A}}$
is fully faithful, this means $\alpha$ is invertible. Hence, in this
case, Statement 2 of the above proposition is equivalent to saying
$\left(L,\alpha\right)$ is pseudo.
\end{rem}

Given a KZ doctrine $P$ on a 2-category $\mathscr{C}$ we have an
equivalence given by composition with the unit $y_{\mathcal{A}}$,
namely $\mathscr{C}_{\textnormal{ccts}}\left(P\mathcal{A},\mathcal{B}\right)\simeq\mathscr{C}\left(\mathcal{A},\mathcal{B}\right)$,
with $\mathscr{C}_{\textnormal{ccts}}\left(P\mathcal{A},\mathcal{B}\right)$
containing left extensions of maps $\mathcal{A}\to\mathcal{B}$ along
the unit $y_{\mathcal{A}}$. This is clearly essentially surjective
as for an $F\colon\mathcal{A}\to\mathcal{B}$ we may take $\overline{F}\colon P\mathcal{A}\to\mathcal{B}$,
and fully faithful as $y_{\mathcal{A}}$ is dense. We can thus recover
Im and Kelly's following result.
\begin{cor}
[Im-Kelly \cite{uniconvolution}] Suppose we are given a 2-category
$\mathscr{C}$ equipped with a pseudomonad $\left(T,u,m\right)$ and
a KZ doctrine $\left(P,y\right)$. Suppose any of the equivalent conditions
of Theorem \ref{liftkzequiv} are met. Then for every pair of pseudo
$T$-algebras $\left(\mathcal{A},x\right)$ and $\left(\mathcal{B},r\right)$
where $\mathcal{B}$ is $P$-cocomplete, composition with the unit
$\left(y_{\mathcal{A}},\xi_{x}\right)$ defines the equivalence 
\[
\mathbf{Oplax}\left[\left(\mathcal{A},x\right),\left(\mathcal{B},r\right)\right]\simeq\mathbf{Oplax}_{\textnormal{ccts}}\left[\left(P\mathcal{A},z_{x}\right),\left(\mathcal{B},r\right)\right]
\]
where a morphism of pseudo $T$-algebras is cocontinuous when the
underlying morphism is. Suppose further that $r$ is $T_{P}$-cocontinuous.
Then composition with the unit $\left(y_{\mathcal{A}},\xi_{x}\right)$
also defines the equivalences
\[
\begin{aligned}\mathbf{Lax}\left[\left(\mathcal{A},x\right),\left(\mathcal{B},r\right)\right] & \simeq\mathbf{Lax}_{\textnormal{ccts}}\left[\left(P\mathcal{A},z_{x}\right),\left(\mathcal{B},r\right)\right]\\
\mathbf{Pseudo}\left[\left(\mathcal{A},x\right),\left(\mathcal{B},r\right)\right] & \simeq\mathbf{Pseudo}_{\textnormal{ccts}}\left[\left(P\mathcal{A},z_{x}\right),\left(\mathcal{B},r\right)\right]
\end{aligned}
\]
Moreover, the above three equivalences restrict to \emph{$P$-}admissible
underlying morphisms.
\end{cor}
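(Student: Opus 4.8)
The plan is to derive the corollary by applying, to each of the lifted KZ doctrines $\widetilde{P}_{\textnormal{oplax}}$, $\widetilde{P}_{\textnormal{lax}}$ and $\widetilde{P}$, the general principle recorded just above the statement: for any KZ doctrine $Q$ on a $2$-category $\mathscr{D}$ and any $Q$-cocomplete object $B$, composition with the unit $y_{A}$ exhibits an equivalence $\mathscr{D}_{\textnormal{ccts}}(QA,B)\simeq\mathscr{D}(A,B)$, where $\mathscr{D}_{\textnormal{ccts}}(QA,B)$ is the full subcategory of $\mathscr{D}(QA,B)$ on the $Q$-cocontinuous morphisms (equivalently, on the left extensions along $y_A$ of maps $A\to B$); it is essentially surjective via $F\mapsto\overline{F}$ and fully faithful because $y_A$ is dense. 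So the entire proof is a matter of checking that the three hypotheses of this principle hold in each of the three settings and that the ``$\textnormal{ccts}$'' subcategory appearing there is the one named in the statement.

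First I would treat the oplax case. Taking $Q=\widetilde{P}_{\textnormal{oplax}}$, $\mathscr{D}=\text{ps-}T\text{-alg}_{\textnormal{oplax}}$, $A=(\mathcal{A},x)$ and $B=(\mathcal{B},r)$: the object $(\mathcal{B},r)$ is $\widetilde{P}_{\textnormal{oplax}}$-cocomplete precisely because $\mathcal{B}$ is $P$-cocomplete (the classification of $\widetilde{P}_{\textnormal{oplax}}$-cocomplete pseudo $T$-algebras above); the unit $(y_{\mathcal{A}},\xi_x)$ is dense in $\text{ps-}T\text{-alg}_{\textnormal{oplax}}$ (the density statement inside Proposition \ref{oplaxlaxlift}, whose argument runs exactly as in Theorem \ref{dimpliesa}); and a $\widetilde{P}_{\textnormal{oplax}}$-cocontinuous morphism $(P\mathcal{A},z_x)\to(\mathcal{B},r)$ is, by the classification of cocontinuous $T$-morphisms above, the same thing as an oplax $T$-morphism with $P$-cocontinuous underlying morphism, so (using $(P\mathcal{A},z_x)=\widetilde{P}_{\textnormal{oplax}}(\mathcal{A},x)$) the left-hand side of the general equivalence is exactly $\mathbf{Oplax}_{\textnormal{ccts}}[(P\mathcal{A},z_x),(\mathcal{B},r)]$. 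Essential surjectivity sends $(F,\varphi)$ to the $\widetilde{P}_{\textnormal{oplax}}$-left extension $(\overline{F},\overline{\varphi})$ furnished by Proposition \ref{docleftext}; its underlying morphism $\overline{F}$ is the $P$-left extension of $F$ along $y_{\mathcal{A}}$, hence $P$-cocontinuous, and composing $(\overline{F},\overline{\varphi})$ with $(y_{\mathcal{A}},\xi_x)$ recovers $(F,\varphi)$ up to the isomorphism $c_F$; full faithfulness is density of the lifted unit. The lax and pseudo cases are identical, with $Q=\widetilde{P}_{\textnormal{lax}}$ on $\text{ps-}T\text{-alg}_{\textnormal{lax}}$ and $Q=\widetilde{P}$ on $\text{ps-}T\text{-alg}$; the only change is that $(\mathcal{B},r)$ must now be $\widetilde{P}_{\textnormal{lax}}$-, resp. $\widetilde{P}$-, cocomplete, which by the same classification is exactly the condition that $\mathcal{B}$ be $P$-cocomplete and $r$ be $T_{P}$-cocontinuous — the extra hypothesis imposed for these two equivalences — and in the pseudo case one also uses that $\overline{\varphi}$ is invertible when $\varphi$ is, as shown in the proof of Theorem \ref{dimpliesa}.

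Finally I would check that all three equivalences restrict to the full subcategories on $P$-admissible underlying morphisms, i.e. that under $(F,\varphi)\leftrightarrow(\overline{F},\overline{\varphi})$ the morphism $F$ is $P$-admissible iff $\overline{F}$ is. One direction is immediate: $F\cong\overline{F}\cdot y_{\mathcal{A}}$ and each $y_{\mathcal{A}}$ is $P$-admissible, and composites of $P$-admissible maps are $P$-admissible (since $P$ is a pseudofunctor and right adjoints compose), so $P$-admissibility of $\overline{F}$ yields that of $F$. For the converse I would use the factorization $\overline{F}\cong(y_{\mathcal{B}})_{\ast}\cdot PF$ through the colimit map $(y_{\mathcal{B}})_{\ast}\colon P\mathcal{B}\to\mathcal{B}$ (the reflection left adjoint to $y_{\mathcal{B}}$, available since $\mathcal{B}$ is $P$-cocomplete): $PF$ is $P$-admissible because $F$ is, and $(y_{\mathcal{B}})_{\ast}$ is $P$-admissible because $P$ carries $(y_{\mathcal{B}})_{\ast}\dashv y_{\mathcal{B}}$ to $P(y_{\mathcal{B}})_{\ast}\dashv Py_{\mathcal{B}}$, so the composite is again $P$-admissible.

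I do not expect a genuine obstacle: all the substantive content has already been packaged into the classification of $\widetilde{P}$- (and $\widetilde{P}_{\textnormal{lax}}$-, $\widetilde{P}_{\textnormal{oplax}}$-) cocomplete algebras and cocontinuous morphisms, into the density of the lifted units, and into Proposition \ref{docleftext}. The only points demanding care are bookkeeping ones — matching, in each of the three cases, the correct flavour of cocompleteness and of ``$\textnormal{ccts}$'' to what the general equivalence needs — together with the short admissibility lemma above, which is the place I would be most careful to keep the variances straight.
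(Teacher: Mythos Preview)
Your proposal is correct and follows essentially the same approach as the paper: apply the general KZ-doctrine equivalence $\mathscr{D}_{\textnormal{ccts}}(QA,B)\simeq\mathscr{D}(A,B)$ to each of the three lifted doctrines, using the classification of cocomplete algebras and cocontinuous morphisms to match the hypotheses and the ``ccts'' sides. The only minor difference is in the converse direction of the admissibility restriction: the paper observes directly (citing \cite[Lemma 12]{yonedakz}) that if $L$ is $P$-admissible then $\overline{L}$ itself has a right adjoint, whence $P\overline{L}$ does too; your factorization $\overline{F}\cong(y_{\mathcal{B}})_{\ast}\cdot PF$ amounts to the same thing, since it exhibits $\overline{F}$ as a composite of left adjoints.
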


\begin{proof}
We need only check the restriction. Note that if $\overline{L}\colon P\mathcal{A}\to\mathcal{B}$
is \emph{$P$-}admissible then so is the composite $\overline{L}\cdot y_{\mathcal{A}}\cong L$
due to closure under composition. If $L$ is \emph{$P$-}admissible,
then $\overline{L}$ has a right adjoint by \cite[Lemma 12]{yonedakz},
and so $P\overline{L}$ also does.
\end{proof}

\subsection{The Preorder of KZ Doctrines on a 2-Category}

In the following discussion of morphisms between KZ pseudomonads and
doctrines we will omit most of the details, as this would take us
beyond the scope of this paper. Moreover, the calculations are quite
similar to those in Section \ref{liftingkzdoctrines}.

It is the goal of this subsection to show that the 2-category of KZ
pseudomonads on a 2-category $\mathscr{C}$ is biequivalent to a preorder.
This is a property one might expect given the ``property like structure''
viewpoint \cite{lack1997}; and the tools of admissible maps give
us a method of proving this result.
\begin{defn}
Given KZ pseudomonads $\left(P,y,\mu\right)$ and $\left(P',y',\mu'\right)$
on a 2-category $\mbox{\ensuremath{\mathscr{C}}}$, a \emph{morphism
of KZ pseudomonads} $P\Longrightarrow P'$ (corresponding to a lifting
of the identity on $\mathscr{C}$) consists of a pseudonatural transformation
$\alpha\colon P\to P'$ and an invertible modification 
\[
\xymatrix@=1em{P\ar[rr]^{\alpha} &  & P'\\
 & \ar@{}[ur]|-{\overset{\psi_{y}}{\Longleftarrow}}\\
 &  & 1_{\mathscr{C}}\ar[uu]_{y'}\ar[lluu]^{y}
}
\]
such that
\[
\xymatrix@=1em{ &  &  &  &  &  &  &  &  & P\ar[rr]^{\alpha}\ar[dd]_{y'P}\ar[dlld]_{yP} &  & P'\ar@/^{0.7pc}/[dd]^{P'y'}\ar@/_{0.7pc}/[dd]_{y'P'}\\
P\ar[rr]^{\alpha}\ar@/^{0.7pc}/[dd]^{Py}\ar@/_{0.7pc}/[dd]_{yP} &  & P'\ar[dd]^{P'y}\ar[rrdd]^{P'y'} &  &  & \ar@{}[rd]|-{=} &  &  & \ar@{}[rd]|-{\stackrel{\psi_{y}P}{\Longleftarrow}\quad\;} & \; & \ar@{}[l]|-{\stackrel{\left(y'\right)_{\alpha}^{-1}}{\Longleftarrow}} & \ar@{}[]|-{\stackrel{\theta'}{\Longleftarrow}}\\
\ar@{}[]|-{\stackrel{\theta}{\Longleftarrow}} & \ar@{}[r]|-{\stackrel{\alpha_{y}}{\Longleftarrow}\quad} & \; & \ar@{}[ld]|-{\;\quad\stackrel{P'\psi_{y}}{\Longleftarrow}} &  &  & \; & PP\ar[rr]_{\alpha P} &  & P'P\ar[rr]_{P'\alpha} &  & P'P'\ar[rr]_{\mu'} &  & P'\\
PP\ar[rr]_{\alpha P} &  & P'P\ar[rr]_{P'\alpha} &  & P'P'\ar[rr]_{\mu'} &  & P'
}
\]
\end{defn}

The reader will notice the following is similar to Lemma \ref{w2isext},
meaning we are justified in omitting most of the details.
\begin{lem}
Given a morphism of KZ pseudomonads as above, the 2-cell $\psi_{y}$
exhibits $\alpha$ as a left extension of $y'$ along $y$.
\end{lem}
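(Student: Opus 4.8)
The plan is to mimic exactly the strategy used for Lemma \ref{w2isext} (where the modification component $\omega_2^{\mathcal A}$ exhibiting $\lambda_{\mathcal A}$ was shown to be a left extension), specialized to the situation where $T$ is replaced by the identity pseudomonad and $\lambda$ is replaced by $\alpha$. In that setting the coherence axiom for a morphism of KZ pseudomonads plays the role of ``coherence axiom 1'' for a pseudo-distributive law. So first I would observe that, since $(P',y',\mu')$ is a KZ pseudomonad, its structure forms a fully faithful adjoint string, and in particular for each $\mathcal A$ the composite $\mu'_{\mathcal A}\cdot P'\alpha_{\mathcal A}$ is exhibited as a left extension of $\mathrm{id}_{P'\mathcal A}$ along $y'_{P'\mathcal A}$, via the isomorphism built from $(y')^{-1}_{\alpha_{\mathcal A}}$ and the pseudomonad unit coherence — this is the analogue of diagram \ref{defrightadj} in the proof of Lemma \ref{w2isext}.

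Next I would show that $y_{\mathcal A}$ is $P'$-admissible, exhibited by the adjunction $P'y_{\mathcal A}\dashv \mu'_{\mathcal A}\cdot P'\alpha_{\mathcal A}$. The unit $\eta$ of this adjunction is defined as the unique solution to a left extension problem (using that $\mu'_{\mathcal A}\cdot P'\alpha_{\mathcal A}$ is the left extension just described), and pseudonaturality of $y'$ forces $\eta$ to be $\mu'_{\mathcal A}\cdot P'\alpha_{\mathcal A}\cdot P'y_{\mathcal A}\xrightarrow{P'\psi_{y}^{\mathcal A}} \mu'_{\mathcal A}\cdot P'y'_{\mathcal A}\cong\mathrm{id}$; the counit $\epsilon$ is defined via a left extension problem involving $\theta'$ and the coherence cell $\alpha_y$. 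The left triangle identity (equivalently, whiskered by $y'_{P\mathcal A}$ — here in the degenerate $T=\mathrm{id}$ case it is the whiskering by the appropriate unit) follows from $\psi_y$ being a modification; the right triangle identity is where the displayed coherence axiom for the morphism of KZ pseudomonads is consumed, together with the second KZ coherence axiom \ref{kzcoh2} for $P'$.

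Finally, with $y_{\mathcal A}$ shown $P'$-admissible and the adjunction in hand, I would invoke \cite[Remark 16]{yonedakz} — exactly as in the ``Each $\omega_2^{\mathcal A}$ exhibits $\lambda_{\mathcal A}$ as a left extension'' step — to conclude that the canonical pasting built from $\eta$ (via $\mathrm{res}_{y_{\mathcal A}}=\mu'_{\mathcal A}\cdot P'\alpha_{\mathcal A}$ and the unit) exhibits $\alpha_{\mathcal A}$ as a left extension of $y'_{\mathcal A}$ along $y_{\mathcal A}$, and then check that this pasting coincides with $\psi_y^{\mathcal A}$, using that $\psi_y$ is a modification together with the third pseudomonad consequence \ref{thirdaxiom} for $P'$. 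The main obstacle is the right triangle identity: unwinding the definitions of $\eta$ and $\epsilon$ into a single large pasting diagram and recognizing precisely where the morphism-of-KZ-pseudomonads coherence axiom and axiom \ref{kzcoh2} apply is the one genuinely technical computation, and it is entirely parallel to the corresponding (omitted-in-spirit) computation in Lemma \ref{w2isext}, which is why the details may reasonably be suppressed here.
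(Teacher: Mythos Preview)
Your proposal is correct and follows essentially the same approach as the paper: establish the adjunction $P'y_{\mathcal A}\dashv \mu'_{\mathcal A}\cdot P'\alpha_{\mathcal A}$ (so that $y_{\mathcal A}$ is $P'$-admissible), then invoke \cite[Remark 16]{yonedakz} and check the resulting exhibiting cell is $\psi_y^{\mathcal A}$. The paper likewise notes explicitly that this is parallel to Lemma~\ref{w2isext} and omits the triangle identities for the same reason you do.

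One small inaccuracy: in defining the counit $\epsilon$, the paper uses the KZ structure cell $\theta$ for $P$ (relating $Py$ and $yP$), together with $\alpha_y$ and $(\psi_y P)^{-1}$, rather than $\theta'$ for $P'$ as you state. Your analogy ``$T\mapsto\mathrm{id}$, $\lambda\mapsto\alpha$'' does not transport cleanly here because in Lemma~\ref{w2isext} there is only one KZ monad, whereas now there are two; the paper's choice of $\theta$ is what makes the right triangle identity line up directly with the displayed coherence axiom for a morphism of KZ pseudomonads (which has $\theta$ on one side and $\theta'$ on the other). This does not affect the validity of your outline, only the bookkeeping in the suppressed computation.
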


\begin{proof}
We first observe that $P'y\dashv\mu'\cdot P'\alpha$ (note that this
right adjoint is $\overline{\alpha}$, similar to $\overline{\lambda}$
in Lemma \ref{w2isext}) with unit $\eta$ given by
\[
\xymatrix@=1em{ & P'P\myar{P'\alpha}{r} & P'P'\ar[rd]^{\mu'}\\
P'\ar[ru]^{P'y}\ar@/_{1.3pc}/[rur]_{P'y'}\ar@/_{1.5pc}/[rrr]_{\textnormal{id}_{P'}} & \ar@{}[u]|-{\quad\Uparrow P'\psi_{y}} & \ar@{}[]|-{\cong} & P'\\
 &  & \;
}
\]
We define the counit $\epsilon$ as the unique 2-cell for which
\[
\xymatrix@=1em{ &  &  &  & \;\ar@{}[drr]|-{\underset{}{\Uparrow\epsilon}} &  &  &  &  &  &  &  & \ar@{}[]|-{\;\;\Uparrow\left(\psi_{y}P\right)^{-1}}\\
P\ar[rr]^{y'P}\ar@/_{0.5pc}/[rrd]_{\alpha} &  & P'P\ar[rr]^{P'\alpha}\ar@/^{2pc}/[rrrrrr]^{\textnormal{id}} &  & P'P'\ar[rr]^{\mu'} &  & P'\ar[rr]^{P'y} &  & P'P\ar@{}[rr]|-{=} &  & P\ar@/^{0.7pc}/[rr]^{yP}\ar@/_{0.7pc}/[rr]_{Py}\ar@{}[rr]|-{\Uparrow\theta}\ar@/^{3pc}/[rrrr]^{y'P}\ar@/_{0.7pc}/[rrd]_{\alpha} &  & PP\ar[rr]^{\alpha P} & \; & P'P\\
 &  & P'\ar@/_{0.5pc}/[rru]^{y'P}\ar@{}[u]|-{\Uparrow\left(y'_{\alpha}\right)^{-1}}\ar@/_{0.7pc}/[rrrru]_{\textnormal{id}_{P'}} &  & \ar@{}[u]|-{\cong} &  &  &  &  &  &  &  & P'\ar@/_{0.8pc}/[rru]_{P'y}\ar@{}[ru]|-{\Uparrow\alpha_{y}}
}
\]
We will omit the triangle identities (as this is almost the same calculation
as earlier). The result then follows from \cite[Remark 16]{yonedakz}
and naturality and pseudomonad coherence axioms.
\end{proof}
\begin{rem}
Given a morphism of KZ pseudomonads, we automatically have an invertible
modification
\[
\xymatrix@=1em{PP\myar{\alpha\ast\alpha}{rr}\ar[dd]_{\mu} &  & P'P'\ar[dd]^{\mu'}\\
 & \ar@{}[]|-{\cong}\\
P\ar[rr]_{\alpha} &  & P'
}
\]
so that multiplication is respected. Indeed $\alpha\cdot\mu$ may
be seen as a left extension of $y'$ along $Py\cdot y$ exhibited
by the bijections\def\fCenter{\ \rightarrow\ } 
\def\ScoreOverhang{2pt}
\settowidth{\rhs}{$H\cdot Py\cdot y$} 
\settowidth{\lhs}{$\alpha\cdot\mu$}
\begin{prooftree}
\Axiom$\makebox[\lhs][r]{$\alpha\cdot\mu$} \fCenter \makebox[\rhs][l]{$H$}$
\RightLabel{$\qquad \textnormal{mates correspondence}$} 
\UnaryInf$\makebox[\lhs][r]{$\alpha$} \fCenter \makebox[\rhs][l]{$H\cdot Py$}$
\RightLabel{$\qquad \text{since \ensuremath{\alpha} is a left extension}$}
\UnaryInf$\makebox[\lhs][r]{$y'$} \fCenter \makebox[\rhs][l]{$H\cdot Py\cdot y$}$ 
\end{prooftree}and $\mu'\cdot\alpha\ast\alpha$ may be seen as left extension of
$y'$ along $yP\cdot y$ by recalling that $R_{L}=\textnormal{res}_{L}\cdot y_{\mathcal{B}}$
for admissible $L\colon\mathcal{A}\to\mathcal{B}$ \cite[Remark 16]{yonedakz}
and taking $L$ to be an arbitrary component of $yP\cdot y$ with
respect to $P'$-admissibility. In particular, noting that $P'y\dashv\mu'\cdot P'\alpha$
and $P'yP\dashv\mu'\cdot P'\alpha P$ gives us the necessary data
for constructing $R_{L}$. Finally, noting that $yP\cdot y\cong Py\cdot y$
gives the result. 
\end{rem}

\begin{defn}
Given KZ doctrines $\left(P,y\right)$ and $\left(P',y'\right)$ on
a 2-category $\mathscr{C}$ a \emph{morphism of KZ doctrines} $P\Longrightarrow P'$
consists of the assertions that: 

\begin{enumerate}
\item every $P$-admissible map is also $P'$-admissible;
\item for each $\mathcal{A}\in\mathscr{C}$, the resulting 2-cell exhibiting
the left extension $\alpha_{\mathcal{A}}$ 
\[
\xymatrix@=1em{P\mathcal{A}\ar[rr]^{\alpha_{\mathcal{A}}} &  & P'\mathcal{A}\\
 & \ar@{}[ur]|-{\overset{\psi_{y}^{\mathcal{A}}}{\Longleftarrow}}\\
 &  & \mathcal{A}\ar[uu]_{y'_{\mathcal{A}}}\ar[lluu]^{y_{\mathcal{A}}}
}
\]
is invertible;
\item for each $\mathcal{A},\mathcal{B}\in\mathscr{C}$, left extensions
along $y_{\mathcal{A}}$ into $P\mathcal{B}$ are preserved by $\alpha_{\mathcal{B}}$.\footnote{Consequently, components of $\alpha$ are $P$-homomorphisms.}
\end{enumerate}
\end{defn}

\begin{lem}
Suppose we are given two KZ doctrines $\left(P,y\right)$ and $\left(P',y'\right)$
on a 2-category $\mathscr{C}$, with corresponding KZ pseudomonads
$\left(P,y,\mu\right)$ and $\left(P',y',\mu'\right)$. Then morphisms
$P\Longrightarrow P'$ of KZ doctrines are in bijection with morphisms
$P\Longrightarrow P'$ of KZ pseudomonads (identified via uniqueness
of left extensions up to coherent isomorphism).
\end{lem}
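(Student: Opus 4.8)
The plan is to set up an explicit passage in each direction and check that the two are mutually inverse, with essential uniqueness of left extensions doing the bookkeeping, exactly as in the proof of Theorem~\ref{liftkzequiv}. Throughout, one uses that each $y_{\mathcal{A}}$ is always $P$-admissible, so that the relevant left extensions of $y'_{\mathcal{A}}$ along $y_{\mathcal{A}}$ are available once assertion~(1) of the morphism-of-KZ-doctrines data is in force.

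First, from a morphism of KZ pseudomonads $(\alpha,\psi_y)$ to a morphism of KZ doctrines. By the preceding lemma each $\psi_y^{\mathcal{A}}$ exhibits $\alpha_{\mathcal{A}}$ as a left extension of $y'_{\mathcal{A}}$ along $y_{\mathcal{A}}$, and it is invertible by hypothesis, which is assertion~(2). Since $\alpha_{\mathcal{A}}$ is then, up to coherent isomorphism, the left extension $\overline{y'_{\mathcal{A}}}$ of Definition~\ref{defkzdoctrine}, axiom~(b) there shows $\alpha_{\mathcal{B}}$ preserves left extensions along $y_{\mathcal{A}}$ into $P\mathcal{B}$, giving assertion~(3). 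For assertion~(1) I mimic Proposition~\ref{preserveadm}: given $P$-admissible $L\colon\mathcal{A}\to\mathcal{B}$ with $PL\dashv\textnormal{res}_L$, the $1$-cell $\mathbf{R}_L:=\mu'_{\mathcal{A}}\cdot P'\alpha_{\mathcal{A}}\cdot P'\textnormal{res}_L\cdot P'y_{\mathcal{B}}$ is exhibited as a left extension by the evident pasting (the analogue of~\ref{defrightadj2}), the unit and counit of $P'L\dashv\mathbf{R}_L$ are defined as the unique solutions to left extension problems just as in that proof, and the triangle identities reduce---after whiskering by the appropriate units and using that $\psi_y$ is a modification---to the coherence axiom for a morphism of KZ pseudomonads together with the axioms~\ref{kzcoh1} and~\ref{kzcoh2}; this is formally the same computation as in Proposition~\ref{preserveadm}, so I only indicate it.

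Conversely, from a morphism of KZ doctrines. By assertion~(1) each $P$-admissible $y_{\mathcal{A}}$ is $P'$-admissible, so condition~(2) of Definition~\ref{admequiv} produces the left extension $\alpha_{\mathcal{A}}$ of $y'_{\mathcal{A}}$ along $y_{\mathcal{A}}$ into the $P'$-cocomplete object $P'\mathcal{A}$, exhibited by $\psi_y^{\mathcal{A}}$, invertible by assertion~(2). For the pseudonaturality isomorphisms one argues as in Lemma~\ref{lambdaccts}: for $L\colon\mathcal{A}\to\mathcal{B}$, both $P'L\cdot\alpha_{\mathcal{A}}$ (since $P'L=\textnormal{lan}_L$ is a $P'$-homomorphism and $y_{\mathcal{A}}$ is $P'$-admissible) and $\alpha_{\mathcal{B}}\cdot PL$ (since $PL=\textnormal{lan}_L$ is a left extension along $y_{\mathcal{A}}$ and $\alpha_{\mathcal{B}}$ preserves it by assertion~(3)) are left extensions of $y'_{\mathcal{B}}\cdot L$ along $y_{\mathcal{A}}$, up to the canonical isomorphisms $\psi_y$ and the $c$'s of Definition~\ref{defkzdoctrine}; uniqueness of left extensions yields the coherent isomorphism $\alpha_L$, and the same principle yields the modification condition for $\psi_y$. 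Finally the coherence axiom for a morphism of KZ pseudomonads is checked by a left extension argument of the kind used in Theorem~\ref{eimpliesd}: both composites are $2$-cells into $\mu'$-composites whose common domain is a component of $Py\cdot y\cong yP\cdot y$, hence are forced equal by the universal property of the relevant left extension together with the coherences~\ref{kzcoh1},~\ref{kzcoh2} and~\ref{thirdaxiom}.

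The two passages are mutually inverse because each recovers $\alpha_{\mathcal{A}}$ (with its exhibiting cell $\psi_y^{\mathcal{A}}$) as \emph{the} left extension of $y'_{\mathcal{A}}$ along $y_{\mathcal{A}}$, which is determined up to a unique coherent isomorphism---this being the identification referred to in the statement. The main obstacle is the verification of the coherence axiom in the second passage (and, dually, the triangle identities for $P'L\dashv\mathbf{R}_L$ in the first), which is the single technically substantial computation; but it is formally identical to the corresponding steps in Proposition~\ref{preserveadm} and Theorem~\ref{eimpliesd}, so, in keeping with the level of detail of this subsection, the details are omitted.
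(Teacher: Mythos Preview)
Your overall strategy is sound and close in spirit to the paper's, but two points deserve comment.

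First, your justification of assertion~(3) in the KZ pseudomonad $\Rightarrow$ KZ doctrine direction has a gap. You invoke axiom~(b) of Definition~\ref{defkzdoctrine} to conclude that $\alpha_{\mathcal{B}}=\overline{y'_{\mathcal{B}}}$ preserves left extensions along $y_{\mathcal{A}}$, but that axiom only applies to extensions $\overline{G}$ of maps $G\colon\mathcal{B}\to P\mathcal{C}$; here $y'_{\mathcal{B}}$ lands in $P'\mathcal{B}$, which is not of the form $P\mathcal{C}$. You would first need to know that $P'\mathcal{B}$ is $P$-cocomplete, which is not automatic. The paper sidesteps this entirely: it simply observes that assertion~(3) is \emph{equivalent} to the pseudonaturality of $\alpha$ (exactly the content of Lemma~\ref{lambdaccts} in this setting), and pseudonaturality is given data for a morphism of KZ pseudomonads.

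Second, your approach to assertion~(1) is genuinely different from the paper's. You propose to mimic Proposition~\ref{preserveadm}, constructing a right adjoint $\mathbf{R}_L$ to $P'L$ directly and verifying the triangle identities using the coherence axiom. The paper instead gives a short conceptual argument: a morphism of KZ pseudomonads is in particular a morphism of pseudomonads, so every $P'$-algebra (equivalently, $P'$-cocomplete object) is a $P$-algebra (equivalently, $P$-cocomplete), and likewise for homomorphisms; characterization~(2) of $P'$-admissibility in Definition~\ref{admequiv} then follows immediately from $P$-admissibility. This buys you assertion~(1) with no computation, and as a bonus establishes that $P'\mathcal{B}$ is $P$-cocomplete, which would also repair your argument for~(3). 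Your computational route should work, but be aware that the analogue of Proposition~\ref{preserveadm} uses the isomorphism $\omega_4$ from Lemma~\ref{w2isext}, whose analogue here (compatibility of $\alpha$ with $\mu$, $\mu'$) you would need to establish along the way.

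In the other direction the paper derives the coherence axiom by noting that the triangle identities for $P'y\dashv\mu'\cdot P'\alpha$ (with the canonical unit and counit) \emph{force} it; your left-extension argument amounts to the same thing unpacked.
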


\begin{proof}
Given that every $P$-admissible map is also $P'$-admissible, we
know that $P'y$ has a right adjoint (and that we have a left extension
$\alpha$ as above, assumed invertible). In particular, this right
adjoint may be constructed as in \cite[Prop. 15]{yonedakz}, and thus
we have an adjunction $P'y\dashv\mu'\cdot P'\alpha$ with unit and
counit as above. The triangle identities then force the coherence
condition. Pseudonaturality of $\alpha$ is equivalent to the preservation
condition.

Conversely, given a morphism of KZ pseudomonads (which always gives
rise to a usual morphism of pseudomonads) we know that every $P'$-cocomplete
object is also $P$-cocomplete (as the cocomplete objects may be characterized
as algebras), and similarly for homomorphisms. Hence given a $P$-admissible
map $L\colon\mathcal{A}\to\mathcal{B}$ and map $K\colon\mathcal{A}\to\mathcal{X}$
for a $P'$-cocomplete (and thus also $P$-cocomplete) object $\mathcal{X}$,
there exists a left extension $J\colon\mathcal{B}\to\mathcal{X}$
which is preserved by any $P'$-homomorphism (as such is necessarily
a $P$-homomorphism also). Consequently, $L$ must be $P'$-admissible.
\end{proof}
Combining this with the results of \cite{marm2012}, yields the following
proposition.
\begin{prop}
Given a 2-category $\mathscr{C}$, the assignation of \cite[Theorems 4.1,4.2]{marm2012}
underlies a biequivalence
\[
\mathbf{KZdoc}\left(\mathscr{C}\right)\simeq\mathbf{KZps}\left(\mathscr{C}\right)
\]
where $\mathbf{KZps}\left(\mathscr{C}\right)$ is the 2-category of
KZ pseudomonads, morphisms of KZ pseudomonads and isomorphisms of
left extensions, and $\mathbf{KZdoc}\left(\mathscr{C}\right)$ is
the preorder of KZ doctrines and morphisms of KZ doctrines.
\end{prop}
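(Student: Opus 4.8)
The plan is to promote the object-level correspondence of Marmolejo and Wood \cite[Theorems 4.1, 4.2]{marm2012} to a biequivalence by combining it with the two preceding lemmata on morphisms together with the short remark about $2$-cells. Recall that \cite[Theorem 4.1]{marm2012} assigns to a KZ doctrine $\left(P,y\right)$ a KZ pseudomonad $\left(P,y,\mu\right)$, with $\mu$ recovered from the induced adjoint string, while \cite[Theorem 4.2]{marm2012} extracts from a KZ pseudomonad its underlying KZ doctrine; these constructions are mutually inverse up to isomorphism on objects, so the assignation is essentially surjective on objects. It therefore suffices to verify that it is a \emph{local equivalence}: for KZ doctrines $\left(P,y\right)$, $\left(P',y'\right)$ with associated KZ pseudomonads $\left(P,y,\mu\right)$, $\left(P',y',\mu'\right)$, the induced functor
\[
\mathbf{KZdoc}(\mathscr{C})\bigl(P,P'\bigr)\longrightarrow\mathbf{KZps}(\mathscr{C})\bigl(P,P'\bigr)
\]
is an equivalence of categories.

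First I would invoke the preceding lemma, which gives a bijection between morphisms $P\Longrightarrow P'$ of KZ doctrines and morphisms $P\Longrightarrow P'$ of KZ pseudomonads; this makes the functor above bijective on objects, hence in particular essentially surjective on objects. Since $\mathbf{KZdoc}(\mathscr{C})$ is a preorder, each of its hom-categories is subterminal, so it remains only to show that each hom-category of $\mathbf{KZps}(\mathscr{C})$ is subterminal as well --- that is, that any two parallel morphisms of KZ pseudomonads admit a unique isomorphism of left extensions between them. This follows from the earlier lemma that for a morphism $(\alpha,\psi_y)$ the modification $\psi_y$ exhibits $\alpha$ as the left extension of $y'$ along $y$: given two such morphisms with $1$-cell data $\alpha$, $\alpha'$, any candidate isomorphism of left extensions must, when whiskered by $y$, agree with the canonical comparison determined by $\psi_y$ and $\psi'_y$, and uniqueness of left extensions up to unique coherent isomorphism then both produces the $2$-cell and forces it to be the only one; its naturality, modification and compatibility axioms hold automatically by the universal property. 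Thus $\mathbf{KZps}(\mathscr{C})(P,P')$ is a contractible groupoid (or empty), equivalent to the subterminal category $\mathbf{KZdoc}(\mathscr{C})(P,P')$, so the displayed functor is an equivalence.

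Finally I would observe that the assignation is $2$-functorial: it preserves identities and composites of morphisms, which --- because the target hom-categories are subterminal --- reduces to checking that the relevant composite and identity morphisms of KZ pseudomonads exist, and this is exactly what the preceding lemma (together with the remark that multiplication is respected) supplies. Combining essential surjectivity on objects with the local equivalence just established yields the biequivalence $\mathbf{KZdoc}(\mathscr{C})\simeq\mathbf{KZps}(\mathscr{C})$ induced by \cite[Theorems 4.1, 4.2]{marm2012}. The only step needing real care is the uniqueness of the comparison $2$-cell in the second paragraph; everything else is bookkeeping, made essentially formal by the convention of identifying morphisms via uniqueness of left extensions up to coherent isomorphism.
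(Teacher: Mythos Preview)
Your proposal is correct and follows exactly the approach the paper intends: the paper offers no explicit proof beyond the sentence ``Combining this with the results of \cite{marm2012}, yields the following proposition,'' and you have spelled out precisely that combination --- essential surjectivity from \cite[Theorems 4.1, 4.2]{marm2012}, the bijection on morphisms from the preceding lemma, and subterminality of the hom-categories of $\mathbf{KZps}(\mathscr{C})$ from the earlier lemma that $\psi_y$ exhibits $\alpha$ as a left extension. Your elaboration of the uniqueness-of-comparison-$2$-cell step is exactly the content implicit in the paper's choice of $2$-cells as ``isomorphisms of left extensions.''
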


\subsection{Examples}

Consider the 2-monad $T$ on locally small categories for strict monoidal
categories, and take $P$ to be the free small cocompletion KZ doctrine
on locally small categories. Note that the pseudo-$T$-algebras are
unbiased monoidal categories (equivalent to (strict) monoidal categories
\cite{leinster}) and so we may write $\textnormal{ps-}T\textnormal{-alg}\simeq\textnormal{MonCat}_{\textnormal{ps}}$
with the latter being the 2-category of monoidal categories, strong
monoidal functors and monoidal transformations. 

Given a monoidal category $\left(\mathcal{A},\varotimes\right)$ we
may define a monoidal structure on $P\mathcal{A}$ by Day's convolution
formula
\[
F\varotimes_{\textnormal{Day}}G:=\int^{a,b\in\mathcal{A}}\mathcal{A}\left(-,a\varotimes b\right)\times Fa\times Gb
\]
for small presheaves $F$ and $G$ on $\mathcal{A}$. Note that $F\varotimes_{\textnormal{Day}}G$
is then small, see \cite[Section 7]{daylack2007}. This can be shown
to give a monoidal structure by the arguments of Day \cite{dayconvolution},
equivalent to the structure of a pseudo-$T$-algebra. As the convolution
algebra structure map is separately cocontinuous (and hence $T_{P}$-cocontinuous
\cite[Prop. 2.3.2]{markextension}) we have enough of Proposition
\ref{claim} to show condition (a) of Theorem \ref{liftkzequiv} is
met.

We thus know that $T$ preserves $P$-admissible maps. This says that
if we suppose that $L\colon\mathcal{A}\to\mathcal{B}$ is $P$-admissible,
meaning that each $\mathcal{B}\left(L-,b\right)$ is a small colimit
of representables, then each 
\[
T\mathcal{B}\left(TL\boldsymbol{-},\mathbf{b}\right)=T\mathcal{B}\left[\left(L-,\cdots L-\right),\left(b_{1},\cdots,b_{n}\right)\right]=\prod_{j=1}^{n}\mathcal{B}\left(L-,b_{j}\right)
\]
 is also a small colimit of representables. 

For simplicity, we will consider the preservation of the admissibility
of $L=y_{\mathcal{A}}$ (which is equivalent to preservation for all
$L$). The existence of a pseudo-distributive law of $T$ over $P$
then yields the following example.
\begin{prop}
Let $X,Y\colon\mathcal{A}^{\textnormal{op}}\to\mathbf{Set}$ be two
small presheaves on $\mathcal{A}$. Then 
\[
X\times Y\colon\left(\mathcal{A}\times\mathcal{A}\right)^{\textnormal{op}}\to\mathbf{Set},\qquad\left(a_{1},a_{2}\right)\mapsto X\left(a_{1}\right)\times Y\left(a_{2}\right)
\]
is a small presheaf on $\mathcal{A}\times\mathcal{A}$.
\end{prop}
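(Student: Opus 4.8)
The plan is to deduce this directly from the fact, recorded in the discussion above, that the existence of the pseudo-distributive law of $T$ over $P$ forces $T$ to preserve $P$-admissible maps; in fact only the admissibility of $Ty_{\mathcal{A}}\colon T\mathcal{A}\to TP\mathcal{A}$ is needed. First I would recall the concrete description of $P$-admissibility for the free small cocompletion: a $1$-cell $L\colon\mathcal{X}\to\mathcal{Y}$ is $P$-admissible precisely when for every object $b$ of $\mathcal{Y}$ the presheaf $\mathcal{Y}(L-,b)$ on $\mathcal{X}$ is small, i.e.\ lies in $P\mathcal{X}$. Applying this with $L=Ty_{\mathcal{A}}$ and with $b=(X,Y)$, viewed as an object of the arity-$2$ summand $(P\mathcal{A})^{2}$ of $TP\mathcal{A}$, we learn that the presheaf $TP\mathcal{A}\bigl(Ty_{\mathcal{A}}(-),(X,Y)\bigr)$ on $T\mathcal{A}$ is small.

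Next I would compute this presheaf. Since $T$ is the free strict monoidal $2$-monad, $T\mathcal{A}=\coprod_{m\ge 0}\mathcal{A}^{m}$ and $TP\mathcal{A}=\coprod_{m\ge 0}(P\mathcal{A})^{m}$, the functor $Ty_{\mathcal{A}}$ acts componentwise by the Yoneda embedding, and in either category there are no morphisms between tuples of different lengths while on tuples of equal length hom-sets are computed coordinatewise. Hence for $\mathbf{a}=(a_{1},\dots,a_{m})\in T\mathcal{A}$ the set $TP\mathcal{A}\bigl(Ty_{\mathcal{A}}(\mathbf{a}),(X,Y)\bigr)$ is empty unless $m=2$, in which case the Yoneda lemma identifies it with $P\mathcal{A}(y_{\mathcal{A}}a_{1},X)\times P\mathcal{A}(y_{\mathcal{A}}a_{2},Y)\cong X(a_{1})\times Y(a_{2})$. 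Therefore the restriction of $TP\mathcal{A}(Ty_{\mathcal{A}}(-),(X,Y))$ along the inclusion of the arity-$2$ component $\mathcal{A}\times\mathcal{A}\hookrightarrow T\mathcal{A}$ is exactly $X\times Y$.

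Finally I would use that smallness of presheaves transfers across the inclusion of a coproduct summand of a coproduct of categories: a presheaf on $\coprod_{i}\mathcal{C}_{i}$ is small if and only if each of its restrictions to the $\mathcal{C}_{i}$ is small, since both small colimits and the property of being a small colimit of representables are detected componentwise. The point is that $\mathcal{A}\times\mathcal{A}$ occurs as an \emph{entire} coproduct summand of $T\mathcal{A}$, which is where the structure of the free strict monoidal category does the work. Combining the three steps yields that $X\times Y$ is small on $\mathcal{A}\times\mathcal{A}$. (One could instead argue directly, writing $X$ and $Y$ as small colimits of representables and using that external product of presheaves preserves colimits in each variable and carries a pair of representables to a representable; but the argument above is the one for which the preceding machinery was designed.) The step I would be most careful about is the bookkeeping in the middle paragraph — making sure the empty hom-sets between distinct arities are handled so that the restriction to $\mathcal{A}\times\mathcal{A}$ is literally $X\times Y$ with nothing extraneous attached.
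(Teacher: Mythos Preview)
Your proof is correct and follows essentially the same approach as the paper: both invoke the $P$-admissibility of $Ty_{\mathcal{A}}$, specialize to $\mathbf{X}=(X,Y)$ in the arity-$2$ summand of $TP\mathcal{A}$, compute the resulting presheaf on $T\mathcal{A}$ via Yoneda to obtain $X(-)\times Y(-)$ supported on $\mathcal{A}\times\mathcal{A}$, and then pass from smallness on $T\mathcal{A}=\coprod_n\mathcal{A}^n$ to smallness on the summand $\mathcal{A}\times\mathcal{A}$. You are simply more explicit than the paper about justifying this last passage.
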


\begin{proof}
Note that $Ty_{\mathcal{A}}$ is $P$-admissible, and hence 
\[
TP\mathcal{A}\left(Ty_{\mathcal{A}}\boldsymbol{-},\mathbf{X}\right)\colon\left(T\mathcal{A}\right)^{\textnormal{op}}\to\mathbf{Set}
\]
 is a small presheaf on $T\mathcal{A}$ for each $\mbox{\ensuremath{\mathbf{X}}}=\left(X_{1},\cdots,X_{n}\right)$
in $TP\mathcal{A}$. In particular, if we take $\mbox{\ensuremath{\mathbf{X}}}=\left(X,Y\right)$
then
\[
\begin{aligned}TP\mathcal{A}\left(y_{\mathcal{A}}\boldsymbol{-},\mathbf{X}\right) & =\begin{cases}
TP\mathcal{A}\left[\left(y_{\mathcal{A}}-,y_{\mathcal{A}}-\right),\left(X,Y\right)\right], & \mathbf{a}\in\left(\mathcal{A}\times\mathcal{A}\right)^{\textnormal{op}}\\
\emptyset, & \textnormal{otherwise}
\end{cases}\\
 & =\begin{cases}
X\left(-\right)\times Y\left(-\right), & \mathbf{a}\in\left(\mathcal{A}\times\mathcal{A}\right)^{\textnormal{op}}\\
\emptyset, & \textnormal{otherwise}
\end{cases}
\end{aligned}
\]
is a small presheaf on $\sum_{n\in\mathbb{N}}\mathcal{A}^{n}$ and
so $X\left(-\right)\times Y\left(-\right)$ is a small presheaf on
$\mathcal{A}\times\mathcal{A}$.
\end{proof}
Our results also apply to the less general setting of distributing
(co)KZ doctrines over KZ doctrines. The following is such an example.
\begin{example}
Consider the KZ doctrine for the free coproduct completion 
\[
\mathbf{Fam}_{\Sigma}\colon\mathbf{Cat}\to\mathbf{Cat}.
\]
 Here a map $L\colon\mathcal{A}\to\mathcal{B}$ is $\mathbf{Fam}_{\Sigma}$-admissible
when $\mathbf{Fam}_{\Sigma}L$ is a left adjoint; that is, when $L$
is a left multiadjoint. As noted by Diers \cite{diers}, this is to
say that for any $Z\in\mathcal{B}$ there exists a family of morphisms
$\left(h_{i}\colon LX_{i}\to Z\right)_{i\in\mathcal{I}}$ which is
universal in the sense that given any $k\colon LX\to Z$ there exists
a unique pair $\left(i,f\right)$ with $i\in\mathcal{I}$ and $f\colon X\to X_{i}$
such that $h_{i}\cdot Lf=k$.

It is well known the free product completion $\mathbf{Fam}_{\Pi}$
distributes over this doctrine \cite[Section 8]{marm2012}. Thus,
as a consequence of Theorem \ref{liftkzequiv}, we see that if a functor
$L$ is a left multiadjoint, then the functor $\mathbf{Fam}_{\Pi}L$
is a left multiadjoint also.
\end{example}

The following is a simple consequence of the essential uniqueness
of distributive laws over KZ doctrines, shown in Corollary \ref{distunique}.
\begin{example}
Let $\mathbf{Prof}$ be the bicategory of profunctors on small categories,
and let $\mathbf{PROF}$ be the Kleisli bicategory of the free small
cocompletion KZ doctrine $P$ on locally small categories. Clearly
$\mathbf{Prof}$ lies inside $\mathbf{PROF}$. By Corollary \ref{distunique},
the extension of a pseudomonad $T$ on locally small categories to
the bicategory $\mathbf{PROF}$ is essentially unique.
\end{example}

\section{Liftings of Locally Fully Faithful KZ Monads\label{liftlffkz}}

In this section, we consider the case in which the KZ doctrine $P$
being lifted is locally fully faithful. The reader will recall that
a KZ doctrine $P$ is locally fully faithful precisely when each unit
map $y_{\mathcal{A}}$ is fully faithful \cite{bungefunk}. 

The main goal of this section is to deduce an analogue of ``Doctrinal
Adjunction'' on the ``Yoneda structure'' induced by the locally
fully faithful KZ doctrine $P$. We start however with the following
basic properties concerning fully faithful and $P$-fully faithful
maps.
\begin{prop}
\noindent Suppose any of the equivalent conditions of Theorem \ref{liftkzequiv}
are satisfied. Then

(a) if $y_{\mathcal{A}}$ is fully faithful for every $\mathcal{A}\in\mathscr{C}$,
then every $Ty_{\mathcal{A}}$ is fully faithful;

(b) $T$ preserves maps which are both $P$-admissible and $P$-fully
faithful.
\end{prop}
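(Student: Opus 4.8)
The plan is to deduce (b) first and then derive (a) from it. For (b), let $L\colon\mathcal{A}\to\mathcal{B}$ be both $P$-admissible and $P$-fully faithful. By Proposition \ref{preserveadm}, $TL$ is again $P$-admissible, and the proof of that proposition exhibits this admissibility by an explicit adjunction $PTL\dashv\mathbf{R}_{L}$ whose unit $n$ is the pasting displayed in \ref{nformula}. The point is that $n$ is assembled entirely from the invertible cells $PTy_{L}$, $P\omega_{2}^{\mathcal{A}}$, a coherence isomorphism, and the single further cell $PT\eta$, where $\eta$ is the unit of the adjunction $PL\dashv\textnormal{res}_{L}$. Since $L$ is $P$-fully faithful, $PL$ is a fully faithful left adjoint, so $\eta$ is invertible; as $T$ and $P$ are pseudofunctors they preserve invertibility, so $PT\eta$ is invertible, and hence $n$ is invertible. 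A left adjoint is fully faithful exactly when its unit is invertible, so $PTL$ is fully faithful; that is, $TL$ is $P$-fully faithful. Together with $P$-admissibility of $TL$ this proves (b).

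For (a), recall that each $y_{\mathcal{A}}$ is itself both $P$-admissible and $P$-fully faithful, so (b) gives that each $Ty_{\mathcal{A}}$ is $P$-fully faithful, i.e. $PTy_{\mathcal{A}}$ is fully faithful. Pseudonaturality of the unit $y\colon 1_{\mathscr{C}}\to P$ supplies an invertible $2$-cell $y_{TP\mathcal{A}}\cdot Ty_{\mathcal{A}}\cong PTy_{\mathcal{A}}\cdot y_{T\mathcal{A}}$. Under the hypothesis that every $y_{\mathcal{C}}$ is fully faithful, the right-hand side is a composite of fully faithful $1$-cells, hence fully faithful; so $y_{TP\mathcal{A}}\cdot Ty_{\mathcal{A}}$ is fully faithful. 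Finally, if a composite $g\cdot f$ is fully faithful and $g$ is fully faithful then $f$ is fully faithful (an elementary check on the representable hom-categories), and $y_{TP\mathcal{A}}$ is fully faithful by hypothesis, so $Ty_{\mathcal{A}}$ is fully faithful.

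The only step that is not bookkeeping is the identification in (b) of $n$ as a composite in which $PT\eta$ is the sole potentially non-invertible constituent; but this is immediate from the formula \ref{nformula} already established in the proof of Proposition \ref{preserveadm}, so essentially all of the work has been done. As an alternative one could prove (b) via the characterisation (\cite[Remark 24]{yonedakz}) of $P$-fully faithfulness for $P$-admissible maps --- that every left extension along $L$ into a $P$-cocomplete object is an isomorphism --- and transport such left extensions along $T$ using Proposition \ref{lambdabeck}; but the adjunction argument above is the shortest route.
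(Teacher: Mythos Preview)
Your proof of (b) is essentially identical to the paper's: both invoke the explicit formula \ref{nformula} for the unit $n$ of the adjunction $PTL\dashv\mathbf{R}_{L}$ constructed in Proposition \ref{preserveadm}, and observe that every constituent 2-cell is invertible once $\eta$ is.

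For (a) you take a different route. The paper argues directly from the isomorphism $\omega_{2}^{\mathcal{A}}\colon \lambda_{\mathcal{A}}\cdot Ty_{\mathcal{A}}\cong y_{T\mathcal{A}}$, using that $y_{T\mathcal{A}}$ is fully faithful to conclude that $Ty_{\mathcal{A}}$ is. You instead apply (b) to $L=y_{\mathcal{A}}$ to obtain that $PTy_{\mathcal{A}}$ is fully faithful, then pass through the pseudonaturality square $y_{TP\mathcal{A}}\cdot Ty_{\mathcal{A}}\cong PTy_{\mathcal{A}}\cdot y_{T\mathcal{A}}$ and cancel the fully faithful $y_{TP\mathcal{A}}$. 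Your argument is slightly longer but has the virtue of making the cancellation principle explicit (``$g\cdot f$ fully faithful and $g$ fully faithful implies $f$ fully faithful''), whereas the paper's one-line deduction from $\lambda_{\mathcal{A}}\cdot Ty_{\mathcal{A}}$ being fully faithful leaves the reader to supply the justification that the first factor of a fully faithful composite is itself fully faithful---which, as stated, requires an additional hypothesis on the second factor. Both approaches are sound; yours trades directness for a cleaner logical dependency, deriving (a) as a genuine corollary of (b).
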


\begin{proof}
Firstly, note that if each $y_{\mathcal{A}}$ is fully faithful (so
that $y_{T\mathcal{A}}$ is fully faithful) then so is $Ty_{\mathcal{A}}$,
since we have an isomorphism
\[
\xymatrix@=1em{TP\ar[rr]^{\lambda_{\mathcal{A}}} &  & PT\\
 & \;\ar@{}[ru]|-{\overset{\omega_{2}}{\Longleftarrow}}\\
 &  & T\mathcal{A}\ar[uu]_{y_{T\mathcal{A}}}\ar[lluu]^{Ty_{\mathcal{A}}}
}
\]
Secondly, note that if $L$ is a $P$-admissible $P$-fully faithful
map, meaning the unit $\eta$ of the admissibility adjunction is invertible,
then so is the unit $n$ exhibiting the admissibility of $TL$ by
Figure \ref{nformula}.
\end{proof}

\subsection{Doctrinal Partial Adjunctions\label{doctrinalpartialadjunctions}}

In this subsection we study how pseudomonads interact with absolute
left liftings (also called partial adjunctions or relative adjunctions),
which we now define. In particular, we show that we get an induced
oplax structure on a partial left adjoint under suitable conditions,
which gives a lifting of the partial adjunction to the setting of
pseudo algebras in a suitable sense. 

This is in the same spirit as subsection \ref{doctrinalleftextensions}
on algebraic left extensions, but not completely analogous (and therefore
not a dual). In particular, here we do not require any algebraic cocompleteness
conditions.
\begin{defn}
Suppose we are given a diagram of the form
\begin{equation}
\xymatrix@=1em{\mathcal{B}\ar[rr]^{R} &  & \mathcal{C}\ar@{}[ld]|-{\stackrel{\eta}{\Longleftarrow}}\\
 & \;\\
 &  & \mathcal{A}\ar[uu]_{I}\ar[uull]^{L}
}
\label{paradjeta}
\end{equation}
in a 2-category $\mathscr{C}$ equipped with a 2-cell $\eta\colon I\to R\cdot L$.
We call such a diagram a \emph{partial adjunction} and say that $L$
is a \emph{partial left adjoint} to $R$ if given any 1-cells $M$
and $N$ as below, for any 2-cell $\zeta\colon I\cdot M\to R\cdot N$
there exists a unique $\overline{\zeta}\colon L\cdot M\to N$ such
that $\zeta$ is equal to the pasting
\[
\xymatrix@=1em{\mathcal{B}\ar[rr]^{R} &  & \mathcal{C}\ar@{}[ld]|-{\stackrel{\eta}{\Longleftarrow}}\\
 & \;\ar@{}[ld]|-{\stackrel{\overline{\zeta}}{\Longleftarrow}\;}\\
\mathcal{D}\ar[rr]_{M}\ar[uu]^{N} &  & \mathcal{A}\ar[uu]_{I}\ar[uull]|-{L}
}
\]
That is, pasting 2-cells of the form $\overline{\zeta}$ above with
$\eta$ defines a bijection of 2-cells.
\end{defn}

\begin{rem}
It is an easy and well known exercise to check that we have an adjunction
$L\dashv R\colon\mathcal{B}\to\mathcal{A}$ with unit $\eta$ in a
2-category $\mathscr{C}$ if and only if 
\[
\xymatrix@=1em{\mathcal{B}\ar[rr]^{R} &  & \mathcal{A}\ar@{}[ld]|-{\stackrel{\eta}{\Longleftarrow}}\\
 & \;\\
 &  & \mathcal{A}\ar[uu]_{\textnormal{id}_{\mathcal{A}}}\ar[uull]^{L}
}
\]
exhibits $L$ as a partial left adjoint. 
\end{rem}

We now define a notion of partial adjunction in the context of pseudo
$T$-algebras and $T$-morphisms. 
\begin{defn}
Suppose we are given oplax $T$-morphisms $\left(I,\xi\right)$ and
$\left(L,\alpha\right)$ and a lax $T$-morphism $\left(R,\beta\right)$
equipped with a $T$-transformation $\eta$ (as in Remark \ref{Ttransremark}
with appropriate identities) as in the diagram
\[
\xymatrix@=1em{\left(\mathcal{B},T\mathcal{B}\overset{y}{\rightarrow}\mathcal{B}\right)\ar[rr]^{\left(R,\beta\right)} &  & \left(\mathcal{C},T\mathcal{C}\overset{z}{\rightarrow}\mathcal{C}\right)\ar@{}[ld]|-{\stackrel{\eta}{\Longleftarrow}}\\
 & \;\\
 &  & \left(\mathcal{A},T\mathcal{A}\overset{x}{\rightarrow}\mathcal{A}\right)\ar[uu]_{\left(I,\xi\right)}\ar[uull]^{\left(L,\alpha\right)}
}
\]
We call such a diagram a \emph{$T$-partial adjunction }if for any
given pseudo $T$-algebra $\left(\mathcal{D},w\right)$, lax $T$-morphism
$\left(M,\epsilon\right)$, and oplax $T$-morphism $\left(N,\varphi\right)$
as below
\[
\xymatrix@=1em{\left(\mathcal{B},T\mathcal{B}\overset{y}{\rightarrow}\mathcal{B}\right)\ar[rr]^{\left(R,\beta\right)} &  & \left(\mathcal{C},T\mathcal{C}\overset{z}{\rightarrow}\mathcal{C}\right)\ar@{}[ld]|-{\;\;\stackrel{\eta}{\Longleftarrow}}\\
 & \;\ar@{}[ld]|-{\stackrel{\overline{\zeta}}{\Longleftarrow}\quad}\\
\left(\mathcal{D},T\mathcal{D}\overset{w}{\rightarrow}\mathcal{D}\right)\ar[rr]_{\left(M,\epsilon\right)}\ar[uu]^{\left(N,\varphi\right)} &  & \left(\mathcal{A},T\mathcal{A}\overset{x}{\rightarrow}\mathcal{A}\right)\ar[uu]_{\left(I,\xi\right)}\ar[ulul]|-{\left(L,\alpha\right)}
}
\]
pasting $T$-transformations of the form $\overline{\zeta}$ above
with the $T$-transformation $\eta$ defines the bijection of $T$-transformations:
\[
\xymatrix@=1em{\left(\mathcal{B},y\right)\ar@{=}[rr] &  & \left(\mathcal{B},y\right)\ar@{}[ldld]|-{\stackrel{\overline{\zeta}}{\Longleftarrow}} &  &  & \left(\mathcal{B},y\right)\ar[rr]^{\left(R,\beta\right)} &  & \left(\mathcal{C},z\right)\ar@{}[ldld]|-{\stackrel{\zeta}{\Longleftarrow}}\\
 &  &  & \ar@{}[r]|-{\sim} & \;\\
\left(\mathcal{D},w\right)\ar[rr]_{\left(M,\epsilon\right)}\ar[uu]^{\left(N,\varphi\right)} &  & \left(\mathcal{A},x\right)\ar[uu]_{\left(L,\alpha\right)} &  &  & \left(\mathcal{D},w\right)\ar[rr]_{\left(M,\epsilon\right)}\ar[uu]^{\left(N,\varphi\right)} &  & \left(\mathcal{A},x\right)\ar[uu]_{\left(I,\xi\right)}
}
\]

This operation of pasting the $T$-transformation $\overline{\zeta}$
with $\eta$ is given by pasting the underlying 2-cells. The verification
that such a pasting of $T$-transformations yields a $T$-transformation
is a simple exercise.
\end{defn}

\begin{rem}
We may be more general here by replacing $\left(M,\epsilon\right)$
and $\left(N,\varphi\right)$ by a lax followed by an oplax, and an
oplax followed by a lax $T$-morphism respectively. However, this
level of generality will not be necessary for this paper.
\end{rem}

We now give the doctrinal properties enjoyed by partial adjunctions.
\begin{prop}
\label{docparadj} Suppose we are given a partial adjunction
\[
\xymatrix@=1em{\mathcal{B}\ar[rr]^{R} &  & \mathcal{C}\ar@{}[ld]|-{\stackrel{\eta}{\Longleftarrow}}\\
 & \;\\
 &  & \mathcal{A}\ar[uu]_{I}\ar[uull]^{L}
}
\]
in a 2-category $\mathscr{C}$ equipped with a pseudomonad $\left(T,u,m\right)$.
Suppose further that 
\[
\left(\mathcal{A},T\mathcal{A}\stackrel{x}{\longrightarrow}\mathcal{A}\right),\quad\left(\mathcal{B},T\mathcal{B}\stackrel{y}{\longrightarrow}\mathcal{B}\right),\quad\left(\mathcal{C},T\mathcal{C}\stackrel{z}{\longrightarrow}\mathcal{C}\right)
\]
are pseudo $T\text{-algebras}$. Then given an oplax $T\text{-morphism}$
structure $\xi$ on $I$ and a lax $T\text{-morphism}$ structure
$\beta$ on $R$, there exists a unique oplax $T\text{-morphism}$
structure $\alpha$ on $L$ such that $\eta$ is a $T$-transformation.
Moreover, this partial adjunction is then lifted to the $T$-partial
adjunction
\[
\xymatrix@=1em{\left(\mathcal{B},T\mathcal{B}\overset{y}{\rightarrow}\mathcal{B}\right)\ar[rr]^{\left(R,\beta\right)} &  & \left(\mathcal{C},T\mathcal{C}\overset{z}{\rightarrow}\mathcal{C}\right)\ar@{}[ld]|-{\quad\stackrel{\eta}{\Longleftarrow}}\\
 & \;\\
 &  & \left(\mathcal{A},T\mathcal{A}\overset{x}{\rightarrow}\mathcal{A}\right)\ar[uu]_{\left(I,\xi\right)}\ar[uull]^{\left(L,\alpha\right)}
}
\]
\end{prop}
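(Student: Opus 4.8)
The plan is to imitate the proof of Proposition~\ref{docleftext}, but exploiting that a partial adjunction is an \emph{absolute} notion, so that no algebraic cocompleteness hypothesis on $z$ is needed. First I would write down the candidate oplax structure cell $\alpha\colon L\cdot x\to y\cdot TL$ on $L$. Using the oplax structure $\xi\colon I\cdot x\to z\cdot TI$ on $I$, the pseudofunctoriality of $T$ applied to $\eta\colon I\to R\cdot L$, and the lax structure $\beta\colon z\cdot TR\to R\cdot y$ on $R$, one forms the canonical composite
\[
I\cdot x \xrightarrow{\ \xi\ } z\cdot TI \xrightarrow{\ z\cdot T\eta\ } z\cdot TR\cdot TL \xrightarrow{\ \beta\cdot TL\ } R\cdot y\cdot TL
\]
(suppressing pseudofunctoriality constraints as elsewhere in the paper). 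Since $L$ is a partial left adjoint to $R$ with unit $\eta$, instantiating the defining bijection at $M=x$ and $N=y\cdot TL$ yields a unique 2-cell $\alpha\colon L\cdot x\to y\cdot TL$ whose pasting with $\eta$ recovers this composite; unwinding the definition of a $T$-transformation, this equation is precisely the assertion that $\eta$ is a $T$-transformation from the lax-then-oplax composite $(I,\xi)$ to the oplax-then-lax composite $(R,\beta)\cdot(L,\alpha)$. This simultaneously gives existence and the uniqueness clause of the proposition.

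Next I would verify that $\alpha$ satisfies the two coherence axioms making $(L,\alpha)$ an oplax $T$-morphism (the unit axiom against $u_{\mathcal A}$ and the compatibility with $m_{\mathcal A}$). Each axiom is an equality of two 2-cells with source a whiskering of $L\cdot x$ and target a whiskering of $y\cdot TL$; to check it, paste both sides with the appropriate whiskering of $\eta$ and use that pasting with $\eta$ is injective (the partial adjunction bijection). After pasting, both sides rewrite --- using pseudonaturality of $u$ and $m$, the pseudo $T$-algebra axioms for $(\mathcal A,x)$, $(\mathcal B,y)$, $(\mathcal C,z)$, and the defining equation for $\alpha$ --- into expressions built only from $\xi$ and $\beta$, at which point they agree by the corresponding coherence axioms of the oplax structure $\xi$ on $I$ and the lax structure $\beta$ on $R$. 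This is entirely parallel to the computation in \cite[Theorem 2.4.4]{markextension}; the only difference is that here we lift a structure onto the partial \emph{left} adjoint rather than onto a left extension, so I would transcribe that argument reading $\eta$ as a partial-adjunction unit rather than as a left-extension cell.

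Finally I would check that the resulting diagram is a $T$-partial adjunction. Given a pseudo $T$-algebra $(\mathcal D,w)$, a lax $T$-morphism $(M,\epsilon)$, an oplax $T$-morphism $(N,\varphi)$ and a $T$-transformation $\zeta\colon I\cdot M\to R\cdot N$, the underlying partial adjunction already produces a unique $\overline\zeta\colon L\cdot M\to N$ with $(\eta\cdot M)$ pasted with $\overline\zeta$ equal to $\zeta$. It remains to see that $\overline\zeta$ is itself a $T$-transformation, which is one cube identity; I would prove it by pasting the cube with $\eta\cdot M$ (legitimate since that recovers $\zeta$), whereupon --- via the definitions of $\alpha$, $\xi$, $\beta$, $\epsilon$, $\varphi$ and the hypothesis that $\zeta$ is a $T$-transformation --- the two sides become equal, so by injectivity of pasting with $\eta$ the cube for $\overline\zeta$ commutes. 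The bijectivity of the lifted correspondence is then inherited verbatim from the underlying partial adjunction, since the lifting operation is given by pasting underlying 2-cells. The main obstacle is the coherence verification in the middle step: structurally routine, but involving sizeable pasting diagrams and careful applications of injectivity of ``paste with $\eta$'' at each stage. I expect no conceptual difficulty beyond that, precisely because the partial adjunction bijection is absolute and hence commutes with all the whiskerings that occur.
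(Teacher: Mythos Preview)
Your proposal is correct and follows essentially the same route as the paper: the paper also defines $\alpha$ as the unique 2-cell making the pasting with $\eta$ equal to the composite $\beta\cdot TL \circ z\cdot T\eta \circ \xi$ (displayed there as the equality of two pasting diagrams), invoking the partial-adjunction bijection with $M=x$ and $N=y\cdot TL$, and then declares the coherence verification a simple omitted exercise. You supply more detail than the paper does --- in particular the strategy of checking the oplax coherence axioms and the $T$-partial adjunction property by pasting with $\eta$ and using injectivity --- but the underlying argument is the same.
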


\begin{proof}
Given our 2-cells 
\[
\xymatrix@=1em{T\mathcal{A}\ar[dd]_{TI}\ar[rr]^{x}\ar@{}[rrdd]|-{\Downarrow\xi} &  & \mathcal{A}\ar[dd]^{I} &  &  & T\mathcal{B}\ar[dd]_{TR}\ar[rr]^{x}\ar@{}[rrdd]|-{\Uparrow\beta} &  & \mathcal{B}\ar[dd]^{R}\\
\\
T\mathcal{C}\ar[rr]_{z} &  & \mathcal{C} &  &  & T\mathcal{C}\ar[rr]_{z} &  & \mathcal{C}
}
\]
exhibiting $I$ as an oplax $T\text{-morphism}$ and $R$ as a lax
$T\text{-morphism}$, we can take our oplax constraint cell of $L$
(which we call $\alpha$) as the unique solution to
\[
\xymatrix{T\mathcal{B}\ar[r]^{y}\;\ar@{}[rdd]|-{\Uparrow\alpha} & \mathcal{B}\ar[rd]^{R} &  &  & T\mathcal{B}\ar[r]^{y}\ar[dr]|-{TR} & \mathcal{B}\ar[rd]^{R}\ar@{}[d]|-{\Uparrow\beta}\\
 & \;\ar@{}[r]|-{\Uparrow\eta} & \mathcal{C} & = & \;\ar@{}[r]|-{\Uparrow T\eta} & T\mathcal{C}\ar[r]^{z}\ar@{}[d]|-{\Uparrow\xi} & \mathcal{C}\\
T\mathcal{A}\ar[r]_{x}\ar[uu]^{TL} & \mathcal{A}\ar[uu]_{L}\ar[ur]_{I} &  &  & T\mathcal{A}\ar[r]_{x}\ar[uu]^{TL}\ar[ur]|-{TI} & \mathcal{A}\ar[ur]_{I}
}
\]
which exists since $L$ is a partial left adjoint. That is, $\alpha$
is the unique oplax structure on $L$ for which $\eta:I\to R\cdot L$
is a $T$-transformation. The verification that $\alpha$ then satisfies
the unitary and multiplicative coherence axioms is a simple exercise
which we omit.
\end{proof}
The following example is an easy application of this result which
does not involve Yoneda structures.
\begin{prop}
\label{compositeoplax} Suppose $\mathscr{A},\mathscr{B}$ and $\mathscr{C}$
are bicategories. Consider a diagram 
\[
\xymatrix{\mathscr{A}\ar@{-->}[r]^{F}\ar@/_{1pc}/[rr]_{H} & \mathscr{B}\ar[r]^{G} & \mathscr{C}}
\]
where $G$ is a lax and locally fully faithful functor, $H$ is an
oplax functor, and $F$ is a locally defined functor
\[
\left(F_{X,Y}:\mathscr{A}\left(X,Y\right)\to\mathscr{B}\left(FX,FY\right):X,Y\in\mathscr{A}\right)
\]
where $G\cdot F=H$ locally. It then follows that $F$ extends to
an oplax functor.
\end{prop}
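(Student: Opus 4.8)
The plan is to realize this as an instance of Proposition \ref{docparadj} applied inside the 2-monad $T$ on $\mathbf{CatGrph}$ whose pseudoalgebras are (unbiased, hence ordinary) bicategories and whose lax/oplax morphisms are lax/oplax functors, as discussed in the Example in Section \ref{background}. First I would observe that a $\mathbf{CatGrph}$-morphism $F\colon\mathscr{A}\to\mathscr{B}$ between the underlying $\mathbf{Cat}$-enriched graphs of bicategories is exactly a family of hom-functors $F_{X,Y}$, and that extending $F$ to an oplax functor amounts precisely to equipping the $\mathbf{CatGrph}$-morphism $F$ with an oplax $T$-morphism structure. Likewise $G$ being lax and $H$ being oplax say that $G$ and $H$ carry lax and oplax $T$-morphism structures respectively, and $G\cdot F=H$ locally says the $\mathbf{CatGrph}$-morphism $G\cdot F$ literally equals $H$.

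Next I would set up the partial adjunction. Take $\mathcal{A}=\mathscr{A}$, $\mathcal{B}=\mathscr{C}$, $\mathcal{C}=\mathscr{C}$ in the notation of Proposition \ref{docparadj}, with $R=\mathrm{id}_{\mathscr{C}}$, $I=H$, $L=F$, and $\eta\colon H\to \mathrm{id}_{\mathscr{C}}\cdot F = G\cdot F$ the identity 2-cell (identity icon), which is legitimate since $G\cdot F=H$ on the nose. The key point making this a \emph{partial} adjunction in $\mathbf{CatGrph}$ is that $G$ is locally fully faithful: for any $M,N$ and any 2-cell $\zeta\colon H\cdot M\to R\cdot N = N$, composing with $G$ gives a unique factorization through $G$ since each $G_{X,Y}$ is fully faithful, and unwinding this produces the unique $\overline{\zeta}\colon F\cdot M\to N$ with $\eta$ pasted on recovering $\zeta$. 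This is the statement that $F$ is a partial left adjoint to $\mathrm{id}_{\mathscr{C}}$ along $H$ in $\mathbf{CatGrph}$; I expect this bijection to be the one mildly technical verification, but it is a routine consequence of fully faithfulness applied hom-category-wise.

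With the partial adjunction in place, I would invoke Proposition \ref{docparadj}: given the oplax $T$-morphism structure $\xi$ on $I=H$ and the (trivial) lax $T$-morphism structure on $R=\mathrm{id}_{\mathscr{C}}$, there is a unique oplax $T$-morphism structure $\alpha$ on $L=F$ for which $\eta$ is a $T$-transformation. Translated back through the Example, $\alpha$ is exactly the data of an oplax functor structure on $F$, and the coherence axioms for an oplax $T$-morphism are precisely the oplax functor coherence axioms. The main obstacle, such as it is, is purely bookkeeping: matching the 2-monad-theoretic notions of lax/oplax $T$-morphism and $T$-transformation with the classical notions of lax/oplax functor and icon, and checking that ``locally fully faithful'' for $G$ is what supplies the partial-adjunction hypothesis; once that dictionary is fixed the result is immediate from Proposition \ref{docparadj}. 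I would close by remarking that the composite oplax structure on $G\cdot F$ induced this way agrees with the original oplax structure on $H$, which is immediate since $\eta$ is an identity and $\alpha$ was characterized by that very compatibility.
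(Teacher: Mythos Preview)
Your high-level strategy---apply Proposition \ref{docparadj} to the 2-monad $T$ on $\mathbf{CatGrph}$ whose pseudo\-algebras are bicategories---is exactly the paper's approach. However, your instantiation is miswritten and does not typecheck: you set $\mathcal{B}=\mathscr{C}$, $R=\mathrm{id}_{\mathscr{C}}$, $L=F$, but $F\colon\mathscr{A}\to\mathscr{B}$ lands in $\mathscr{B}$, not $\mathscr{C}$, so $L\colon\mathcal{A}\to\mathcal{B}$ fails, and the displayed equality ``$\mathrm{id}_{\mathscr{C}}\cdot F = G\cdot F$'' is incoherent. The correct instantiation (the one the paper uses) is $\mathcal{A}=\mathscr{A}$, $\mathcal{B}=\mathscr{B}$, $\mathcal{C}=\mathscr{C}$, $L=F$, $R=G$, $I=H$, with $\eta$ the identity since $G\cdot F=H$. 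With $R=G$ the partial-adjunction bijection reads: a 2-cell $\zeta\colon H\cdot M=G\cdot F\cdot M\to G\cdot N$ factors uniquely through $G$ by local full faithfulness, giving $\overline{\zeta}\colon F\cdot M\to N$. This is what your prose describes, but your symbols point to the wrong triangle, and with $R=\mathrm{id}_{\mathscr{C}}$ there is no way to extract an oplax structure on $F$ (only on $G\cdot F$, at best).

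One further point the paper makes and you omit: before invoking Proposition \ref{docparadj} the paper replaces $\mathscr{B}$ and $\mathscr{C}$ by their full sub-bicategories on the objects in the images of $F$ and $H$. The issue is that 2-cells in $\mathbf{CatGrph}$ are icon-like (they demand agreement on objects), so the hom-category-wise absolute-lifting characterisation of full faithfulness only assembles into a \emph{partial adjunction in $\mathbf{CatGrph}$} once the relevant $\mathbf{CatGrph}$-morphisms are forced to agree on objects in the right way. Your claim that the bijection is ``a routine consequence of fully faithfulness applied hom-category-wise'' glosses over exactly this object-level bookkeeping; without the restriction to images the argument that the diagram is a genuine partial adjunction in $\mathbf{CatGrph}$ is incomplete.
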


\begin{proof}
To see this, recall that the fully faithfulness of each $G_{M,N}$
(for objects $M,N\in\mathscr{B}$) may be characterized by saying
that each
\[
\xymatrix@=1em{\mathscr{B}\left(M,N\right)\myar{G_{M,N}}{rr} &  & \mathscr{C}\left(HM,HN\right)\ar@{}[dl]|-{\;\;\stackrel{\text{id}}{\Longleftarrow}}\\
 & \mbox{\;}\\
 &  & \mathscr{B}\left(M,N\right)\ar@{->}[ulul]^{\textnormal{id}_{\mathscr{B}\left(M,N\right)}}\ar[uu]_{G_{M,N}}
}
\]
is an absolute lifting \cite[Example 2.18]{weberyoneda}. As this
absolute left lifting is preserved upon whiskering by 
\[
F_{X,Y}:\mathscr{A}\left(X,Y\right)\to\mathscr{B}\left(FX,FY\right)
\]
we have the family of partial adjunctions
\[
\xymatrix@=1em{\mathscr{B}\left(FX,FY\right)\ar@{->}[rr]^{G_{FX,FY}} &  & \mathscr{C}\left(HX,HY\right)\ar@{}[dl]|-{\stackrel{\text{id}}{\Longleftarrow}}\\
 & \mbox{\;}\\
 &  & \mathscr{A}\left(X,Y\right)\ar@{->}[ulul]^{F_{X,Y}}\ar[uu]_{H_{X,Y}}
}
\]
Endowing with the bicategory structure of $\mathscr{A}$, and full
sub-bicategory structures of $\mathscr{B}$ and $\mathscr{C}$ restricted
to objects in the images of $F$ and $H$ respectively, we see by
Proposition \ref{docparadj} that $F$ extends to an oplax functor
$F\colon\mathscr{A}\to\mathscr{B}$.
\end{proof}
\begin{rem}
Clearly, this may be stated more generally in the setting of a pseudo
$T$-algebras. Also, it suffices to only have an isomorphism $GF\cong H$
on the underlying 2-category.
\end{rem}

\begin{rem}
In Kelly's setting of a doctrinal adjunction \cite{doctrinal}, if
both the left and right adjoint are lax, exhibited by a counit and
unit which are $T$-transformations of lax $T$-morphisms, then the
induced oplax structure on the left adjoint is inverse to the given
lax structure. In this partial adjunction case, the best we can say
is that if $\left(I,\xi\right)$ is pseudo, $\left(L,\alpha^{\ast}\right)$
lax, and $\eta\colon\left(I,\xi\right)\to\left(R,\beta\right)\cdot\left(L,\alpha^{\ast}\right)$
a $T$-transformation of lax $T$-morphisms, then the induced oplax
structure on $L$ given as $\alpha$ satisfies $\alpha^{\ast}\cdot\alpha=\textnormal{id}_{L\cdot x}$.
This means the identity 2-cell is a generalized $T$-transformation
from $\left(L,\alpha\right)$ to $\left(L,\alpha^{*}\right)$, but
not necessarily the other way around.
\end{rem}

\subsection{Doctrinal ``Yoneda Structures''\label{doctrinalyonedastructures}}

Kelly \cite{doctrinal} showed that given an adjunction $L\dashv R$
which lifts to pseudo algebras, oplax structures on the left adjoint
are in bijection with lax structures on the right adjoint. The goal
of this section is to give a similar result for ``Yoneda structure
diagrams'', that is diagrams of the form
\[
\xymatrix@=1em{\mathcal{B}\myar{R}{rr} &  & P\mathcal{A}\ar@{}[ld]|-{\stackrel{\varphi_{L}}{\Longleftarrow}}\\
 & \;\\
 &  & \mathcal{A}\ar[uu]_{y_{\mathcal{A}}}\ar[uull]^{L}
}
\]
for which $L$ is an absolute left lifting, and $R$ is a left extension
exhibited by the same 2-cell $\varphi_{L}$ (as appear in Yoneda structures
\cite{yonedastructures}, or in the setting of a locally fully faithful
KZ doctrine \cite{yonedakz}). 

We state the following as one of the main results of this paper, due
to its applications as a coherence result for oplax functors out of
certain bicategories, such as the bicategories of spans or polynomials.
This application will be briefly discussed at the end of this section.
\begin{thm}
[Doctrinal Yoneda Structures] Suppose we are given a 2-category $\mathscr{C}$
equipped with a pseudomonad $\left(T,u,m\right)$ and a locally fully
faithful KZ doctrine $\left(P,y\right)$. Suppose that $T$ pseudo-distributes
over $P$. Suppose we are given pseudo $T$-algebra structures
\[
\left(\mathcal{A},T\mathcal{A}\stackrel{x}{\longrightarrow}\mathcal{A}\right),\qquad\qquad\left(\mathcal{B},T\mathcal{B}\stackrel{y}{\longrightarrow}\mathcal{B}\right)
\]
Then for any $P$-admissible map $L\colon\mathcal{A}\to\mathcal{B}$
we have a Yoneda structure diagram as on the left, underlying a ``doctrinal
Yoneda structure'' diagram as on the right 
\[
\xymatrix@=1em{\mathcal{B}\myar{R_{L}}{rr} &  & P\mathcal{A}\ar@{}[ld]|-{\stackrel{\varphi_{L}}{\Longleftarrow}} &  &  & \left(\mathcal{B},y\right)\myar{\left(R_{L},\beta\right)}{rr} &  & \left(P\mathcal{A},z_{x}\right)\ar@{}[ld]|-{\stackrel{\varphi_{L}}{\Longleftarrow}}\\
 & \; &  &  &  &  & \;\\
 &  & \mathcal{A}\ar[uu]_{y_{\mathcal{A}}}\ar[uull]^{L} &  &  &  &  & \left(\mathcal{A},x\right)\ar[uu]_{\left(y_{\mathcal{A}},\xi\right)}\ar[uull]^{\left(L,\alpha\right)}
}
\]
in that 2-cells $\alpha$ as on the left below exhibiting $L$ as
an oplax $T$-morphism
\[
\xymatrix@=1em{T\mathcal{B}\ar[rr]^{y} &  & \mathcal{B} &  &  &  &  & T\mathcal{B}\ar[rr]^{y}\ar[dd]_{TR_{L}} &  & \mathcal{B}\ar[dd]^{R_{L}}\\
 & \ar@{}[]|-{\Uparrow\alpha} &  &  &  &  &  &  & \ar@{}[]|-{\Uparrow\beta}\\
T\mathcal{A}\ar[rr]_{x}\ar[uu]^{TL} &  & \mathcal{A}\ar[uu]_{L} &  &  &  &  & T\mathcal{C}\ar[rr]_{z_{x}} &  & \mathcal{C}
}
\]
are in bijection with 2-cells $\beta$ as on the right exhibiting
$R_{L}$ as a lax $T$-morphism.
\end{thm}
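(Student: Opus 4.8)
The plan is to exhibit the bijection as the mutually inverse pair of constructions furnished by Proposition \ref{docparadj} and Proposition \ref{docleftext}, both applied to the single 2-cell $\varphi_L$, which in this locally fully faithful setting simultaneously exhibits $R_L$ as a left extension and $L$ as a partial left adjoint. In both cases the constructed structure is the unique one for which $\varphi_L$ becomes a $T$-transformation, and it is this common characterisation that forces the two constructions to be inverse.

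First I would set up the data. Since $L$ is $P$-admissible and $P$ is locally fully faithful, the chosen left extension $(R_L,\varphi_L)$ of $y_{\mathcal{A}}$ along $L$ is exhibited by an invertible $\varphi_L\colon y_{\mathcal{A}}\to R_L\cdot L$, and the \emph{same} $\varphi_L$ exhibits $L$ as a partial left adjoint of $y_{\mathcal{A}}$ through $R_L$ (the absolute left lifting property of the Yoneda structure diagram; see \cite{yonedakz}). By Theorem \ref{dimpliesa} the unit is a pseudo $T$-morphism $(y_{\mathcal{A}},\xi_x)\colon(\mathcal{A},x)\to(P\mathcal{A},z_x)$; in particular $\xi_x$ is invertible, so we may regard $(y_{\mathcal{A}},\xi_x)$ as simultaneously a lax and an oplax $T$-morphism. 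Fixing this structure on $y_{\mathcal{A}}$ once and for all, the requirement ``$\varphi_L$ is a $T$-transformation'' is, because $\xi_x$ is invertible, literally the same equation relating $\alpha$, $\beta$ and $\xi_x$ whether the datum on $y_{\mathcal{A}}$ is read as lax or as oplax.

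Next I would produce the two maps. Applying Proposition \ref{docparadj} to the partial adjunction $\varphi_L$, with the pseudo $T$-algebras $x$, $y$, $z_x$ and the oplax structure $\xi_x$ on $I=y_{\mathcal{A}}$, yields: for every 2-cell $\beta$ making $(R_L,\beta)$ a lax $T$-morphism, a unique 2-cell $\alpha$ making $(L,\alpha)$ an oplax $T$-morphism for which $\varphi_L$ is a $T$-transformation. For the reverse direction I would apply Proposition \ref{docleftext} with $z=z_x$. Its $T$-preservation hypotheses hold: $z_x$ is $T_P$-cocontinuous by Proposition \ref{claim}, hence $T_P$-adm-cocontinuous by Proposition \ref{admcocontinuousequiv}, so $z_x$ $T$-preserves the left extension $(R_L,\varphi_L)$ along the $P$-admissible $L$ into the $P$-cocomplete object $P\mathcal{A}$, and it likewise $T$-preserves the resulting left extension $(z_x\cdot TR_L,\,z_x\cdot T\varphi_L)$ of $z_x\cdot Ty_{\mathcal{A}}$ along $TL$, the map $TL$ being $P$-admissible by Proposition \ref{claim}(1). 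Proposition \ref{docleftext} then gives: for every oplax $T$-morphism structure $\alpha$ on $L$, a unique lax $T$-morphism structure $\beta$ on $R_L$ for which $\varphi_L$ is a $T$-transformation.

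Finally I would check that these assignments are mutually inverse. Both are pinned down by the single equation ``$\varphi_L$ is a $T$-transformation'' (with $\xi_x$ fixed), and each is its unique solution given the other datum; so, starting from an oplax $\alpha$, forming $\beta$ by Proposition \ref{docleftext} and then $\alpha'$ by Proposition \ref{docparadj}, the cell $\alpha$ itself already solves the equation, whence $\alpha'=\alpha$, and symmetrically $\beta'=\beta$. This gives the desired bijection between oplax structures on $L$ and lax structures on $R_L$, all compatible with $\varphi_L$ as a $T$-transformation, so that the right-hand diagram underlies a ``doctrinal Yoneda structure''. The one genuinely delicate point is the compatibility noted above: that the $T$-transformation condition occurring in Proposition \ref{docparadj} (phrased with an oplax $I$) and in Proposition \ref{docleftext} (phrased with a lax $I$) is the same constraint once one uses that $\xi_x$ is invertible; granting this, and the $T$-preservation check for $z_x$, the remainder is formal. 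Specialising appropriately recovers Kelly's oplax--lax correspondence for adjunctions \cite{doctrinal}.
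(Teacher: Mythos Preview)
Your proposal is correct and follows essentially the same approach as the paper: apply Proposition \ref{docparadj} and Proposition \ref{docleftext} to the single 2-cell $\varphi_L$, and observe that the two uniqueness characterisations (``$\varphi_L$ is a $T$-transformation'') coincide because $\xi_x$ is invertible. The paper phrases this last step as ``we can go between the defining equalities by pasting with $\xi_x$ and $\xi_x^{-1}$''; you are more explicit in verifying the $T$-preservation hypotheses of Proposition \ref{docleftext} via Propositions \ref{claim} and \ref{admcocontinuousequiv}, which the paper leaves to a footnote.
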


\begin{proof}
We need only check that the propositions concerning partial adjunctions
and left extensions\footnote{Note that Proposition \ref{docleftext} applies since each $z_x$ is $T_P$-cocontinuous by Proposition \ref{claim}.}
are inverse to each other. But this is just a consequence of the fact
that we can go between the defining equalities for these propositions
\[
\xymatrix{T\mathcal{B}\ar[r]^{y}\ar@{}[rdd]|-{\Uparrow\alpha} & \mathcal{B}\ar[rd]^{R_{L}} &  &  & T\mathcal{B}\ar[r]^{y}\ar[dr]|-{TR_{L}} & \mathcal{B}\ar[rd]^{R_{L}}\ar@{}[d]|-{\Uparrow\beta}\\
 & \;\ar@{}[r]|-{\Uparrow\varphi_{L}} & P\mathcal{A} & = & \;\ar@{}[r]|-{\Uparrow T\varphi_{L}} & TP\mathcal{A}\ar[r]^{z_{x}}\ar@{}[d]|-{\Uparrow\xi_{x}} & P\mathcal{A}\\
T\mathcal{A}\ar[r]_{x}\ar[uu]^{TL} & \mathcal{A}\ar[uu]|-{L}\ar[ur]_{y_{\mathcal{A}}} &  &  & T\mathcal{A}\ar[r]_{x}\ar[uu]^{TL}\ar[ur]|-{Ty_{\mathcal{A}}} & \mathcal{A}\ar[ur]_{y_{\mathcal{A}}}
}
\]
and
\[
\xymatrix{ & T\mathcal{B}\ar[r]^{y} & \mathcal{B}\ar[dd]^{R} &  &  & T\mathcal{B}\ar[r]^{y}\ar[dd]|-{TR_{L}} & \mathcal{B}\ar[dd]^{R_{L}}\\
T\mathcal{A}\ar[ru]^{TL}\ar[r]^{x}\ar[rd]_{Ty_{\mathcal{A}}} & \mathcal{A}\ar[dr]^{y_{\mathcal{A}}}\ar[ur]_{L}\ar@{}[r]|-{\Uparrow\varphi_{L}}\ar@{}[u]|-{\Uparrow\alpha}\ar@{}[d]|-{\Uparrow\xi_{x}^{-1}} & \; & = & T\mathcal{A}\ar[ru]^{TL}\ar[rd]_{Ty_{\mathcal{A}}}\ar@{}[r]|-{\Uparrow T\varphi_{L}} & \;\ar@{}[r]|-{\Uparrow\beta} & \;\\
 & TP\mathcal{A}\ar[r]_{z_{x}} & P\mathcal{A} &  &  & TP\mathcal{A}\ar[r]_{z_{x}} & P\mathcal{A}
}
\]
by pasting with $\xi_{x}$ and $\xi_{x}^{-1}$.
\end{proof}
\begin{rem}
In the ``doctrinal Yoneda structure'' of the above, $\varphi_{L}$
is a $T$-transformation exhibiting $\left(R_{L},\beta\right)$ as
a $T$-left extension and $\left(L,\alpha\right)$ as a $T$-partial
left adjoint, provided $\alpha$ and $\beta$ correspond via this
bijection.
\end{rem}

We observe that the bijection between oplax structures on left adjoints
and lax structures on right adjoints as in ``Doctrinal adjunction''
\cite{doctrinal} is a special case of this theorem.
\begin{cor}
[Kelly] Suppose we are given a 2-category $\mathscr{C}$ equipped
with a pseudomonad $\left(T,u,m\right)$, pseudo $T$-algebra structures
\[
\left(\mathcal{A},T\mathcal{A}\stackrel{x}{\longrightarrow}\mathcal{A}\right),\qquad\qquad\left(\mathcal{B},T\mathcal{B}\stackrel{y}{\longrightarrow}\mathcal{B}\right)
\]
and an adjunction $L\dashv R\colon\mathcal{B}\to\mathcal{A}$ in $\mathscr{C}$.
Then oplax structures on $L$ are in bijection with lax structures
on $R$.
\end{cor}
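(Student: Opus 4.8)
The plan is to realise both directions of the bijection as instances of the two propositions underlying the preceding theorem, exactly as in its proof. Write $\eta\colon\mathrm{id}_{\mathcal{A}}\to R\cdot L$ for the unit of the adjunction. On the one hand, by the remark following the definition of partial adjunction, the triangle $\mathcal{B}\xrightarrow{R}\mathcal{A}\xleftarrow{\mathrm{id}_{\mathcal{A}}}\mathcal{A}$ together with $\eta$ exhibits $L$ as a partial left adjoint to $R$ (with $I=\mathrm{id}_{\mathcal{A}}$ and target $\mathcal{C}=\mathcal{A}$). On the other hand, $(R,\eta)$ exhibits $R$ as the left extension $\mathrm{lan}_{L}(\mathrm{id}_{\mathcal{A}})$, and this left extension is \emph{absolute}: since $\eta$ is a unit of an adjunction, the triangle identities show that for any $1$-cell $G$ the whiskering $G\cdot\eta$ exhibits $G\cdot R$ as $\mathrm{lan}_{L}(G)$, so the extension is respected by every $1$-cell and in particular $T$-preserved by $x\colon T\mathcal{A}\to\mathcal{A}$; and because the pseudofunctor $T$ carries $L\dashv R$ to $TL\dashv TR$, the induced left extension $(x\cdot TR,\,x\cdot T\eta)$ is again absolute, hence also $T$-preserved by $x$. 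Thus the hypotheses of both Proposition~\ref{docparadj} and Proposition~\ref{docleftext} are in force.

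Next, equip $I=\mathrm{id}_{\mathcal{A}}$ with its identity lax and oplax $T$-morphism structures. Applying Proposition~\ref{docparadj} with $\mathcal{C}=\mathcal{A}$, $z=x$, the oplax structure $\mathrm{id}$ on $I$, and a chosen lax structure $\beta$ on $R$, I obtain a unique oplax structure $\Phi(\beta)$ on $L$ for which $\eta$ is a $T$-transformation. Applying Proposition~\ref{docleftext} with the same data but with the lax structure $\mathrm{id}$ on $I$ and a chosen oplax structure $\alpha$ on $L$, I obtain a unique lax structure $\Psi(\alpha)$ on $R$ for which $\eta$ is a $T$-transformation. Since in each case the output is characterised as the \emph{unique} structure on the remaining leg making $\eta$ a $T$-transformation, $\Phi$ and $\Psi$ are mutually inverse: $\Psi(\Phi(\beta))$ is the unique lax structure on $R$ compatible (via $\eta$) with the oplax structure $\Phi(\beta)$, and $\beta$ already has this property, so $\Psi(\Phi(\beta))=\beta$; symmetrically $\Phi(\Psi(\alpha))=\alpha$. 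This produces the asserted bijection between oplax structures on $L$ and lax structures on $R$.

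Equivalently, the statement can be read off directly from the preceding theorem by taking $P$ to be the identity KZ doctrine on $\mathscr{C}$, for which all hypotheses of Theorem~\ref{liftkzequiv} hold trivially: then a map is $P$-admissible precisely when it has a right adjoint, the left extension $R_{L}$ is the right adjoint $R$, and the Day convolution $z_{x}$ is $x$, so the Doctrinal Yoneda Structures theorem specialises to exactly this corollary. The only point requiring care in either approach is to verify that the defining equation for the induced oplax structure in Proposition~\ref{docparadj} and the defining equation for the induced lax structure in Proposition~\ref{docleftext} are literally the same condition---namely that $\eta$ is a $T$-transformation---so that the two uniqueness clauses interlock into a bijection. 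This is the crux of the (short) argument, and it is immediate once the definition of $T$-transformation is unwound in both settings; the remaining verifications (that $\Phi(\beta)$ and $\Psi(\alpha)$ satisfy the oplax/lax coherence axioms) are handled by the two propositions themselves.
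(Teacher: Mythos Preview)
Your proposal is correct, and your second paragraph is precisely the paper's proof: take $P$ to be the identity KZ doctrine, note that $P$-admissible maps are then left adjoints and the Yoneda-structure diagram is the unit of the adjunction, and read off the bijection from the Doctrinal Yoneda Structures theorem.

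Your first paragraph is also correct but is really the same argument unpacked: the proof of the Doctrinal Yoneda Structures theorem consists exactly of observing that Propositions~\ref{docparadj} and~\ref{docleftext} furnish mutually inverse assignments because their defining equations (that $\varphi_L$, here $\eta$, be a $T$-transformation) coincide. You have simply verified directly that the hypotheses of Proposition~\ref{docleftext} hold in this special case---absoluteness of the left extension $R=\mathrm{lan}_L(\mathrm{id}_{\mathcal A})$ from the adjunction, hence $T$-preservation by any $1$-cell---rather than deducing them from the distributive-law machinery with $P=\mathrm{id}$. This buys you a self-contained argument that does not invoke Theorem~\ref{liftkzequiv} or the notion of KZ doctrine at all, at the cost of repeating in miniature the content of the theorem's proof. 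The paper's version is terser because it treats the corollary purely as a specialisation.
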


\begin{proof}
Let $P$ be the identity pseudomonad on $\mathscr{C}$, which is clearly
a locally fully faithful KZ doctrine. Trivially, any pseudomonad $T$
pseudo-distributes over the identity. Now observe that for the identity
pseudomonad, the admissible maps are the left adjoints and the ``Yoneda
structure diagrams'' are the units of adjunctions $\eta\colon\textnormal{id}_{\mathcal{A}}\to R\cdot L$.
Applying the above theorem then gives the result.
\end{proof}

\subsection{Applications and Future Work}

The motivating application of this result is not to give an analogous
result to doctrinal adjunction, but instead the observation that it
may be seen as a coherence result. In particular, consider the following
special case of this theorem concerning the bicategory of spans in
a category $\mathcal{E}$ with pullbacks, denoted $\mathbf{Span}\left(\mathcal{E}\right)$. 

For the following corollary, we recall that locally defined functors
are the morphisms of $\mathbf{CatGrph}$, and $\mathbf{CatGrph}$
gives rise to bicategories and oplax/lax functors via a suitable 2-monad
\cite{icons}.
\begin{cor}
Suppose we are given a small\footnote{Note that one may work in a larger universe to work around this condition.}
category with pullbacks $\mathcal{E}$ and a bicategory $\mathscr{C}$
with the same objects, as well as locally defined functors
\[
L_{X,Y}:\mathbf{Span}\left(\mathcal{E}\right)\left(X,Y\right)\to\mathscr{C}\left(X,Y\right)
\]
with corresponding left extensions $\left(R_{L}\right)_{X,Y}$ as
components in the diagram 
\[
\xymatrix@=1em{\mathscr{C}\myar{R_{L}}{rr} &  & \hat{\mathbf{Span}}\left(\mathcal{E}\right)\ar@{}[ld]|-{\quad\stackrel{\varphi_{L}}{\Longleftarrow}}\\
 & \;\\
 &  & \mathbf{Span}\left(\mathcal{E}\right)\ar[uu]_{Y}\ar@{->}[uull]^{L}
}
\]
where $\hat{\mathbf{Span}}\left(\mathcal{E}\right)$ is the local
cocompletion\footnote{The monoidal cocompletion as given by the Day convolution structure may be generalized to the setting of bicategories; we call this the local cocompletion.}
of $\mathbf{Span}\left(\mathcal{E}\right)$. Then oplax structures
on $L$ are in bijection with lax structures on $R_{L}$.
\end{cor}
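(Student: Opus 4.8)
The plan is to realise this corollary as a special case of the Doctrinal Yoneda Structures theorem above. First I would work in the 2-category $\mathbf{CatGrph}$ of $\mathbf{Cat}$-graphs and locally defined functors, and take $T$ to be the 2-monad on $\mathbf{CatGrph}$ whose pseudo-algebras are bicategories, with lax (resp.\ oplax, pseudo) $T$-morphisms the lax (resp.\ oplax, pseudo) functors and $T$-transformations the icons, as recalled in the Example in Section~\ref{background}. For $P$ I would take the local cocompletion KZ doctrine on $\mathbf{CatGrph}$, sending a $\mathbf{Cat}$-graph $\mathscr{X}$ to the $\mathbf{Cat}$-graph $\widehat{\mathscr{X}}$ with the same objects and hom-categories $\widehat{\mathscr{X}}(X,Y)=[\mathscr{X}(X,Y)^{\textnormal{op}},\mathbf{Set}]_{\textnormal{small}}$, with unit given hom-wise by the Yoneda embedding. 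Since each Yoneda embedding is fully faithful, $P$ is locally fully faithful, and $P\,\mathbf{Span}(\mathcal{E})=\widehat{\mathbf{Span}}(\mathcal{E})$ by definition; moreover $\mathbf{Span}(\mathcal{E})$ and $\mathscr{C}$ are pseudo $T$-algebras, being bicategories. (The smallness hypothesis on $\mathcal{E}$ ensures the small-cocompletion doctrine $P$ is defined on the relevant objects.)

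Next I would check that $T$ pseudo-distributes over $P$. By Theorem~\ref{liftkzequiv} it suffices to verify the assertions of Definition~\ref{distkzdoctrine}, and this is the hom-wise version of the Day convolution distributive law treated in the monoidal-category example above. That $T$ preserves $P$-admissible maps reduces, via the explicit description of $P$-admissibility for a local presheaf doctrine, to the statement that a finite product of presheaves each of which is a small colimit of representables is again a small colimit of representables --- exactly the computation carried out there for monoidal categories, now performed in each hom-category (the coproduct over composable strings contributes nothing, a presheaf on a coproduct being a tuple of presheaves). The $T_P$-cocontinuity of the components $\lambda$ (equivalently, of the local Day convolution maps $z_x$) and the unit and multiplication left-extension conditions then follow from separate cocontinuity of Day convolution applied hom-wise, just as in Proposition~\ref{claim}. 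Hence condition~(d) of Theorem~\ref{liftkzequiv} holds.

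Finally I would assemble the data. The family $(L_{X,Y})$ is a morphism $L\colon\mathbf{Span}(\mathcal{E})\to\mathscr{C}$ of $\mathbf{CatGrph}$, and the given diagram --- a Yoneda structure diagram, with $L$ an absolute left lifting and $R_L$ a left extension of the local Yoneda map $Y$ exhibited by the \emph{same} cell $\varphi_L$ --- witnesses, by the characterisation of admissible maps for locally fully faithful KZ doctrines in \cite{yonedakz}, that $L$ is $P$-admissible. Applying the Doctrinal Yoneda Structures theorem with $\mathcal{A}=\mathbf{Span}(\mathcal{E})$, $\mathcal{B}=\mathscr{C}$, the bicategory structures as $T$-algebra structures, and $z_x$ the local Day convolution on $\widehat{\mathbf{Span}}(\mathcal{E})$, the oplax $T$-morphism structures on $L$ --- which are precisely the oplax-functor structures extending the locally defined functor $L$ --- are in bijection with the lax $T$-morphism structures on $R_L$, i.e.\ the lax-functor structures on $R_L$. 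This is the assertion.

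The hard part is the second step: checking that the hom-wise Day convolution really organises into a genuine pseudo-distributive law of the bicategory 2-monad over the local cocompletion doctrine. Morally this \emph{is} the local Day convolution used to build $\widehat{\mathbf{Span}}(\mathcal{E})$, but making the coherence precise is delicate. Routing through Theorem~\ref{liftkzequiv} is what makes this tractable: only the two lightweight assertions of Definition~\ref{distkzdoctrine} --- preservation of admissible maps and separate cocontinuity --- need to be checked, rather than the four modifications and eight coherence axioms of a pseudo-distributive law.
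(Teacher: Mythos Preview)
Your proposal is correct and follows precisely the approach the paper intends: the corollary is presented without proof, as an immediate instance of the Doctrinal Yoneda Structures theorem applied to the 2-monad $T$ on $\mathbf{CatGrph}$ for bicategories (set up in the Example of Section~\ref{background}) and the local-cocompletion KZ doctrine $P$, with the pseudo-distributive law supplied by the hom-wise Day convolution. One small correction: in your closing paragraph you say Definition~\ref{distkzdoctrine} has ``two lightweight assertions'', but it has four (preservation of admissibles, invertibility of $\omega_2$, $T_P$-cocontinuity of $\lambda$, and the unit/multiplication left-extension conditions); you do in fact address all four in the body of your argument, so this is only a miscount in the summary.
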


To see why this is useful, recall that composition of spans is given
by taking the terminal diagram of the form
\[
\xymatrix{ &  & \bullet\ar@{-->}[dd]\ar@{-->}[ld]\ar@{-->}[rd]\ar@/_{2pc}/@{-->}[ldld]\ar@/^{2pc}/@{-->}[rdrd]\\
 & \bullet\ar[ld]_{f}\ar[rd]^{g} &  & \bullet\ar[ld]_{h}\ar[rd]^{k}\\
\bullet &  & \bullet &  & \bullet
}
\]
and so when evaluating the composite of two spans we may recover the
two morphisms of spans in the above diagram; that is, there is a relationship
between the way 2-cells are defined and how composition of 1-cells
is defined.

This relationship between composition and 2-cells is captured in Day's
convolution formula \cite{dayconvolution}, and causes the coend defining
the Day convolution to collapse to a more workable sum. In particular,
composition in $\hat{\mathbf{Span}}\left(\mathcal{E}\right)$ is given
by the convolution formula
\[
{\displaystyle GF\left(s;t\right)=\sum_{T\stackrel{h}{\longrightarrow}Y}F\left(s;h\right)G\left(h;t\right)}
\]
where $s;t$ is an arbitrary span from $X$ to $Z$ through $Y$,
and $F$ and $G$ are presheaves on $\mathbf{Span}\left(\mathcal{E}\right)\left(X,Y\right)$
and $\mathbf{Span}\left(\mathcal{E}\right)\left(Y,Z\right)$ respectively.
As a result, it is easier to show that a locally defined functor $L\colon\mathbf{Span}\left(\mathcal{E}\right)\to\mathscr{C}$
is oplax by instead showing that the corresponding $R_{L}\colon\mathscr{C}\to\hat{\mathbf{Span}}\left(\mathcal{E}\right)$
is lax. Indeed, the reader should notice here that the problem of
showing $L$ is oplax involves pullbacks, whereas the equivalent problem
of showing $R$ is lax does not (once this convolution formula has
been established).

A more involved application along the same lines deals not with the
bicategory of spans, but instead $\mathbf{Poly}_{c}\left(\mathcal{E}\right)$,
the bicategory of polynomials with cartesian 2-cells as studied by
Gambino, Kock and Weber \cite{weber,gambinokock}. We see that due
to the complicated nature of composition in $\mathbf{Poly}_{c}\left(\mathcal{E}\right)$,
showing that a locally defined functor $L:\mathbf{Poly}\left(\mathcal{E}\right)\to\mathscr{C}$
is oplax becomes a large calculation (especially for the associativity
coherence conditions); however if we instead show that $R_{L}:\mathscr{C}\to\hat{\mathbf{Poly}_{c}}\left(\mathcal{E}\right)$
is lax our work will be reduced significantly; in fact by this method
we can completely avoid coherences involving composition of distributivity
pullbacks.

In a soon forthcoming paper we will exploit this fact in more detail
to give a complete proof of the universal properties of polynomials
which avoids the majority of the coherence conditions.

\bibliographystyle{siam}
\bibliography{references}

\end{document}